\NewDocumentCommand\DownArrow{O{2.0ex} O{black}}{%
   \mathrel{\tikz[baseline] \draw [<-, line width=0.5pt, #2] (0,0) -- ++(0,#1);}
}
\NewDocumentCommand\SlantLeftDownArrow{O{2.0ex} O{black}}{%
   \mathrel{\tikz[baseline] \draw [<-, line width=0.5pt, #2] (0,0) -- ++(#1,#1);}
}
\NewDocumentCommand\UpArrow{O{2.0ex} O{black}}{%
   \mathrel{\tikz[baseline] \draw [line width=0.5pt, decoration={markings,mark=at position 1 with {\arrow[scale=2, line width=0.25pt]{to}}},  postaction={decorate}, #2] (0,0) -- ++(0,#1);}
}
\newcommand{\explright}[2]{\mathrlap{\minibox[c]{\quad $\leftarrow$  \fbox{\footnotesize #2}   \\[3pt]}} #1}
\newcommand{\explleft}[2]{\mathllap{\minibox[c]{ \fbox{\footnotesize #2}  $\rightarrow$ \  \\[3pt]}} #1}
\newcommand{\explabove}[2]{\overset{\mathclap{\minibox[c]{ \fbox{\footnotesize #2} \\ $\DownArrow[10pt]$}}}{#1}}
\newcommand{\explaboveshiftslantleft}[3]{\overset{\mathclap{\minibox[c]{ \hspace{#1} \fbox{\footnotesize #3} \\ \qquad $\SlantLeftDownArrow[10pt]$}}}{#2}}
\newcommand{\expl}[2]{\underset{\mathclap{\minibox[c]{$\UpArrow[10pt]$\\ \fbox{\footnotesize #2}}}}{#1}}
\newcommand{\explshift}[3]{\underset{\mathclap{\minibox[c]{$\UpArrow[10pt]$\\ \hspace{#1} \fbox{\footnotesize #3}}}}{#2}}
\newcommand{\explparshift}[4]{\underset{\mathclap{\minibox[c]{$\UpArrow[10pt]$\\ \hspace{#2} \fbox{\parbox{#1}{\footnotesize #4}}}}}{#3}}
\DeclareMathAlphabet{\mathchanc}{OT1}{pzc}%
                                 {m}{it}
\newcommand{\bC}{\mathbb{C}}
\newcommand{\bG}{\mathbb{G}}
\newcommand{\bN}{\mathbb{N}}
\newcommand{\bP}{\mathbb{P}}
\newcommand{\bQ}{\mathbb{Q}}
\newcommand{\bZ}{\mathbb{Z}}
\newcommand{\scr}{\mathcal}
\newcommand{\sA}{\scr{A}}
\newcommand{\sE}{\scr{E}}
\newcommand{\sF}{\scr{F}}
\newcommand{\sG}{\scr{G}}
\newcommand{\sH}{\scr{H}}
\newcommand{\sI}{\scr{I}}
\newcommand{\sJ}{\scr{J}}
\newcommand{\sK}{\scr{K}}
\newcommand{\sL}{\scr{L}}
\newcommand{\sM}{\scr{M}}
\newcommand{\sO}{\scr{O}}
\newcommand{\sP}{\scr{P}}
\newcommand{\sQ}{\scr{Q}}
\newcommand{\sX}{\scr{X}}
\newcommand{\tf}{\tilde{f}}
\newcommand{\tx}{\tilde{x}}
\newcommand{\tV}{\tilde{V}}
\newcommand{\tX}{\tilde{X}}
\newcommand{\tZ}{\tilde{Z}}
\newcommand{\ot}{\overline{t}}
\newcommand{\oy}{\overline{y}}
\newcommand{\oz}{\overline{z}}
\newcommand{\oK}{\overline{K}}
\DeclareMathOperator{\KSBA}{{KSBA}}
\DeclareMathOperator{\norm}{{norm}}
\DeclareMathOperator{\todd}{{td}}
\DeclareMathOperator{\ch}{{ch}}
\DeclareMathOperator{\CM}{{CM}}
\DeclareMathOperator{\vol}{{vol}}
\DeclareMathOperator{\lct}{{lct}}
\DeclareMathOperator{\Sym}{{Sym}}
\DeclareMathOperator{\Aut}{Aut}
\DeclareMathOperator{\Bl}{{Bl}}
\DeclareMathOperator{\codim}{codim}
\DeclareMathOperator{\Ker}{{Ker}}
\DeclareMathOperator{\pre}{{pre}}
\DeclareMathOperator{\Gr}{{Gr}}
\DeclareMathOperator{\Vol}{Vol}
\DeclareMathOperator{\id}{{id}}
\DeclareMathOperator{\im}{{im}}
\DeclareMathOperator{\Isom}{Isom}
\DeclareMathOperator{\GL}{{GL}}
\DeclareMathOperator{\Proj}{{Proj}}
\DeclareMathOperator{\red}{red}
\DeclareMathOperator{\reg}{reg}
\DeclareMathOperator{\sm}{sm}
\DeclareMathOperator{\rk}{{rk}}
\DeclareMathOperator{\Spec}{{Spec}}
\DeclareMathOperator{\Supp}{{Supp}}
\DeclareMathOperator{\var}{{Var}}
\newcommand{\factor}[2]{\left. \raise 2pt\hbox{\ensuremath{#1}} \right/
        \hskip -2pt\raise -2pt\hbox{\ensuremath{#2}}}
\renewcommand\subsection{
  \renewcommand{\sfdefault}{pag}
  \@startsection{subsection}%
  {2}{0pt}{.8\baselineskip}{.4\baselineskip}{\raggedright
    \sffamily\itshape\small\bfseries
  }}
\renewcommand\section{
  \renewcommand{\sfdefault}{phv}
  \@startsection{section} %
  {1}{0pt}{\baselineskip}{.8\baselineskip}{\centering
    \sffamily
    \scshape
    \bfseries
}}
\setlist[enumerate]{leftmargin=0.8cm}
\setlist[itemize]{leftmargin=0.8cm}
\setlist[description]{leftmargin=0.0cm}
\DeclareMathOperator{\Kss}{K-ss}
\DeclareMathOperator{\uKs}{u-K-s}
\DeclareMathOperator{\Kps}{K-ps}
\newcommand{\rM}{\mathrm{M}}
\title{Positivity of the CM line bundle for families of K-stable klt Fano varieties}
\author{Giulio Codogni}
\address{
Dipartimento di Matematica, Universit\`{a} degli Studi di Roma  ``Tor Vergata" ,   \\
Via della ricerca scientifica, 00133 Roma, Italy.}
\email{codogni@mat.uniroma2.it}
\author{Zsolt Patakfalvi}
\address{
EPFL\\
SB MATHGEOM CAG  \\
MA B3 635 (B\^atiment MA) \\
Station 8 \\
CH-1015 Lausanne}
\email{zsolt.patakfalvi@epfl.ch}
\date{\today}
\begin{document}

\maketitle

\begin{abstract}

The Chow-Mumford (CM) line bundle is a functorial line bundle on the base of any family of klt Fano varieties. It is conjectured that it yields a polarization on the moduli space of $K$-poly-stable klt Fano varieties. Proving ampleness of the CM line bundle boils down to showing semi-positivity/positivity statements about the CM-line bundle for families with  $K$-semi-stable/$K$-polystable fibers. We prove the necessary semi-positivity statements in the $K$-semi-stable situation,
and the necessary positivity statements in the uniform $K$-stable situation, including in both cases variants  assuming $K$-stability  only for  general fibers. Our statements work in the most general singular situation (klt singularities), and the proofs are algebraic, except the computation of the limit of a sequence of real numbers via the central limit theorem of probability theory.
We also  present an application to the classification of Fano varieties. Additionally, our semi-positivity statements work in general for log-Fano pairs.

\end{abstract}

\tableofcontents

\section{Introduction}

\emph{Throughout the article, the base field is an algebraically closed field $k$ of characteristic zero.}

\subsection{Main theorem}
\label{sec:intro_boundary_free}

The interest in the moduli space of singular $K$-polystable Fano varieties stems from the classification theory of algebraic varieties. 
The \emph{birational part} of the classification theory, also called the Minimal Model Program \cite{Birkar_Cascini_Hacon_McKernan_Existence_of_minimal_models,Hacon_McKernan_Existence_of_minimal_models_for_varieties_of_log_general_type_II,
Hacon_Xu_Existence_of_log_canonical_closures,
Birkar_Existence_of_log_canonical_flips_and_a_special_LMMP,
Fujino_Introduction_to_the_log_minimal_model_program_for_log_canonical_pairs,
flips_and_abundance_for_algebraic_threefolds,Keel_Matsuki_McKernan_Log_abundance_theorem_for_threefolds,
Alexeev_Hacon_Kawamata_Termination_of_many_4-dimensional_log_flips,
Birkar_On_termination_of_log_flips_in_dimension_four,Birkar_Ascending_chain_condition_for_log_canonical_thresholds_and_termination_of_log_flips}, predicts that up to specific birational equivalences, each projective variety decomposes into iterated fibrations with general fibers of 3 basic types: \underline{Fano}, \underline{weak Calabi-Yau}, and \underline{general type} To be precise, one here needs to allow pairs, see \autoref{sec:intro_logarithmic}, but the boundary free case is a good first approximation.  

The above 3 types  are defined by having a specific class of mild singularities and negative/numerically trivial/positive canonical bundles. Then the \emph{moduli part} of the classification theory is supposed to construct a projective, compactified moduli spaces for the above 3 basic types of varieties. According to our current understanding, the moduli part seems to be doable only in the presence of a singular K\"ahler-Einstein metric, e.g., \cite[Conj 8.11, and the following 2 paragraphs]{Tian_Kahler-Einstein_metrics_with_positive_scalar_curvature},  which is predicted to be equivalent to the algebraic notion of $K$-polystability \cite{Berman_Guenancia_Kahler-Einstein_metrics_on_stable_varieties_and_log_canonical_pairs,Chen_Donaldson_Sun_Kahler-Einstein_metrics_on_Fano_manifolds_I_Approximation_of_metrics_with_cone_singularities,
Chen_Donaldson_Sun_Kahler-Einstein_metrics_on_Fano_manifolds_II_Limits_with_cone_angle_less_than_2pi,
Chen_Donaldson_Sun_Kahler-Einstein_metrics_on_Fano_manifolds_III_Limits_as_cone_angle_approaches_2pi_and_completion_of_the_main_proof,
Tian_K-stability_and_Kahler-Einstein_metrics,Odaka_The_GIT_stability_of_polarized_varieties_via_discrepancy,Odaka_The_Calabi_conjecture_and_K-stability,Odaka_Xu_Log-canonical_models_of_singular_pairs_and_its_applications, Recent_Tian}. We refer the reader to \autoref{def:K_stability} and to \autoref{def:K_stable} for the precise definition and for a characterization of $K$-semistability used in the present article. Additionally, see \autoref{sec:K_stability_versions} for an explanation on $K$-polystability.

In particular, on the Fano side, for the moduli part of the classification theory one should construct algebraically the following two spaces:
\begin{itemize}
\item the stack $\sM^{\Kss}_{n,v}$ of $K$-semistable Fano varieties of dimension $n$ and anti-canonical volume $v$, as well as, 
\item the projective good moduli space  $\rM^{\Kps}_{n,v}$ of $\sM^{\Kss}_{n,v}$  parametrizing  $K$-polystable Fano varieties of dimension $n$ and anti-canonical volume $v$.
\end{itemize}
We note that the construction of the above two spaces is known except for the properness and the projectivity of $\rM_{n,v}^{K-ps}$ via a sequence of recent papers
\cite{Blum_Liu_Xu_Openness_of_K-semistability_for_Fano_varieties,Blum_Xu_Uniqueness_of_K-polystable_degenerations_of_Fano_varieties,
Alper_Blum_Halpern-Leistner_Xu_Reductivity_of_the_automorphism_group_of_K-polystable_Fano_varieties,
Blum_Liu_Openness_of_uniform_K-stability_in_families_of_Q-Fano___varieties,
Xu_A_minimizing_valuation_is_quasi-monomial,
Jiang_Boundedness_of_Q-Fano_varieties_with_degrees_and___alpha-invariants_bounded_from_below,
Birkar_Anti-pluricanonical_systems_on_Fano_varieties,Birkar_Singularities_of_linear_systems_and_boundedness_of_Fano_varieties}.
That is, $\sM_{n,v}^{\Kss}$ is known to exist as an Artin stack of finite type over $k$ that admits a good moduli space $\rM_{n,v}^{\Kps}$. Additionally, $\rM_{n,v}^{\Kps}$ is  known to be a  separated algebraic space, which is of finite type over $k$, and the uniformly $K$-stable locus $\rM_{n,v}^{\uKs} \subseteq \rM_{n,v}^{K-ps}$ is known to be an open sub-algebraic space \cite[Thm A]{Blum_Liu_Openness_of_uniform_K-stability_in_families_of_Q-Fano___varieties}.  Furthermore, the polarization on $\rM^{\Kps}_{n,v}$ is predicted to be given by the descent $L$ to $\sM^{\Kss}_{n,v}$ of the Chow-Mumford (CM) line bundle $\lambda$. We refer the reader to the paragraph after \autoref{eq:CM_line_bundle_log} for the definition of the CM line bundle, and see \autoref{lem:descent_of_CM_line_bundle} for the definition of the descent as well as for the proof of its existence. Our main theorem concerns this prediction:

\begin{theorem}
\label{thm:main}
Fix an integer $n>0$ and a rational number $v>0$, and let $\lambda$ be the CM line bundle on the moduli stack $\sM_{n,v}^{\Kss}$ of $K$-semistable Fano varieties of dimension $n$ and anti-canonical volume $v$.  Let $\pi : \sM_{n,v}^{\Kss} \to \rM_{n,v}^{\Kps}$ be the good moduli space of $\sM_{n,v}^{\Kss}$, and let $L$ be the descent of $\lambda$ along $\pi$. Then:
\begin{enumerate}
\item \label{itm:main:nef} Both $\lambda$ and $L$ are nef.
\item \label{itm:main:big} 
Let $V \subseteq \rM_{n,v}^{\Kps}$ be a proper closed subspace intersecting $\rM_{n,v}^{\uKs}$. Then $L|_V$ is big. 
\item \label{itm:main:q_proj}  If $V \subseteq \rM_{n,v}^{K-ps}$ is a proper closed subspace, then the normalization of $V \cap \rM_{n,v}^{\uKs}$ is a quasi-projective scheme. \end{enumerate}
\end{theorem}

We  address later, in \autoref{rem:why_only_unif_K_stable},  the reasons of the specific generality of \autoref{thm:main}, and we present in  \autoref{sec:intro_logarithmic} our results for pairs.

\begin{remark}
Notably, \autoref{thm:main} deals with non-smoothable singular Fano varieties too, about which we remark that:
\begin{enumerate}
\item This is the first result about (semi-)positivity of the CM line bundle dealing with non-smoothable singular Fano varieties.
\item A typical $K$-semistable Fano variety is non-smoothable. In fact,  smooth Fano varieties of a given dimension  are bounded  regardless of K-semi-stability
\cite{Kollar_Miyaoka_Mori_Rational_connectedness_and_boundedness_of_Fano_manifolds},  and so are smoothable $K$-semi-stable varieties \cite{Jiang_Boundedness_of_Q-Fano_varieties_with_degrees_and___alpha-invariants_bounded_from_below}. On the other hand, non-smoothable $K$-semistable Fano varieties of a given dimension are unbounded if one does not fix the volume, as can be seen by considering quasi-\'etale quotients by bigger and bigger finite subgroups of  $\Aut(\bP^2)$, which quotients are $K$-semi-stable according to \cite[Cor. 1.7]{Fujita_Uniform_K-stability_and_plt_blowups_of_log_Fano_pairs}.

\end{enumerate}

\end{remark}

\begin{remark}
The proof of \autoref{thm:main} uses the Central Limit Theorem of probability theory. See \autoref{sec:outline_semi_positivity} for an outline of our argument or \autoref{prop:vectP2} for the precise place where the Central Limit Theorem is used.
\end{remark}

\begin{remark}
\label{rem:partial_results}
Let $\rM^{\Kps,\sm}_{n,v}$ be the closure of the locus of smooth Fano varieties. Using analytic methods one can show that $\rM^{\Kps,\sm}_{n,v}$ is proper and that $L|_V$ is big on every closed $V \subseteq \rM^{\Kps,\sm}_{n,v}$ intersecting the smooth locus
 \cite{Li_Wang_Xu_Qasi_projectivity_of_the_moduli_space_of_smooth_Kahler_Einstein_Fano_manifolds,Li_Wang_Xu_On_the_proper_moduli_spaces_of_smoothable_Kahler-Einstein_Fano_varieties,SpottiYTD,Odaka_Compact_moduli_spaces_of_Kahler-Einstein_Fano_varieties}. Our theorem extends this to the case of $V$ intersecting the uniformly $K$-stable locus, which then yields the quasi-projectivity of the normalization of an open set of $\rM^{\Kps,\sm}_{n,v}$ that is possibly bigger than the smooth locus.

\end{remark}

\begin{remark}
An equivalent way of stating point \autoref{itm:main:nef} and \autoref{itm:main:big} of \autoref{thm:main} is the following: $\lambda$ and $L$ are nef, and for every  proper closed subspace $V \subseteq \rM_{n,v}^{\Kps}$ the augmented base locus $B_+(L|_V)$ is contained in $V \setminus \rM_{n,v}^{\uKs}$. This follows immediately from  \cite[Thm 0.3]{Nakamaye_Stable_base_loci_of_linear_series}.
\end{remark}

\begin{remark}
Uniformly K-stable Fano varieties have finite automorphism group; this implies that, when $\sM^{\uKs}_{n,v}$  is  smooth, the coarse moduli space $\rM_{n,v}^{\uKs}$ has finite quotient singularities, and hence the normalization in \autoref{thm:main}.\autoref{itm:main:q_proj} can be  dropped from the statement. 

We know that $\sM^{\uKs}_{n,v}$ is smooth at the points corresponding to smooth Fano varieties 
\cite{Ran_Deformations_of_manifolds_with_torsion_or_negative_canonical_bundle,Kawamata_Unobstructed_deformations_A_remark_on_a_paper_of_Z_Ran}, and to terminal Fano $3$-folds
\cite[Thm 1.7]{Sano_On_deformations_of_Q-Fano_3-folds}.  Unfortunately, these unobstructedness statements do not hold for all Fano varieties, for example,  \cite[Rem 2.13]{Sano_On_deformations_of_Q-Fano_3-folds} gives a counterexample. However, the counterexample is a cone over a del Pezzo surface of degree 6, which is not uniformly $K$-stable, as its automorphism groups is not finite. This leads to the following question.
\end{remark}

\begin{question}
 Is the deformation space of uniformly $K$-stable Fano varieties  unobstructed?
\end{question}

\subsection{Technical statements}
\label{sec:intro_logarithmic}

Our most general statements implying \autoref{thm:main}, just as points \autoref{itm:main:nef} and \autoref{itm:main:big} of \autoref{thm:main}, come in two flavors: semi-positivity and positivity statements. We start with the semi-positivity statements, which we are able to show also in the logarithmic case. Let us first present the precise definition of the CM-line bundle in this setting.

If $f : (X, \Delta) \to T$ is a flat morphism of relative dimension $n$ from a projective normal pair to a normal projective variety such that $-(K_{X/T} + \Delta)$ is $\bQ$-Cartier and $f$-ample. Then we define the CM line bundle by 
\begin{equation}
\label{eq:CM_line_bundle_log}
\lambda_{f, \Delta}:= -f_* ((-(K_{X/T} + \Delta))^{n+1} ).
\end{equation}
This cycle, up to multiplying with a positive rational number, is the first Chern class of the
functorial line bundle on $T$ defined in  \cite{Paul_tian1,Paul_tian2}, see also  \autoref{prop:2_defs_CM_same} and \cite{Schumacher,Fine_Ross_A_note_on_positivity_of_the_CM_line_bundle,PRS}. 
In particular, one defines $\lambda$ to be the unique $\bQ$-line bundle $\lambda$ on $\sM^{\Kss}_{n,v}$ such that for every $\nu : T \to \sM^{\Kss}_{n,v}$, if   $f : X \to T$ is the   associated family, then $\nu^* \lambda = \lambda_f:= \lambda_{f,0}$. :

Our most general semi-positivity statements then are the following. We note that by a general geometric fiber we mean a fiber over any geometric point $\Spec \overline{L} \to U$, where $U \subseteq T$ is a fixed  non-empty open set.

\begin{theorem}
\label{thm:semi_positive_boundary}
Let $f : X \to T$ be a flat morphism of relative dimension $n$ with connected fibers between normal projective varieties and let $\Delta$ be an effective $\bQ$-divisor on $X$  such that $-(K_{X/T} + \Delta)$ is $\bQ$-Cartier and $f$-ample. Let $\lambda_{f,\Delta}$ be the CM  line bundle on $T$ as defined in \autoref{eq:CM_line_bundle_log}. 
\begin{enumerate}
\item {\scshape Pseudo-effectivity:} \label{itm:semi_positive_boundary:pseff} If $T$ is smooth and  $(X_t, \Delta_t)$ is K-semi-stable  for general geometric fibers $X_t$, then $\lambda_{f, \Delta}$ is pseudo-effective.
\item {\scshape Nefness:} \label{itm:semi_positive_boundary:nef} If all fibers $X_t$ are normal, $\Delta$ does not contain any fibers (so that we may restrict $\Delta$ on the fibers), and $(X_t, \Delta_t)$ is $K$-semi-stable for all geometric fibers $X_t$, then $\lambda_{f, \Delta}$ is nef. 
\end{enumerate}
\end{theorem}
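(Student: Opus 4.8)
The plan is to reduce both statements, by intersecting with suitable curves on $T$, to the numerical inequality $\big(-(K_{X/T}+\Delta)\big)^{n+1}\le 0$ for a log Fano family with $K$--semistable fibres over a smooth projective curve, and then to prove that inequality by combining relative Riemann--Roch with the valuative ($\delta$--invariant) characterisation of $K$--semistability.

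\emph{Reduction to a curve.} For \autoref{itm:semi_positive_boundary:nef} it is enough to show $\lambda_{f,\Delta}\cdot C\ge 0$ for every curve $C\subseteq T$; for \autoref{itm:semi_positive_boundary:pseff}, by the Boucksom--Demailly--P\u{a}un--Peternell duality between the pseudo--effective cone of divisors and the movable cone of curves, it is enough to show $\lambda_{f,\Delta}\cdot C\ge 0$ for $C$ a very general member of a covering family of curves on $T$. In either case let $\nu\colon\tilde C\to C\hookrightarrow T$ be the normalisation and form the base change $\tilde f\colon(X_{\tilde C},\Delta_{\tilde C})\to\tilde C$. In case \autoref{itm:semi_positive_boundary:nef} the standing hypotheses (all fibres normal, $\Delta$ containing no fibre) ensure that $\Delta_{\tilde C}$ is defined, that $-(K_{X_{\tilde C}/\tilde C}+\Delta_{\tilde C})$ is $\bQ$--Cartier and $\tilde f$--ample, that every geometric fibre of $\tilde f$ remains $K$--semistable, and that $K_{X/T}$ base changes correctly (\autoref{sec:relative_canonical}). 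In case \autoref{itm:semi_positive_boundary:pseff} the same holds along a very general covering curve, because the loci on $T$ over which a fibre is non--normal, is contained in $\Delta$, or over which the relative canonical fails to base change are all avoided by a very general member of a covering family (one uses here the analysis of \autoref{sec:relative_canonical} on very general curves in moving families, together with the constancy of $K$--semistability on very general fibres, \autoref{prop:delta_general_fiber}). By the base change property of the CM line bundle (\autoref{prop:CM_base_change}), $\lambda_{f,\Delta}\cdot C=\deg\lambda_{\tilde f,\Delta_{\tilde C}}=-\big(-(K_{X_{\tilde C}/\tilde C}+\Delta_{\tilde C})\big)^{n+1}$. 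Hence it suffices to prove: if $g\colon(Y,\Gamma)\to B$ is flat over a smooth projective curve with $L:=-(K_{Y/B}+\Gamma)$ relatively ample $\bQ$--Cartier and all geometric fibres $(Y_b,\Gamma_b)$ $K$--semistable log Fano pairs, then $L^{\,n+1}\le 0$.

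\emph{Riemann--Roch and filtrations.} After replacing $L$ by a sufficiently divisible Cartier multiple it suffices to prove $\limsup_m m^{-(n+1)}\deg\sE_m\le 0$, where $\sE_m:=g_*\sO_Y(mL)$; indeed for $m\gg 0$ this is a vector bundle with vanishing higher direct images and rank $N_m=h^0(Y_b,mL_b)\sim\frac{v}{n!}m^{n}$ ($v:=L_b^{\,n}$), and relative Grothendieck--Riemann--Roch gives $\deg\sE_m=\frac{L^{\,n+1}}{(n+1)!}m^{n+1}+O(m^{n})$. Now for each closed point $0\in B$ the inclusion $\sE_m\hookrightarrow\sE_m\otimes_{\sO_B}K(B)=R_m:=H^0(Y_\eta,mL_\eta)$ ($Y_\eta$ the generic fibre) induces a $\bZ$--filtration $\sF_0^{\bullet}R_m$ recording the order of a section along $Y_0$ relative to the model $\sE_m$; a standard local computation writes $\deg\sE_m$ as a finite sum over the ``bad'' points $0$ of a linear functional of the jumps of $\sF_0^{\bullet}R_m$. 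Since each $\sF_0$ is cut out by the single $\bQ$--Cartier divisor $Y_0$ on the fixed model $Y$, the system $\{\sF_0^{\bullet}R_m\}_m$ is a linearly bounded graded filtration of the section ring $R_\bullet=\bigoplus_m R_m$ of $(Y_\eta,L_\eta)$; for such a filtration the normalised expected vanishing orders $S_m(\sF_0):=\frac{1}{mN_m}\sum_i a_i^{(m)}$ converge to a limit $S(\sF_0)$ equal to the integral of the associated concave transform over the Okounkov body. Substituting and letting $m\to\infty$ bounds $\limsup_m(n+1)!\,m^{-(n+1)}\deg\sE_m$ above by a nonnegative combination of the numbers $S(\sF_0)-A_0$, where $A_0>0$ is the normalising constant of $\sF_0$; evaluating the limit $S_m(\sF_0)\to S(\sF_0)$ --- i.e.\ showing that the empirical measures of the normalised filtration jumps converge to the appropriate Duistermaat--Heckman measure --- is the one place where the Central Limit Theorem of probability is used (see \autoref{sec:outline_semi_positivity} and \autoref{prop:vectP2}).

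\emph{Conclusion and main difficulty.} Under the dictionary above each $\sF_0$ is the filtration induced on a general fibre $(Y_b,\Gamma_b)$ by a divisorial valuation $w_0$ (coming from $\ord_{Y_0}$; here one uses that $\Gamma$ contains no fibre and the fibres are normal, so that $(Y_b,\Gamma_b)$ and $w_0$ are meaningful, and that the model base changes well), with $A_0=A_{(Y_b,\Gamma_b)}(w_0)$ and $S(\sF_0)=S_{(Y_b,\Gamma_b)}(w_0)$. Since $(Y_b,\Gamma_b)$ is $K$--semistable, $\delta(Y_b,\Gamma_b)\ge 1$ (Fujita--Odaka, Blum--Jonsson; cf.\ \autoref{def:K_stable}), i.e.\ $A_{(Y_b,\Gamma_b)}(w)\ge S_{(Y_b,\Gamma_b)}(w)$ for every divisorial valuation $w$; in particular $A_0\ge S(\sF_0)$ for all bad points, so $\limsup_m m^{-(n+1)}\deg\sE_m\le 0$ and $L^{\,n+1}\le 0$, as needed. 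I expect the main obstacle to be the ``Riemann--Roch and filtrations'' step: making the correspondence between $\deg\sE_m$ and the $\sF_0$ exact (signs, the identification of $A_0$ with an honest log discrepancy on the fibre, and the observation that a basis adapted to $\sF_0$ need only be chosen one point of $B$ at a time), and especially the limit $S_m\to S$, which is the sole analytic ingredient and relies on the Central Limit Theorem. In the pseudo--effectivity case a further delicate point is the reduction step --- descending to a \emph{very general} movable curve while keeping $\Delta$ restrictable and $K_{X/T}$ base--changeable --- which is precisely why $T$ is taken smooth there and $K$--semistability is imposed only on very general fibres; and the presence of the boundary forces one to carry $\delta$--invariants and log discrepancies relative to $\Gamma_b$ throughout, accounting for the extra hypotheses (all fibres normal, $\Delta$ restricting to every fibre) in the nefness statement.
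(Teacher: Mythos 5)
Your reduction to a family over a smooth projective curve (BDPP duality for pseudo-effectivity, plus the base-change property \autoref{prop:CM_base_change} and the analysis of \autoref{sec:relative_canonical} along very general covering curves) is exactly the paper's first step and is sound. The gap is in your ``Riemann--Roch and filtrations'' step, and it is fatal as written. First, $\deg\sE_m$ is not a finite sum over ``bad'' points of quantities read off from the local filtrations $\sF_0^{\bullet}R_m$: to localize the degree of a vector bundle on a curve you must first fix a rational trivialization (a $K(B)$-basis of $R_m$), and the local contribution at $0$ is expressed by the jumps of $\sF_0$ only if that basis is adapted to $\sF_0$, which cannot be arranged at all points of $B$ simultaneously; there is no canonical finite set of bad points and no basis-free local decomposition of an absolute degree. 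Second, and more seriously, $\sF_0$ is a filtration of $H^0(Y_\eta,mL_\eta)$ --- a vector space over the function field $K(B)$ --- induced by the valuation $\ord_{Y_0}$ on the \emph{total space}; this valuation does not restrict to the trivial valuation on $K(B)$ and is not a divisorial valuation on any closed fibre. Unless the family is a product away from $0$ (i.e.\ a test configuration), there is no way to transport it to a valuation $w_0$ on a general closed fibre $(Y_b,\Gamma_b)$, so the identities $A_0=A_{(Y_b,\Gamma_b)}(w_0)$ and $S(\sF_0)=S_{(Y_b,\Gamma_b)}(w_0)$ are unjustified, and the $K$-semistability hypothesis --- which concerns geometric closed fibres, and which cannot be harmlessly shuffled between the generic fibre and geometric fibres (cf.\ \autoref{rem:delta_invariant_non_albraically_closed_field}) --- is never actually applied to the objects your computation produces. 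You have also misplaced the probabilistic input: the convergence $S_m\to S$ for a linearly bounded filtration is standard Okounkov-body material and needs no probability; the Central Limit Theorem enters elsewhere.

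For contrast, the paper's proof of the curve case (\autoref{thm:semi_positivity_curve}) runs by contradiction: if $(-K_{X/T}-\Delta)^{n+1}>0$, then after perturbing by $-\varepsilon f^*H$ the bundle $\sE=f_*\sO_X(qM)$ has positive degree, and the issue becomes lifting an (almost) basis of $\sE_t$ to global sections. This fails for $\sE$ itself, so one passes to the fibre product $X^{(m)}$ and the tensor power $\sE^{\otimes m}$, uses the Harder--Narasimhan filtration (a global object on the curve, restricted to the single fibre $t$) to identify a globally generated subbundle of $\sE^{\otimes m}$ spanned over $t$ by simple tensors, and proves via the Central Limit Theorem (\autoref{prop:vectP2}) that this subbundle has asymptotically full rank. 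The resulting divisor is close to a product basis type divisor on $X_t^{(m)}$, hence klt by $\delta_q(X_t,\Delta_t)>1-c$ and \autoref{prop:prod_basis}, contradicting the semi-positivity statement \autoref{prop:cont}. The crucial point your proposal misses is that $K$-semistability must be applied on a single closed fibre to a $k$-basis of $H^0(X_t,-qK_{X_t}-q\Delta_t)$, and the whole Harder--Narasimhan/product/CLT apparatus exists precisely to manufacture such a basis from global data; the local-at-$0$ filtrations you propose never produce one.
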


Next we  state our positivity statements. These pertain to families with maximal variation. Here, a family $f: X\to T$ of Fano varieties as in \autoref{thm:semi_positive_no_boundary} has maximal variation if there is a non-empty open set of $T$ over which the isomorphism equivalence classes of the fibers are finite. In the logarithmic case one faces considerable extra difficulties when the variation comes partially also from the variation of the boundary, as it was also the case for the KSBA stable moduli \cite{Kovacs_Patakfalvi_Projectivity_of_the_moduli_space_of_stable_log_varieties_and_subadditvity_of_log_Kodaira_dimension}. Hence, to keep the length of the article under control, we address here only the question of positivity in the boundary free case. The logarithmic version was addressed after the initial submission of the present article in  \cite{Quentin}.

\begin{theorem}
\label{thm:semi_positive_no_boundary}
Let $f : X \to T$ be a flat morphism with connected fibers between normal projective varieties such that $-K_{X/T}$ is $\bQ$-Cartier and $f$-ample, and let $\lambda_f$ be the CM line bundle defined in equation \autoref{eq:CM_line_bundle_log}. 
\begin{enumerate}
\item \label{itm:semi_positive_no_boundary:big} {\scshape Bigness:} If  $T$ is smooth, the  general geometric fibers of $f$ are uniformly $K$-stable,  the variation of $f$ is maximal, and either $\dim T=1$ or the fibers of $f$ are reduced, then {$\lambda_f$ is big}.
\item \label{itm:semi_positive_no_boundary:ample} {\scshape Ampleness:} If all the geometric fibers of $f$ are uniformly $K$-stable and the isomorphism equivalence classes of the closed fibers are finite, then {$\lambda_f$ is ample}.
\item \label{itm:semi_positive_no_boundary:q_proj} {\scshape Quasi-projectivity:} If $T$ is only assumed to be a proper normal algebraic space, all the geometric fibers are $K$-semi-stable and there is an open set $U \subseteq T$ over which the geometric fibers are uniformly $K$-stable and the isomorphism classes of the fibers are finite, then $U$ is a quasi-projective variety. 

\end{enumerate}
\end{theorem}

\begin{remark}
\label{rem:openness}
We note that both, $K$-semistability \cite[Thm 1.1]{Blum_Liu_Xu_Openness_of_K-semistability_for_Fano_varieties} \cite[Thm 1.4]{Xu_A_minimizing_valuation_is_quasi-monomial} and uniform $K$-stability 
\cite[Thm A]{Blum_Liu_Openness_of_uniform_K-stability_in_families_of_Q-Fano___varieties} are open properties. 

We also remark that in \autoref{thm:semi_positive_no_boundary} we carefully said ``geometric fiber'' instead of just ``fiber''. The reason is that we use the $\delta$-invariant description of $K$-stability, and the $\delta$-invariant of a variety is not invariant under base extension to the algebraic closure, see \autoref{rem:delta_invariant_non_albraically_closed_field}. So, for scheme theoretic fibers over non algebraically closed  fields the $\delta$-invariant can have non semi-continuous behavior.  
\end{remark}

\begin{remark}
We proved \autoref{thm:semi_positive_boundary} and \autoref{thm:semi_positive_no_boundary} in the stated generality, as in this setting the relative canonical divisor exists and admits reasonable base-change properties  on very general curves in moving families of curves on the base, see \autoref{sec:relative_canonical} for details.
Nevertheless, in situations where this base-change is automatic, \autoref{thm:semi_positive_no_boundary} directly implies statements over non-normal, non-projective, and even non-scheme bases. This is made precise for example in the following statement:
\end{remark}

\begin{corollary}
\label{cor:proper_base}
Let $f : X \to T$ be a flat, projective morphism with connected fibers to a proper algebraic space, such that there is an integer $m > 0$ for which  $\omega_{X/T}^{[m]}$  is a line bundle and all  the geometric fibers are $K$-semi-stable klt Fano varieties. Let $N$ be the CM-line bundle associated to the polarization $\omega_{X/T}^{[-m]}$. Then, $N$ is nef, and if the variation of $f$ is maximal and the very general geometric fiber is uniformly $K$-stable, then $N$ is big. 
\end{corollary}
The CM line bundle over a general base is defined in \autoref{notation:Paul_Tian}, following \cite{Paul_tian2}.

\begin{remark}
Note that over $\bC$ the positivity properties of \autoref{thm:semi_positive_no_boundary} (nefness, pseudo-effectivity, bigness, ampleness) can be also characterized  analytically, e.g., \cite[Prop 4.2]{Demailly_Singular_Hermitian_metrics_on_positive_line_bundles}
\end{remark}

\begin{remark}
{\scshape Negativity of $-K_{X/T}$ point of view.}
Unwinding definition \autoref{eq:CM_line_bundle_log}, 
we obtain that, in the case of one dimensional base, \autoref{thm:semi_positive_no_boundary} states that $(-K_{X/T})^{n+1}$ is at most zero/smaller than 0. Using this in conjunction with the base-change property of the CM line bundle proved in \autoref{prop:CM_base_change} we obtain that \autoref{thm:semi_positive_no_boundary}, especially the last 3 points, prove strong negativity properties of $-K_{X/T}$ for families of klt Fano varieties. For example, one obtains that if $C \to T$ is a general enough curve, then the top self intersection of $(-K_{X/T})|_{f^{-1}C}$ is negative. 

There do exist birational geometry statements claiming that $-K_{X/T}$ is not nef, e.g., \cite[Prop 1]{Zhang_On_projective_manifolds_with_nef_anticanonical_bundles}. Our negativity statements point in this direction but go further. However, it is not a coincidence that strong negativity statements on $-K_{X/T}$ did not show up earlier, as in fact \autoref{thm:semi_positive_no_boundary} is not true for every family of klt Fano varieties. Indeed, \autoref{ex:negative_degree} shows that   in  \autoref{thm:semi_positive_no_boundary} one cannot relax the $K$-semi-stable Fano assumption to just assuming klt Fano. 
The development of the notions of $K$-stability in the past decade was essential for creating the chance of proving negativity statements for $-K_{X/T}$ of the above type. 

We also note that as $-K_{X/T}$ is not nef usually in the  situation of \autoref{thm:semi_positive_no_boundary}, c.f., \autoref{thm:nef_threshold} and \autoref{ex:positive_and_big}, the negativity of $(-K_{X/T})^{n+1}$ is independent of the negativity of $\kappa(-K_{X/T})$. In fact,  assuming the former, $\kappa(-K_{X/T})$ can be $- \infty$ (\autoref{ex:anti_canonical_no_section}), 
$\dim X$ (\autoref{ex:positive_and_big}), and also something in between $-\infty$ and $\dim X$ (\autoref{ex:anti_canonical_no_section}). 
\end{remark}

\begin{remark}
\label{rem:why_only_unif_K_stable}
There are two main reasons why our positivity statements \autoref{itm:semi_positive_no_boundary:big}, \autoref{itm:semi_positive_no_boundary:ample} and \autoref{itm:semi_positive_no_boundary:q_proj} of \autoref{thm:semi_positive_no_boundary} work in the uniformly $K$-stable case, but not in the $K$-polystable case:
\begin{enumerate}
\item We rely on the characterization of $K$-semistability and uniform $K$-stability via the $\delta$ invariant given by \cite{Fujita_Odaka_On_the_K-stability_of_Fano_varieties_and_anticanonical_divisors,BJ}. Such characterization is not available for the $K$-polystable case.
\item Our \autoref{thm:nef_threshold}  about the nef threshold, on which the above 3 points of \autoref{thm:semi_positive_no_boundary} rely, fails in the $K$-polystable case according to \autoref{ex:not_nef}. Hence, one would need a significantly different approach to extend points \autoref{itm:semi_positive_no_boundary:big}, \autoref{itm:semi_positive_no_boundary:ample} and \autoref{itm:semi_positive_no_boundary:q_proj} of \autoref{thm:semi_positive_no_boundary} to the $K$-polystable case.
\end{enumerate}

\end{remark}

\begin{remark}
One could make definition \autoref{eq:CM_line_bundle_log} also without requiring flatness. We do not know if \autoref{thm:semi_positive_no_boundary} holds in this situation. Nevertheless, we note that it would be interesting to pursue this direction for example for applications to Mori-fiber spaces with higher dimensional bases, see \autoref{cor:bounding_volume}.  

Also we expect that the reduced fiber assumption of point \autoref{itm:semi_positive_no_boundary:big} of \autoref{thm:semi_positive_no_boundary} can be removed, as we need it for technical reasons, namely we want the base changes over normalizations of  general elements of movable families of curves to be nice, and also because the conjectured  $K$-semi-stable reduction should eliminate it.  Here, $K$-semi-stable reduction means the conjecture that $K$-semsitable  families of Fano varieties over function fields of DVR's can be extended over the DVR after a finite base-change. 
\end{remark}

\subsection{Boundedness of the volume}
\label{sec:intro_applications}

Fujita showed in \cite[Thm 1.1]{Fujita_Optimal_bounds_for_the_volumes_of_Kahler-Einstein_Fano_manifolds} that $\vol(-K_X) \leq (n+1)^n$ for every $K$-semistable Fano variety $X$ of dimension $n$, see \cite[Thm 3]{Liu_The_volume_of_singular_Kahler-Einstein_Fano_varieties} for better bounds in the presence of quotient singularities. Using \autoref{thm:semi_positive_boundary} we can show similar bounds for Fano varieties $X$ admitting a  Fano fibration structure with $K$-semi-stable general fiber. 

\begin{corollary}
\label{cor:bounding_volume}
If $(X,\Delta)$ is a normal Fano pair, and $f : (X, \Delta) \to \bP^1$ is a fibration with $K$-semi-stable  general geometric fibers $(F, \Delta_F)$, then 
\begin{equation*}
\vol(-(K_X+\Delta)) \leq 2 \dim  \left( X\right) \vol (-(K_F+\Delta_F)) \leq 2\dim \left(X\right)^{\dim \left(X\right)}.
\end{equation*}
\end{corollary}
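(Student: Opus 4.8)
The plan is to apply point \autoref{itm:semi_positive_boundary:pseff} of \autoref{thm:semi_positive_boundary} to the fibration $f : X\to\bP^{1}$ and then unwind the resulting numerical inequality. First I would verify the hypotheses of that theorem: the base $\bP^{1}$ is smooth; $f$ is automatically flat because $X$ is an irreducible variety mapping to a smooth curve; the fibers are connected since $f$ is a fibration; $X$ is normal and projective by assumption; and the very general geometric fiber $(F,\Delta_{F})$ is $K$-semi-stable by hypothesis. It remains to check that $L:=-(K_{X/\bP^{1}}+\Delta)$ is $\bQ$-Cartier and $f$-ample. Writing $F$ also for the numerical class of a fiber, one has $K_{X/\bP^{1}}=K_{X}-f^{*}K_{\bP^{1}}=K_{X}+2F$, so $L=A-2F$ with $A:=-(K_{X}+\Delta)$; since $A$ is ample and $2F$ is pulled back from $\bP^{1}$ (hence $f$-trivial), $L$ is $\bQ$-Cartier and $f$-ample, and the theorem applies.

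The theorem then says $\lambda_{f,\Delta}=-f_{*}(L^{n+1})$ is pseudo-effective on $\bP^{1}$, where $n=\dim X-1$ is the relative dimension. Since $\lambda_{f,\Delta}$ is a $0$-cycle on the curve $\bP^{1}$ and proper pushforward preserves degrees of $0$-cycles, pseudo-effectivity is precisely the numerical inequality $L^{n+1}\le 0$ on $X$ (this is the ``negativity of $-K_{X/T}$'' phenomenon discussed in the introduction). Because $F=f^{*}(\mathrm{pt})$, we have $F^{k}=0$ for $k\ge 2$, so the binomial expansion of $(A-2F)^{n+1}$ collapses to
\[
0\ \ge\ L^{n+1}\ =\ (A-2F)^{n+1}\ =\ A^{n+1}-2(n+1)\,A^{n}\cdot F .
\]
Now $A$ is ample, so $A^{n+1}=\vol(-(K_{X}+\Delta))$; and $A^{n}\cdot F=(A|_{F})^{n}$, where by adjunction on the general fiber together with the base-change property of $K_{X/\bP^{1}}+\Delta$ recalled in \autoref{sec:relative_canonical} one has $A|_{F}=-(K_{X/\bP^{1}}+\Delta)|_{F}=-(K_{F}+\Delta_{F})$, the ample anticanonical class of the klt Fano pair $(F,\Delta_{F})$; hence $A^{n}\cdot F=\vol(-(K_{F}+\Delta_{F}))$. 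Rearranging the displayed inequality gives $\vol(-(K_{X}+\Delta))\le 2(n+1)\,\vol(-(K_{F}+\Delta_{F}))=2\dim(X)\,\vol(-(K_{F}+\Delta_{F}))$, the first claimed bound.

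For the second bound, set $\Delta=0$: then $F$ is a $K$-semi-stable Fano variety of dimension $n=\dim X-1$, so Fujita's optimal volume bound \cite[Thm 1.1]{Fujita_Optimal_bounds_for_the_volumes_of_Kahler-Einstein_Fano_manifolds} gives $\vol(-K_{F})\le (n+1)^{n}=\dim(X)^{\dim(X)-1}$. Substituting into the first bound yields $\vol(-K_{X})\le 2\dim(X)\cdot\dim(X)^{\dim(X)-1}=2\dim(X)^{\dim(X)}$. The entire argument is a formal consequence of \autoref{thm:semi_positive_boundary}; the only point requiring real care is the verification of that theorem's hypotheses for $(f,L)$ — in particular the $\bQ$-Cartierness and $f$-ampleness of $-(K_{X/\bP^{1}}+\Delta)$ — and the identification of the general geometric fiber of $(X,\Delta)$ with the $K$-semi-stable pair $(F,\Delta_{F})$ so that adjunction applies. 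I do not expect any genuine obstacle beyond this bookkeeping.
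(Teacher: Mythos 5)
Your proof is correct and is essentially the paper's own argument: both reduce the claim to $\deg\lambda_{f,\Delta}\geq 0$, supplied by point \autoref{itm:semi_positive_boundary:pseff} of \autoref{thm:semi_positive_boundary}, and then expand $(-K_X-\Delta)^{\dim X}=\left((-K_{X/\bP^1}-\Delta)-f^*K_{\bP^1}\right)^{\dim X}$ using that the square of a fiber class vanishes. The only differences are cosmetic (you expand $(A-2F)^{n+1}$ instead of the paper's equivalent rearrangement) together with one citation detail for the second bound: since $F$ may be a singular klt Fano, the paper supplements Fujita's theorem with \cite[Thm 3]{Liu_The_volume_of_singular_Kahler-Einstein_Fano_varieties} to cover the non-smooth case.
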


\begin{remark}
\autoref{cor:bounding_volume} is sharp  for surfaces and threefolds. Indeed, a del Pezzo surface of degree $8$ and the blow-up of $\bP^3$ at a line, whose anti-canonical volume is $54$, can be fibred over $\bP^1$ with $K$-semis-table fibres. 
\end{remark}

\begin{remark}
\label{rem:K_stability_classification}
{ \scshape Classification of (uniform) $K$-(semi/poly)-stable Fano varieties}: to explain which varieties \autoref{cor:bounding_volume} pertains to, we provide a short list on Fano varieties that are either known to be $K$-semi-stable or not $K$-semi-stable. In fact, one typically wants to figure out for a given Fano variety the behavior with respect to all four $K$-stability properties, see \autoref{sec:K_stability_versions}. This has been an active area of research recently. To start with, let us recall that $K$-semi-stable Fano varieties are always klt. 

A Del-Pezzo surface is K-polystable if and only if it is not  of degree $8$ or $7$ \cite{Tian_Yau_Kahler-Einstein_metrics_on_complex_surfaces_with_C_1_at_least_zero,Tian_On_Calabis_conjecture_for_complex_surfaces_with_positive_first_Chern_class}. 
Smooth Fano surfaces with discrete automorphism groups are even uniformly K-stable, and their delta invariant, see \autoref{sec:delta}, is bounded away from $1$ in an effective way \cite{Park_Won_K-stability_of_smooth_del_Pezzo_surfaces}. Smoothable singular K-stable Del-Pezzo surfaces are classified in \cite{Odaka_Spotti_Sun}.

K-stable proper intersection of two quadrics in an odd dimensional projective space are classified in \cite{Spotti_Sun_Explicit_Gromov-Hausdorff_compactifications_of_moduli_spaces_of_Kahler-Einstein_Fano_manifolds}, see also \cite{Arezzo_Ghigi_Pirola_Symmetries_quotients_and_Kahler-Einstein_metrics}; in particular, smooth varieties of these types are always K-stable. Cubic $3$-folds are studied in \cite{Liu_Xu_K-stability_of_cubic_threefolds}, where again smooth ones are $K$-stable, and so are the ones containing only $A_k$ singularities for $k \leq 4$. Under adequate hypotheses, in \cite{Dervan_Finite}, it is shown that Galois covers of K-semistable Fano varieties are K-stable. This can be applied for instance to double solids. Furthermore, birational superigid Fano varieties are K-stable under some addition mild hypothesis \cite{OO13,Stibitz_Zhuang_K-stability_of_birationally_superrigid_Fano_varieties,Z}. However, according to the best knowledge of the authors, there is not a complete classification of K-stable smooth Fano threefolds.

If one wants to study klt Fano varieties from the point of view of the MMP, it is particularly relevant to see if one can apply \autoref{cor:bounding_volume} to the case of Mori Fibre Spaces with one dimensional bases. In \cite[Corollary 1.11]{CFST}, it is shown that if a smooth Fano surface or a smooth toric variety can appear as a fibre of MFS, then it is K-semistable. We do not know if the analogous  result holds in dimension $3$. However, there are examples of smooth Fano fourfolds with Picard number one, which then can be general fibers of  MFS's, that are not K-semistable \cite{Fujita_Examples}, see also \cite{Codogni2018}.

\end{remark}

\subsection{Byproduct statements}

As a byproduct of our technique for proving \autoref{thm:semi_positive_boundary}, we obtain the following bound on the nef threshold of $-(K_{X/T} + \Delta)$ with respect to $\lambda_{f, \Delta}$ in the uniformly $K$-stable case. 

\begin{theorem}
\label{thm:bounding_nef_threshold}
\label{thm:nef_threshold}
Let $f : X \to T$ be a flat morphism with connected fibers from a normal projective variety of dimension $n+1$ to a smooth curve and let $\Delta$ be an effective $\bQ$-divisor on $X$  such that 
\begin{itemize}
 \item $-(K_{X/T} + \Delta)$ is $\bQ$-Cartier and $f$-ample, and 
 \item $\left(X_{\ot},\Delta_{\ot}\right)$ is uniformly $K$-stable for fibers $X_{\ot}$ over  general geometric points $\ot \in T$.
\end{itemize}
Set
\begin{itemize}
 \item \label{itm:thm:bounding_nef_threshold_Delta} set $\delta:=\delta\left(X_{\ot}, \Delta_{\ot}\right)$ for $\ot$ very general geometric point, and
 \item let $v:=\left( (-K_{X/T} - \Delta)_t \right)^n$ for any $t \in T$.
\end{itemize}
Then, $- K_{X/T} - \Delta + \frac{\delta}{(\delta -1) v (n+1)} f^* \lambda_{f,\Delta} $ is nef. 
\end{theorem}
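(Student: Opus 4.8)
The plan is to reduce nefness of the $\bQ$-divisor $D:=-K_{X/T}-\Delta+\frac{\delta}{(\delta-1)v(n+1)}f^*\lambda_{f,\Delta}$ on $X$ to an intersection-number inequality, since $T$ is a smooth curve and $X$ is a projective variety of dimension $n+1$. A $\bQ$-Cartier divisor on such an $X$ is nef if and only if it is nef on every irreducible curve; the curves contained in fibers are controlled by $f$-ampleness of $-(K_{X/T}+\Delta)$ together with the fact that $f^*\lambda_{f,\Delta}$ is trivial on fibers (so $D$ restricted to a fiber is just $-(K_{X/T}+\Delta)|_{X_t}$, which is ample), so the only real content is curves that dominate $T$, and for those it suffices to prove $D\cdot C\ge 0$, equivalently (after passing to the normalization of $C$ and then to $T$ itself, or directly by a limiting/perturbation argument) to prove $D^{?}$-type positivity; concretely the cleanest route is to show that for the pushforward one has the numerical inequality relating $\big((-K_{X/T}-\Delta)^{n+1}\big)=-\lambda_{f,\Delta}$ (a number, since $T$ is a curve — this uses the unwinding of \eqref{eq:CM_line_bundle_log} noted in the ``Negativity of $-K_{X/T}$'' remark) to $\deg\lambda_{f,\Delta}$ and the intersection $\big((-K_{X/T}-\Delta)^n\cdot f^*(\mathrm{pt})\big)=v$.

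The heart of the argument is a lower bound on $\big((-K_{X/T}-\Delta)^{n+1}\big)$ — i.e. an upper bound on $\deg\lambda_{f,\Delta}$ in terms of $v$, $n$ and $\delta$. This is exactly the kind of statement that the $\delta$-invariant characterization of uniform $K$-stability (from \cite{Fujita_Odaka_On_the_K-stability_of_Fano_varieties_and_anticanonical_divisors,BJ}) is designed to feed. The strategy: for a very general geometric fiber $(X_{\ot},\Delta_{\ot})$ with $\delta=\delta(X_{\ot},\Delta_{\ot})>1$, produce — on a finite base change or directly over $T$ — a family of effective $\bQ$-divisors (basis-type divisors for large $m$, built from fiberwise complete linear systems $|{-}m(K_{X/T}+\Delta)|$) whose fiberwise log canonical threshold is bounded below by something approaching $\delta$, and whose relative "average" as a $\bQ$-divisor is $\frac{1}{v(n+1)}$ times the relevant anticanonical class. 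Then apply a positivity/semipositivity input for the CM-type class — this is where I would invoke the semi-positivity machinery underlying Theorem \ref{thm:semi_positive_boundary}, applied not to $(X,\Delta)$ itself but to $(X,\Delta+ c\,\Gamma)$ for a suitable boundary perturbation $\Gamma$ and $c$ close to the log canonical threshold — to conclude that the CM line bundle of the perturbed family is pseudo-effective, hence has nonnegative degree on the curve $T$, and unwind this into the claimed bound. The factor $\frac{\delta}{(\delta-1)v(n+1)}$ should emerge from optimizing the perturbation coefficient $c$ against the constraint $c<\mathrm{lct}\approx\delta$ together with the normalization of the basis-type divisor.

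Carrying this out requires the base-change compatibilities for $K_{X/T}+\Delta$ over the curve $T$ (available here since $\dim T=1$, which is precisely the hypothesis that makes this work, as the excerpt emphasizes), the semicontinuity/genericity of the $\delta$-invariant (\autoref{prop:delta_general_fiber}, so that ``very general geometric fiber has $\delta$'' propagates), and a careful passage to the limit $m\to\infty$ in the basis-type divisor construction — it is here, in computing the limit of the sequence of normalized degrees, that I expect the Central Limit Theorem input flagged in the introduction to enter.

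\textbf{Main obstacle.} The serious difficulty is not the reduction to a curve intersection inequality (routine once $\dim T=1$) nor the perturbation bookkeeping, but establishing the semipositivity of the CM class of the \emph{perturbed} family $(X,\Delta+c\Gamma)\to T$ with the precise constant: one must ensure the perturbed pair is still klt with $K$-semistable very general fiber (so that Theorem \ref{thm:semi_positive_boundary}\eqref{itm:semi_positive_boundary:pseff} applies), control how $\lambda$ changes under $\Delta\rightsquigarrow\Delta+c\Gamma$ via the base-change formula \autoref{prop:CM_base_change}, and extract the limit as $m\to\infty$ and $c\to\delta^-$ without losing the sharp coefficient — this limiting step, tying together the basis-type divisor asymptotics and the Central Limit Theorem, is where essentially all the work lies.
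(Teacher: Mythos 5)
There is a genuine gap, and it sits exactly at the step you treat as routine: the reduction of nefness to an intersection-number inequality. Nefness of $D:=-K_{X/T}-\Delta+\frac{\delta}{(\delta-1)v(n+1)}f^*\lambda_{f,\Delta}$ must be tested against every irreducible curve $C\subseteq X$ dominating $T$, and for such a curve $D\cdot C=(-K_{X/T}-\Delta)\cdot C+\deg(f|_C)\cdot\frac{\delta}{(\delta-1)v(n+1)}\deg\lambda_{f,\Delta}$. The term $(-K_{X/T}-\Delta)\cdot C$ is not determined by, and cannot be bounded below by, the numbers $(-K_{X/T}-\Delta)^{n+1}=-\deg\lambda_{f,\Delta}$, $v$ and $\delta$; so ``an upper bound on $\deg\lambda_{f,\Delta}$'' is not the heart of the matter. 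Indeed $\deg\lambda_{f,\Delta}\geq 0$ is already the content of \autoref{thm:semi_positivity_curve} and says nothing about horizontal curves on which $-K_{X/T}-\Delta$ is negative (compare \autoref{ex:not_nef}, where $\deg\lambda_f=0$ yet $-K_{X/T}$ is not nef). Your proposed final mechanism --- pseudo-effectivity of the CM class of a perturbed family $(X,\Delta+c\Gamma)$ --- again outputs only a single degree inequality on the curve $T$, and you never supply the bridge from that numerical statement back to nefness of a divisor on the $(n+1)$-dimensional total space $X$.

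The missing idea, which is how the paper actually argues, is to exhibit a small perturbation of $D$ as a relative log canonical divisor of a family with klt fibers. One sets $N:=\varepsilon(-K_{X/T}-\Delta)+\bigl(\tfrac{(1+\varepsilon)\deg\lambda}{v(n+1)}+\varepsilon'\bigr)f^*H$ and $M:=N-K_{X/T}-\Delta$ (which is $f$-ample with $M^{n+1}>0$), and uses the Harder--Narasimhan/CLT statement \autoref{prop:vectP2} applied to $\sE:=f_*\sO_X(qM)$ to find, for suitable $q$ and $m$, global sections of $\sE^{\otimes m}$ whose restrictions over a good closed point $t$ are linearly independent pure tensors making up a sufficiently large fraction of a basis. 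These sections give an effective $\Gamma\sim_{\bQ}M^{(m)}$ on the fiber product $X^{(m)}$ with $\bigl(X_t^{(m)},\Delta_t^{(m)}+\Gamma_t\bigr)$ klt, by the product basis-type lct bound \autoref{prop:prod_basis} together with $\delta_{q'}(X_t,\Delta_t)>1+\varepsilon$. Then $K_{X^{(m)}/T}+\Delta^{(m)}+\Gamma\sim_{\bQ}N^{(m)}$ is nef by the semi-positivity of relative log canonical divisors (the engine behind \autoref{prop:semi_positivity_engine_downstairs}), and one descends from $X^{(m)}$ to $X$ by \autoref{lem:product_nef} and lets $\varepsilon\to\delta-1$, $\varepsilon'\to 0$. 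It is this ``write the divisor as $K$ plus a klt boundary and quote nefness of log canonical divisors'' step --- not any CM-degree bookkeeping --- that converts the fiberwise $\delta$-invariant information into nefness on all of $X$, and it is absent from your proposal.
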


Recall  that the uniformly $K$-stable assumption in \autoref{thm:nef_threshold} is equivalent to assuming $\delta >1$, see \autoref{def:K_stability} and \autoref{def:K_stable}. In particular, $\delta -1 >0$ in the last line of the statement. 

\begin{remark}
The reason for assuming in \autoref{thm:nef_threshold} that $\left(X_{\ot},\Delta_{\ot}\right)$ is uniformly $K$-stable for general geometric fibers, but setting $\delta$ to be $\delta\left(X_{\ot}, \Delta_{\ot}\right)$ only for \emph{very} general geometric fibers is technical. On one hand, uniform $K$-stability is known to be an open property by \cite[Thm A]{Blum_Liu_Openness_of_uniform_K-stability_in_families_of_Q-Fano___varieties}, and hence one may assume it on the general geometric fiber without imposing any additional assumption. On the other hand, only the function  $\ot \mapsto \min\{1,\delta\left(X_{\ot}, \Delta_{\ot}\right)\}$, but not $\ot \mapsto \delta\left(X_{\ot}, \Delta_{\ot}\right)$ itself, is  known to be constructible \cite[Prop 4.3]{Blum_Liu_Xu_Openness_of_K-semistability_for_Fano_varieties}. For $\ot \mapsto \delta\left(X_{\ot}, \Delta_{\ot}\right)$, it is only known that it is constant on the complement of countably many closed sets by \autoref{prop:delta_general_fiber}.
\end{remark}

\begin{remark}
One cannot have a nef threshold statement, as   in \autoref{thm:nef_threshold}, for $K$-polystable Fano varieties instead of uniformly $K$-stable ones. Indeed, take the family $f : X \to T$ given by  \autoref{ex:not_nef}. It has $K$-polystable fibers, $\deg \lambda_f=0$, but $'K_{X/T}$ is not nef. In particular, for any $a \in \bQ$,  $-K_{X/T}+ a f^*\lambda_f \equiv - K_{X/T}$, and hence for any $a \in \bQ$, $-K_{X/T}+ a f^*\lambda_f$ is not nef.

\end{remark}

We also recover a structure theorem when the CM line bundle $\lambda_f$ is not positive:

\begin{theorem}
\label{thm:lambda_non_big}
Let $f : X \to T$ be a flat morphism of relative dimension $n$ with connected fibers between normal projective varieties and let $\Delta$ be an effective $\bQ$-divisor on $X$  such that $-(K_{X/T} + \Delta)$ is $\bQ$-Cartier and $f$-ample. Assume that $\left(X_{\ot}, \Delta_{\ot}\right)$ is uniformly $K$-stable for fibers $X_{\ot}$ over general geometric points $\ot \in T$. If $H$ is an ample divisor on $T$, such that  $\lambda_{f,\Delta} \cdot H^{\dim T -1} =0$, then  for every integer $q>0$ divisible enough,  $f_* \sO_X(q (-K_{X/T} - \Delta))$ is an $H$-semi-stable vector bundle of slope $0$.
\end{theorem}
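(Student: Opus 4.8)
The plan is to reduce to the one-dimensional base case via a general-curve-slicing argument, and then to invoke the nef threshold result of \autoref{thm:nef_threshold} together with a slope/semi-stability analysis. First I would reduce to showing that $E := f_*\sO_X(q(-K_{X/T}-\Delta))$ is $H$-semi-stable of slope $0$ by testing against quotients: suppose $E \twoheadrightarrow Q$ is a torsion-free quotient with $\mu_H(Q) < 0$ (or a saturated subsheaf of positive slope); we want a contradiction. The usual device (as in the Kovács--Patakfalvi and Patakfalvi--Xu projectivity arguments) is to cut down by $\dim T - 1$ general members of $|mH|$ for $m \gg 0$ to land on a smooth curve $C \subseteq T$ over which the restricted family $f_C : X_C \to C$ still has uniformly $K$-stable very general geometric fibers (here the constancy of the $\delta$-invariant on very general fibers, \autoref{prop:delta_general_fiber}, is what makes this legitimate), and over which $\deg \lambda_{f_C, \Delta_C} = \lambda_{f,\Delta}\cdot H^{\dim T-1} = 0$ by the base-change compatibility of the CM line bundle. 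Semi-stability of $E$ with respect to $H$ is then equivalent (for $m$ divisible enough, by Mehta--Ramanathan-type restriction theorems, which one must cite or set up carefully) to semi-stability of $E|_C$ with slope $\deg(E|_C)/\rk E$ on the curve, so it suffices to treat $\dim T = 1$.

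On the curve, the strategy is: first show $\deg \lambda_{f,\Delta} = 0$ forces $\deg E \le 0$ — but actually I would instead directly run the nef-threshold machine. By \autoref{thm:nef_threshold}, the $\bQ$-divisor $L := -K_{X/T} - \Delta + \frac{\delta}{(\delta-1)v(n+1)} f^*\lambda_{f,\Delta}$ is nef; since $\deg \lambda_{f,\Delta} = 0$ on the curve, $f^*\lambda_{f,\Delta} \equiv 0$, so in fact $-K_{X/T}-\Delta$ itself is nef. Combined with \autoref{thm:semi_positive_no_boundary}\autoref{itm:semi_positive_no_boundary:pseff}/\autoref{itm:semi_positive_no_boundary:nef} (which give $\deg \lambda_{f,\Delta} \ge 0$ always, here it is $=0$, i.e.\ the boundary case of semi-positivity), one expects this to be exactly the borderline situation where the positivity of the pushforward bundle $E = f_*\sO_X(qL')$ (with $L' = -K_{X/T}-\Delta$, $q$ divisible enough so that $qL'$ is Cartier and $f$-globally generated) degenerates to semi-stable of slope $0$. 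Concretely: $L'$ being $f$-ample and nef, the relative sections bundle $E$ is nef by Viehweg-type weak positivity / the fact that $f_*\sO_X(qL')$ of a nef relatively ample line bundle is nef on a curve; hence every quotient of $E$ has non-negative degree, so $\mu_{\min}(E) \ge 0$. It remains to rule out $\mu(E) > 0$, and this is where one reuses the CM computation: $\deg \lambda_{f,\Delta}$ is expressible (via Knudsen--Mumford expansion, as surely done earlier in the paper in the proof of \autoref{thm:semi_positive_no_boundary}) in terms of the leading coefficients of $\deg f_*\sO_X(qL')$ as $q \to \infty$; vanishing of $\deg\lambda_{f,\Delta}$ together with nefness of $E$ for all $q$ pins $\deg E = o(q^{n+1})$ at the relevant order, forcing $\mu(E) = 0$ and, with $\mu_{\min}(E)\ge 0 = \mu(E)$, semi-stability.

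The main obstacle I anticipate is the descent step — going from semi-stability of $E|_C$ on a general complete-intersection curve back to $H$-semi-stability of $E$ on $T$ — in the exact generality needed (arbitrary normal projective $T$, $q$ only ``divisible enough''): one has to be careful that the restriction theorem applies to $E$ (which requires controlling its reflexivity/torsion-freeness and the genericity of $C$ simultaneously with the constancy of $\delta$ and the CM base change), and that ``divisible enough'' can be chosen uniformly. A secondary subtlety is justifying that $\deg\lambda_{f,\Delta}=0$ on the curve genuinely forces $\mu(E)=0$ rather than merely $\mu_{\min}(E)\ge0$; this needs the precise Knudsen--Mumford/Riemann--Roch bookkeeping relating $\lambda_{f,\Delta}$ to $\deg f_*\sO_X(q(-K_{X/T}-\Delta))$, which I would import from the earlier part of the paper rather than re-derive. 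Everything else — the reduction to curves, the application of \autoref{thm:nef_threshold}, and the nefness of $E$ — should be routine given the tools already assembled.
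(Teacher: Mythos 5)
Your reduction to a curve and the ``lower bound'' half of the argument match the paper's proof: the paper cuts $T$ by $\dim T-1$ general members of $|H|$, uses base-change compatibility of $\lambda_{f,\Delta}$ and of the pushforward, and on the resulting curve deduces from \autoref{thm:nef_threshold} that $-K_{X/T}-\Delta$ is nef (since $\deg\lambda=0$), whence $\sE_q:=f_*\sO_X(q(-K_{X/T}-\Delta))$ is nef by \autoref{prop:semi_positivity_engine_downstairs}, i.e.\ $\mu_{\min}(\sE_q)\ge 0$. Incidentally, the descent step you flag as the main obstacle does not need Mehta--Ramanathan: only the elementary direction is required --- a saturated subsheaf of positive $H$-slope restricts to a positive-degree subsheaf of $\sE_q|_C$ for a general complete-intersection curve $C$ --- and this is exactly how the paper concludes.

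The genuine gap is in your argument that $\mu(\sE_q)=0$. The Knudsen--Mumford/Riemann--Roch bookkeeping identifies $\deg\lambda_{f,\Delta}$ only with the coefficient of $q^{n+1}$ in $\deg\sE_q$; its vanishing gives $\deg\sE_q=O(q^n)$, and since $\rk\sE_q\sim vq^n/n!$ this bounds $\mu(\sE_q)$ by a constant, not by $0$ --- indeed the $q^n$ coefficient of $\deg\sE_q$ is $\tfrac{1}{2\,n!}f_*\bigl(\Delta\cdot(-K_{X/T}-\Delta)^n\bigr)$, which $\lambda_{f,\Delta}$ does not see. What is actually needed is the upper bound $\mu_{\max}(\sE_q)\le 0$, and this is where uniform $K$-stability enters a second time: the paper first proves (\autoref{thm:no_divisor}) that $|-K_{X/T}-\Delta-f^*L|_{\bQ}=\emptyset$ for every ample $L$ on the curve, by perturbing the boundary to $\Delta+\varepsilon\Gamma$ (openness of uniform $K$-stability) and applying the semi-positivity theorem to the perturbed family; via global generation of semi-stable bundles of slope $\ge 2g$ this yields $\mu_{\max}(\sE_q)\le 2g$ (\autoref{lem:2g_slope_bound}), and the multiplication maps $(\sF^1_q)^{\otimes m}\to\sE_{qm}$ improve $2g$ to $0$ (\autoref{prop:no_positive_slopes}). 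Combined with $\mu_{\min}(\sE_q)\ge 0$ this forces slope $0$ and semi-stability. Without a substitute for this step, your argument only shows that $\sE_q$ is nef.
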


\begin{corollary}
\label{cor:lambda_trivial_fiber_bundle}
Assume $k=\bC$, and let $f : X \to T$ be a surjective morphism from a normal projective variety of dimension $n+1$ to a smooth, projective curve  such that $-K_{X/T} $ is $\bQ$-Cartier and $f$-ample, and the general fiber of $f$ is uniformly $K$-stable. Then, $\deg \lambda_{f}=0$ if and only if, $f$ is analytically locally a fiber bundle. 
\end{corollary}

\subsection{Similar results in other contexts}

Roughly, there are three types of statements above: (semi-)positivity results, moduli applications, inequality of volumes of fibrations. Although in the realm of $K$-stability ours are the first general algebraic results, statements of these types were abundant in other, somewhat related, contexts: KSBA stability, GIT stability, and just general algebraic geometry. Our setup and our methods are different from these results, still we briefly list some of them for completeness of background. We note that KSBA stability is related to our framework as it is shown to be exactly the canonically polarized $K$-stable situation \cite{Odaka_The_GIT_stability_of_polarized_varieties_via_discrepancy,Odaka_The_Calabi_conjecture_and_K-stability,Odaka_Xu_Log-canonical_models_of_singular_pairs_and_its_applications}. Also, GIT stability is related, as $K$-stability originates from an infinite dimensional GIT, although it is shown that it cannot be reproduced using GIT, e.g., \cite{Wang_Xu_Nonexistence_of_asymptotic_GIT_compactification}.

\noindent
\begin{tabular}{|l|p{110pt}|p{110pt}|p{77pt}|}
\hline
& general algebraic geometry & KSBA stability & GIT stability \\
\hline
(semi-)positivity & \cite{Griffiths_Periods_of_integrals,Fujita_On_Kahler_fiber_spaces,Kawamata_Characterization_of_abelian_varieties,Viehweg_Weak_positivity,Kollar_Subadditivity_of_the_Kodaira_dimension} & \cite{Kollar_Projectivity_of_complete_moduli,Fujino_Semi_positivity_theorems_for_moduli_problems,Kovacs_Patakfalvi_Projectivity_of_the_moduli_space_of_stable_log_varieties_and_subadditvity_of_log_Kodaira_dimension,Patakfalvi_Xu_Ampleness_of_the_CM_line_bundle_on_the_moduli_space_of_canonically_polarized_varieties} & \cite{CH} \\
\hline
moduli applications & \cite{Viehweg_Quasi_projective_moduli} & \cite{Kollar_Projectivity_of_complete_moduli,Kovacs_Patakfalvi_Projectivity_of_the_moduli_space_of_stable_log_varieties_and_subadditvity_of_log_Kodaira_dimension,Ascher_Bejleri_Moduli_of_weighted_stable_elliptic_surfaces_and_invariance_of_log___plurigenera} & \cite{CH} \\
\hline
volume and slope inequalities & \cite{Xiao_Fibered_algebraic_surfaces_with_low_slope} &  & \cite{Pardini,Stoppino}\\
\hline
\end{tabular}

\subsection{Overview of K-stability for Fano varieties}
\label{sec:K_stability_versions}

In the present article we define $K$-semi-stability and uniform $K$-stability using valuations, see \autoref{def:K_stability}, which is equivalent then to  the $\delta$-invariant definition given in \autoref{def:K_stable}. These definitions were shown to be equivalent  to the more traditional ones that use test configurations \cite[Theorem B]{BJ}. However, this approach has a considerable  disadvantage: there is no known delta invariant type definition of $K$-stability and $K$-polystability. While we do not use these notions in any  of the statements or proof of our results, we believe that they are important notions in the study of Fano varieties. Hence, for completeness we state the classical definitions involving  test configurations for all the four notions of $K$-stability. We refer the reader to \cite{Tian_Test,Don_Calabi} or more recent papers such as \cite{Dervan_Uniform,Boucksom_Uniform} for more details. 

For a Fano variety $X$ we mention the following notions of $K$-stability:
\begin{description}
\item[K-semi-stability] For every normal test configuration of $X$, the Donaldson-Futaki invariant is non-negative.
\item[K-stability]  For every normal test configuration of $X$, the Donaldson-Futaki invariant is non-negative, and it is equal to zero if and only if the test configuration is a trivial test configuration. In particular, there is no $1$-parameter subgroup of  $\Aut(X)$.
\item[K-poly-stability]  For every normal test configuration of $X$ the Donaldson-Futaki invariant is non-negative, and it is equal to zero if and only if the test configuration is a product test configuration, i.e. it comes from a one parameter subgroup of the automorphism group of $X$.
\item[Uniform K-stability] There exists a positive real constant $\delta$ such that for every normal test configuration of $X$ the Donaldson-Futaki invariant is at least $\delta$ times the $L^1$ norm (or, equivalently, the minimum norm) of the test configuration. This notion implies K-stability, and when $X$ is smooth the finiteness of the automorphism group of $X$, too \cite[Cor E]{Boucksom_Hisamoto_Jonsson_Uniform_K-stability_and_asymptotics_of_energy_functionals_in_Kahler_geometry}.
\end{description}
We also note that the Yau-Tian-Donaldson (in short, YTD) conjecture asserts that a klt Fano variety admits a singular K\"{a}hler-Einstein metric if and only if it is K-polystable. This is known to hold for smooth \cite{Chen_Donaldson_Sun_Kahler-Einstein_metrics_on_Fano_manifolds_I_Approximation_of_metrics_with_cone_singularities,Chen_Donaldson_Sun_Kahler-Einstein_metrics_on_Fano_manifolds_II_Limits_with_cone_angle_less_than_2pi,Chen_Donaldson_Sun_Kahler-Einstein_metrics_on_Fano_manifolds_III_Limits_as_cone_angle_approaches_2pi_and_completion_of_the_main_proof,Tian} and smoothable Fano varieties \cite{Li_Wang_Xu_On_the_proper_moduli_spaces_of_smoothable_Kahler-Einstein_Fano_varieties}, and independently  \cite{SpottiYTD} in the finite automorphism case, and for singular ones admitting a crepant resolution \cite{Recent_Tian}. In the literature, there are also many proposed strenghtenings of the notion of K-stability; they should be crucial to extend the YTD conjecture to the case of constant scalar curvature K\"{a}hler metrics. In this paper we are interested in uniform K-stability \cite{Dervan_Uniform,Boucksom_Uniform,Bouck_variational}, which at least for smooth Fano manifold is known to be equivalent to K-stability (we should stress that the proof is via the equivalence with the existence of a  K\"{a}hler-Einstein metric). One can also strenghten the notion of K-stability by possibly looking at  non-finitely generated filtrations of the coordinate ring, see \cite{Nystrom_filtrations,Gabor,Tits}.

\subsection{Outline of the proof}
\label{sec:outline}

Our proof for the semi-postivity and the positivity statements for the CM line bundle are different. Hence, we discuss the corresponding outlines  separately in \autoref{sec:outline_semi_positivity} and in \autoref{sec:outline_positivity}, respectively. Additionally, as it is an indispensable link between semi-positivity and positivity, we present the ideas behind the nefness threshold statement of \autoref{thm:bounding_nef_threshold} in \autoref{sec:outline_nefness_threshold}. For simplicity, we restrict in all cases to the non-logarithmic situation, that is, to statements about $-K_{X/T}$ instead of $-(K_{X/T} + \Delta)$. As all the assumptions and consequences are invariant under base-extension to another algebraically closed field, we may also assume that $k$ is uncountable. In particular, the very general geometric fibers whose existence is assumed in the statements also show up as closed fibers. 

\subsubsection{ Semi-positivity statements.} 
\label{sec:outline_semi_positivity}
As nefness and pseudo-effectivity can be checked via non-negative intersection with effective or moving 1-cycles, respectively, points \autoref{itm:semi_positive_boundary:pseff} and \autoref{itm:semi_positive_boundary:nef} of \autoref{thm:semi_positive_boundary} can be reduced to the case of one dimensional base. Hence, we assume that the base of our fibration $f: X \to T$ is a curve, in which case pseudo-effectivity and nefness are both equal to the degree being at least zero. So, we are supposed to prove that $\deg \lambda_f \geq 0$ or equivalently that $(-K_{X/T})^{n+1} \leq 0$, see \autoref{eq:CM_line_bundle_log}. 

We argue by contradiction, so we assume that $(-K_{X/T})^{n+1}>0$. If we fix a $\bQ$-divisor $H$ on $T$ of small enough positive degree, then by the continuity of the intersection product $(-K_{X/T} - f^*H )^{n+1}>0$ also holds.   As $X$ is normal and fibered over the curve $T$ over which $-K_{X/T}$ is ample, this implies via a Riemann-Roch computation that the $\bQ$-linear system $|-K_{X/T} - f^* H|_{\bQ}$ is non-empty, see \autoref{rem:big}. Our initial idea is to obtain a contradiction from this fact: in fact, \autoref{prop:cont} shows that there are no $\Gamma \in |-K_{X/T} - f^* H|_{\bQ}$ such that $(X_t,\Gamma_t)$ is klt for general $t \in T$. The only problem is that there are examples where $|-K_{X/T} - f^* H|_{\bQ}$ is non-empty such that  for every $\Gamma \in |-K_{X/T} - f^* H|_{\bQ}$, the pair $(X_t,\Gamma_t)$ is not klt for general $t \in T$. Indeed, every family with negative CM line bundle  has to satisfy the conditions stated in the previous sentence,  according to \autoref{prop:cont}. An explicit example is given in \autoref{ex:negative_degree}.

Our second idea is that maybe the $K$-stable assumption leads us to a $\Gamma$ as above that also satisfies the klt condition when restricted to a general fiber. According to the delta invariant description of $K$-semi-stability (\autoref{def:K_stable}), if $X_t$ is $K$-semi-stable, then up to a little perturbation one can obtain klt divisors the following way: for $q \gg 0$, let $D_1,\dots,D_l$ be divisors corresponding to any basis of $H^0\left(X_t, -qK_{X_t} \right)$; then the divisor $D := \displaystyle\sum_{i=1}^l \frac{D_i}{ql} \in |-K_{X_t}|_{\bQ}$ 
is such that $(X_t, D)$ is klt. 

Now, we would like to lift such a divisor to $|-K_{X/T} - f^* H|_{\bQ}$. To this end, it is enough to lift for $q \gg 0$, every element of a basis of $H^0\left(X_t, -qK_{X_t} \right)$ to elements of $H^0(X,q(-K_{X/T} - f^* H))$. Using some perturbation argument, it suffices to show the existence of linearly independent sections $s_1,\dots,s_l \in H^0\left(X_t, -qK_{X_t} \right)$ such that $s_i$ lifts, and $\frac{l}{h^0\left(-qK_{X_t} \right)}$ is close enough to $1$. 

This in turn would be implied by the following: let $\sE_q$ be the subsheaf of $f_* \sO_X(-qK_{X/T})$ spanned by the global sections, then 
\begin{equation}
\label{eq:outline_goal}
\displaystyle\lim_{q \to \infty} \frac{\rk \sE_q}{\rk f_* \sO_X(q(-K_{X/T}-f^*H))}=1.
\end{equation}
For the readers more familiar with the language of volumes and restricted volumes, we note that \autoref{eq:outline_goal} is equivalent to showing that the restricted volume of $-K_{X/T}$ over a general fiber is equal to the anti-canonical volume of the fibers.

Unfortunately, \autoref{eq:outline_goal}   still does not hold. For example, if one takes the isotrivial family 
\begin{equation*}
X:=\bP_{T} (\sO_{T}(-n)\oplus \underbrace{\sO_T(1)\oplus \dots \oplus \sO_T(1)}_{\textrm{$n$ times}}) 
\end{equation*}
of $\bP^n$'s over $T:= \bP^1$ (as in \autoref{ex:negative_degree} for $n=2$), then 
\begin{equation*}
f_* \sO_X(-qK_{X/T}) \cong S^{(n+1)q}(\sO_{T}(-n)\oplus \sO_T(1)\oplus \dots \oplus \sO_T(1)). 
\end{equation*}
 In this situation $\sE_q$ is the direct sum of the factors with  degree greater than $q \deg H \sim q \varepsilon$ (here $1 \gg \varepsilon>0$). Then one can compute that \autoref{eq:outline_goal} does not hold. For example, in the case of $n=1$,
\begin{equation*}
S^{2q}(\sO_T(-1) \oplus \sO_T(1)) = \sO_T(-q) \oplus \sO_T(-q+1) \oplus \dots \oplus \sO_T(q). 
\end{equation*}
 So, we see that the limit of \autoref{eq:outline_goal} is $\frac{1}{2} - \varepsilon$. 

The idea that saves the day at this point is the \emph{product trick}, which was pioneered in the case of semi-posivity questions by Viewheg \cite{Viehweg_Weak_positivity}. The precise idea is to replace $X$ by an $m$-times self fiber product $X^{(m)}$ over $T$. Let $f^{(m)} : X^{(m)} \to T$  be the induced morphism, \autoref{sec:product_notation}. Then, one can replace the initial goal with showing that there exists  $\Gamma \in \left| - K_{X^{(m)}/T} - \left( f^{(m)}\right)^* m H \right|_{\bQ}$ such that $\left(X^{(m)}_t, \Gamma_t \right)$ is klt for $t \in T$ general. Running through the previous arguments for $X^{(m)}$ instead of $X$,  this would boil down to showing that 
\begin{equation}
\label{eq:outline_goal_2}
\displaystyle\lim_{m \to \infty} \frac{\rk \sE_{q,m}}{\rk f_*^{(m)} \sO_{X^{(m)}} \left(q\left(-K_{X^{(m)}/T}- \left(f^{(m)} \right)^* m H\right)\right) }=1,
\end{equation}
where $\sE_{q,m}$ is a  subsheaf given by certain condition specified below of the subsheaf generated by global sections   of 
\begin{equation}
\label{eq:tensor_product_bundle_outline}
f_*^{(m)} \sO_{X^{(m)}} \left(q\left(-K_{X^{(m)}/T}- \left(f^{(m)} \right)^* m H\right)\right) \cong \bigotimes_{\textrm{$m$ times}}f_* \sO_X(q(-K_{X/T}-f^*H)).
\end{equation}
The extra condition in the definition of $\sE_{q,m}$ is due to the need that $\Gamma$ has to be klt on a general fiber. This would be automatic if  the conjecture that products of $K$-semi-stable klt Fano varieties are $K$-semi-stable was known. Unfortunately, this is a surprisingly hard unsolved conjecture in the theory of $K$-stability\footnote{This conjecture has been proved in \cite{Product}, published after the first version of this paper has appeared.}. Hence, we elude it by considering only bases of $H^0\left( X^{(m)}_t, -q K_{X^{(m)}_t} \right) \cong \displaystyle\bigotimes_{m \textrm{ times}} H^0 \left( X_t, -q K_{X_t}\right)$ that are induced from bases of $H^0 \left( X_t, -q K_{X_t}\right)$. As log canonical thresholds are known to behave well under taking products, see \autoref{prop:prod_basis}, 
if the restriction   $\Gamma|_{X_t^{(m)}}$ to a general fiber is  a divisor corresponding to such basis, the $K$-stability of $X_t$ implies that $\left(X_t^{(m)}, \Gamma|_{X_t^{(m)}} \right)$ is klt. Hence, the additional condition in the definition of $\sE_{q,m}$ is that it is the biggest subsheaf as above such that $\left( \sE_{q,m} \right)_t$ is spanned by simple tensors for a basis $t_1,\dots,t_l$ of $\left( f_* \sO_X(q(-K_{X/T}-f^*H)) \right)_t$ to be specified soon.

So, we are left to specify a basis of $\left( f_* \sO_X(q(-K_{X/T}-f^*H)) \right)_t \cong H^0(X_t, -qK_{X_t})$ for which \autoref{eq:outline_goal_2} holds. For that we use the Harder-Narasimhan filtration $0=\sF^0 \subseteq \dots \subseteq \sF^r$ of $f_* \sO_X(q(-K_{X/T}-f^*H))$. Let the basis $v_1,\dots, v_l$ be any basis adapted to the restriction of this filtration over $t$, that is, to $0=\sF^0_t \subseteq \dots \subseteq \sF^r_t$. The lower part of the filtration, until  the graded pieces reach slope $2g$, where $g$ is the genus of $T$,   is globally generated. Furthermore, there is an induced Harder-Narasimhan filtration on the sheaf in \autoref{eq:tensor_product_bundle_outline}. The part of slope at least $2g$ in the last filtration that we defined is globally generated such that its restriction over $t \in T$ is generated by simple tensors in $v_i$, \autoref{prop:HN_Tensor_product}. Hence, if $\sE_{q,m}'$ is this part of the Harder-Narasimhan filtraton, then it is enough to prove that 
\begin{equation}
\label{eq:outline_goal_3}
\displaystyle\lim_{m \to \infty} \frac{\rk \sE_{q,m}'}{\rk f_*^{(m)} \sO_{X^{(m)}} \left(q\left(-K_{X^{(m)}/T}- \left(f^{(m)} \right)^* m H\right)\right) }=1,
\end{equation}
The final trick of the semi-positivity part is then that \autoref{eq:outline_goal_3} can be translated to a probability limit, which then is implied by the central limit theorem of probability theory, see \autoref{prop:vectP2}.

We explain here the probability theory argument via the example of 
\begin{equation*}
\sF_m:=\displaystyle\bigotimes_{m \textrm{ times}} (\sO_{\bP^1}(-1) \oplus \sO_{\bP^1}(2)). 
\end{equation*}
 The claim then is that as $m$ goes to infinity the rank of the non-negative degree part of $\sF_m$ over the rank of $\sF_m$ converges to $1$. It is easy to see that this is the  limit of the left hand side of the following equation as $m$ goes to infinity:
\begin{equation*}
\sum_{0 \leq i \leq m, 2i - (m-i) \geq 0 } {m \choose i} \left(\frac{1}{2} \right)^m =  
\underbrace{\sum_{0 \leq i \leq m, i \geq \frac{m}{3} } {m \choose i} \left(\frac{1}{2} \right)^m \geq \sum_{0 \leq i \leq m, i \geq \frac{m}{2}- A\frac{\sqrt{m}}{4} } {m \choose i} \left(\frac{1}{2} \right)^m }_{\parbox{220pt}{\footnotesize for $m$ big enough, where $A>0$ is an arbitrary fixed real number, independent of $m$}}
\end{equation*}
The last summation appearing in the previous equation is equal to the probability of getting at least $\frac{m}{2} - A\frac{\sqrt{m}}{4}$ heads when flipping a coin $m$ times. Note that for this $m$-times flipping the expected value is $\frac{m}{2}$ and $\sqrt{m}$-times the square deviation is $\frac{\sqrt{m}}{4}$. Hence, the above probability converges to $\int_{-A}^\infty \frac{1}{\sqrt{2 \pi}} e^{\frac{-x^2}{2}} dx$  by the classical De Moivre-Laplace theorem, a special case of the central limit theorem. We obtain \autoref{eq:outline_goal_3} by taking $A \to \infty$ limit, and using that the above integral integrates the density function of the standard Gaussian normal distribution.

\subsubsection{ Nefness threshold, that is, \autoref{thm:nef_threshold}.} 
\label{sec:outline_nefness_threshold}
This part uses the same ideas as the above semi-positivity part, but in a different logical framework. That is, the argument is not a proof by contradiction. Instead, the starting point is that $\left(-K_{X/T} + \left( f^{(m)} \right)^* \left(\frac{\lambda_f}{v (n+1)}  + H \right) \right)^{n+1}>0$. Hence, again up to a little perturbation and by using the ideas of the previous point, there is an integer $m>0$ such that there exists a  $\Gamma \in \left|-\delta K_{X^{(m)}/T} + \left( f^{(m)} \right)^* m \left(\frac{\delta \lambda_f}{v (n+1)}  + H \right) \right|_{\bQ}$ for which  $\left( X^{(m)}_t,\Gamma_t \right)$ is klt for $t \in T$ general. Then standard semi-positivity argument (\autoref{prop:semi_positivity_engine_downstairs}) shows that
\begin{equation*}
K_{X^{(m)}/T} -\delta K_{X^{(m)}/T} + \left( f^{(m)} \right)^* m \left(\frac{\delta \lambda_f}{v (n+1)}  + H \right) = (1- \delta)  K_{X^{(m)}/T} + \left( f^{(m)} \right)^* m \left(\frac{\delta \lambda_f}{v (n+1)}  + H \right)
\end{equation*}
is nef. Lastly, one divides by $\delta -1$, converges to $0$ with $H$, and lastly by a standard lemma (\autoref{lem:product_nef}) removes the $(\_)^{(m)}$.

\subsubsection{ Positivity.}
\label{sec:outline_positivity}
The rough idea here is to use a twisted version of the ampleness lemma, c.f.,  \cite[3.9 Ampleness Lemma]{Kollar_Projectivity_of_complete_moduli} and the slight modification in \cite[Thm 5.1]{Kovacs_Patakfalvi_Projectivity_of_the_moduli_space_of_stable_log_varieties_and_subadditvity_of_log_Kodaira_dimension}. We need a twisted version of the ampleness lemma as the techniques developed until this point in the article  do not work directly over higher dimensional bases. The main idea here is that to get bigness of $\lambda_f$ it is enough to show positivity of $\lambda_f$ over a very general element $C$ of each moving family of curves of $T$ in a bounded way. Below we explain how we do this. 

The main benefit of proving the result on the nefness threshold, \autoref{thm:nef_threshold},  
is the following:  one can prove, again using standard semi-positivity arguments, see \autoref{prop:semi_positivity_engine_downstairs}, that $\sQ:=f_* \sO_X(-r K_{X/T} + \alpha f^* \lambda_f)$ is nef, for some constants $r$ and $\alpha$. Furthermore, these constants $r$ and $\alpha$  can be chosen to be uniform, as $f$ runs through all families obtained by base-changing on a very general element $C$ of a moving family of curves on $T$. Then, the ampleness lemma (\autoref{thm:ampleness}) gives an ample line bundle $B$ on $T$ such that for all curves $C$ as above, $C \cdot B  \leq C \cdot \det \sQ$. Then one can use another  trick from (semi-)positivity theory, already contained in Viehweg's work, which shows that for $q :=  \rk \sQ$ there is an embedding
\begin{equation*}
\det \sQ \to \bigotimes_{q \textrm{ times}} f_* \sO_X(-r K_{X/T} + \alpha f^* \lambda_f) \cong f_*^{(q)} \sO_X\left(-r K_{X^{(q)}/T} + q \alpha \left( f^{(q)} \right)^* \lambda_f \right),
\end{equation*}
Using the adjunction of $f^{(q)}_*$ and $\left( f^{(q)} \right)^*$, we obtain the inequality of divisors
\begin{equation*}
\left(f^{(q)}\right)^* B \leq  \left(f^{(q)}\right)^* \det \sQ \leq - r K_{X^{(q)}/T} + q \alpha \left( f^{(q)} \right)^* \lambda_f,
\end{equation*}
which survives the restriction over $C$ by the genericity assumption in the choice of $C$. 
From here, a simple intersection computation shows that $C \cdot B$ bounds $\deg \lambda_f|_C$ from below up to some uniform constants, not depending on the choice of $C$, see the end of the proof of  point \autoref{itm:semi_positive_no_boundary:big} of \autoref{thm:semi_positive_no_boundary}. 

\subsection{Organization of the paper}

See \autoref{sec:outline} for a thorough explanation on which part of the argument can be found where. Here we only note that the actual argument, so what is explained in  \autoref{sec:outline}, starts in \autoref{sec:growth}, and lasts until \autoref{sec:examples}, where we construct some examples which show that the statements of the main results are sharp. After \autoref{sec:examples}, we only have \autoref{sec:appendix}, with some computations related to the definition of the CM line bundle. 

 Before the argument starts, in \autoref{sec:notation}, \autoref{sec:CM_definition} and \autoref{sec:delta} we present notation and background, as well as, simpler statements. The division of this part between the above 3 sections is based on topics. \autoref{sec:notation} contains general topics, \autoref{sec:CM_definition} contains the definition of the CM line bundle and the related statements, and \autoref{sec:delta} contains the definition and the basics about the $\delta$-invariant and $K$-stability. 

We also include a table on the location of the proofs of the theorems stated in the introduction.\\[5pt]
\def\arraystretch{1.3}
\begin{tabular}{|l|l|}
\hline
\textbf{Statements of the introduction} & \textbf{their proofs} \\
\hline
\autoref{thm:main} & \autoref{sec:main_theorem} \\
\hline
\autoref{thm:semi_positive_boundary} & \autoref{sec:semi_pos_proof} \\
\hline
\autoref{thm:semi_positive_no_boundary}
& \autoref{sec:arbitrary_base_pos} \\
\hline 
\autoref{cor:proper_base} \& \autoref{thm:lambda_non_big} & \autoref{sec:arbitrary_base_pos} \\
\hline 
 \autoref{cor:bounding_volume} & \autoref{sec:applications} \\
\hline 
\autoref{thm:nef_threshold} & \autoref{sec:nefness_threshold} \\
\hline
\end{tabular}

\subsection{Acknowledgements}

We would like to thank the referee for the thorough and careful reading of the article, as well as for the many useful suggestions. This project gained significant momentum during the INdAM workshop  "Moduli of K-stable varieties"; we thank INdAM for the organizational and financial support. We thank Harold Blum,  Ruadha\'i Dervan, Mattias Jonsson, Quentin Posva, Chenyang Xu and Maciej Zdanowicz for the many useful conversations and comments. The work of the second author was partially supported by the Swiss National Science Foundation grant \#200021/169639.

\section{Notation}
\label{sec:notation}

\subsection{Base-changes}
\label{sec:base_changes}

All base-changes are denoted by lower index. For example, if $f : X \to T$ is a family, $\sF$ is a coherent sheaf on $X$ and $S \to T$ is a base-change, then $\sF_S:= h^* \sF$, where $h : S \times_T X \to X$ is the projection morphism. 

\subsection{Fiber product notation}
\label{sec:product_notation}

The most important particular notation used in the article is that of fiber products. That is, for a family $f : X \to T$ of varieties we denote the $m$-times fiber product of $X$ with itself over $T$ by $X^{(m)}$. As in our situation the base is always clear, we omit it from the notation. Hence, $X^{(m)}$ denotes the fiber product over $T$ of $m$ copies of $X$, and for a point $t \in T$, $X_t^{(m)}$ denotes the fiber product over $t$ of  $m$ copies of $X_t$. In this situation, $p_i : X^{(m)} \to X$ denotes the projection onto the $i$-th factor, and we set for any divisor $D$ or line bundle $\sL$:
\begin{equation*}
D^{(m)}:= \sum_{i=1}^m p_i^* D \textrm{, and } \sL^{(m)}:= \bigotimes_{i=1}^m p_i^* \sL.
\end{equation*}

\subsection{General further notation}
\label{sec:general_notation}

A \emph{variety} is an integral, separated scheme of finite type over $k$. We call $(X, \Delta)$ a \emph{pair}, if $X$ is a normal variety, and $\Delta$ is an  effective $\bQ$-divisor, called the \emph{boundary}. 
A projective pair $(X, \Delta)$ over $k$  is a \emph{normal Fano} pair, if   $-(K_X + \Delta)$ is an ample $\bQ$-Cartier divisor. A \emph{normal Fano} pair $(X, \Delta)$ is
a \emph{Fano} pair if $(X, \Delta)$ has  klt singularities. To avoid confusion, many times we say \emph{klt Fano} instead of \emph{Fano}, nevertheless we mean the same by the two. If there is no boundary, we mean taking the boundary $\Delta=0$. 

A \emph{big open set} $U$ of a variety $ X$ is an open set for which $\codim_X (X \setminus U) \geq 2$. 

A \emph{vector bundle} is a locally free sheaf of finite rank. 

The \emph{$\bQ$-linear system}  of a $\bQ$-divisor $D$ on a normal variety is $|D|_{\bQ}:=\{ \ L \textrm{ is an effective $\bQ$-divisor}\ | \ \exists m \in \bZ, m>0 : m L \sim m D\ \}$.  

A \emph{geometric fiber} of a morphism $f : X \to T$ is a fiber over a geometric point, that is over a morphism $\Spec K \to T$, where $K$ is an algebraically closed field extension of the base field $k$. We say that a condition holds for a \emph{very general geometric point/fiber}, if there are countably many proper closed sets, outside of which it holds for all geometric points/fibers. \emph{General point/fiber} is defined the same way but excluding only finitely many proper closed subsets. The \emph{(geometric) generic point/fiber} on the other hand denotes the scheme theoretic (geometric) generic point/generic fiber.

\subsection{Relative canonical divisor}
\label{sec:relative_canonical}

For a flat family $f : X \to T$ the relative dualizing complex  is defined by $\omega_{X/T}^\bullet:=f^! \sO_T$, where $f^!$ is Grothendieck upper shriek functor as defined in \cite{Hartshorne_Residues_and_duality}. If $f$ is also a family of pure dimension $n$, then the relative canonical sheaf is the lowest non-zero cohomology sheaf $\omega_{X/T}:= h^{-n}(\omega_{X/T}^\bullet)$ of the relative dualizing complex. To obtain the absolute versions of these notions one uses the above definition for $T = \Spec k$. The important facts regarding the relative dualizing sheaf that we use in the present section are the following: 
\begin{enumerate}
\item The sheaf $\omega_{X/T}$ is reflexive if the fibers are normal \cite[Prop A.10]{Patakfalvi_Schwede_Zhang_F_singularities_in_families}.
\item If $T$ is Gorenstein and $X$ is normal, then $\omega_{X/T} \cong \omega_X \otimes f^* \omega_T^{-1}$ \cite[Lemma 2.4]{Patakfalvi_Semi_negativity_of_Hodge_bundles_associated_to_Du_Bois_families}, and then as $\omega_X$ is $S_2$ \cite[Cor 5.69]{Kollar_Mori_Birational_geometry_of_algebraic_varieties}, $\omega_{X/T}$ is also reflexive in this case \cite{Hartshorne_Stable_reflexive_sheaves}. 
\item By the previous two points, if $f$ is flat, $X$ is normal and either $T$ is smooth or the fibers are normal, then $\omega_{X/T}$ is reflexive, and hence it corresponds to a linear equivalence class of Weil divisors which we denote by $K_{X/T}$. 
\item \label{itm:Cohen_Macaulay_base_change} On the relative Cohen-Macaulay locus $U \subseteq X$ (that is, on the open set where the fibers are Cohen-Macaulay),  $\omega_{U/T} \cong \omega_{X/T}|_U$ is compatible with base-change \cite[Thm 3.6.1]{Conrad_Grothendieck_duality_and_base_change}. 
\end{enumerate}
In particular, by the above we always have the following assumptions on our families: $f : X \to T$ is flat with fibers being of pure dimension $n$, and either $T$ is smooth, or the fibers of $f$ are normal. In both cases  we discuss base-change properties of the relative canonical divisor below. 

\subsubsection{Base-change of the relative log-canonical divisor when the fibers are normal}
\label{sec:base_change_relative_canonical_normal_fibers}

Let us assume that $f : X \to T$ is a projective, flat morphism to a normal projective variety with normal, connected fibers. In particular then $X$ is also normal. Assume additionally that there is an effective $\bQ$-divisor  $\Delta$ given on $X$, such that $\Delta$ does not contain any fiber, and  $K_{X/T} + \Delta$ is a $\bQ$-Cartier divisor. Let $U \subseteq X$ be the smooth locus of $f$, which is an open set, and by the normality assumption on the fibers, $U \cap X_t$ is a big open set on each fiber $X_t$, see \autoref{sec:general_notation} for the definition of a big open set. 

Let $S \to T$ be a morphism from another normal projective variety. Then, we may define a pullback $\Delta_S$ as the unique extension of the pullback of $\Delta|_U$ to $U_S$; the key here is that $\Delta|_U$ is $\bQ$-Cartier.  Moreover, if $\sigma: X_S \to X$ is the induced morphism, then as $\bQ$-Cartier divisors
\begin{equation}
\label{eq:pullback_log_canonical_divisor_normal_fibers}
 K_{X_S/S} +  \Delta_S \sim_{\bQ} \sigma^* (K_{X/T} + \Delta).
\end{equation}
Indeed, it is enough to verify this isomorphism on $U$, as $U$ is big in $X$ and $U_S$ is big in $X_S$. However, over $U$ the linear equivalence \autoref{eq:pullback_log_canonical_divisor_normal_fibers} holds by the definition of $\Delta_S$ and by the base-change property of point \autoref{itm:Cohen_Macaulay_base_change} above. 
In particular, $f_S : X_S \to S$ and $\Delta_S$   satisfies all the assumptions we had for $f : X \to T$ and $\Delta$.

\subsubsection{Base-change of the relative log-canonical divisor when the base is smooth}
\label{sec:base_change_relative_canonical_smooth_base}

Let $f : X \to T$ be a flat morphism from a normal projective variety to a smooth, projective variety with connected fibers. Let $\Delta$ be an effective $\bQ$-divisor on $X$ such $K_{X/T} + \Delta$ is $\bQ$-Cartier. Let $T_{\norm} \subseteq T$ be the open set over which the fibers of $X$ are normal. 

Note that by the smoothness assumption on $T$, at a point $x \in X$, the fiber $X_{f(x)}$ is Gorenstein if and only if $X$ is relatively Gorenstein if and only if $X$ is Gorenstein. 
Let $U \subseteq X$ be the open set of relatively Gorenstein points over $T$. Let $\iota : C \to T$ be a finite morphism from a smooth, projective curve such that $\iota(C) \cap T_{\norm} \neq \emptyset$, and denote by $\sigma: X_C \to X$ the natural morphism. 

\emph{We claim that $\sigma^{-1} U$ is big in $X_C$.} This is equivalent  to showing that for each $c \in C$, $X_c$ is Gorenstein at some point, and that for general $c \in C$, there is a big open set of $X_c$ where $X_c$ is Gorenstein. The former is true for all schemes of finite type over $k$, hence also for $X_c$. The latter is true by the $\iota(C) \cap T_{\norm} \neq \emptyset$ assumption. This concludes our claim.

Now, let $\pi : Z \to X_C$ be the normalization of $X_C$, $\rho: Z \to X$ and $g : Z \to C$ the induced morphisms and set $W:= \rho^{-1} U$. The notations are summarized in the following diagram:
\begin{equation*}
\xymatrix{
&   W \ar@{^(->}[ld] \ar[r] & \sigma^{-1} U \ar@{^(->}[ld] \ar[r]  & U \ar@{^(->}[ld] & \\
Z \ar[dr]_g 
\ar@/^1.3pc/[rr]|!{[rru];[r]}\hole^\rho
\ar[r]_{\pi} & X_C \ar[r]_{\sigma} \ar[d]^{f_C} & X \ar[d]^f \\
& C \ar[r]_{\iota} & T & T_{\norm} \ar@{_(->}[l]
} 
\end{equation*}
Then, \cite[Lem 9.13]{Kovacs_Patakfalvi_Projectivity_of_the_moduli_space_of_stable_log_varieties_and_subadditvity_of_log_Kodaira_dimension} tells us that there is a natural injection $\omega_{W/C} \to \left( \pi|_W \right)^* \omega_{\sigma^{-1} U/C}$. To be precise, \cite[Lem 9.13]{Kovacs_Patakfalvi_Projectivity_of_the_moduli_space_of_stable_log_varieties_and_subadditvity_of_log_Kodaira_dimension} assumes $\sigma^{-1} U$ to be normal, but as the proof does not use it, this is an unnecessary assumption.  Combining this injection with the isomorphism
$\left( \sigma|_{\sigma^{-1}U} \right)^* \omega_{U/T} \cong \omega_{\sigma^{-1} U/C}$ given by point \autoref{itm:Cohen_Macaulay_base_change} above we obtain
\begin{equation}
\label{eq:relative_canonical_base_change}
\omega_{W/C}  \hookrightarrow \left( \pi|_W \right)^* \omega_{\sigma^{-1} U/C} \cong  \left( \pi|_W \right)^* \left( \sigma|_{\sigma^{-1}U} \right)^* \omega_{U/T} \cong \left( \rho|_W \right)^* \omega_{U/T} ,   
\end{equation}
which is an isomorphism over the locus $T_{\red}$ over which the fibers of $f$ are reduced. Indeed, over $T_{\red}$ the fibers of $X_C \to C$ are all reduced, and by the $\iota(C) \cap T_{\norm} \neq \emptyset$ assumption the general fiber of $X_C \to C$ is normal. In particular, over $T_{\red}$, $X_C$ is $R_1$ and $S_2$,  and hence normal. So, $\pi$ is the identity over $T_{\red}$.

Let $m>0$ be then an integer such that $m(K_{X/T}+ \Delta)$ is Cartier. That is, $\sL:=\sO_X(m(K_{X/T}+ \Delta))$ is a line bundle, and furthermore, $m\Delta$ yields an embedding $\omega_{U/T}^{\otimes m} \hookrightarrow \sL|_U$. Composing this with the $m$-th power of the homomorphism of \autoref{eq:relative_canonical_base_change} we obtain:
\begin{equation}
\label{eq:relative_canonical_base_change_final}
 \omega_{W/C}^{\otimes m} \to (\rho|_W)^* \sL \cong \sO_W(m \rho^*(K_{X/T} + \Delta)|_W),
\end{equation}
which map over $T_{\red}$ is given by ``multiplying with $\left(\rho|_{g^{-1}\iota^{-1} T_{\red}}\right)^* m\Delta$''. Indeed, for the last remark, the main thing to note is that the regular locus of $X$, over which   $m \Delta$ is necessarily Cartier, pulls back to a big open set of  $g^{-1}\iota^{-1}T_{\red}$, as general fiber of $f_C$ is normal and special fiber of $f_C$ over $T_{\red}$ are reduced. Hence $\pi$ is an isomorphism over $g^{-1}\iota^{-1}T_{\red}$ and also the pullback $\left(\rho|_{g^{-1}\iota^{-1} T_{\red}}\right)^* m\Delta$ is sensible the usual way: restricting to the regular locus, performing the pullback there, and then taking divisorial extension using bigness of the open set. 

Lastly, the map \autoref{eq:relative_canonical_base_change_final} is given by an effective divisor $D$. If we set $\Delta_Z:= \frac{D}{m}$, using that $W$ is big in $Z$, we obtain:

\begin{proposition}
\label{rem:reduced_fibers_no_boundary}
\label{prop:relative_canonical_base_change_normal}
Consider the following situation:
\begin{itemize}
\item let $f : X \to T$ be a flat morphism from a normal projective variety to a smooth, projective variety with connected fibers,
\item  let $\Delta$ be an effective $\bQ$-divisor on $X$ such $K_{X/T} + \Delta$ is $\bQ$-Cartier,
\item let $T_{\norm} \subseteq T$ and $T_{\red} \subseteq T$  be the open set over which the fibers of $X$ are normal or  reduced, respectively,
\item let $\iota : C \to T$ be a finite morphism from a smooth, projective curve such that $\iota(C) \cap T_{\norm} \neq \emptyset$, and
\item  let $\pi : Z \to X_C$ be the normalization, and $\rho: Z \to X$ and $g : Z \to C$ be the induced morphisms.
\end{itemize}
Then, there is an effective $\bQ$-divisor $\Delta_Z$ on $Z$ such that:
\begin{enumerate}
\item \label{itm:relative_canonical_base_change:base_change} $K_{Z/C} + \Delta_Z \sim_{\bQ} \rho^*( K_{X/T} + \Delta)$, 
\item \label{itm:relative_canonical_base_change:red_fibers} $X_C$ is normal over $T_{\red}$ and $\Delta_Z|_{g^{-1}\iota^{-1}T_{\red}} = \left(\rho|_{g^{-1}\iota^{-1} T_{\red}}\right)^* \Delta$, and 
\item $\Delta_Z|_{g^{-1}\iota^{-1}T_{\norm}}$ agrees with the pullback of $\Delta|_{f^{-1}T_{\norm}}$ in the sense of \autoref{sec:base_change_relative_canonical_normal_fibers}.
\end{enumerate}
\end{proposition}

\section{The definition of the CM line bundle}
\label{sec:CM_definition}

Here we present the definition of the CM line bundle in two cases:
\begin{enumerate}
 \item in the non logarithmic case for arbitrary polarizations, and
 \item in the logarithmic case for the anti-log-canonical polarization.
\end{enumerate}
In the first case, we also connect it to the other existing definitions in the literature. In the second case, we are not able to present such connections, because the lack of literature  would force us to work out many details about the Paul-Tian type definition \cite{Paul_tian1,Paul_tian2}, and then prove the equivalence with that: this would be beyond the scope of the present article. 

In any case, it is important to stress that the definitions are different in the two cases: \emph{One does not obtain the logarithmic version by simply plugging in the logarithmic relative anti-canonical divisor into the polarization of the non-logarithmic case}. The reason for the difference is that that in the logarithmic case the CM line bundle has to take into account also the variation of the boundary, see the paragraph before \autoref{thm:semi_positive_no_boundary}.

\begin{definition}
\label{def:CM}
{\scshape CM line bundle in the non-logarithmic setting.}
Let $f : X \to T$ be a flat morphism of normal projective varieties of relative dimension $n$, and $L$ an $f$-ample $\bQ$-Cartier divisor on $X$. For every integer $q$ divisible enough, the Hilbert polynomial of a (equivalently any) fiber $X_t$ is 
\begin{equation}
\label{eq:a_0_a_1}
\chi(X_t, qL_t) = a_0 q^{n} + a_1 q^{n-1} + O(q^{n-2}).
\end{equation}
Set $\mu_L:=\frac{2a_1}{a_0}$.
We define the Chow-Mumford line bundle as the pushforward cycle
$$
\lambda_{f,L}:=f_*\left(\mu_L L^{n+1}+(n+1)L^n \cdot K_{X/T}\right),
$$
which is an abuse of language as it is not a line bundle but rather a $\bQ$-Cartier divisor class, according to \autoref{prop:2_defs_CM_same}. We would also like to stress that $\lambda_{f,L}$ is a divisor class (in the Weil group, or equivalently the first Chow group), as opposed to a fixed divisor. 

If $L$ is not indicated, then we take $L= -K_{X/T}$, which we assume to be an $f$-ample $\bQ$-Cartier divisor, and we use the notation $\lambda_f:=\lambda_{f,L}$.

\end{definition}

\begin{remark}
\label{rem:anti_canonically_polarized_CM}
\label{rem:anti_canonically_polarized_CM_2}
Note that in the $L= - K_{X/T}$ case:
\begin{equation*}
\lambda_f=f_*\left(\mu_L (-K_{X/T})^{n+1}+(n+1)(-K_{X/T})^n \cdot K_{X/T}\right) = f_* (\mu_L - (n+1)) (-K_{X/T})^{n+1}
\end{equation*}
As $X$ in \autoref{def:CM} is assumed to be normal, so is $X_t$ for $t$ a general closed point.  In particular, \autoref{lem:Hilbert_poly_coeffs} implies that
\begin{equation*}
\mu_L=\frac{2a_1}{a_0}= \frac{2 \left(- \frac{K_X \cdot L_t^{n-1}}{2 (n-1)!}\right)}{\frac{L_t^n}{n!}} = 
n \frac{-K_{X_t} \cdot L_t^{n-1}}{L_t^n}. 
\end{equation*}
In particular if $L= -K_{X/T}$ we obtain that $\mu_L = n$.
Hence, we obtain the definition we used in \autoref{eq:CM_line_bundle_log}:
\begin{equation*}
\lambda_f= f_* (\mu_L - (n+1)) (-K_{X/T})^{n+1} = - f_*  (-K_{X/T})^{n+1}.
\end{equation*}

\end{remark}

We only define the logarithmic version of the CM line bundle in the anti-log-canonically polarized case. If $\Delta=0$, this definition agrees with  the case of $L=-K_{X/T}$ of the non-logarithmic definition, according to the  final formula of \autoref{rem:anti_canonically_polarized_CM}.

\begin{definition}
\label{def:CM_log} 
{\scshape CM line bundle in the logarithmic setting.}
If $f : (X, \Delta) \to T$ is a flat morphism of relative dimension $n$ from a projective normal pair to a normal projective variety such that $-(K_{X/T} + \Delta)$ is $\bQ$-Cartier and $f$-ample. Then we define the CM line bundle by 
\begin{equation*}
\lambda_{f, \Delta}:= -f_* ((-(K_{X/T} + \Delta))^{n+1} ).
\end{equation*}

\end{definition}

\begin{notation}
\label{notation:Knudsen_Mumford}
 In the set-up of \autoref{def:CM} (resp. of \autoref{def:CM_log}, in which case we set also $L:=-(K_{X/T} + \Delta)$), fix an integer $s$ such that $sL$ is an $f$-very ample Cartier divisor. Following \cite[Appendix to Chapter 5, Section D]{MD_FJ_KF_GIT} and \cite[Theorem 4]{Knudsen_Mumford_The_projectivity_of_the_moduli_space_of_stable_curves_I}, consider  the Mumford-Knudsen expansion of $\sO_X(sL)$:
\begin{equation}\label{eq:Mum-Ku}
\det f_* \sO_X(qsL) \cong \bigotimes_{i=0}^{n+1} \sM_i^{{q \choose i}},
\end{equation}
where  $\sM_i$ are uniquely determined line bundles on $T$. 
\end{notation}

For future reference, we note that as the left side of \autoref{eq:Mum-Ku} is invariant under base-change for $q \gg 0$, the above unicity of $\sM_i$ implies that:

\begin{lemma}
\label{lem:Knudsen_mumford_base_change}
In the situation of \autoref{notation:Knudsen_Mumford}, the formation of $\sM_i$ is compatible with base-change. That is, if $ S \to T$ is a base-change, and $\sM^S_i$ are the coefficients of the Knudsen-Mumford expansion of $sL_S$, then $\sM^S_i \cong \left( \sM_i\right)_S$. 
\end{lemma}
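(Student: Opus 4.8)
The plan is to deduce \autoref{lem:Knudsen_mumford_base_change} from the uniqueness clause in the Knudsen--Mumford expansion \autoref{eq:Mum-Ku}, together with the base-change compatibility of the formation of $\det f_* \sO_X(qsL)$ for $q \gg 0$. First I would recall the precise form of the Knudsen--Mumford theorem as stated in \cite[Theorem 4]{Knudsen_Mumford_The_projectivity_of_the_moduli_space_of_stable_curves_I}: given a proper flat morphism $f : X \to T$ of relative dimension $n$ and an $f$-very ample line bundle $\sA = \sO_X(sL)$, there exist line bundles $\sM_0, \dots, \sM_{n+1}$ on $T$, unique up to canonical isomorphism, such that the isomorphism \autoref{eq:Mum-Ku} holds \emph{functorially in $T$}; that is, the formation of the tuple $(\sM_i)_{i=0}^{n+1}$ commutes with arbitrary base change $S \to T$. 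Since the statement we want is literally this functoriality applied to the very ample line bundle $\sO_{X_S}(sL_S) = (\sO_X(sL))_S$ (note $sL_S$ is Cartier, very ample relative to $f_S$, because very ampleness is preserved under base change), in fact the lemma is a direct citation of the base-change statement built into the Knudsen--Mumford formalism.

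If instead one wants to give a self-contained argument that only uses the uniqueness of the $\sM_i$ (as the surrounding text suggests, via the phrase ``the above unicity of $\sM_i$ implies that''), the key steps are as follows. Fix a base change $\phi : S \to T$ with $S$ normal projective, and let $g := f_S : X_S \to S$ be the pulled-back family, with natural map $\psi : X_S \to X$. By flat base change for cohomology together with cohomology and base change (valid since $qsL$ is $f$-very ample, hence $R^j f_* \sO_X(qsL) = 0$ for $j > 0$ and $f_*\sO_X(qsL)$ is locally free for all $q$ sufficiently large, and similarly for $g$), we have
\begin{equation*}
g_* \sO_{X_S}(qsL_S) \cong \phi^* f_* \sO_X(qsL)
\end{equation*}
for all $q \gg 0$, and hence, taking determinants,
\begin{equation*}
\det g_* \sO_{X_S}(qsL_S) \cong \phi^* \det f_* \sO_X(qsL) \cong \phi^*\!\left( \bigotimes_{i=0}^{n+1} \sM_i^{\binom{q}{i}} \right) \cong \bigotimes_{i=0}^{n+1} \left(\phi^* \sM_i\right)^{\binom{q}{i}} .
\end{equation*}
On the other hand, applying \autoref{notation:Knudsen_Mumford} to the family $g$ and the very ample Cartier divisor $sL_S$ produces line bundles $\sM^S_i$ on $S$ with $\det g_* \sO_{X_S}(qsL_S) \cong \bigotimes_{i=0}^{n+1} (\sM^S_i)^{\binom{q}{i}}$ for $q \gg 0$. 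Comparing the two expressions, both $(\phi^*\sM_i)_{i=0}^{n+1}$ and $(\sM^S_i)_{i=0}^{n+1}$ are tuples of line bundles on $S$ whose $\binom{q}{i}$-weighted tensor products agree for all $q \gg 0$; by the uniqueness part of the Knudsen--Mumford expansion applied over $S$, these tuples coincide, i.e. $\sM^S_i \cong \phi^* \sM_i = (\sM_i)_S$ for every $i$.

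The only point requiring a little care — and the one I would flag as the main (minor) obstacle — is justifying that the uniqueness of the Knudsen--Mumford coefficients is strong enough to conclude from ``the products $\bigotimes_i \sN_i^{\binom{q}{i}}$ agree for all large $q$'' that ``each $\sN_i$ agrees'': a priori uniqueness is stated for a fixed expansion attached to a fixed $\sA$, so one wants the sharper statement that the functions $q \mapsto \binom{q}{i}$, $i = 0,\dots,n+1$, are linearly independent over $\bZ$ for $q$ ranging over a cofinite set, which lets one solve for each $\sN_i$ by a finite alternating-tensor combination of the $\det g_*\sO_{X_S}(qsL_S)$ for consecutive values of $q$ — this is precisely how uniqueness is proved in \cite{Knudsen_Mumford_The_projectivity_of_the_moduli_space_of_stable_curves_I}, so invoking it directly suffices. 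With that observed, the proof is complete; no geometry beyond flat base change and cohomology-and-base-change for the very ample sheaf $qsL$ enters.
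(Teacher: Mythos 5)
Your argument is exactly the paper's: the paper proves this lemma in one sentence by observing that the left-hand side of \autoref{eq:Mum-Ku} is base-change invariant for $q \gg 0$ (via cohomology and base change) and then invoking the uniqueness of the $\sM_i$, which is precisely the route you take, only spelled out in more detail. Your closing remark about recovering each $\sM_i$ from finitely many consecutive values of $q$ is the correct justification of that uniqueness step, as in \cite{Knudsen_Mumford_The_projectivity_of_the_moduli_space_of_stable_curves_I}.
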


\begin{notation}
\label{notation:Paul_Tian}
In the case of \autoref{def:CM}, according to \cite[Definition 1]{Paul_tian2} (see also \cite[Section 2.4, page 11]{Paul_tian1} and \cite[Theorem 4]{Knudsen_Mumford_The_projectivity_of_the_moduli_space_of_stable_curves_I} for the role of $\sM_{n+1}$), the CM line bundle is defined as 
\begin{equation*}
L_{CM,f,sL}:=\sM_{n+1}^{n(n+1)+ \mu_{sL}} \otimes \sM_n^{-2 (n+1)},
\end{equation*}
where $\mu_{sL}$ is the number defined in \autoref{def:CM}.
For simplicity we regard $L_{CM,f,sL}$ as a Cartier divisor. As we explained earlier in the case of \autoref{def:CM_log} a definition as above is not worked out in the literature to such an extent, and hence we do not consider it here. 
\end{notation}

The proof of the following proposition will be given in \autoref{sec:appendix}. 

\begin{proposition} 
\label{prop:2_defs_CM_same}
\begin{enumerate}
\item \label{itm:2_defs_CM_same:Tian_Paul} {\scshape Connection with the Paul-Tian definition.} 
In the situation of \autoref{notation:Paul_Tian}, if $T$ is smooth or the fibers of $f$ are normal, then 
$s^n \lambda_{f,L} = c_1(L_{CM,f,sL})$. 
In particular, $\lambda_{f,L}$ is $\bQ$-Cartier.
\item \label{itm:2_defs_CM_same:leading_term} {\scshape Connection with the leading term of the Knudsen-Mumford expansion.} 
In the situation of \autoref{notation:Knudsen_Mumford},  consider the case of \autoref{def:CM_log}, which includes the case of  \autoref{def:CM} with $L=-sK_{X/T}$ as well. Additionally, assume that 
either $T$ is smooth or the fibers of $f$ are normal, and $\Delta$ does not contain any fiber. Then,  
$-s^{n+1} \lambda_{f, \Delta}= c_1(\sM_{n+1})$.
In particular, $\lambda_{f,\Delta}$ is $\bQ$-Cartier. 
\end{enumerate}
\end{proposition}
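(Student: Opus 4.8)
\textbf{Proof plan for \autoref{prop:2_defs_CM_same}.}

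The plan is to reduce both statements to the identity between the two natural definitions of the CM divisor class, which in turn follows from Grothendieck--Riemann--Roch (GRR) applied to the proper flat morphism $f$ and the line bundle $\sO_X(qsL)$. First I would expand the left-hand side of the Knudsen--Mumford identity \autoref{eq:Mum-Ku}: by GRR,
\begin{equation*}
\ch\bigl(f_! \sO_X(qsL)\bigr) = f_*\bigl(\ch(\sO_X(qsL)) \cdot \todd(\omega_{X/T}^{\vee})\bigr),
\end{equation*}
and taking degree-$1$ components (i.e.\ first Chern classes) gives $c_1(\det f_* \sO_X(qsL))$ as an explicit polynomial in $q$ with coefficients built from $f_*\bigl((sL)^{i+1} \cdot (\text{Todd terms})\bigr)$. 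Comparing with the right-hand side $\sum_i \binom{q}{i} c_1(\sM_i)$ and matching the top two powers of $q$ (the $q^{n+1}$ and $q^n$ coefficients) identifies $c_1(\sM_{n+1})$ and $c_1(\sM_n)$ explicitly; in particular $c_1(\sM_{n+1}) = \frac{s^{n+1}}{(n+1)!}\,f_*(L^{n+1})$ up to the standard normalization. Here one needs $T$ smooth or the fibers normal precisely so that $K_{X/T}$ exists as a divisor class and the Todd class computation makes sense; and $\Delta$ not containing a fiber so that in the logarithmic case the relevant restriction/pullback identities (\autoref{sec:base_change_relative_canonical_normal_fibers} and \autoref{sec:base_change_relative_canonical_smooth_base}, together with \autoref{prop:relative_canonical_base_change_normal}) hold. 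This yields point \autoref{itm:2_defs_CM_same:leading_term} in the non-logarithmic case $L = -s K_{X/T}$ directly, since there $\mu_L - (n+1) = -1$ by \autoref{rem:anti_canonically_polarized_CM} and $\lambda_{f,L} = -f_*(-K_{X/T})^{n+1}$.

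For the logarithmic refinement of point \autoref{itm:2_defs_CM_same:leading_term}, I would observe that $\sM_{n+1}$ depends only on the polarization line bundle $\sO_X(sL)$ with $L := -(K_{X/T}+\Delta)$ and not on the boundary structure per se; what must be checked is that the GRR output, which a priori involves $K_{X/T}$ rather than $K_{X/T}+\Delta$, still computes $f_*(L^{n+1})$ after substituting $L = -(K_{X/T}+\Delta)$. Since only the $q^{n+1}$-coefficient is needed and that coefficient is $\frac{s^{n+1}}{(n+1)!} f_*(c_1(\sO_X(sL))^{n+1})/s^{n+1}$-type term coming purely from $\ch(\sO_X(qsL))$ (the Todd factor contributes only to lower order in $q$), the boundary-free GRR computation suffices, and one gets $-s^{n+1}\lambda_{f,\Delta} = c_1(\sM_{n+1})$. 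The base-change compatibility recorded in \autoref{lem:Knudsen_mumford_base_change} lets one reduce, if desired, to the case where $T$ is a smooth curve or even a point to pin down constants.

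For point \autoref{itm:2_defs_CM_same:Tian_Paul}, I would combine the formula $L_{CM,f,sL} = \sM_{n+1}^{n(n+1)+\mu_{sL}} \otimes \sM_n^{-2(n+1)}$ with the two explicit expressions for $c_1(\sM_{n+1})$ and $c_1(\sM_n)$ obtained above. One computes
\begin{equation*}
c_1(L_{CM,f,sL}) = (n(n+1)+\mu_{sL})\,c_1(\sM_{n+1}) - 2(n+1)\,c_1(\sM_n),
\end{equation*}
substitutes the GRR expressions, uses $\mu_{sL} = s\mu_L$ together with the relation between $\mu_L$ and the Hilbert-polynomial coefficients $a_0, a_1$ from \autoref{eq:a_0_a_1} (and \autoref{lem:Hilbert_poly_coeffs}), and checks that the $K_{X/T}$-linear terms combine exactly into $s^n\bigl(\mu_L L^{n+1} + (n+1) L^n \cdot K_{X/T}\bigr)$, i.e.\ $s^n\lambda_{f,L}$. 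Finally, $\bQ$-Cartierness of $\lambda_{f,L}$ and $\lambda_{f,\Delta}$ is immediate from either identity, since $c_1(\sM_{n+1})$, $c_1(\sM_n)$ and $c_1(L_{CM,f,sL})$ are (integral) Cartier classes. The main obstacle I anticipate is bookkeeping: correctly tracking the binomial normalizations $\binom{q}{i}$ versus the $q^i/i!$ from the Chern-character expansion, and handling the Todd-class contributions to the $q^n$-coefficient of $c_1(\sM_n)$ (which do involve $K_{X/T}$ and $c_2$-type terms that must cancel appropriately); none of this is conceptually hard, but it is where an arithmetic slip is easy, which is presumably why the authors defer it to \autoref{sec:appendix}.
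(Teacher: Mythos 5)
Your overall strategy is the same as the paper's: expand $c_1(\det f_*\sO_X(qsL))$ as a polynomial in $q$ via Riemann--Roch, match the top two coefficients against $\sum_i \binom{q}{i} c_1(\sM_i)$ to identify $c_1(\sM_{n+1})$ and $c_1(\sM_n)$, and then combine them according to the Paul--Tian formula. The bookkeeping you flag (binomials versus $q^i/i!$, the $\frac{n}{2}\frac{q^n}{n!}$ correction entering $c_1(\sM_n)$) is indeed exactly what the paper's Step 2 does.

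There is, however, a genuine gap at the central step: you apply Grothendieck--Riemann--Roch directly to $f : X \to T$ with a Todd factor ``$\todd(\omega_{X/T}^{\vee})$''. This is not available here. $X$ is only normal (in the intended applications it is klt with Cartier index possibly $>1$), and GRR in the form you use requires $X$ to be smooth or at least a local complete intersection over $T$; moreover, the Todd class that enters GRR is that of the virtual relative tangent bundle, not of the dual of the dualizing sheaf, and for singular $X$ no such perfect relative cotangent complex is at hand. This is precisely the difficulty the appendix is written to address: the paper passes to a resolution $\sigma : Z \to X$, applies GRR on $Z$, and then controls the discrepancy between $Rf_*\sO_X(qM)$ and $Rg_*\sO_Z(q\sigma^*M)$ through the sheaves $R^i\sigma_*\sO_Z$ ($i\ge 1$), whose supports have fiber dimension at most $n-2$ generically and $n-1$ over codimension-one points of $T$; \autoref{lem:pushforward_degree_coherent_sheaf} then shows these contribute only $O(q^{n-1})$ to $c_1$, so the $q^{n+1}$ and $q^n$ coefficients are unaffected. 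Without this (or an equivalent device), your identification of $c_1(\sM_{n+1})$ and $c_1(\sM_n)$ is unjustified. A secondary, smaller omission: when $T$ is singular but the fibers are normal, the paper first reduces to smooth $T$ by resolving $T$ and checking $\tau_*\lambda_{f_{T'},L_{T'}}=\lambda_{f,L}$ together with the base-change of the $\sM_i$; your remark about reducing to a curve or a point does not quite substitute for this, since \autoref{lem:Mumford_line_bundles_over_curve} genuinely needs a smooth base for the determinant-of-cohomology and GRR arguments to run.
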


\begin{proposition} {\scshape Base-change for the CM-line bundle.}
\label{prop:CM_base_change}
Let $f : X \to T$ be a flat morphism between projective normal varieties, let $\Delta$ be an effective $\bQ$-divisor such that $-(K_{X/T} + \Delta)$ is  an $f$-ample $\bQ$-Cartier divisor, and let $\tau : S \to T$ be a morphism from a normal projective variety. Assume either:
\begin{enumerate}
 \item \label{itm:CM_base_change:normal_fiber} the fibers of $f$ are normal and $\Delta$ does not contain any fiber, in which case set $g:=f_S, Z:= X_S$, and let $\Delta_Z$ be the pullback of $\Delta$ as explained in \autoref{sec:base_change_relative_canonical_normal_fibers}. 
 \item \label{itm:CM_base_change:smooth_base} $T$ is smooth and $\tau$ is a finite morphism from a curve, such that some of the fibers of $f$ over $\tau(S)$ are normal and not contained in $\Delta$. In this case, set  $Z$ to be the normalization of $X_S$, $\rho:Z \to X$ and $g :Z \to S$ the induced morphisms and $\Delta_Z$ the effective $\bQ$-divisor on $Z$ given by \autoref{prop:relative_canonical_base_change_normal}.\end{enumerate}
Then, the CM line bundle satisfies the  base-changes 
 $\tau^*\lambda_{f, \Delta} = \lambda_{g,\Delta_Z}$. 

\end{proposition}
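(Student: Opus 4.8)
The plan is to reduce the identity $\tau^*\lambda_{f,\Delta}=\lambda_{g,\Delta_Z}$ to the base-change compatibility of the leading coefficient $\sM_{n+1}$ of the Knudsen-Mumford expansion, which is already available from \autoref{lem:Knudsen_mumford_base_change} and \autoref{prop:2_defs_CM_same}\autoref{itm:2_defs_CM_same:leading_term}. First I would fix an integer $m>0$ such that $L:=-m(K_{X/T}+\Delta)$ is an $f$-very ample Cartier divisor; after replacing $L$ by a further multiple (and tracking the scaling $\lambda_{f,\Delta}$ picks up) we may directly invoke \autoref{notation:Knudsen_Mumford}. The key point is that in both cases \autoref{itm:CM_base_change:normal_fiber} and \autoref{itm:CM_base_change:smooth_base} the pullback boundary $\Delta_Z$ has been constructed precisely so that $-(K_{Z/S}+\Delta_Z)\sim_{\bQ}\rho^*(-(K_{X/T}+\Delta))$ (this is the content of \autoref{sec:base_change_relative_canonical_normal_fibers} in case \autoref{itm:CM_base_change:normal_fiber}, and of \autoref{prop:relative_canonical_base_change_normal}\autoref{itm:relative_canonical_base_change:base_change} in case \autoref{itm:CM_base_change:smooth_base}). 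Hence the polarizing line bundle downstairs pulls back, up to $\bQ$-linear equivalence, to the polarizing line bundle upstairs on $Z$.

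Next I would verify that the hypotheses of \autoref{prop:2_defs_CM_same}\autoref{itm:2_defs_CM_same:leading_term} are met for $g:Z\to S$ with boundary $\Delta_Z$: namely that either $S$ is smooth or the fibers of $g$ are normal, and $\Delta_Z$ contains no fiber. In case \autoref{itm:CM_base_change:normal_fiber} the fibers of $g=f_S$ are the fibers of $f$ (they are normal by hypothesis) and $\Delta_Z$ contains no fiber by its very construction. In case \autoref{itm:CM_base_change:smooth_base} the base $S$ is a smooth curve, so the relevant hypothesis ``$S$ smooth'' holds; that $\Delta_Z$ contains no fiber of $g$ follows because over the (nonempty) locus where the fibers of $f$ are normal the morphism $\pi:Z\to X_S$ is an isomorphism and $\Delta_Z$ restricts to a genuine pullback of $\Delta$, while over the remaining points of $S$ the support of $\Delta_Z$ is the strict transform of $\rho^{-1}\Delta$, which dominates no fiber. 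With these hypotheses in hand, \autoref{prop:2_defs_CM_same}\autoref{itm:2_defs_CM_same:leading_term} gives $-s^{n+1}\lambda_{g,\Delta_Z}=c_1(\sM_{n+1}^{Z})$ where $\sM_{n+1}^{Z}$ is the top Knudsen-Mumford coefficient for $(g,\Delta_Z,sL_S)$, and likewise $-s^{n+1}\lambda_{f,\Delta}=c_1(\sM_{n+1})$ downstairs.

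Finally I would combine \autoref{lem:Knudsen_mumford_base_change}, which says $\sM_{n+1}^{S}\cong(\sM_{n+1})_S$ for the base-change $X_S\to S$ of the polarization $sL_S$, with the observation that in case \autoref{itm:CM_base_change:smooth_base} the normalization $\pi:Z\to X_S$ does not affect the top coefficient: since $\pi$ is finite and birational and an isomorphism in relative codimension zero over a dense open of $S$, the sheaves $g_*\sO_Z(qsL_S)$ and $(f_S)_*\sO_{X_S}(qsL_S)$ agree up to a subsheaf supported on a proper closed subset of $S$ for $q\gg0$, which changes $\det$ only by a divisor that does not affect the $\binom{q}{n+1}$-coefficient; hence $\sM_{n+1}^{Z}\cong\sM_{n+1}^{S}\cong(\sM_{n+1})_S\cong\tau^*\sM_{n+1}$. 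Taking $c_1$ and dividing by $-s^{n+1}$ yields $\lambda_{g,\Delta_Z}=\tau^*\lambda_{f,\Delta}$.

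The main obstacle I anticipate is the bookkeeping in case \autoref{itm:CM_base_change:smooth_base} around the normalization $\pi:Z\to X_S$: one must be careful that passing to the normalization, and the subtle definition of $\Delta_Z$ via the injection $\omega_{W/C}\hookrightarrow(\rho|_W)^*\omega_{U/T}$ of \autoref{prop:relative_canonical_base_change_normal}, does not introduce a discrepancy between $c_1(\sM_{n+1}^{Z})$ and $\tau^*c_1(\sM_{n+1})$. The cleanest way around this is to avoid comparing Euler characteristics directly and instead argue that both $-s^{n+1}\lambda$'s equal $c_1$ of the (base-change compatible) top Knudsen-Mumford coefficient of one and the same polarized family $X_S\to S$ — using that $\lambda$ depends only on the $\bQ$-linear equivalence class of the polarization and that $\rho^*(-(K_{X/T}+\Delta))\sim_{\bQ}-(K_{Z/S}+\Delta_Z)$ — so that the normalization is invisible at the level of the relevant Chern class. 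One should also double-check the harmless scaling factors $s,m$ introduced when making the polarization very ample Cartier, but these cancel on both sides.
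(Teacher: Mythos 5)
Your proposal is correct and follows essentially the same route as the paper: both express $\lambda$ via the top Knudsen--Mumford coefficient using \autoref{prop:2_defs_CM_same}.\autoref{itm:2_defs_CM_same:leading_term}, invoke \autoref{lem:Knudsen_mumford_base_change} for the base change $X_S\to S$, and then check that normalization leaves the leading coefficient untouched. Your final step is exactly the content of \autoref{lem:highest_Knudsen_Mumford_coefficient_integral}; the only thing to make explicit is that the cokernel of $h_*(\sL^q)\hookrightarrow g_*(\pi^*\sL^q)$ has length growing only like $O(q^{n})$ (the paper gets this from \autoref{lem:pushforward_degree_coherent_sheaf}, since it is supported on the non-normal locus of $X_S$, of dimension at most $n$), which is what guarantees the $\binom{q}{n+1}$-coefficient is unaffected.
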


\begin{proof}
Set $V:=X_S$, $L:=-(K_{X/T} + \Delta)$ and let $h: V \to S$ and $\sigma: V \to X$ be the induced morphisms. Fix an integer  $s>0$ be such that $sL$ and $s\rho^* L$ are relatively very ample over $T$ and $S$, respectively.  Note that according, to point \autoref{itm:relative_canonical_base_change:base_change} of \autoref{prop:relative_canonical_base_change_normal}, $s\rho^* L \cong -s(K_{Z/T} +\Delta_Z )$. Furthermore, set $\sM_{n+1}^f$, $\sM_{n+1}^g$ and $\sM_{n+1}^h$ be the leading terms of the Knudsen-Mumford expansions of $sL$, $s \rho^* L$ and $s \sigma^* L$, respectively. Then, 
\begin{equation*}
\tau^* \lambda_{f,\Delta} 
= 
\underbrace{\frac{\tau^* c_1\left(\sM_{n+1}^f\right)}{-s^{n+1}} }_{\textrm{point \autoref{itm:2_defs_CM_same:leading_term} of \autoref{prop:2_defs_CM_same}}}
=
\underbrace{\frac{c_1\left(\sM_{n+1}^h\right)}{-s^{n+1}} }_{\textrm{\autoref{lem:Knudsen_mumford_base_change}}} 
=
\underbrace{\frac{c_1\left(\sM_{n+1}^g\right)}{-s^{n+1}}}_{\parbox{73pt}{\tiny vacuous statement in the case of point \autoref{itm:CM_base_change:normal_fiber}, and  \autoref{lem:highest_Knudsen_Mumford_coefficient_integral} in the case of point \autoref{itm:CM_base_change:smooth_base}}}
=
\underbrace{  \lambda_{g,\Delta_Z} }_{\textrm{point \autoref{itm:2_defs_CM_same:leading_term} of \autoref{prop:2_defs_CM_same}}}.
\end{equation*}

\end{proof}

\section{The delta invariant and $K$-stability}
\label{sec:delta}

Here we give the definitions and the properties used in the present article of $\delta$-invariants, as well as we present the definition of $K$-semi-stability and uniform $K$-stability in \autoref{def:K_stability}. In the rest of the article we will use the characterizations of $K$-semi-stability and $K$-stability via $\delta$-invariants given in \autoref{def:K_stable}. We also prove in the present section that the $\delta$-invariant is constant at the very general fibers of a log-Fano family, see \autoref{prop:delta_general_fiber}.

\subsection{Definitions}

Basis-type divisors and the delta invariant have been introduced by K. Fujita and Y. Odaka in \cite{Fujita_Odaka_On_the_K-stability_of_Fano_varieties_and_anticanonical_divisors}, see also \cite{BJ}; in this section we recall their definitions. 

\begin{definition}\label{def:basis}
Assume we are in the following situation:
\begin{itemize}
 \item $Z$ is  a variety over $k$,
 \item  $L$ is a $\bQ$-Cartier divisor on $Z$, and
 \item $q>0$ is an integer for which $qL$ is Cartier.
\end{itemize}
A divisor $D \in |L|_{\bQ}$ is of \emph{$q$-basis type} if there are $D_i \in |qL| \quad (1 \leq i \leq h^0(X,qL))$, for which the corresponding $s_i \in H^0(Z,qL)$ form a $k$-basis of $H^0(Z, qL)$, and $D$ can be expressed as
$$
D=
\frac{1}{qh^0(Z,qL)}\sum_{i=1}^{h^0(Z,qL)} D_i. 
$$
$D$ is of \emph{basis type} if it is of $q$-basis type for some integer $q>0$. 
\end{definition}

Let $\Delta$ be a fixed effective $\bQ$-divisor on $Z$ such that $(Z,\Delta)$ is a klt pair. Given a $\bQ$-Cartier effective divisor $D$ on $Z$, we define its log canonical threeshold as
\begin{equation*}
\lct(Z,\Delta; D):=\sup \{t | (Z,\Delta+tD) \textrm{ is klt } \}.
\end{equation*}
Remark that since $(Z,\Delta)$ is klt, the above threshold is a positive number. Let us recall the definition of the $\alpha$ invariant.

\begin{definition}\label{def:alpha}
Let  $(Z,\Delta)$ be a klt pair and let $L$ be an effective $\bQ$-Cartier divisor on $Z$. The alpha invariant of $(Z,\Delta;L)$ is
\begin{equation*}
\alpha(Z,\Delta; L):=\inf_{D\in |L|_{\bQ}}\lct(Z,\Delta;D).
\end{equation*}
We write $\alpha(Z,\Delta)$ for $\alpha(Z,\Delta;-K_Z-\Delta)$.																																																															
\end{definition}

The $\alpha$ invariant has been introduced by Tian in relation with the existence problem for K\"{a}hler-Einstein metrics. The delta invariant is a variation on the alpha invariant. The main difference is that in the case of $\alpha$ invariant one considers the log canonical threshold of all divisors in the $\bQ$-linear system, while in the $\delta$ invariant is defined using only basis type divisors. In particular, while $\alpha (X) \geq \frac{\dim X}{\dim X + 1}$ only implies $K$-semi-stability \cite{Tian_alpha,Odaka_Sano}, $\delta(X) \geq 1$ happens to be equivalent to it \cite[Theorem B]{BJ}, see also \autoref{def:K_stable}. The delta invariant was introduced in \cite[Definition 0.2]{Fujita_Odaka_On_the_K-stability_of_Fano_varieties_and_anticanonical_divisors}. In \cite{BJ}, although it was also denoted by $\delta$, it is called the \emph{stability threshold}. 

\begin{definition}\label{def:delta}
Let  $(Z,\Delta)$ be a klt pair and let $L$ be a $\bQ$-Cartier divisor on $Z$. 
\begin{enumerate}
 \item \label{itm:delta:delta_q} For every positive integer $q$ for which  $qL$ is Cartier and $h^0(Z,qL)>0$, the \emph{$q$-th delta invariant} of $L$ with respect to the pair  $(Z,\Delta)$ is
$$
\delta_q(Z,\Delta;L):=\inf_{D \in |L|_{\bQ}  \textrm{ is of $q$-basis type}} \lct(Z,\Delta;D).
$$
\item \label{itm:delta:delta} Assume that $L$ is big, and fix an integer $s>0$ such that $sL$ is Cartier and $h^0(Z, sL)>0$, which conditions then also hold for  every positive multiple of $s$. 
The \emph{delta invariant} of   $L$ with respect to   $(Z,\Delta)$ is
$$
\delta(Z,\Delta;L):=\limsup_{q\to \infty}\delta_{sq}(Z,\Delta;L).
$$
\item 
If $(Z,\Delta)$ is a klt Fano pair, we let $\delta_q(Z,\Delta):=\delta_q(Z,\Delta;-K_Z-\Delta)$ and 
$\delta(Z,\Delta):=\delta(Z,\Delta;-K_Z-\Delta)$.
\end{enumerate}
\end{definition}

\begin{remark}
We note the following subtleties of \autoref{def:delta}:
\begin{itemize}
\item According to \cite[Lem 8.8]{Kovacs_Patakfalvi_Projectivity_of_the_moduli_space_of_stable_log_varieties_and_subadditvity_of_log_Kodaira_dimension}, the infimum of point \autoref{itm:delta:delta_q} is in fact a minimum.
\item According to \autoref{lem:scaling_delta_invariant}, the  definition of point \autoref{itm:delta:delta} does not depend on the choice of $s$, and the limsup in point \autoref{itm:delta:delta} is in fact a limit. 

\end{itemize}
\end{remark}

\subsection{Relation to K-stability}

In this section we follow closely \cite{BJ}, as we want to adapt some of their result from Fano varieties over $\bC$ to Fano pairs over $k$. Similar adaptation was done also in \cite{Blum_Thesis}. Consider the situation:

\begin{notation}
$(Z,\Delta)$ is a klt pair, $L$ is a $\bQ$-Cartier divisor on $Z$, and $s>0$ is an integer such that $sL$ is Cartier and $h^0(Z,sL)\neq 0$. 
\end{notation}
 Let $v$ be a non-trivial divisorial valuation on $Z$ associated to a prime divisor $E$ over $Z$, we consider the filtration
\begin{equation*}
F_iH^0(Z,qsL):=\{  t \in H^0(Z,qsL) |\textrm{ such that } v(t)\geq i\}=\underbrace{H^0(V,qs\pi^*L-iE)}_{\parbox{80pt}{\tiny $\pi : V \to Z$ is a normal model where $E$ lives}},
\end{equation*}
and the invariant
\begin{multline*}
S_{q}(v):=\frac{1}{qsh^0(Z,qsL)}\sum_i i \dim_k\left(F_iH^0(Z,qsL)/F_{i+1}H^0(Z,qsL)\right)
\\ =\frac{1}{qsh^0(Z,qsL)}\sum_{i \geq 1}  \dim_k F_iH^0(Z,qsL).
\end{multline*}
Denote by $B_q$ the set of $qs$-basis type divisors with respect to $qsL$. As observed for instance in \cite[proof of Lemma 2.2]{Fujita_Odaka_On_the_K-stability_of_Fano_varieties_and_anticanonical_divisors},
\begin{equation}
\label{com_basis}
S_q(v)=\max_{D\in B_q}v(D),
\end{equation}
and the maximum is attained exactly for bases adapted to the filtration $F_i$. When $L$ is big, the asymptotic of $S_q$ is well-understood, see for instance \cite[proof of Theorem 1.3]{Fujita_Odaka_On_the_K-stability_of_Fano_varieties_and_anticanonical_divisors}, \cite[Corollary 2.12]{BJ} and \cite[Corollary 3.2]{Boucksom_Uniform}:
\begin{equation}
\label{asymp}
S(v):=\lim_{q\to \infty}S_q(v)=\frac{1}{\Vol(L)}\int_0^{+\infty}\Vol(\pi^* L-xE)dx
\end{equation}
The next statement is a logarithmic version of \cite[Theorem 4.4]{BJ}, following very closely the arguments given there. 

\begin{theorem}
\label{thm:BJ}
\begin{enumerate}
 \item \label{itm:BJ_limit}

If $L$ is a big $\bQ$-Cartier divisor, such that $sL$ is a Cartier divisor and $h^0(Z, sL) \neq 0$, then 
the sequence $\delta_{qs}(Z,\Delta;L)$ converges to $\delta(Z,\Delta;L)$, i.e. the delta invariant is a limit and not only a limsup; moreover
\begin{equation*}
\delta(Z,\Delta;L)=\inf_v\frac{A(v)}{S(v)},
\end{equation*}
where $A(v)$ is the log-discrepancy of $v$ with respect to the klt pair $(Z,\Delta)$, and the inf is taken over all non-trivial divisorial valuations. In particular, $\delta(Z, \Delta;L)$  is independent of the choice of $s$.

\item \label{itm:BJ_alpha}
Assuming furthermore that $L$ is ample, the following bounds hold
\begin{equation*}
\frac{\dim Z+1}{\dim Z }\alpha(Z,\Delta;L)\leq \delta(Z,\Delta;L)\leq \left(\dim Z+1\right)\alpha(Z,\Delta;L).
\end{equation*}
\end{enumerate}
\end{theorem}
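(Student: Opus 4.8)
\textbf{Overall strategy.} The statement is \autoref{thm:BJ}, a logarithmic version of \cite[Theorem 4.4]{BJ}. I would prove the two parts in the order given, since part \autoref{itm:BJ_alpha} is a relatively short corollary of part \autoref{itm:BJ_limit} once the valuative description of $\delta$ is in hand. The heart of the matter is part \autoref{itm:BJ_limit}: establishing that $\delta_{qs}(Z,\Delta;L) \to \inf_v A(v)/S(v)$, with the limit existing. The plan is to follow the three-step scheme of \cite{BJ} adapted to allow a boundary $\Delta$ and an arbitrary big $\bQ$-Cartier $L$ (not just $-K_Z$): (i) relate $\delta_{qs}$ to an infimum over divisorial valuations of the ``finite-level'' quotient $A(v)/S_q(v)$; (ii) pass to the limit using the asymptotic \autoref{asymp}, $S_q(v)\to S(v)$, together with a uniform (in $v$) control of the error $|S_q(v)-S(v)|$; (iii) handle the interchange of $\inf_v$ and $\lim_q$.

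\textbf{Step (i): the finite-level identity.} For a fixed $q$, I would use \autoref{com_basis}, namely $S_q(v) = \max_{D \in B_q} v(D)$, together with the standard computation of the log canonical threshold of a basis-type divisor. Specifically, for a $qs$-basis type divisor $D$ adapted to the filtration $F_\bullet$ of a divisorial valuation $v$ attached to a prime divisor $E$ over $Z$, one has $\ord_E(D) = S_q(v)$, and hence $\lct(Z,\Delta;D) \le A(v)/S_q(v)$ for every such $v$ and every such adapted $D$; conversely, the log canonical threshold of any $D\in B_q$ is computed by \emph{some} divisorial valuation $E$ over $Z$, and one checks (this is the Fujita--Odaka observation, \cite{FO}, \cite[Lemma 2.2]{FO}) that one may assume $D$ is adapted to the filtration of that $E$, so that $\lct(Z,\Delta;D) = A(v)/\ord_E(D) \ge A(v)/S_q(v)$. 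Taking infima over $D\in B_q$ on one side and over $v$ on the other yields
\begin{equation*}
\delta_{qs}(Z,\Delta;L) = \inf_v \frac{A(v)}{S_q(v)}.
\end{equation*}
This requires knowing that $\delta_{qs}$ is a minimum, which is granted by \cite[Lem 8.8]{Kovacs_Patakfalvi_Projectivity_of_the_moduli_space_of_stable_log_varieties_and_subadditvity_of_log_Kodaira_dimension}, cited in \autoref{def:delta}.

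\textbf{Step (ii)--(iii): passing to the limit.} Here I would invoke the asymptotic equidistribution result: by \autoref{asymp}, $S_q(v) \to S(v)$ pointwise in $v$, but for the interchange $\lim_q \inf_v = \inf_v \lim_q$ one needs this convergence to be suitably uniform. The key input — exactly as in \cite{BJ} — is a comparison of the Newton--Okounkov/concave-transform pictures at level $q$ and in the limit, giving an inequality of the form $S_q(v) \le \left(1 + \varepsilon_q\right) S(v)$ with $\varepsilon_q \to 0$ \emph{independent of $v$} (this uses bigness of $L$ crucially, via finite generation up to small error and Fujita approximation, or alternatively Blum--Jonsson's filtered-linear-series machinery \cite[Cor 2.12]{BJ}). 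From $A(v)/S(v) \le A(v)/S_q(v) \cdot (1+\varepsilon_q)$ and the reverse easy inequality $S_q(v)\ge$ (something comparable), one sandwiches $\delta_{qs}(Z,\Delta;L)$ between $\inf_v A(v)/S(v)$ times $(1-o(1))$ and $(1+o(1))$, forcing convergence to $\inf_v A(v)/S(v)$. The boundary $\Delta$ enters only through the definition of $A(v) = A_{(Z,\Delta)}(v)$ as the log discrepancy of the klt pair; every step above is insensitive to replacing $A_Z$ by $A_{(Z,\Delta)}$, which is the (routine but necessary) content of ``following very closely the arguments'' of \cite{BJ}. Independence of $s$ then follows from \autoref{lem:scaling_delta_invariant} (or simply from the fact that the right-hand side $\inf_v A(v)/S(v)$ manifestly does not see $s$).

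\textbf{Part \autoref{itm:BJ_alpha}: the $\alpha$--$\delta$ comparison.} With the valuative formula $\delta(Z,\Delta;L) = \inf_v A(v)/S(v)$ established, and given that $L$ is now assumed \emph{ample}, I would bound $S(v)$ against the pseudoeffective threshold $T(v) := \sup\{x : \Vol(\pi^*L - xE) > 0\}$. On the one hand $S(v) = \frac{1}{\Vol(L)}\int_0^{T(v)} \Vol(\pi^*L - xE)\,dx \le T(v)$, giving $A(v)/S(v) \ge A(v)/T(v)$; taking infima and using the valuative characterization of $\alpha$ (that $\alpha(Z,\Delta;L) = \inf_v A(v)/T(v)$, a standard fact for ample $L$, going back to the Abban--Zhuang/Fujita circle of ideas but in this generality traceable to the cited literature) yields $\delta \ge \alpha$ — actually one gets the stronger $\frac{\dim Z + 1}{\dim Z}\alpha \le \delta$ by using concavity of $x\mapsto \Vol(\pi^*L - xE)$ and ampleness to compare $\int_0^{T(v)}\Vol$ with $\frac{\dim Z}{\dim Z + 1}T(v)\Vol(L)$ (the concave function dominated by a linear one vanishing at $T(v)$ integrates to at most that fraction). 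On the other hand, $S(v) \ge \frac{1}{\dim Z + 1} T(v)$ — this is where ampleness is used again, since near $x=0$ the volume is bounded below appropriately — giving $A(v)/S(v) \le (\dim Z + 1)A(v)/T(v)$ and hence $\delta \le (\dim Z + 1)\alpha$. These are exactly the inequalities in \cite[1.1]{BJ} / \cite{FO}, and the proof is a direct integral estimate, so I would present it tersely.

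\textbf{Expected main obstacle.} The subtle point is the uniformity in $v$ of the convergence $S_q(v)\to S(v)$ needed for the interchange of limit and infimum in Step (iii). Pointwise convergence is easy from \autoref{asymp}; uniform convergence requires the filtered-linear-series / Newton--Okounkov body comparison of Blum--Jonsson (and ultimately rests on the bigness of $L$ and a Fujita-approximation-type argument), and one must check that introducing the boundary $\Delta$ does not disturb this — which it does not, since $\Delta$ affects only $A(v)$ and not the volumes $\Vol(\pi^*L - xE)$. I would isolate this uniform estimate as the technical lemma and import it essentially verbatim from \cite{BJ}, remarking that their proof is written for $L = -K_Z$ but uses only bigness.
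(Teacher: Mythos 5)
Your proposal follows essentially the same route as the paper's proof: the finite-level identity $\delta_{qs}=\inf_v A(v)/S_q(v)$ (obtained by combining $S_q(v)=\max_{D\in B_q}v(D)$ with $\lct(Z,\Delta;D)=\inf_v A(v)/v(D)$ and swapping the two infima), pointwise convergence via \autoref{asymp} for one inequality, the uniform estimate $(1+\varepsilon)S(v)\ge S_q(v)$ of \cite[Cor 3.6]{BJ} for the other, and the $S$-versus-pseudoeffective-threshold comparison of \cite[Prop 3.11]{BJ} for the $\alpha$ bounds, with the boundary entering only through $A(v)$. The only quibble is that your parenthetical concavity heuristic for the direction $S(v)\le\tfrac{\dim Z}{\dim Z+1}T(v)$ is stated backwards (concave functions lie above, not below, their chords — that argument gives the \emph{lower} bound $S(v)\ge T(v)/(\dim Z+1)$), but since you, like the paper, ultimately import these inequalities from the cited literature, this does not affect the argument.
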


\begin{proof}
{\scshape Point \autoref{itm:BJ_limit}.}
Set $\delta_q:=\delta_{qs}(Z,\Delta;L)$ and $\delta:=\delta(Z,\Delta;L)$. We first prove the inequality
\begin{equation}
\label{sup}
\limsup_{q\to \infty} \delta_q\leq \inf_v\frac{A(v)}{S(v)}
\end{equation}
Thanks to Equations \autoref{com_basis} and \autoref{asymp}, we can write 
\begin{equation*}
\inf_v\frac{A(v)}{S(v)}=
\underbrace{\inf_v\lim_{q\to \infty}\inf_{D\in B_q}\frac{A(v)}{v(D)}\geq \limsup_{q\to \infty}\left(\inf_{D\in B_q}\inf_v\frac{A(v)}{v(D)}\right)}_{\parbox{200pt}{\tiny $\forall v' : \displaystyle\inf_{D\in B_q} \frac{A(v')}{v'(D)} \geq \displaystyle\inf_{D\in B_q}\displaystyle\inf_v\frac{A(v)}{v(D)}$ \\ $\Rightarrow \forall v' \displaystyle\lim_{q\to \infty}\displaystyle\inf_{D\in B_q}\frac{A(v')}{v'(D)}\geq \displaystyle\limsup_{q\to \infty}\left(\displaystyle\inf_{D\in B_q}\displaystyle\inf_v\frac{A(v)}{v(D)}\right)$, and then take $\displaystyle\inf_{v'}( \_ )$ on the left side}} 
=\underbrace{\limsup_{q\to \infty} \delta_q.}_{\inf_v\frac{A(v)}{v(D)}=\lct(Z,\Delta; D)}
\end{equation*}
We now prove the inequality
\begin{equation}
\label{inf}
\liminf_{q\to \infty} \delta_q\geq \inf_v\frac{A(v)}{S(v)}
\end{equation}
This inequality follows from the key uniform convergence result \cite[Corollary 3.6]{BJ}: for every $\varepsilon>0$ there exists a $q_0=q_0(\varepsilon)$ such that for all $q>q_0$ and all divisorial valuations $v$ we have
\begin{equation*}
(1+\varepsilon)S(v)\geq S_q(v)
\end{equation*}
\cite[Corollary 3.6]{BJ} is stated over the complex numbers, however its proof works verbatim over $k$, let us explain why. The core part of the argument is \cite[Lemma 2.2]{BJ}, which is about convergence of integrals of concave functions over convex bodies in an Euclidean spaces, and this has nothing to do with the base field of $Z$. Another key ingredient is \cite[Lemma 2.6]{BJ}, which relies just on the concavity of the volume function. The rest of the proof uses filtrations of the coordinate ring and the Okunkov body of $Z$ to reduce the claimed approximation result to \cite[Lemma 2.2]{BJ}.

Let us now finish the proof. For $q$ big enough we have
\begin{equation*}
\frac{1}{1+ \varepsilon}\inf_v\frac{A(v)}{S(v)}\leq \inf_v\frac{A(v)}{S_q(v)}=\inf_v\inf_{D\in B_q}\frac{A(v)}{v(D)}\underbrace{=\delta_q}_{\inf_v\frac{A(v)}{v(D)}=\lct(Z,\Delta; D)}
\end{equation*}
taking the liminf on $q$ on the right hand side, and then letting $\varepsilon$ go to zero, we get the requested inequality. We obtain point \autoref{itm:BJ_limit} combining Equations \ref{sup} and \ref{inf}.

{\scshape Point \autoref{itm:BJ_alpha}.}
 Given a divisorial valuation $v$, we define its $q$-th pseudo-effective threshold as
\begin{equation*}
T_q(v):=\max \left\{ \left. \frac{v(D)}{qs} \; \right| \;  D\in |qsL|\right\}
\end{equation*}
and we have
\begin{equation*}
\alpha(Z,\Delta;L)=
\inf_q \inf_v \frac{A(v)}{T_q(v)}.
\end{equation*}

When $L$ is ample, \cite[Prop. 3.11]{BJ} gives the following bounds
\begin{equation*}
\frac{\dim(Z)}{\dim(Z)+1}\inf_q T_q(v)\geq S(v)\geq \left(\frac{1}{\dim(Z)+1}\right)\inf_{q}T_q(v),
\end{equation*}
which imply point \autoref{itm:BJ_alpha} (again, the proof in \cite{BJ} is over the complex numbers, but it works also over $k$). 
\end{proof}

\begin{corollary}[{\scshape Invariance of the delta invariant by scaling}]
\label{lem:scaling_delta_invariant} In the situation of \autoref{def:delta}.\autoref{itm:delta:delta}, for every positive integer $r>0$,  $ \delta(Z,\Delta;L)=r \delta(Z,\Delta;rL)$. Equivalently,
\begin{equation}
\label{eq:delta_invariant_scaling:statement}
\limsup_{q\to \infty}\delta_{rsq}(Z,\Delta;L)= \limsup_{q\to \infty}\delta_{sq}(Z,\Delta;L).
\end{equation}\end{corollary}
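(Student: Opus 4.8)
The plan is to deduce the corollary directly from the valuative description of $\delta$ established in \autoref{thm:BJ}; no genuinely new input is needed, so the work is just bookkeeping together with a check that there is no circularity.

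First I would record an exact identity relating the finite-level invariants for $L$ and for $rL$. Fix an integer $q>0$ with $qrL$ Cartier. Since $|q(rL)|=|qrL|$ and $H^0(Z,q(rL))=H^0(Z,qrL)$, the $q$-basis type divisors of $rL$ and the $qr$-basis type divisors of $L$ are built from exactly the same $k$-bases of $H^0(Z,qrL)$, and only the normalizing constant differs by a factor $r$; concretely $D\mapsto rD$ is a bijection from the set of $qr$-basis type divisors of $L$ onto the set of $q$-basis type divisors of $rL$. Combining this bijection with the elementary scaling $\lct(Z,\Delta;rD)=\tfrac1r\lct(Z,\Delta;D)$ (immediate from $(Z,\Delta+t\cdot rD)$ being klt iff $(Z,\Delta+(tr)D)$ is klt), one obtains $\delta_q(Z,\Delta;rL)=\tfrac1r\,\delta_{qr}(Z,\Delta;L)$ for all such $q$. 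Letting $q$ run through the positive multiples of a fixed admissible step $s$ — note that $s$ is admissible for $rL$ as well, because $srL$ is Cartier and $h^0(Z,srL)>0$ since $sr$ is a positive multiple of $s$ — and passing to $\limsup$ gives $\delta(Z,\Delta;rL)=\tfrac1r\limsup_{q\to\infty}\delta_{rsq}(Z,\Delta;L)$. Hence the two displayed statements in the corollary are equivalent, and it remains to prove \autoref{eq:delta_invariant_scaling:statement}, i.e. that changing the step from $s$ to $rs$ leaves $\limsup_{q\to\infty}\delta_{sq}(Z,\Delta;L)$ unchanged.

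For that I would invoke \autoref{thm:BJ}.\autoref{itm:BJ_limit}: for every admissible step the sequence $\delta_{(\cdot)q}(Z,\Delta;L)$ actually converges, and its limit is $\inf_v A(v)/S(v)$, a quantity that involves no choice of step at all. Applying this with step $s$ and with step $rs$ shows that both limsups in \autoref{eq:delta_invariant_scaling:statement} are genuine limits equal to $\inf_v A(v)/S(v)$, hence equal; equivalently, once $\delta_{sq}(Z,\Delta;L)$ is known to converge, its subsequence $\delta_{s(rq)}(Z,\Delta;L)$ converges to the same value. Together with the previous paragraph this yields $\delta(Z,\Delta;L)=r\,\delta(Z,\Delta;rL)$. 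An alternative, essentially identical, route is to compute on the level of the valuative formula: by \autoref{thm:BJ}.\autoref{itm:BJ_limit} applied to $rL$ in place of $L$ one has $\delta(Z,\Delta;rL)=\inf_v A(v)/S(v)$ with $S(v)=\tfrac{1}{\Vol(rL)}\int_0^{+\infty}\Vol(\pi^*(rL)-xE)\,dx$, and the substitution $x\mapsto rx$ together with the degree-$(\dim Z)$ homogeneity of $\Vol$ shows this equals $r$ times the corresponding quantity for $L$.

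There is essentially no serious obstacle here; the corollary is a formal consequence of \autoref{thm:BJ}. The single point that genuinely must be checked is the logical ordering: the proof of \autoref{thm:BJ} rests on the asymptotic volume expression \autoref{asymp} for $S(v)$ (from \cite{FO,BJ,Boucksom_Uniform}) and on the uniform convergence of \cite{BJ}, and uses nothing from this corollary, so the forward reference to \autoref{lem:scaling_delta_invariant} inside \autoref{def:delta} is harmless. The only other care required is the elementary matching of the definitions of basis type divisors at the multiples $q$ versus $qr$ in the first step.
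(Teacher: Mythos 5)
Your proof is correct and follows the same route as the paper, whose entire argument is the one-line observation that \autoref{thm:BJ} turns the limsup into a limit (independent of the step), so the claim follows. You simply flesh out the bookkeeping the paper leaves implicit — the bijection between $qr$-basis type divisors of $L$ and $q$-basis type divisors of $rL$ together with the scaling of the lct, and the non-circularity of the forward reference — all of which is accurate.
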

\begin{proof}
By \autoref{thm:BJ}, the limsup appearing in Equation \autoref{eq:delta_invariant_scaling:statement} is a limit, so the claim.
\end{proof}

We give the following definition of K-stability, which is equivalent to the more classical one by \cite[Theorem 6.1 (ii)]{Odaka_Sun} and \cite[Theorem 1.5]{Fujita_Uniform_K-stability_and_plt_blowups_of_log_Fano_pairs}.

\begin{definition}
\label{def:K_stability}
A normal Fano pair $(Z,\Delta)$ is
\begin{enumerate}
\item \emph{$K$-semi-stable} if  it is klt and for every divisorial valuation $v$, one has $A(v)\geq S(v)$; 
\item \emph{uniformly $K$-stable} if  it is klt  and there exists a positive constant $\varepsilon$ such that for every divisorial valuation $v$, one has $A(v)\geq(1 +\varepsilon) S(v)$. 
\end{enumerate}
Here $A(v)$ denotes the log-discrepancy of $\nu$ with respect to the pair $(Z, \Delta)$.

\end{definition}

The following corollary is now an immediate consequence of the above definition and \autoref{thm:BJ}

\begin{corollary}[{\scshape Characterization of K-stability}]
\label{def:K_stable}
Let $(Z,\Delta)$ be a normal Fano pair. 
Then, $(Z,\Delta)$ is
\begin{enumerate}
 \item $K$-semi-stable if and only if $(Z,\Delta)$ is klt and $\delta(Z,\Delta) \geq 1$,
 \item uniformly $K$-stable if and only if $(Z,\Delta)$ is klt and $\delta(Z,\Delta) > 1$.
\end{enumerate}

Moreover, if $(Z,\Delta)$ is klt and $\alpha(Z,\Delta)\geq\frac{\dim(Z)}{\dim(Z)+1} $ (resp. $>\frac{\dim(Z)}{\dim(Z)+1}$), then $(Z,\Delta)$ is K-semi-stable (resp. uniformly K-stable); if $(Z,\Delta)$ is klt and $\alpha(Z,\Delta)\leq \frac{1}{\dim(Z)+1}$ (resp. $< \frac{1}{\dim(Z)+1}$), then $(Z,\Delta)$ is not uniformly K-stable (resp. not K-semi-stable).

\end{corollary}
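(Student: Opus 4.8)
The plan is to read the corollary off directly from \autoref{def:K_stability} and the two parts of \autoref{thm:BJ}, with no new geometric input. Since $(Z,\Delta)$ is a normal Fano pair, $L:=-K_Z-\Delta$ is ample, hence big, so \autoref{thm:BJ} applies with $\delta(Z,\Delta)=\delta(Z,\Delta;L)$ and $\alpha(Z,\Delta)=\alpha(Z,\Delta;L)$; moreover $S(v)>0$ for every (non-trivial) divisorial valuation $v$, since for $L$ big the divisor $\pi^*L-xE$ stays big for small $x>0$, so all the ratios $A(v)/S(v)$ below are well defined.

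First I would dispose of the non-klt case: if $(Z,\Delta)$ is not klt it is by definition neither $K$-semi-stable nor uniformly $K$-stable, and the right-hand sides of (1) and (2) both fail, since they include kltness; so we may assume $(Z,\Delta)$ is klt. Then by point \autoref{itm:BJ_limit} of \autoref{thm:BJ} we have $\delta(Z,\Delta)=\inf_v A(v)/S(v)$, the infimum over all divisorial valuations. For (1): $\delta(Z,\Delta)\ge 1$ holds iff $A(v)/S(v)\ge 1$, i.e. $A(v)\ge S(v)$, for every $v$; together with kltness this is exactly the condition defining $K$-semi-stability in \autoref{def:K_stability}. For (2): if some $\varepsilon>0$ satisfies $A(v)\ge(1+\varepsilon)S(v)$ for all $v$, then $\delta(Z,\Delta)=\inf_v A(v)/S(v)\ge 1+\varepsilon>1$; conversely, if $\delta(Z,\Delta)>1$, put $\varepsilon:=\delta(Z,\Delta)-1>0$, and then $A(v)/S(v)\ge\inf_{v'}A(v')/S(v')=1+\varepsilon$ for every $v$, i.e. $A(v)\ge(1+\varepsilon)S(v)$. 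With kltness, this is the condition defining uniform $K$-stability.

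For the $\alpha$-invariant consequences I would substitute the bounds of point \autoref{itm:BJ_alpha} of \autoref{thm:BJ} — available because $L$ is ample — into the equivalences just established. Writing $d:=\dim Z$, these read $\tfrac{d+1}{d}\alpha(Z,\Delta)\le\delta(Z,\Delta)\le(d+1)\alpha(Z,\Delta)$. Hence $\alpha(Z,\Delta)\ge\tfrac{d}{d+1}$ gives $\delta(Z,\Delta)\ge 1$, so $(Z,\Delta)$ is $K$-semi-stable by (1), and the strict inequality gives $\delta(Z,\Delta)>1$, hence uniform $K$-stability by (2); likewise $\alpha(Z,\Delta)\le\tfrac1{d+1}$ gives $\delta(Z,\Delta)\le 1$, which by (2) precludes uniform $K$-stability, and the strict version gives $\delta(Z,\Delta)<1$, which by (1) precludes $K$-semi-stability.

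There is essentially no obstacle once \autoref{thm:BJ} is granted; the one place calling for a little care is the quantifier rearrangement in the uniform case — passing between ``there is $\varepsilon>0$ with $A(v)\ge(1+\varepsilon)S(v)$ for all $v$'' and ``$\inf_v A(v)/S(v)>1$'' — which is valid precisely because ``$\inf>1$'' is equivalent to ``$\inf\ge 1+\varepsilon$ for some $\varepsilon>0$'', and needs no appeal to attainment of the infimum.
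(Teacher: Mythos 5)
Your proposal is correct and matches the paper's approach: the paper simply declares the corollary an immediate consequence of \autoref{def:K_stability} and \autoref{thm:BJ}, and your argument is exactly the spelled-out version of that deduction (the formula $\delta=\inf_v A(v)/S(v)$ from point \autoref{itm:BJ_limit} plus the $\alpha$--$\delta$ bounds from point \autoref{itm:BJ_alpha}). Your care about $S(v)>0$ and the quantifier rearrangement in the uniform case is appropriate but does not constitute a departure from the paper's route.
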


\subsection{Products}

The following conjecture is motivated by the equivalence between K-stability and K\"{a}hler-Einstein metrics in the Fano setting, it has been already proposed in \cite[Conjecture 1.11]{Park_Won_K-stability_of_smooth_del_Pezzo_surfaces}.

\begin{conjecture}\label{conj:delta}
Given two klt Fano pairs $(W,\Delta_W)$ and $(Z,\Delta_Z)$, one has
$$
\delta(W\times Z,\Delta_W\boxtimes \Delta_Z)=\min\{\delta(W,\Delta_W) \, , \, \delta(Z,\Delta_Z)\}
$$
\end{conjecture}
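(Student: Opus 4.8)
The plan is to establish the two inequalities $\delta(W\times Z,\Delta_W\boxtimes\Delta_Z)\le\min\{\delta(W,\Delta_W),\delta(Z,\Delta_Z)\}$ and $\ge$ separately; only the second is the real content. For the first, write $L_W:=-K_W-\Delta_W$, $L_Z:=-K_Z-\Delta_Z$, and note that $-K_{W\times Z}-(\Delta_W\boxtimes\Delta_Z)\sim_{\bQ}p_W^*L_W+p_Z^*L_Z$ and that $(W\times Z,\Delta_W\boxtimes\Delta_Z)$ is again a klt Fano pair. Given a prime divisor $E$ over $W$, on a normal model $\pi_W:V\to W$, the prime divisor $E\times Z$ over $W\times Z$ (living on $V\times Z$, with $p_V,p_Z$ the projections) has $A_{W\times Z,\Delta_W\boxtimes\Delta_Z}(\ord_{E\times Z})=A_{W,\Delta_W}(\ord_E)$, because $K_{V\times Z/W\times Z}\sim p_V^*K_{V/W}$ and $Z$ is klt; and feeding the product formula for volumes $\Vol_{W\times Z}(p_W^*D_1+p_Z^*D_2)=\binom{\dim W+\dim Z}{\dim W}\Vol_W(D_1)\Vol_Z(D_2)$ into \autoref{asymp}, the binomial factor and $\Vol_Z(L_Z)$ cancel and one obtains $S(\ord_{E\times Z})=S_W(\ord_E)$. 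By \autoref{itm:BJ_limit} of \autoref{thm:BJ} this gives $\delta(W\times Z,\Delta_W\boxtimes\Delta_Z)\le A_W(\ord_E)/S_W(\ord_E)$, and taking the infimum over all such $E$ (then the symmetric argument) yields $\le\min\{\delta(W,\Delta_W),\delta(Z,\Delta_Z)\}$.

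For the reverse inequality set $\delta_0:=\min\{\delta(W,\Delta_W),\delta(Z,\Delta_Z)\}$. By \autoref{itm:BJ_limit} of \autoref{thm:BJ} together with \autoref{com_basis}, it suffices to show that for each $q$ and each $qs$-basis type divisor $\Gamma$ of $-K_{W\times Z}-(\Delta_W\boxtimes\Delta_Z)$ one has $\lct(W\times Z,\Delta_W\boxtimes\Delta_Z;\Gamma)\ge\min\{\delta_{qs}(W,\Delta_W),\delta_{qs}(Z,\Delta_Z)\}-\varepsilon_q$ for some $\varepsilon_q\to0$, as the right-hand side tends to $\delta_0$. Since $H^0(W\times Z,q(p_W^*L_W+p_Z^*L_Z))\cong H^0(W,qL_W)\otimes H^0(Z,qL_Z)$, when the chosen basis of the left-hand side is a tensor product of bases of the two factors, the associated basis-type divisor has the form $p_W^*\Gamma_W+p_Z^*\Gamma_Z$ with $\Gamma_W,\Gamma_Z$ basis type, and \autoref{prop:prod_basis} gives $\lct(W\times Z,\Delta_W\boxtimes\Delta_Z;p_W^*\Gamma_W+p_Z^*\Gamma_Z)\ge\min\{\lct(W,\Delta_W;\Gamma_W),\lct(Z,\Delta_Z;\Gamma_Z)\}\ge\min\{\delta_{qs}(W,\Delta_W),\delta_{qs}(Z,\Delta_Z)\}$. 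So the product-basis case is immediate; the difficulty is that a general basis of a tensor product of vector spaces is \emph{not} a tensor product of bases.

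To handle an arbitrary basis I would attempt a Fubini-type / Newton--Okounkov-body argument: fix admissible flags on $W$ and on $Z$ and assemble them into an admissible flag on $W\times Z$ (the $W$-coordinates first, then the $Z$-coordinates), so that the Okounkov body of $L_W\boxplus L_Z$ is built from those of $L_W$ and $L_Z$; then for a divisorial valuation $w$ on $W\times Z$ the concave transform of $w$ on this body --- which computes $S(w)$ --- should be estimated slice-by-slice over the $W$-factor, each slice being controlled via the behaviour of $w$ along the fibre $Z_{K(W)}$ (hence by $\delta(Z,\Delta_Z)$) and the integration in the $W$-direction being controlled by $\delta(W,\Delta_W)$; concretely this amounts to triangularizing the given basis simultaneously with respect to the two $w$-induced filtrations of $H^0(W,qL_W)$ and $H^0(Z,qL_Z)$, reducing to the product case up to an error vanishing as $q\to\infty$. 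A structurally cleaner alternative would be to route through torus-equivariant $K$-stability: if $\delta\ge\delta_0$ could be tested on $\bG_m$-invariant valuations (for a suitable action on a degeneration of $W\times Z$), a general destabilizing valuation could be replaced by one constant along one factor, for which the computation of the first paragraph applies. In either approach I expect the real obstacle to be the same: giving a usable description of divisorial (equivalently quasi-monomial) valuations on a product in terms of valuation data on the two factors, together with a uniform comparison of $S(w)$ with such data. This is precisely the gap that keeps \autoref{conj:delta} open, notwithstanding that the product behaviour of log canonical thresholds (\autoref{prop:prod_basis}) is classical.
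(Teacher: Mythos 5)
The statement you are trying to prove is stated in the paper as \autoref{conj:delta}, i.e.\ as an open conjecture: the paper offers no proof of it, and explicitly retreats to the weaker \autoref{prop:prod_basis}, which only bounds the log canonical threshold of \emph{product} basis type divisors. So there is no proof in the paper to compare yours against, and your proposal should be judged on its own terms.

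On those terms, your write-up is honest and the partial results in it are sound. The ``$\le$'' inequality via the valuation $\ord_{E\times Z}$ is correct: $A_{W\times Z,\Delta_W\boxtimes\Delta_Z}(\ord_{E\times Z})=A_{W,\Delta_W}(\ord_E)$ because $K_{V\times Z/W\times Z}=p_V^*K_{V/W}$ and $E\times Z$ is not contained in $p_Z^*\Delta_Z$ (the klt-ness of $Z$ is not what is doing the work here), and the volume product formula fed into \autoref{asymp} does give $S(\ord_{E\times Z})=S_W(\ord_E)$, whence $\delta(W\times Z,\Delta_W\boxtimes\Delta_Z)\le\min\{\delta(W,\Delta_W),\delta(Z,\Delta_Z)\}$ by \autoref{thm:BJ}. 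For the reverse inequality your reduction to \autoref{prop:prod_basis} only covers bases of $H^0(W,qL_W)\otimes H^0(Z,qL_Z)$ that are themselves tensor products of bases, and you correctly identify that a general basis is not of this form; this is exactly the obstruction, and neither of your two suggested repairs (slicing the Okounkov body of $L_W\boxplus L_Z$, or reducing to valuations constant along a factor via some equivariance) is carried out or known to work --- in particular there is no available description of an arbitrary divisorial valuation on $W\times Z$, or of its $S$-invariant, in terms of valuation data on the factors. So the proposal does not prove the statement, but it does not claim to, and the gap you name is precisely why the statement remains a conjecture in the paper. If your goal was to supply what the paper actually uses downstream, the relevant target is \autoref{prop:prod_basis}, whose proof (via \autoref{lem:product} and the splitting $\Delta_W\boxtimes\Delta_Z+tD=(\Delta_W+tD_W)\boxtimes(\Delta_Z+tD_Z)$) your second paragraph essentially reproduces.
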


The analogue result for the alpha invariant and any polarization appeared for example in \cite[Proposition 8.11]{Kovacs_Patakfalvi_Projectivity_of_the_moduli_space_of_stable_log_varieties_and_subadditvity_of_log_Kodaira_dimension}, but used to be present much earlier in a smaller generality, i..e, in the smooth non-log case, for example in Viehweg's works. See also \cite[Thm. 1.10]{Park_Won_K-stability_of_smooth_del_Pezzo_surfaces} and \cite[Lemma 2.29]{Chelstov_log} for the Fano case. We can prove a weaker result for the delta invariant in \autoref{prop:prod_basis}.

\begin{definition}[Product basis type divisor]\label{def:prod_basis}
Let $(W,\Delta_W)$ and $(Z,\Delta_Z)$ be two klt pairs, let $L_W$ and $L_Z$ $\bQ$-Cartier divisors on $W$ and $Z$, respectively, and let $q>0$ be an integer such that both $qL_W$ and $q L_Z$ are Cartier and both $h^0(W, qL_W)$ and $h^0(Z, qL_Z)$ are non-zero. A divisor $D$ on $W\times Z$ is of $q$-\emph{product basis type} if there exist $q$-basis type divisors $D_W$ on $W$ and $D_Z$ on $Z$ such that
$$
D=p_W^*D_W+p_Z^*D_Z
$$
where $p_W$ and $p_Z$ are the projections.
 \end{definition}
\begin{remark}
\label{rem:prod_basis}
In \autoref{def:prod_basis}, if $D_W$ is associated to a basis $s_i$ and $D_Z$ to a basis $t_i$, then $D$ is associated to the basis $s_i\boxtimes t_j$.
\end{remark}

\begin{lemma}
\label{lem:product}
Let $(W,\Delta_W)$ and $(Z,\Delta_Z)$ be two klt (resp. lc) pairs, then also $(W\times Z,\Delta_W \boxtimes \Delta_Z)$ is klt (resp. lc).
\end{lemma}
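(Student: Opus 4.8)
The plan is to reduce the statement to a single, well-chosen log resolution of the product and to read off the discrepancies there, exploiting that in characteristic zero a product of smooth varieties is smooth and a product of simple normal crossing divisors is again simple normal crossing.

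First I would record the basic structure. Since $k$ is algebraically closed, $W \times Z$ is an integral normal variety: integrality and normality ($R_1$ and $S_2$) are preserved by products over a perfect field. On the big open subset $W_{\sm} \times Z_{\sm} \subseteq W \times Z$ one has $\omega_{W \times Z} \cong p_W^* \omega_W \otimes p_Z^* \omega_Z$, hence $K_{W \times Z} \sim p_W^* K_W + p_Z^* K_Z$ as Weil divisor classes, and therefore
\[
K_{W\times Z} + \Delta_W \boxtimes \Delta_Z \;\sim_{\bQ}\; p_W^*(K_W + \Delta_W) + p_Z^*(K_Z + \Delta_Z)
\]
is $\bQ$-Cartier; as $\Delta_W \boxtimes \Delta_Z$ is visibly effective, $(W \times Z, \Delta_W \boxtimes \Delta_Z)$ is a genuine pair, and it remains only to control its singularities.

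Next I would fix log resolutions $\mu_W : W' \to W$ of $(W,\Delta_W)$ and $\mu_Z : Z' \to Z$ of $(Z,\Delta_Z)$, and write the crepant pullbacks $K_{W'} + B_{W'} = \mu_W^*(K_W+\Delta_W)$ and $K_{Z'} + B_{Z'} = \mu_Z^*(K_Z+\Delta_Z)$; here $B_{W'}$ (resp. $B_{Z'}$) has simple normal crossing support and all coefficients $<1$ in the klt case (resp. $\le 1$ in the lc case), which is precisely the hypothesis on $(W,\Delta_W)$ and $(Z,\Delta_Z)$. Then $\mu := \mu_W\times\mu_Z : W'\times Z' \to W\times Z$ is proper birational with $W'\times Z'$ smooth, its exceptional locus is $\Exc(\mu_W)\times Z' \,\cup\, W'\times\Exc(\mu_Z)$, and choosing at each point local product coordinates adapted to the two snc divisors upstairs shows that the union of $p_1^*\supp B_{W'}$, $p_2^*\supp B_{Z'}$, the strict transform of $\Delta_W\boxtimes\Delta_Z$, and $\Exc(\mu)$ has simple normal crossing support; so $\mu$ is a log resolution of $(W\times Z,\Delta_W\boxtimes\Delta_Z)$. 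Using $K_{W'\times Z'} \sim p_1^*K_{W'} + p_2^*K_{Z'}$ together with $\mu^* p_W^* = p_1^* \mu_W^*$ and $\mu^* p_Z^* = p_2^* \mu_Z^*$, I obtain
\[
K_{W'\times Z'} + \bigl(p_1^* B_{W'} + p_2^* B_{Z'}\bigr) \;\sim_{\bQ}\; \mu^*\bigl(K_{W\times Z} + \Delta_W\boxtimes\Delta_Z\bigr),
\]
and since the prime components of $p_1^*B_{W'}$ and $p_2^*B_{Z'}$ are pairwise distinct, have snc support, and have coefficients $<1$ (resp. $\le 1$), the characterization of klt (resp. lc) singularities via a single log resolution yields that $(W\times Z,\Delta_W\boxtimes\Delta_Z)$ is klt (resp. lc).

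The only genuinely delicate points are the two "product preserves the good property'' assertions — that $W'\times Z'$ is smooth and that a product of snc divisors is snc — which really use that we work over a field with $\charact k = 0$ (perfectness suffices), so that the smooth locus of a product is the product of the smooth loci; I would state these carefully but they are standard. Everything else is routine bookkeeping with discrepancies, and no appeal to the (open) \autoref{conj:delta} is needed.
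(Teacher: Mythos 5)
Your proof is correct and follows exactly the same route as the paper's: the paper's (three-line) proof also takes the product of log resolutions of the two factors, observes that in characteristic zero this is a log resolution of the product pair, and notes that the resulting discrepancies are the union of the discrepancies of the two factors. Your write-up simply fills in the details (the crepant pullback identity, the snc check, the distinctness of the components) that the paper leaves implicit.
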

\begin{proof}
As we work in characteristic zero, we may take  the product of a log resolution of $(W,\Delta_W )$ and of $(Z, \Delta_Z )$. This will be a log-resolution for $(W\times Z,\Delta_W\boxtimes \Delta_Z)$, with the union of the discrepancies of the original two log-resolutions, so the claim.
\end{proof}

\begin{proposition}\label{prop:prod_basis}
With the notations of \autoref{def:prod_basis}, let $D$ be a $q$-product basis type divisor. Then, 
$$
\lct(W\times Z, \Delta_W  \boxtimes \Delta_Z,D)\geq\min\{ \delta_q(W,\Delta_W;L_W)\; , \; \delta_q(Z,\Delta_Z; L_Z)\}
$$
\end{proposition}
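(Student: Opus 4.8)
The plan is to deduce this from \autoref{lem:product} — the fact that the box-sum of two klt pairs is klt — together with the elementary observation that a $q$-product basis type divisor becomes, once the pullbacks are absorbed into the boundary, exactly a box-sum. Concretely, write $D=p_W^*D_W+p_Z^*D_Z$ with $D_W,D_Z$ of $q$-basis type, so that $D$ is an effective $\bQ$-Cartier divisor; since $\delta_q(W,\Delta_W;L_W)\leq\lct(W,\Delta_W;D_W)$ and $\delta_q(Z,\Delta_Z;L_Z)\leq\lct(Z,\Delta_Z;D_Z)$ directly from the definition of the $q$-th delta invariant (\autoref{def:delta}), it suffices to prove
\begin{equation*}
\lct\bigl(W\times Z,\Delta_W\boxtimes\Delta_Z;D\bigr)\ \geq\ \min\bigl\{\lct(W,\Delta_W;D_W),\ \lct(Z,\Delta_Z;D_Z)\bigr\}.
\end{equation*}

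First I would fix a rational $t$ with $0\leq t<\min\{\lct(W,\Delta_W;D_W),\lct(Z,\Delta_Z;D_Z)\}$ and check that $(W,\Delta_W+tD_W)$ and $(Z,\Delta_Z+tD_Z)$ are then both klt: $\Delta_W+tD_W$ is an effective $\bQ$-divisor with $K_W+\Delta_W+tD_W$ still $\bQ$-Cartier, and the set of parameters $t'\geq 0$ for which $(W,\Delta_W+t'D_W)$ is klt is an interval containing $0$, because lowering an effective $\bQ$-boundary can only raise discrepancies. Next I would record the identity of $\bQ$-divisors on the product
\begin{equation*}
(\Delta_W+tD_W)\boxtimes(\Delta_Z+tD_Z)\ =\ \Delta_W\boxtimes\Delta_Z+t\bigl(p_W^*D_W+p_Z^*D_Z\bigr)\ =\ \Delta_W\boxtimes\Delta_Z+tD,
\end{equation*}
and apply \autoref{lem:product} to the two klt pairs $(W,\Delta_W+tD_W)$ and $(Z,\Delta_Z+tD_Z)$ to conclude that $(W\times Z,\Delta_W\boxtimes\Delta_Z+tD)$ is klt, hence $\lct(W\times Z,\Delta_W\boxtimes\Delta_Z;D)\geq t$. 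Letting $t$ increase to the minimum of the two thresholds gives the displayed inequality, and combining it with the two inequalities $\delta_q\leq\lct$ above finishes the proposition.

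I do not expect a genuine obstacle here: essentially all of the content sits in \autoref{lem:product} (the product of log resolutions argument), and the remainder is bookkeeping with pullbacks of $\bQ$-Cartier divisors. The only spots deserving a sentence of justification are the interval property of the klt locus as the scaling parameter $t$ varies, and the verification that $\Delta_W+tD_W$ is still an effective $\bQ$-divisor with $K_W+\Delta_W+tD_W$ $\bQ$-Cartier, so that ``klt'' even makes sense on the factors — both immediate.
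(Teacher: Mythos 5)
Your proof is correct and follows essentially the same route as the paper's: take $t$ below the threshold, use the identity $(\Delta_W+tD_W)\boxtimes(\Delta_Z+tD_Z)=\Delta_W\boxtimes\Delta_Z+tD$, and apply \autoref{lem:product} to the two klt factor pairs. The only cosmetic difference is that you route through the intermediate inequality $\lct(W\times Z,\cdot\,;D)\geq\min\{\lct(W,\cdot\,;D_W),\lct(Z,\cdot\,;D_Z)\}$ and then invoke $\delta_q\leq\lct$, whereas the paper takes $t<\min\{\delta_q,\delta_q\}$ directly; these are the same argument.
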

\begin{proof}
 Take $t<\min\{ \delta_q(W,\Delta_W; L_W),\delta_q(Z,\Delta_Z;L_Z)\}$. We have to show that $(W\times Z,\Delta_W\boxtimes \Delta_Z+tD)$ is log canonical. Recall that 
\begin{equation*} 
 (W\times Z,\Delta_W\boxtimes \Delta_Z+tD)=(W\times Z,\left(\Delta_W+tD_W\right)\boxtimes\left( \Delta_Z+tD_Z\right))
 \end{equation*}
 and both $(W,\Delta_W+tD_W)$ and $(Z,\Delta_Z+tD_Z)$ are log canonical because of the hypothesis on $t$, so the claim follows from \autoref{lem:product}
 
\end{proof}

The full \autoref{conj:delta} has been proved in the preprint \cite{Product}, published after the first version of this paper has appeared.

\subsection{Behavior in families}

Here we prove that the $\delta$-invariant is constant on very general geometric points.
Recall that a \emph{geometric point} of $T$ is a map from the spectrum of an algebraically closed field to $T$. Key examples are the closed points and the geometric  generic point (i.e. the algebraic closure of the function fields) of $T$.

\begin{proposition}
\label{prop:delta_general_fiber}
Let $f : (X,\Delta) \to T$ be a flat, projective family of normal pairs over a normal variety, that is, we assume that $K_{X/T} + \Delta$ is $\bQ$-Cartier, and $\Supp \Delta$ does not contain any fiber. Additionally, let $L$ be an $f$-ample $\bQ$-Cartier divisor on $X$. Then there is a very general value of $\delta\left(X_{\ot}, \Delta_{\ot} ; L_{\ot}\right)$. More precisely, there is a real number $d\geq 0$ and there are countably many Zariski closed subsets $T_i \subseteq T$ such that for any geometric point $\ot \in T \setminus \left( \bigcup_i T_i \right)$, $\delta\left(X_{\ot}, \Delta_{\ot}; L_{\ot} \right)=d$.
\end{proposition}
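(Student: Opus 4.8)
The plan is to reduce the statement to the known characterization $\delta(Z,\Delta;L)=\inf_v A(v)/S(v)$ from \autoref{thm:BJ}, and then to argue that each quantity appearing in this expression is constructible or semicontinuous in families, so that the infimum stabilizes on a countable intersection of open sets. First I would observe that by \autoref{thm:BJ}.\autoref{itm:BJ_limit} we may compute $\delta$ on the generic fiber via the formula $\delta\left(X_{\ot},\Delta_{\ot};L_{\ot}\right)=\lim_{q\to\infty}\delta_{qs}\left(X_{\ot},\Delta_{\ot};L_{\ot}\right)$, where $s$ is chosen once and for all (uniformly in the family) so that $sL$ is relatively Cartier and $f_*\sO_X(sL)\neq 0$. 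So it suffices to control each $\delta_q := \delta_{qs}$ in families and then pass to the limit.

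Next I would treat the $q$-th delta invariant. For fixed $q$, $\delta_q\left(X_t,\Delta_t;L_t\right)$ is the infimum — in fact minimum, by \cite[Lem 8.8]{Kovacs_Patakfalvi_Projectivity_of_the_moduli_space_of_stable_log_varieties_and_subadditvity_of_log_Kodaira_dimension} — over the (bounded, irreducible) parameter space of $q$-basis type divisors of the log canonical threshold $\lct(X_t,\Delta_t;D)$. The key inputs are: (i) the relative $H^0$, namely $f_*\sO_X(qsL)$, is locally free of constant rank on an open set (shrink $T$ or pass to a stratification), so the family of $q$-basis type divisors is parametrized by an open subset of a relative flag-type variety, itself flat over $T$; (ii) the log canonical threshold is a constructible function and is lower semicontinuous in flat families of pairs — this is the standard inversion-of-adjunction / generic-flatness package (e.g. the lct as $\sup\{t : (X,\Delta+tD)\ \mathrm{klt}\}$ behaves constructibly, and on a countable union of locally closed sets it is literally constant, with the generic fiber value being the maximal one). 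Taking the minimum of the lct over the relative parameter space of basis-type divisors, which is a proper-over-base family after compactifying, produces a constructible function $t\mapsto\delta_q(X_t,\Delta_t;L_t)$ on $T$ whose value on the geometric generic point equals its value on a dense open subset $U_q\subseteq T$ (after base change to $\overline{k(T)}$). Thus for each $q$ there is a nonempty open $U_q$ on which $\delta_q$ is the constant $d_q := \delta_q\left(X_{\overline\eta},\Delta_{\overline\eta};L_{\overline\eta}\right)$.

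Finally I would intersect. Set $T_i$ to be the complements of the $U_q$, so $T\setminus\bigcup_i T_i = \bigcap_q U_q$ is the complement of countably many proper closed subsets. For any geometric point $\ot$ in this set and every $q$, $\delta_q\left(X_{\ot},\Delta_{\ot};L_{\ot}\right)=\delta_q\left(X_{\overline\eta},\Delta_{\overline\eta};L_{\overline\eta}\right)$; letting $q\to\infty$ through multiples of $s$ and using that the limit exists (again \autoref{thm:BJ}.\autoref{itm:BJ_limit}) gives $\delta\left(X_{\ot},\Delta_{\ot};L_{\ot}\right)=\delta\left(X_{\overline\eta},\Delta_{\overline\eta};L_{\overline\eta}\right)=:d$. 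One subtlety: the geometric generic fiber is taken over $\overline{k(T)}$, which is not an algebraically closed extension of $k$ of the form $\Spec K\to T$ with $K$ as in the definition of geometric point only if one is careful — but the value $d$ is a real number intrinsic to the isomorphism class, and specialization from $\overline{k(T)}$ to a very general closed (or other geometric) point is exactly what the constructibility argument delivers; alternatively one runs the whole argument after an uncountable algebraically closed base extension, which is harmless since $\delta$ is invariant under such. I expect the main obstacle to be item (ii): carefully setting up the semicontinuity and constructibility of the log canonical threshold uniformly over the relative parameter space of basis-type divisors — in particular handling that this parameter space and the total family of pairs over it are flat with the required singularity-theoretic behavior (klt on the generic fiber, normality of fibers not destroyed), so that inversion of adjunction applies fiberwise. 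The limit interchange at the end is routine once each $\delta_q$ is under control, because the convergence $\delta_q\to\delta$ is pointwise and the equality is on a fixed countable intersection.
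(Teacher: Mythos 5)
Your proposal is correct and follows essentially the same route as the paper: fix $s$ uniformly, parametrize $q$-basis type divisors by a relative parameter space over $T$ (the paper uses open subsets of fiber powers of $\bP((f_*\sO_X(qsL))^*)$), invoke the semicontinuity/constructibility of the log canonical threshold on that space (the paper's citation is \cite[Lem 8.8]{Kovacs_Patakfalvi_Projectivity_of_the_moduli_space_of_stable_log_varieties_and_subadditvity_of_log_Kodaira_dimension}) to get constancy of $\delta_{qs}$ on a dense open $U_q$, and intersect over $q$ before passing to the limit via \autoref{thm:BJ}. The only cosmetic difference is that the paper makes the constructibility concrete through the finitely many level sets $L_i$ of the lct function and the criterion $Y_{\ot}\subseteq (L_i)_{\ot}$, rather than phrasing it via the geometric generic fiber.
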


\begin{proof}

We may fix an integer $s>0$ such that $sL$ is Cartier and $f_* \sO_X(qsL)$ is non-empty and commutes with base-change for any integer $q>0$. In particular, then for all $t \in T$, $sL_t$ is Cartier and $h^0(X_t, qsL_t)$ is positive and independent of $t$ for any integer $q>0$. 

We claim that \emph{for each integer $q>0$ there is a real number $d >0$ and a non-empty Zariski open set $U_q \subseteq T$ such that for each geometric point $\ot \in U$, $\delta_{qs}\left(X_{\ot},\Delta_{\ot}; L_{\ot} \right)=d$.} 
Assuming this claim, by setting $T_q:= T \setminus U_q$ we obtain  the statement of the proposition. 

So, we fix an integer $q>0$, and in the rest of the proof we show the above claim. We also set  $r:=h^0(X_t, qsL_t)$ and $l:=qsr$, where the former is independent of $t \in T$ by the above choice of $s$.

Set $W:=\bP((f_* \sO_X(qsL))^*)$. Then,  for any geometric point $\ot \in T$ we have natural bijections:
\begin{equation}
\label{eq:delta_general_fiber:one_div}
\fbox{\parbox{125pt}{$k\big(\ot\big)$-rational points of $
W_{\ot}$}} 
\leftrightarrow
\fbox{\parbox{130pt}{lines through the origin in $H^0\left(X_K,\sO_X(qsL)|_{X_{\ot}} \right)$}}  
\leftrightarrow
\fbox{\parbox{100pt}{$D \in \left|qsL|_{X_{\ot}}\right|$ } },
\end{equation}
where $k\big(\ot\big)$ is the residue field of $\ot$, and $X_K$ and $X_{\ot}$ are the corresponding base-changes, as explained in \autoref{sec:base_changes}.
We consider the open subset
\begin{equation*}
Y \subseteq \underbrace{  W \times_T W \times_T \cdots \times_T W}_{r \textrm{ times}}
\end{equation*}
corresponding to linearly independent lines. That is, for  any geometric point $\ot \in T$, using \autoref{eq:delta_general_fiber:one_div},  we have a natural bijection
\begin{equation}
\label{eq:delta_general_fiber:basis_type_div}
\fbox{\parbox{120pt}{$k\big(\ot\big)$-rational points of $
Y_{\ot}$}} 
\leftrightarrow
\fbox{\parbox{230pt}{$(D_i)=(D_1,\dots,D_r)$ is a  basis of  $\left|qsL|_{X_{\ot}}\right|$ } }
\end{equation}
Denote by $\oy_{(D_i)}$ the geometric point of $Y$ corresponding to $(D_i)$ via the correspondence \autoref{eq:delta_general_fiber:basis_type_div}, where $D_i\in \left|\left.qsL\right|_{X_{\ot}} \right|$. 

Consider the universal family of $q$-basis type divisors, where $\Delta_Y$ is the base-change divisor as defined in \autoref{sec:base_change_relative_canonical_normal_fibers},
\begin{equation*}
g : (Z := X \times_T Y, \Delta':=\Delta_Y ; \Gamma  ) \to  Y 
\end{equation*}
such that for any geometric point $\oy:=\oy_{(D_i)} \in Y$, 
$\Gamma_{\oy} = \sum_{i=1}^{r} \frac{D_i}{l}$.
 Denote by $\pi : Y \to T$ the natural projection.

According to \cite[Lem 8.8]{Kovacs_Patakfalvi_Projectivity_of_the_moduli_space_of_stable_log_varieties_and_subadditvity_of_log_Kodaira_dimension} , the log canonical threshold function $\oy \mapsto \lct \left( \Gamma_{\oy};Z_{\oy}, \Delta'_{\oy} \right)$, which takes values on the geometric points of $Y$,  is lower semi-continuous. Furthermore, the second paragraph of \cite[Lem 8.8]{Kovacs_Patakfalvi_Projectivity_of_the_moduli_space_of_stable_log_varieties_and_subadditvity_of_log_Kodaira_dimension} shows that there is a dense open set $Y_0 \subseteq Y$ such that $\lct \left( \Gamma_{\oy};Z_{\oy}, \Delta'_{\oy} \right)$ is the same for every $\oy \in Y_0$. Applying this iteratedly to the complement of $Y_0$, we obtain that $\oy \mapsto \lct \left( \Gamma_{\oy};Z_{\oy}, \Delta'_{\oy} \right)$ takes only finitely many values on $Y$, say $r_1>r_2>\dots>r_l$, and the level sets are constructible subsets of $Y$. Hence,
\begin{equation*}
L_i:=\left\{\oy \in Y \left| \lct \left( \Gamma_{\oy};Z_{\oy}, \Delta'_{\oy} \right) \geq r_i  \right. \right\}
\end{equation*}
are open sets, and 
  for any geometric point $\oy:=\oy_{(D_j)}$ of $Y$,
\begin{equation*}
 \lct \left(X_{\oy},\Delta_{\oy}; \Gamma_{\oy} \right) =  \lct \left(X_{\oy},\Delta_{\oy}; \sum_{j=1}^r \frac{D_j}{l} \right) = \max \{r_i | (D_j) \in L_i \}.
\end{equation*}
It follows that for any geometric point $\ot \in T$, 
\begin{equation}
\label{eq:delta_general_fiber:delt_inv}
\delta_{qs}\left(X_{\ot}, \Delta_{\ot}\right)= \max\{ r_i | Y_{\ot} \subseteq (L_i)_{\ot}  \}. 
\end{equation}
After the above discussion, our claim follows immediately. Indeed, we just need to choose $a$ to be the smallest integer such that $L_a$ contains the  generic fiber of $\pi$. Then there is a non-empty open set $U \subseteq T$ contained in 
\begin{equation*}
\left( T \setminus \pi(Y  \setminus L_a) \right) \cap \pi(L_a \setminus L_{a-1}).
\end{equation*}
In particular, for any geometric point $\ot \in U$:
\begin{enumerate}
\item $Y_{\ot} \subseteq (L_a)_{\ot}$, and
\item $ (L_a \setminus L_{a-1})_{\ot} \neq \emptyset$ and hence $Y_{\ot} \not\subseteq (L_{a-1})_{\ot}$. 
\end{enumerate}
Therefore, by setting $d:=r_a$, \autoref{eq:delta_general_fiber:delt_inv} implies that $\delta_{qs}\left( X_{\ot}, \Delta_{\ot}\right) = d$ for all geometric points $\ot \in U$.

\end{proof}

\begin{remark}
\label{rem:delta_invariant_non_albraically_closed_field}
We note that one could define the $\delta$-invariant also over over non algebraically closed base fields, with verbatim the same definition as \autoref{def:delta}. If $(Y_K,\Delta_K)$ is a projective klt pair and $N_K$ is a $\bQ$-Cartier divisor  defined over a non-closed field $K$,  and furthermore we choose a basis type divisor $D= \sum_{i=1}^{h^0(Y_K, qN_K)} \frac{D_i}{q}$ (that is, $D_i$ form a $K$-basis of $H^0(Y_K, qN_K)$),  then $\lct (Y_K,\Delta_K, D) = \lct\left(Y_{\oK}, \Delta_{\oK}, D_{\oK} \right)$, where $D_{\oK}$ is a basis type divisor for $N_{\oK}$. Hence, $\delta_q (Y_K,\Delta_K, N_K) \geq \delta_q \left(Y_{\oK}, \Delta_{\oK}, N_{\oK} \right)$. However, $\delta_q (Y_K,\Delta_K, N_K) > \delta_q \left(Y_{\oK}, \Delta_{\oK}, N_{\oK} \right)$ could happen as not all basis type divisors of $N_{\oK}$ come from basis type divisors  of $N_K$. A simple example is if $Y_K$ is a conic not isomorphic to $\bP^1_K$, $\Delta_K=0$, and $N_K = K_{Y_K}^{-1} $. Then, $\delta_q(Y_K,\Delta_K;N_K)=3$, but $\delta_q\left(Y_{\oK},\Delta_{\oK};N_{\oK}\right)=1$. 

In particular, if one takes a conic bundle $f : X \to T$ without a section, and $\eta$ is the generic point of $T$, then for the generic fiber we have $\delta\left(X_{\eta}\right)=2$, but for all geometric fiber (including the geometric generic fiber) outside of the discriminant locus we have $\delta\left(X_{\ot}\right)=1$. So, the $\delta$-invariant is not the same for a general and for the generic point (in general). In particular, one cannot replace "any geometric point $\ot \in T$" in \autoref{prop:delta_general_fiber} with just "any point $t \in T$".
\end{remark}

\begin{remark}
The special case of \autoref{prop:delta_general_fiber} when $d=1$ and $\Delta=0$ (so for $K$-semi-stability via \cite{BJ}) was shown in \cite[Thm 3]{Blum_Liu_The_normalized_volume_of_a_singularity_is_lower_semicontinuous} with other methods.

\end{remark}

\begin{remark}

 \autoref{prop:delta_general_fiber} is very weak version of what is expected to hold. It is conjectured, cf., \cite{Blum_Liu_The_normalized_volume_of_a_singularity_is_lower_semicontinuous}, that $\delta$ is lower semi-continuous, and furthermore the $\delta  \geq 1$ set is also open. Some of this has been proven in \cite[Thm 1.1(i)]{Li_Wang_Xu_On_the_proper_moduli_spaces_of_smoothable_Kahler-Einstein_Fano_varieties} and \cite{Blum_Liu_The_normalized_volume_of_a_singularity_is_lower_semicontinuous}. The full conjecture has been proven recently in \cite{Blum_Liu_Xu_Openness_of_K-semistability_for_Fano_varieties}, after the first version of this paper was published.
 \end{remark}

\section{Growth of sections of vector bundles over curves}
\label{sec:growth}

In this section, we present results about the growth of the number of sections of vector bundles over curves. We apply these in \autoref{sec:semi_positivity} and \autoref{sec:positivity} to vector bundles of the form $f_* \sO_X(q (-K_{X/T}- \Delta - f^* H))$ to obtain many sections of divisors of type $q (-K_{X/T}- \Delta - f^* H)^{(m)}$, where $f : (X, \Delta) \to T$ is a log-Fano family, $H$ is an auxiliary ample divisor on $T$, and $(\_)^{(m)}$ is the fiber product notation of \autoref{sec:product_notation}. 
The precise statement is given in \autoref{prop:vectP2}.

\begin{notation}
\label{notation:semi_stable_vector_bundle}
Let $T$  be a smooth projective curve of genus $g$ over $k$, let $\sE$ be a vector bundle on $T$. Let $\mu(\sE)$ be the slope of $\sE$, namely $\mu(\sE):= \deg \sE/\rk \sE$.

\end{notation}

First we recall well known statements in \autoref{prop:tensor_product_semi_stable}, \autoref{prop:semi-stable_coho_vanishing} and \autoref{prop:semi_stable_globally_generated} concerning semi-stable bundles.

\begin{proposition}
\label{prop:tensor_product_semi_stable}
In the situation of \autoref{notation:semi_stable_vector_bundle}, given two vector bundles $\sE$ and $\sE'$, we have $\mu(\sE \otimes \sE') = \mu(\sE) + \mu(\sE')$; moreover, if both $\sE$ and $\sE'$ are semi-stable, then so is $\sE \otimes \sE'$. 
\end{proposition}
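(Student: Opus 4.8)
The statement breaks naturally into two parts: the additivity of the slope under tensor product, and the preservation of semi-stability. The additivity part is purely numerical. I would compute $\rk(\sE \otimes \sE') = \rk(\sE)\rk(\sE')$ and $\deg(\sE \otimes \sE') = \rk(\sE')\deg(\sE) + \rk(\sE)\deg(\sE')$ — the latter from the fact that $c_1$ of a tensor product of bundles of ranks $a$ and $b$ is $b\,c_1(\sE) + a\,c_1(\sE')$, which one sees by taking determinants: $\det(\sE \otimes \sE') \cong (\det \sE)^{\otimes \rk \sE'} \otimes (\det \sE')^{\otimes \rk \sE}$. Dividing, $\mu(\sE \otimes \sE') = \deg\sE/\rk\sE + \deg\sE'/\rk\sE' = \mu(\sE) + \mu(\sE')$.

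For the semi-stability statement, the cleanest route over a curve in characteristic zero is to reduce to the rank-one case via the relation between semi-stability and ampleness/nefness, or alternatively to invoke the standard fact that semi-stability of a bundle $\sE$ on a smooth projective curve over an algebraically closed field of characteristic zero is equivalent to the nefness of the ``tautological'' $\bQ$-twisted bundle $\sE \langle -\mu(\sE) \rangle$, i.e. to the nefness of $\sO_{\bP(\sE)}(1) \otimes \pi^* A$ for a suitable $\bQ$-divisor $A$ of degree $-\mu(\sE)$ on $T$. Concretely: $\sE$ is semi-stable of slope $\mu$ iff for every finite cover $\nu : T' \to T$ and every line bundle $N$ on $T'$ with $\deg N < \deg \nu^* \sE/\rk\sE = \mu \cdot \deg\nu$, one has $H^0(T', \nu^*\sE \otimes N^{-1}) = 0$; equivalently $\sE \otimes M$ is nef whenever $M$ is a line bundle (or $\bQ$-line bundle) with $\deg M \geq -\mu$. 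Granting this characterization, if $\sE, \sE'$ are semi-stable with slopes $\mu, \mu'$, pick $\bQ$-line bundles $M, M'$ of degrees $-\mu, -\mu'$; then $\sE \otimes M$ and $\sE' \otimes M'$ are nef, hence so is their tensor product $(\sE \otimes M) \otimes (\sE' \otimes M') = (\sE \otimes \sE') \otimes (M \otimes M')$, where $\deg(M \otimes M') = -(\mu + \mu') = -\mu(\sE \otimes \sE')$ by the first part. This says exactly that $\sE \otimes \sE'$ is semi-stable.

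The one technical point to be careful about — and what I expect to be the main obstacle in writing this cleanly — is the equivalence ``$\sE$ semi-stable of slope $\mu \iff \sE \otimes M$ nef for $\deg M \geq -\mu$'' together with the fact that tensor products of nef bundles are nef. The latter is a standard consequence of the fact that nef bundles form a cone closed under tensor product (via $\sO_{\bP(\sE)}(1)$ being nef, pullbacks, and the splitting principle / Hartshorne's criterion over curves); the former, that nefness of $\sE$ on a curve is equivalent to every quotient having non-negative degree, i.e. to semi-positivity of slopes of quotients, is Hartshorne's theorem on ample/nef bundles on curves. Since the paper is working over an algebraically closed field of characteristic zero, these are all available, and the cleanest presentation simply cites them; alternatively, one can give the classical direct argument: if $\sF \subseteq \sE \otimes \sE'$ were destabilizing, pull back along a suitable cyclic cover to reduce to the case where one wants to produce a map from a line bundle of too-large degree into $\sE \otimes \sE'$, then use that a nonzero map $N \to \sE \otimes \sE'$ with $N$ a line bundle of slope $> \mu + \mu'$ induces, via the projection formula, a nonzero map $N \otimes (\sE')^\vee \to \sE$, and a sub-line-bundle of $N \otimes (\sE')^\vee$ of slope $> \mu$ (which exists by an averaging argument on $(\sE')^\vee$, using semi-stability of $\sE'$ hence of $(\sE')^\vee$), contradicting semi-stability of $\sE$. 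I would favor citing the nefness criterion, as it is shortest and the paper's context (Viehweg-style positivity arguments) makes it the natural language.
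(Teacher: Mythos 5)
Your argument is correct and is essentially the paper's: the slope additivity comes from the same determinant identity $\det(\sE\otimes\sE')\cong(\det\sE)^{\otimes\rk\sE'}\otimes(\det\sE')^{\otimes\rk\sE}$, and for semistability the paper simply cites \cite[Corollary 6.4.14]{Lazarsfeld_Positivity_in_algebraic_geometry_II}, i.e.\ exactly the standard characteristic-zero tensor-product theorem that you propose to obtain from the adjacent nefness characterization of semistability in the same reference. The only slip is in your parenthetical reformulation via covers, where the inequality should read $\deg N > \mu\cdot\deg\nu$ (no sub-line-bundles of too large degree), not $\deg N < \mu\cdot\deg\nu$.
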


\begin{proof} For the first statement, just remark that $\det(\sE \otimes \sE')=\det(\sE)^{\otimes \rk(\sE') } \otimes \det(\sE')^{\otimes \rk(\sE)}$. The second statement is \cite[Corollary 6.4.14]{Lazarsfeld_Positivity_in_algebraic_geometry_II}.

\end{proof}

\begin{proposition}
\label{prop:semi-stable_coho_vanishing}
In the situation of \autoref{notation:semi_stable_vector_bundle},  if $\sE$ is semi-stable  with $\mu(\sE)> 2g -2$, then $h^1(\sE) = h^0(\omega_T \otimes \sE^*)=0$. 
\end{proposition}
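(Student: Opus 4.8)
The plan is to reduce everything to the single elementary fact that a semi-stable vector bundle of negative slope on a projective curve has no nonzero global sections, and then apply Serre duality.

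First I would record the equality $h^1(\sE) = h^0(\omega_T \otimes \sE^*)$, which is just Serre duality on the smooth projective curve $T$; this reduces the statement to proving $h^0(\omega_T \otimes \sE^*) = 0$. Next, I would observe that $\sE^*$ is semi-stable: a quotient of $\sE^*$ of smaller slope would dualize to a sub-bundle of $\sE$ of larger slope, contradicting semi-stability of $\sE$. Tensoring by the line bundle $\omega_T$ preserves semi-stability and shifts the slope by $\deg \omega_T = 2g-2$, so $\sF := \omega_T \otimes \sE^*$ is semi-stable with
\[
\mu(\sF) = \deg \omega_T - \mu(\sE) = (2g-2) - \mu(\sE) < 0
\]
by the hypothesis $\mu(\sE) > 2g-2$.

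Finally I would argue that a semi-stable bundle $\sF$ with $\mu(\sF) < 0$ satisfies $h^0(\sF) = 0$: a nonzero section $s \in H^0(T, \sF)$ gives a nonzero map $\sO_T \to \sF$, whose image is an invertible subsheaf $\sL \subseteq \sF$ that is a quotient of $\sO_T$, hence $\deg \sL \geq 0$, so $\mu(\sL) \geq 0 > \mu(\sF)$, contradicting semi-stability. Therefore $h^0(\sF) = 0$, and combining with Serre duality gives $h^1(\sE) = 0$ as well. There is no real obstacle here — the only point requiring a word of care is the semi-stability of $\sE^*$ (equivalently, the compatibility of the Harder--Narasimhan formalism with dualization on curves), which is standard and can be cited from \cite{Lazarsfeld_Positivity_in_algebraic_geometry_II} alongside the tensor-product statement already invoked in \autoref{prop:tensor_product_semi_stable}.
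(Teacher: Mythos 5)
Your proof is correct and follows essentially the same route as the paper: reduce to $h^0(\omega_T \otimes \sE^*)=0$ via Serre duality, note that $\omega_T \otimes \sE^*$ is semi-stable of negative slope, and conclude that it has no nonzero sections since a section would yield a degree-$\geq 0$ subsheaf. You spell out the intermediate steps (semi-stability of the dual, the slope shift under twisting) a bit more explicitly than the paper does, but the argument is the same.
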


\begin{proof}
We prove the $h^0$ vanishing, and then the $h^1$ vanishing follows by Serre-duality. The bundle $\omega_T \otimes \sE^*$ is also semi-stable and $\mu(\omega_T \otimes \sE^*) = \mu(\omega_T) - \mu(\sE) < 0$. Hence, $h^0(\omega_T \otimes \sE^*)=0$, as a section would give a subbundle of $\sE$ of slope $0$. 
\end{proof}

\begin{proposition}
\label{prop:semi_stable_globally_generated}
In the situation of \autoref{notation:semi_stable_vector_bundle}, if $\sE$ is  semi-stable with $\mu(\sE) \geq 2g$, then $\sE$ is globally generated. 
\end{proposition}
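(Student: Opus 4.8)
The plan is to reduce global generation to a cohomology vanishing and then quote \autoref{prop:semi-stable_coho_vanishing}. Recall that a vector bundle $\sE$ on a projective variety is globally generated provided that for every closed point $p$ the evaluation map $H^0(T,\sE)\to \sE\otimes k(p)$ is surjective. On a curve this can be tested fiberwise using the short exact sequence
\[
0 \to \sE\otimes \sO_T(-p) \to \sE \to \sE|_p \to 0 ,
\]
whose associated long exact sequence shows that evaluation at $p$ is surjective as soon as $H^1\bigl(T,\sE\otimes\sO_T(-p)\bigr)=0$. Thus it suffices to establish this vanishing for every $p\in T$.

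Since $\sO_T(-p)$ is a line bundle, it is (trivially) semi-stable, so by \autoref{prop:tensor_product_semi_stable} the bundle $\sE\otimes\sO_T(-p)$ is again semi-stable, with slope
\[
\mu\bigl(\sE\otimes\sO_T(-p)\bigr) = \mu(\sE) + \mu\bigl(\sO_T(-p)\bigr) = \mu(\sE) - 1 \geq 2g-1 > 2g-2 .
\]
Hence \autoref{prop:semi-stable_coho_vanishing} applies to $\sE\otimes\sO_T(-p)$ and yields $h^1\bigl(T,\sE\otimes\sO_T(-p)\bigr)=0$. Combined with the first paragraph, this proves that $\sE$ is globally generated.

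There is essentially no obstacle: the argument is a direct unwinding of definitions together with the two previously recorded facts. The only points worth flagging are (i) that the semi-stability of $\sE\otimes\sO_T(-p)$ could also be seen by hand, since subsheaves of $\sE\otimes\sO_T(-p)$ are precisely the $\sO_T(-p)$-twists of subsheaves of $\sE$ and all slopes drop by exactly $1$, so the full strength of \autoref{prop:tensor_product_semi_stable} is not really needed; and (ii) that the hypothesis $\mu(\sE)\geq 2g$ is used only to secure the strict inequality $\mu(\sE)-1>2g-2$ after twisting by $\sO_T(-p)$, which is exactly what \autoref{prop:semi-stable_coho_vanishing} requires.
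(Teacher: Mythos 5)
Your proof is correct and follows essentially the same route as the paper: both arguments reduce global generation at a point $t$ to the vanishing $h^1(T,\sE(-t))=0$, which holds because the twist is semi-stable of slope $\mu(\sE)-1\geq 2g-1>2g-2$. The only cosmetic difference is that you conclude directly from the connecting map in the long exact sequence, whereas the paper packages the same vanishing through a Riemann--Roch dimension count.
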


\begin{proof}
Fix a closed point $t \in T$, and let $\sG$ be either $\sE$ or $\sE(-t)$.
Riemann-Roch tells us that
\begin{equation*}
h^0(\sG)
\expl{=}{\autoref{prop:semi-stable_coho_vanishing}}
h^0(\sG) - h^1(\sG)
= \deg \sG + \rk(\sG) (1 - g) = \rk(\sG)(\mu(\sG) + 1-g).
\end{equation*}
In particular, $h^0( \sE) = h^0(\sE(-t)) + \rk(\sG)$. So, by looking at the usual exact sequence:
\begin{equation*}
\xymatrix{
0 \ar[r] & H^0(T, \sE(-t)) \ar[r] & H^0(T, \sE) \ar[r] & H^0(k(t), \sE \otimes k(t)) 
}
\end{equation*}
we see that $\sE$ is in fact generated at $t$. As $t$ was chosen arbitrarily,  $\sE$ is globally generated.
\end{proof}

\begin{notation}
\label{notation:HN_basic}
In the situation of \autoref{notation:semi_stable_vector_bundle},
let $0=\sF^0  \subsetneq \sF^1 \subsetneq \dots \subsetneq \sF^{\ell-1} \subsetneq \sF^{\ell} = \sE$ be the Harder-Narasimhan filtration \cite[Lem 1.3.7 \& 1.3.8]{Harder_Narasimhan_On_the_cohomology_groups_of_moduli_spaces_of_vector_bundles_on_curves} of $\sE$. Set $\mu_i:= \mu\left(\sG^i\right)$ and $r_i:=\rk\left(\sG^i \right)$, where $\sG^i:= \sF^i /\sF^{i-1}$. In particular, we have $\mu_1 > \mu_2 >\dots > \mu_{\ell}$ \cite[Lem 1.3.8]{Harder_Narasimhan_On_the_cohomology_groups_of_moduli_spaces_of_vector_bundles_on_curves}.
\end{notation}

\begin{remark}
\label{weight_versus_HN}
When $T=\mathbb{P}^1$, we have a canonical decomposition 
$$\sE=\bigoplus_{1\leq j\leq l} \sO_T(a_j)\otimes \sO_T^{\oplus n_j},$$ with $a_j< a_{j+1}$. In this case, the Harder-Narasimhan filtration turns out to be $$\sF^i=\bigoplus_{1\leq j\leq i} \sO_T(a_j)\otimes \sO_T^{\oplus n_j},$$ and the slope $\mu_i$ is just $a_i$. 

In the study of K-stability, a key situation is when $T=\bP^1$ is the base of a test configuration $f\colon (\sX,\sL) \to T$ trivially compactified at infinity, and $\sE=f_*(q\sL)$ for some $q>0$. The classical localization formula, see for instance \cite[Example 1]{Wang_GIT}, implies that the Harder-Narasimhan filtration of $\sE$ is equal to the weight filtration with respect to the $\bG_m$-action induced by the test configuration. We can thus think at the Harder-Narasimhan filtration as a generalization of the weight filtration. 

On the other hand, we also note that the Harder-Harasimhan filtration is much more general than the weight filtration as it exists for any family not only for test configuartions, in particular for non-isotrivial families over arbitrary curve bases. This is a crucial point for our argument.
\end{remark}

\begin{proposition}
\label{prop:non_semi_stable_global_generation}
In the situation of \autoref{notation:HN_basic}, if $\mu_i\geq 2g$ for every $i$, then:
\begin{enumerate}
 \item  $H^1(T,\sE)= 0$
 \item $\sE$ is globally generated. 
\end{enumerate}

\end{proposition}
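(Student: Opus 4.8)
The plan is to prove both assertions simultaneously by induction on the index $i$ of the Harder-Narasimhan filtration: I claim that for every $i$ one has $H^1(T,\sF^i)=0$ and $\sF^i$ is globally generated. Taking $i=\ell$, so $\sF^\ell = \sE$, then yields the proposition. The base case $i=0$ is trivial, since $\sF^0=0$.

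For the inductive step I would work with the defining short exact sequence of the filtration,
\begin{equation*}
0 \longrightarrow \sF^{i-1} \longrightarrow \sF^i \longrightarrow \sG^i \longrightarrow 0 .
\end{equation*}
By construction $\sG^i$ is semistable of slope $\mu_i \geq 2g$, hence in particular $\mu(\sG^i) > 2g-2$, so \autoref{prop:semi-stable_coho_vanishing} gives $H^1(T,\sG^i)=0$, and $\mu(\sG^i)\geq 2g$ lets \autoref{prop:semi_stable_globally_generated} give that $\sG^i$ is globally generated. Feeding the inductive hypothesis $H^1(T,\sF^{i-1})=0$ into the long exact cohomology sequence of the displayed sequence immediately forces $H^1(T,\sF^i)=0$, which is assertion (1) at level $i$; moreover the same vanishing makes the restriction map $H^0(T,\sF^i)\to H^0(T,\sG^i)$ surjective, so the induced sequence $0\to H^0(T,\sF^{i-1})\to H^0(T,\sF^i)\to H^0(T,\sG^i)\to 0$ is exact.

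It remains to deduce that $\sF^i$ is globally generated. I would fix an arbitrary closed point $t\in T$, restrict the above short exact sequence to the fibre at $t$, and compare it with the exact sequence on global sections through the three evaluation maps; this produces a commutative diagram with exact rows. The right-hand evaluation map $H^0(T,\sG^i)\to \sG^i\otimes k(t)$ is surjective because $\sG^i$ is globally generated, and the left-hand one $H^0(T,\sF^{i-1})\to \sF^{i-1}\otimes k(t)$ is surjective by the inductive hypothesis. A direct diagram chase — lift a given element of $\sF^i\otimes k(t)$ first to $H^0(T,\sG^i)$ and then to $H^0(T,\sF^i)$, and correct the resulting error, which lies in $\sF^{i-1}\otimes k(t)$, by a suitable section of $\sF^{i-1}$ — then shows that $H^0(T,\sF^i)\to \sF^i\otimes k(t)$ is surjective. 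As $t$ was arbitrary, $\sF^i$ is globally generated, completing the induction.

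I do not anticipate a genuine obstacle: the two nontrivial inputs are precisely \autoref{prop:semi-stable_coho_vanishing} and \autoref{prop:semi_stable_globally_generated}, already available. The only subtlety worth flagging is that global generation is not inherited by extensions for free; the diagram chase really uses the vanishing $H^1(T,\sF^{i-1})=0$, which is exactly why it is cleaner to prove (1) and (2) together in one induction rather than establishing them separately.
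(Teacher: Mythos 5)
Your proof is correct and follows essentially the same strategy as the paper: an induction along the Harder--Narasimhan filtration, feeding each semistable graded piece into \autoref{prop:semi-stable_coho_vanishing} and \autoref{prop:semi_stable_globally_generated} and using the resulting $H^1$-vanishing to lift generating sections. The only (immaterial) difference is that the paper peels off the semistable piece $\sF^1$ as the sub and inducts on the quotient $\sE/\sF^1$, whereas you build up through the $\sF^i$ with the semistable piece as the quotient.
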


\begin{proof}
We prove both statements at once, by induction on the length $\ell$ of the Harder-Narasimhan filtration of $\sE$. If $\ell=1$, both statements were shown in \autoref{prop:semi-stable_coho_vanishing}. So, we may assume that $\ell>1$. However, then we may include $\sE$ in an exact sequence
\begin{equation}
\label{eq:non_semi_stable_global_generation:induction_seq}
\xymatrix{
0 \ar[r] & \sG \ar[r] & \sE \ar[r] & \sE' \ar[r] & 0
},
\end{equation}
where $\sG$ is semi-stable of rank at least $2g$ and $\sE'$ also satisfies the assumption of the proposition, but with $\ell$ replaced by $\ell-1$. Hence we know both statements for $\sE$ replaced by $\sE'$. Applying now long exact sequence of cohomology to \autoref{eq:non_semi_stable_global_generation:induction_seq} yields:
\begin{equation}
\label{eq:non_semi_stable_global_generation:long_ex}
\xymatrix@C=15pt{
0 \ar[r] &  H^0(T, \sG) \ar[r] & H^0(T, \sE) \ar[r] & H^0(T, \sE') \ar[r] &
0 = H^1(T, \sG) \ar[r] & H^1(T, \sE) \ar[r] & H^1(T, \sE')=0,
}
\end{equation}
where the two vanishings are due to \autoref{prop:semi-stable_coho_vanishing} and induction, respectively. 
This proves our cohomology vanishing statement. 

For the global generation statement, we just use that both $\sG$ and $\sE'$ are globally generated again by \autoref{prop:non_semi_stable_global_generation} and induction respectively. Hence, according to \autoref{eq:non_semi_stable_global_generation:long_ex}, the sections generating these two bundles at a given $t$ give a section generating  $\sE$ at $t$. 
 
\end{proof}

After the above basic statements, we work towards \autoref{prop:vectP2}. This is a statement about tensor powers of vector bundles of positive degree. In particular, we need to understand the Harder-Narasimhan filtration of a tensor power, in terms of the Harder-Narasimham filtration of the original vector bundle. The necessary notation is introduced in \autoref{notation:HN}.

\begin{notation}
\label{notation:HN}
In the situation of \autoref{notation:HN_basic}, fix also a closed point $t \in T$, which will be the point at which the global sections we are interested in would need to become simple tensors. Then the Harder Narasimhan filtration induces a filtration $0=\sF^0_t  \subsetneq \sF^1_t \subsetneq \dots \subsetneq \sF^{\ell-1}_t \subsetneq \sF^{\ell}_t = \sE_t$. Let $\{e_i\}$ be a basis of $\sE_t$ adapted to this filtration. By this, we mean that the intersection of $\sF_t^j$ and $\{e_i\}$ gives a basis of $\sF_t^j$ for every $j$.

Fix an integer $m>0$, which will be the power of the tensor-power of $\sE$ that we are examining. We will parametrize subsheaves of $\sE^{\otimes m}$ that are tensor products of the $\sF^i$'s by elements of $\{1,\dots,\ell\}^m$. Because of \autoref{prop:non_semi_stable_global_generation}, we will be particularly interested in subsheaves with slope at least $2g$. So, consider the subset of $\{1, \dots, \ell\}^m$ defined by:
\begin{equation*}
 S_m:= \left\{(s_1, \dots, s_m)\in \{1, \dots, \ell\}^m \ \left| \quad \displaystyle\sum_{j=1}^m \mu_{s_j} \geq 2g \right.\right\}
\end{equation*}
As we are interested in a filtration of $\sE^{\otimes m}$, we will need an ordering on $\{1, \dots, \ell\}^m$. First we introduce a partial ordering:  for any two $s, s' \in \{1, \dots, \ell\}^m $, we say that
\begin{itemize}
 \item  $s \geq s'$ if $s_j \geq s_j'$ for all $1 \leq j \leq m$, and
 \item  $s > s'$ if $s \geq s'$ and there is a $1 \leq j \leq m$ such that $s_j > s_j'$.
\end{itemize}
Note that $S_m$ is closed in the downwards direction, that is, whenever $s \in S_m$, and $s' <s$, then $s' \in S_m$. 
  
We also assign a \emph{minimal slope} $\nu(s)$ to $s \in \{1, \dots, \ell\}^m $  given by $\sum_{j=1}^m \mu_{s_j}$, which will be the actual minimal slope of the corresponding sheaf in the Harder-Narasimham filtration of $\sE^{\otimes m}$. Note that $\nu(s) \geq 2g$ if and only if $s \in S_m$. 

After the above, we arrange the elements of $\{1, \dots, \ell\}^m$ in a  decreasing order with respect to $\nu( \_ )$, such that $S_m$ consists of the first $d$ elements
\begin{equation*}
S_m= \left\{s^1, \dots, s^d\right\}\textrm{, and }\{1, \dots, \ell\}^m = S_m \cup \left\{s^{d+1}, \dots, s^e\right\},
\end{equation*}
where  $\nu(s^c)$ is a (not necessarily strictly) decreasing function of $c$. In particular, 
whenever $s^{c'} < s^{c}$, then $c'<c$. As expected, $s^c_j$ denotes the coordinates of $s^c$, that is,  $s^c = ( s^c_1,\dots, s^c_m)$.

For any integer $1 \leq c \leq e$, we define then the following subbundles of $\sE^{\otimes m}$
\begin{equation*}
 \widetilde{\sF}^c:=\bigotimes_{j=1}^m \sF^{s^c_j} \textrm{, and } \sH^c:= \sum_{i=1}^c \widetilde{\sF}^i .
\end{equation*}
In fact, it is not clear from the definition that  $\sH^c$ is a subbundle as oppoed to just a coherent subsheaf. We prove in \autoref{prop:HN_Tensor_product} that it is indeed a subbundle. For simplicity we also define 
\begin{equation*}
\widetilde{\sG}^c:= \bigotimes_{j=1}^m \sG^{s^c_j}
\end{equation*}
Recall that
\begin{equation}
\label{eq:HN:G} 
\rk \left( \widetilde{\sG}^c \right)  = \prod_{j=1}^m r_{s^c_j}
\textrm{, and }
\mu\left(\widetilde{\sG}^c\right)=\sum_{j=1}^m \mu\left( \sG^{s^c_j}\right) = \sum_{j=1}^m \mu_{s_j^c}.
\end{equation}

After the above notational preparation, it is quite straight-forward to state and prove the description of the Harder-Narasimham filtration of $\sE^{\otimes m}$ that we need:

\end{notation}

\begin{proposition}
\label{prop:HN_Tensor_product}
In the situation of \autoref{notation:HN}:
\begin{enumerate}
\item \label{itm:HN_Tensor_product:subbundle} For each integer $1 \leq c \leq e$,  $\sH^c$ is a subbundle of $\sE^{\otimes m}$.
\item \label{itm:HN_Tensor_product:filtration} The filtration $0 \subsetneq \sH^1 \subsetneq \sH^2 \subsetneq \dots \subsetneq \sH^d$ is a refinement of the Harder-Narsimhan filtration of $\sH^d$. More precisely, the quotients are semi-stable with (not necessarily strictly) decreasing slopes. Furthermore, all these slopes are at least  $2g$. Even more precisely, for each integer $1 \leq c \leq e$,
\begin{equation*}
\sH^c/\sH^{c-1} \cong \widetilde{\sG}^c.
\end{equation*}

\item \label{itm:HN_Tensor_product:simple_tensor} For each integer $1 \leq c \leq e$,  $\sH^c_t \subseteq \sE_t^{\otimes m}$ is spanned by simple tensors of $e_i$.
\end{enumerate}

\begin{proof}
For each integer $1 \leq c \leq e$ we have a surjective homomorphism:
\begin{equation}
\label{eq:HN_Tensor_product:isomorphism_theorem}
\sH^c / \sH^{c-1} 
\expl{=}{by definition}
\left. \left(\sH^{c-1} + \widetilde{\sF}^c\right)\right/ \sH^{c-1}  
\explshift{-40pt}{\cong}{isomorphism theorem} 
\widetilde{\sF}^c\left/ \left(\widetilde{\sF}^c \cap \sH^{c-1} \right) \right.
\expl{\twoheadleftarrow}{$\sum_{c'< c} \widetilde{\sF}^{c'} \subseteq \widetilde{\sF}^c \cap \sH^{c-1} $}
\widetilde{\sF}^c \left/  \left( \sum_{s_{c'}< s_{c}} \widetilde{\sF}^{c'} \right) \right.
\cong 
\widetilde{\sG}^c
\end{equation}
So, 
\begin{multline}
\label{eq:non_semi_stable_global_generation:inequality}
\rk \left( \sE^{\otimes m} \right) = \sum_{c=1}^e \rk \left( \sH^c/ \sH^{c-1}\right) 
\expl{\leq}{\autoref{eq:HN_Tensor_product:isomorphism_theorem}}
\sum_{c=1}^e \rk \left( \widetilde{\sG}^c\right)
= \sum_{(i_1, \dots, i_m) \in \{1,\dots,\ell\}^m} \left( \prod_{j=1}^m r_{i_j} \right) 
\\ = \left( \sum_{i=1}^{\ell} r_i \right)^m 
 = \rk \left( \sE^{\otimes m} \right).
\end{multline}
Hence, we have equality in the middle of \autoref{eq:non_semi_stable_global_generation:inequality}, and hence the last homomorphism in \autoref{eq:HN_Tensor_product:isomorphism_theorem} is an isomorphism for all $c$. 

In particular, for all $1 \leq c \leq e$, there is an exact sequence:
\begin{equation}
\label{eq:HN_Tensor_product:extension}
\xymatrix{
0 \ar[r] &  \sH^{c-1} \ar[r] & \sH^c  \ar[r] & \widetilde{\sG}^c \ar[r] & 0
}
\end{equation}
This concludes \autoref{itm:HN_Tensor_product:subbundle}, as both $\sH^c$ and $\sE^{\otimes m} / \sH^c$ are iterated extensions of vector bundles, hence they are both vector bundles. 

Point \autoref{itm:HN_Tensor_product:simple_tensor} also follows immediately from the definition of $\sH^c$ as it is a sum of product type subbundles.  Lastly,  \autoref{itm:HN_Tensor_product:filtration} also follows directly from \autoref{eq:HN_Tensor_product:extension}.

\end{proof}

\end{proposition}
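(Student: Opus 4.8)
The plan is to reduce everything to the single statement $\sH^c/\sH^{c-1} \cong \widetilde{\sG}^c$ of part \autoref{itm:HN_Tensor_product:filtration}, from which parts \autoref{itm:HN_Tensor_product:subbundle} and \autoref{itm:HN_Tensor_product:simple_tensor}, as well as the remaining content of \autoref{itm:HN_Tensor_product:filtration}, follow with little extra effort.

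First I would produce a canonical surjection $\widetilde{\sG}^c \twoheadrightarrow \sH^c/\sH^{c-1}$ for each $c$. By the second isomorphism theorem, $\sH^c/\sH^{c-1} = (\sH^{c-1} + \widetilde{\sF}^c)/\sH^{c-1} \cong \widetilde{\sF}^c/(\widetilde{\sF}^c \cap \sH^{c-1})$. Since $\sF^a \subseteq \sF^b$ for $a \leq b$, we have $\widetilde{\sF}^{c'} \subseteq \widetilde{\sF}^c$ whenever $s^{c'} \leq s^c$ componentwise, and as the total order was chosen so that $s^{c'} < s^c$ implies $c' < c$, this yields $\sum_{s^{c'} < s^c} \widetilde{\sF}^{c'} \subseteq \widetilde{\sF}^c \cap \sH^{c-1}$, hence a surjection $\widetilde{\sF}^c / \sum_{s^{c'} < s^c}\widetilde{\sF}^{c'} \twoheadrightarrow \sH^c/\sH^{c-1}$. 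The source equals $\bigotimes_{j=1}^m (\sF^{s^c_j}/\sF^{s^c_j - 1}) = \widetilde{\sG}^c$ by the standard compatibility of tensor products with filtrations (quotient $\bigotimes_j \sF^{s^c_j}$ by the sum of the subsheaves obtained by lowering one index by one step, using $\sF^0 = 0$). The crux is then to upgrade these surjections to isomorphisms, which I would do by a global rank count: summing over all $1 \leq c \leq e$, $\rk \sE^{\otimes m} = \sum_c \rk(\sH^c/\sH^{c-1}) \leq \sum_c \rk \widetilde{\sG}^c = \sum_{(i_1,\dots,i_m)} \prod_j r_{i_j} = \bigl(\sum_i r_i\bigr)^m = \rk \sE^{\otimes m}$, so equality holds term by term and every surjection $\widetilde{\sG}^c \twoheadrightarrow \sH^c/\sH^{c-1}$ is an isomorphism. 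This rank count is the step I expect to need the most care, though it is mostly bookkeeping: one must be sure that the $\widetilde{\sF}^c$ for $c=1,\dots,e$ really exhaust all of $\{1,\dots,\ell\}^m$ and that the partial order embeds into the chosen total order.

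The rest is formal. Part \autoref{itm:HN_Tensor_product:subbundle}: the exact sequences $0 \to \sH^{c-1} \to \sH^c \to \widetilde{\sG}^c \to 0$ together with $\sH^0 = 0$ exhibit each $\sH^c$ (and each $\sE^{\otimes m}/\sH^c$) as an iterated extension of vector bundles on a smooth curve, hence a vector bundle. For the Harder-Narasimhan claim in part \autoref{itm:HN_Tensor_product:filtration}: $\widetilde{\sG}^c = \bigotimes_j \sG^{s^c_j}$ is semistable by \autoref{prop:tensor_product_semi_stable}, of slope $\sum_j \mu_{s^c_j} = \nu(s^c)$, which is weakly decreasing in $c$ by construction of the ordering and is $\geq 2g$ for $c \leq d$ since $s^c \in S_m$; a filtration with semistable graded pieces of weakly decreasing slopes refines the Harder-Narasimhan filtration, which is recovered by amalgamating maximal runs of pieces of equal slope (an extension of semistable bundles of a common slope is again semistable of that slope). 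Part \autoref{itm:HN_Tensor_product:simple_tensor}: $\sH^d$ is a finite sum of tensor products of the $\sF^i$; restriction to $t$ commutes with tensor products and with finite sums of subsheaves, and since $\{e_i\}$ is adapted to the filtration each $(\sF^i)_t$ is the span of a subset of $\{e_i\}$, so $(\sH^d)_t$ is spanned by simple tensors in the $e_i$.
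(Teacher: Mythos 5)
Your proposal is correct and follows essentially the same route as the paper's own proof: the surjection $\widetilde{\sG}^c \twoheadrightarrow \sH^c/\sH^{c-1}$ via the isomorphism theorem, the global rank count forcing these to be isomorphisms, and the formal deduction of all three parts from the resulting exact sequences. The only difference is that you spell out a few steps the paper leaves implicit (the identification of $\widetilde{\sF}^c/\sum_{s^{c'}<s^c}\widetilde{\sF}^{c'}$ with $\widetilde{\sG}^c$, and the amalgamation of equal-slope pieces to recover the Harder--Narasimhan filtration), which is fine.
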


\begin{notation}
\label{notation:globally_generated_part}
In the situation of \autoref{notation:HN} (in fact, for introducing the following notation we only need the first two paragraphs of \autoref{notation:HN}),  let $G_{m,t}$ be the $k$-linear subspace of 
\begin{equation*}
\im \big( \underbrace{H^0(T,\sE^{\otimes m}) \to \left(\sE^{\otimes m}\right)_t}_{\textrm{evaluation map}} \big)
\end{equation*}
spanned by pure tensor in the $e_i$. 
 
\end{notation}

\begin{theorem}\label{prop:vectP2}
In the situation of \autoref{notation:globally_generated_part},
if $\deg \sE>0$, then 
$$
\lim_{m\to \infty}\frac{\dim_k G_{m,t}}{\dim_k \sE_t^{\otimes m}}=1.
$$
\end{theorem}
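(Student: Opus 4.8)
The plan is to bound $\dim_k G_{m,t}$ from below by $\rk\sH^d$, where $\sH^d\subseteq\sE^{\otimes m}$ is the subbundle constructed in \autoref{notation:HN}, and then to recognise the resulting ratio as the probability that a sum of i.i.d.\ random variables of positive mean stays above the fixed constant $2g$. First I would record that $\sH^d$ is globally generated: by \autoref{prop:HN_Tensor_product}\autoref{itm:HN_Tensor_product:filtration} it is a successive extension of the semistable bundles $\widetilde{\sG}^c$ ($1\le c\le d$), each of slope $\sum_{j=1}^m\mu_{s^c_j}\ge 2g$ since $s^c\in S_m$, so \autoref{prop:non_semi_stable_global_generation} applies and the evaluation $H^0(T,\sH^d)\to\sH^d_t$ is surjective. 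Composing with $\sH^d\hookrightarrow\sE^{\otimes m}$ shows $\sH^d_t$ lies in the image of $H^0(T,\sE^{\otimes m})\to(\sE^{\otimes m})_t$. By \autoref{prop:HN_Tensor_product}\autoref{itm:HN_Tensor_product:simple_tensor}, $\sH^d_t$ is spanned by simple tensors in the $e_i$; each such simple tensor lies in the image of the evaluation map, hence in $G_{m,t}$ by definition, so $\sH^d_t\subseteq G_{m,t}$. Combining \autoref{prop:HN_Tensor_product}\autoref{itm:HN_Tensor_product:filtration} with \autoref{eq:HN:G},
\[
\dim_k G_{m,t}\ \ge\ \rk\sH^d\ =\ \sum_{c=1}^d\rk\widetilde{\sG}^c\ =\ \sum_{s\in S_m}\ \prod_{j=1}^m r_{s_j}.
\]
Since $G_{m,t}\subseteq(\sE^{\otimes m})_t$ and $\dim_k\sE_t^{\otimes m}=(\rk\sE)^m=\bigl(\sum_{i=1}^\ell r_i\bigr)^m$, it suffices to prove that this sum divided by $(\rk\sE)^m$ tends to $1$.

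Next I would translate this into probability. Put $R:=\rk\sE=\sum_{i=1}^\ell r_i$ and $p_i:=r_i/R$, so $(p_1,\dots,p_\ell)$ is a probability vector, and let $X_1,X_2,\dots$ be i.i.d.\ with $\bP(X_j=i)=p_i$. Because $S_m$ is exactly the set of tuples $s$ with $\sum_{j=1}^m\mu_{s_j}\ge 2g$, one has $\tfrac{1}{R^m}\sum_{s\in S_m}\prod_{j=1}^m r_{s_j}=\bP\bigl(\sum_{j=1}^m\mu_{X_j}\ge 2g\bigr)$. Using $\deg\sG^i=r_i\mu_i$ and additivity of degree along the Harder--Narasimhan filtration, the common mean is $\bE[\mu_{X_j}]=\sum_{i=1}^\ell p_i\mu_i=\tfrac1R\sum_{i=1}^\ell\deg\sG^i=\tfrac{\deg\sE}{R}=\mu(\sE)>0$ by hypothesis. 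The $\mu_{X_j}$ are bounded (they take finitely many values), so $\bE\bigl[\sum_{j=1}^m\mu_{X_j}\bigr]=m\,\mu(\sE)\to+\infty$ while the threshold $2g$ is fixed; hence by Chebyshev's inequality (equivalently the weak law of large numbers, or the De Moivre--Laplace/central limit theorem) $\bP\bigl(\sum_{j=1}^m\mu_{X_j}<2g\bigr)\to 0$ as $m\to\infty$. The ratio in the statement, squeezed between this probability and $1$, therefore converges to $1$.

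Granting \autoref{prop:HN_Tensor_product} and \autoref{prop:non_semi_stable_global_generation}, this argument has no real obstacle; the only points that need care are (i) verifying that an individual simple tensor sitting inside the globally generated fibre $\sH^d_t$ actually belongs to $G_{m,t}$, which is immediate once one knows $\sH^d_t$ itself lies in the image of the evaluation map, and (ii) the observation that $\mu(\sE)>0$ makes the typical value of $\sum_j\mu_{X_j}$ grow linearly past the fixed bound $2g$, so that only a law of large numbers is needed rather than a sharp Gaussian estimate. (The central limit theorem is emphasised in the paper because in the main application the same combinatorial sum recurs as a literal coin-flipping count, where the refined asymptotics is convenient; for the present statement Chebyshev is enough.)
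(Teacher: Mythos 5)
Your proposal is correct and follows essentially the same route as the paper: bound $G_{m,t}$ below by the fibre of the globally generated piece $\sH^d$ via \autoref{prop:HN_Tensor_product} and \autoref{prop:non_semi_stable_global_generation}, rewrite $\rk\sH^d/\rk\sE^{\otimes m}$ as the probability $P(\sum_j \mu_{X_j}\ge 2g)$ for i.i.d.\ variables with mean $\mu(\sE)>0$, and let $m\to\infty$. The only difference is that you invoke Chebyshev's inequality where the paper uses the Central Limit Theorem, and the paper itself notes in the remark following the proof that Chebyshev suffices, so this is not a substantive divergence.
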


\begin{proof}
Combining \autoref{prop:non_semi_stable_global_generation} and \autoref{prop:HN_Tensor_product} yields
$
\sH^d_t\subseteq G_{m,t}.
$
So, it is enough to prove that
$$
\lim_{m\to \infty}\frac{\rk \sH^d}{\rk \sE^{\otimes m}}=1.
$$
By Proposition \autoref{prop:HN_Tensor_product}, item \autoref{itm:HN_Tensor_product:filtration}, and by equation \autoref{eq:HN:G} if we set $r:= \rk \sE$ and $p_i:=\frac{r_i}{r}$, then 
\begin{equation}
\label{eq:vectP2:to_probability}
\frac{\rk \sH^d}{\rk \sE^{\otimes m}}  = \sum_{c =1}^d \frac{\rk \widetilde{\sG}^c}{\rk \sE^{\otimes m}}  = \sum_{c=1}^d \prod_{j=1}^m \frac{r_{s^c_j}}{r} = \sum_{s \in S_m} \prod_{j=1}^m \frac{r_{s_j}}{r} = \sum_{s \in S_m} \prod_{j=1}^m p_{s_j}. 
\end{equation}
As $\sum_{i=1}^{\ell} p_i=1$, we may define a discrete probability space $\sX$ on $\ell$ elements $\{1,\dots, l\}$ with measures $p_1,\dots, p_l$ respectively. Let $X_j$ be a sequence of independent identically distributed random variables of $\sX$ that take value $\mu_i$ on $i$, and let $Z_m:=\sum_{i=j}^m X_j$. With this language, \autoref{eq:vectP2:to_probability} tells us that 
\begin{equation*}
\frac{\rk \sH^d}{\rk \sE^{\otimes m}} = P\left(\sum_{i=j}^m X_j \geq 2g \right)= P\left(Z_m \geq 2g \right),
\end{equation*}
where $P(\dots)$ denotes the probability of the given condition. Hence we are left to show that 
\begin{equation}
\label{eq:vectP2:goal}
\lim_{m \to \infty} P\left(Z_m \geq 2g \right) =1 .
\end{equation}
Consider now, the Central Limit Theorem of probability theory as for example in \cite[Thm 3.4.1]{Durrett_Probability_theory_and_examples}. Note that as $\sX$ is a finite metric space both the expected value $\mu$ and the variance $\sigma^2$ of $X_j$ are finite. Then the central limit theorem states that the random variable $\frac{Z_m - m \mu}{\sqrt{m}}$ weakly converges to a normal distribution $\Phi$ with expected value $0$ and covariance $\sigma^2$. In particular, this induces a convergence on the level of distribution functions, or more precisely we would like to use the following convergence, which holds for each real number $A$ and it is shown for example in \cite[Thm 3.2.5.iv]{Durrett_Probability_theory_and_examples}:
\begin{equation}
\label{eq:vectP2:clt}
\lim_{m \to \infty} P \left( \frac{Z_m - m \mu}{\sqrt{m}} \geq A  \right) = P(\Phi \geq A )
\end{equation}
\begin{inlineclaim} For each fixed real number $A$ there is an integer $m_A>0$ such that for all integers $m \geq m_A$:
\begin{equation}
\label{eq:vectP2:containment}
\frac{Z_m - m \mu}{\sqrt{m}} \geq A \quad  \Rightarrow \quad Z_m \geq 2g
\end{equation}
\end{inlineclaim}

\begin{proof}[Proof of the claim]
For this, note first that 
\begin{equation*}
\mu= \sum_{i=1}^{\ell} \mu_i p_i = \frac{\sum_{i=1}^{\ell} \mu_i r_i}{r} = \frac{\deg \sE}{\rk \sE} > 0. 
\end{equation*}
Hence, if we assume that $ \frac{Z_m - m \mu}{\sqrt{m}} \geq A $ then there is an integer $m_A$ such that 
\begin{equation*}
2g 
\expl{\leq}{for $m \geq m_A$ }
 A \sqrt{m} + m \mu
\expl{\leq}{$ \frac{Z_m - m \mu}{\sqrt{m}} \geq A $} 
Z_m
.
\end{equation*}
\end{proof}
We continue the proof of \autoref{prop:vectP2}: Combining our claim and \autoref{eq:vectP2:clt} we obtain that 
\begin{equation*}
\liminf_{m \to \infty} P \left(Z_m \geq 2g \right) \geq P(\Phi \geq A)
\end{equation*}
As this is true for all real numbers $A$, and $\lim_{A \to - \infty} P( \Phi \geq A)=1$,  we obtain that 
\begin{equation*}
\liminf_{m \to \infty} P \left(Z_m \geq 2g \right) =1 
\expl{\Rightarrow}{$ \forall m \  : \  P \left(Z_m \geq 2g \right) \leq 1$}
\lim_{m \to \infty} P \left(Z_m \geq 2g \right) =1
\end{equation*}
This is exactly the statement of \autoref{eq:vectP2:goal}, which was our goal to prove. 
      
\end{proof}

\begin{remark}
We note that in the proof of \autoref{prop:vectP2}, one can replace the Central Limit Theorem by the weaker statement of Chebyshev's inequality. Indeed, using the notation of the proof, as the variance of $Z_m$ is $m \sigma^2$:
\begin{equation*}
1- P(Z_m - m \mu \geq \sqrt{m}A ) \leq 
P\left(|Z_m - m \mu |\geq \frac{-A}{\sigma}\sqrt{m}\sigma \right) 
\expl{\leq}{Chebyshev's inequalty} 
\frac{\sigma^2}{A^2}  \to 0 \quad (\textrm{as }A \to -\infty).
\end{equation*}
\end{remark}

\section{Ancillary statements}

Here we gather smaller statements that are used multiple times in \autoref{sec:semi_positivity}, \autoref{sec:nefness_threshold} and \autoref{sec:positivity}.

\subsection{Normality of total spaces}

In  the next sections we work mostly in the following setup:

\begin{notation}
\label{notation:CM_semi_positivity}
 Let  $f \colon (X,\Delta) \to T$ satisfy the following assumptions:
 \begin{enumerate}
  \item $T$ is a smooth, projective curve, 
  \item $X$ is a normal, projective variety of dimension $n+1$,
  \item $f$ is a projective and surjective morphism with connected fibers, 
  \item $\Delta$ is an effective $\bQ$-divisor on $X$, 
  \item $-(K_X + \Delta)$ is an  $f$-ample $\bQ$-Cartier divisor. 
  \item $(X_t,\Delta_t)$ is klt for general $t \in T$.  \end{enumerate}
\end{notation}

\begin{lemma}
\label{lem:reducing_reduced_fibers}
In the situation of \autoref{notation:CM_semi_positivity}, there exists a finite morphism from  a smooth projective curve $\tau :S \to T$ such that if $g : Y \to S$ is the normalization of the pullback of $f$, and $\pi : Y \to X$ the induced morphism, then $g$ has reduced fibers, and there is an effective $\bQ$-divisor $\Gamma$ on $Y$ such that 
\begin{enumerate}
 \item \label{itm:reducing_reduced_fibers:canonical_bundle_formula} $\pi^* (K_{X/T} + \Delta) = K_{Y/S} + \Gamma$,
 \item \label{itm:reducing_reduced_fibers:CM} $\lambda_g = \sigma^* \lambda$, where $\lambda_g$ is the CM line bundle for $g$.
\end{enumerate}
\end{lemma}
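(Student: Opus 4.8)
The plan is to obtain $\tau : S \to T$ by repeatedly applying Kawamata's semistable reduction (or the weaker statement that, after a finite base change ramified appropriately, the multiple fibers can be killed). Concretely, for each point $t \in T$ over which $f$ has a non-reduced fiber, let $m_t$ be the least common multiple of the multiplicities of the components of $X_t$; by Abhyankar's lemma / the classical theory of semistable reduction over a curve, a finite cover $\tau : S \to T$ totally ramified to order $m_t$ at each preimage of each such $t$, and étale elsewhere over these points, has the property that the normalization $Y$ of $X \times_T S$ is flat over $S$ with reduced fibers. Since there are only finitely many bad points and $T$ is a smooth projective curve, such a $\tau$ exists (take a cover dominating the ones constructed pointwise, e.g. via a suitable branched cover of $T$). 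This is the construction step; the real content is in (1) and (2).

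For item (1), I would invoke \autoref{prop:relative_canonical_base_change_normal} directly: since $T$ is smooth, $X$ is normal, $\Delta$ is effective with $K_{X/T} + \Delta$ being $\bQ$-Cartier, and $\tau(S) = S \to T$ is a finite morphism from a smooth projective curve meeting the normal-fiber locus $T_{\norm}$ (which is non-empty and open since the generic fiber is normal), the proposition yields an effective $\bQ$-divisor $\Gamma := \Delta_Z$ on $Y = Z$ with $K_{Y/S} + \Gamma \sim_{\bQ} \pi^*(K_{X/T} + \Delta)$, which is exactly point \autoref{itm:relative_canonical_base_change:base_change} of that proposition. (One should double-check that the notation $\Gamma$ here is the $\Delta_Z$ there, and that $\pi = \rho$; these match the diagram preceding \autoref{prop:relative_canonical_base_change_normal}.) So item (1) is essentially a citation once the base change is set up.

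For item (2), I would apply \autoref{prop:CM_base_change} in its case \autoref{itm:CM_base_change:smooth_base}: $T$ is smooth, $\tau : S \to T$ is a finite morphism from a curve, and some fiber of $f$ over $\tau(S)$ is normal and not contained in $\Delta$ (again using non-emptiness of $T_{\norm}$, and arranging $\tau(S)$ to meet it — this is automatic since $\tau$ is surjective onto $T$). The proposition then gives $\tau^* \lambda_{f,\Delta} = \lambda_{g, \Gamma}$ where $g = f_S$ postcomposed with normalization, i.e. exactly $\lambda_g = \sigma^* \lambda$ in the lemma's notation (with $\sigma = \tau$, $\lambda = \lambda_{f,\Delta}$, and $\Gamma$ the divisor from \autoref{prop:relative_canonical_base_change_normal}, consistently with the choice in item (1)).

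The main obstacle is the construction step: producing a single $\tau$ that simultaneously (a) makes all fibers of the normalized pullback reduced and (b) is a finite morphism from a \emph{smooth} projective curve, while keeping track that the $\bQ$-divisor $\Gamma$ furnished by the base-change machinery is the same one witnessing both (1) and (2) — i.e. that \autoref{prop:relative_canonical_base_change_normal} and \autoref{prop:CM_base_change} are being applied with mutually compatible choices. The reduced-fibers part is standard (semistable reduction over a curve, e.g. via ramified covers and Abhyankar's lemma, noting we do not need a full semistable model, only reduced fibers of a possibly non-normal-total-space flat family after normalization), but it is worth a sentence or two to say precisely which statement is being quoted; everything after that is bookkeeping against the two base-change propositions already established.
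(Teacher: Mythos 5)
Your proposal is correct and follows essentially the same route as the paper: a finite cover ramified to order divisible by the lcm of the fiber-component multiplicities at the finitely many bad points, followed by the two citations (\autoref{prop:relative_canonical_base_change_normal}.\autoref{itm:relative_canonical_base_change:base_change} for item (1) and \autoref{prop:CM_base_change}.\autoref{itm:CM_base_change:smooth_base} for item (2)), with $\Gamma$ being the divisor $\Delta_Z$ constructed in \autoref{sec:base_change_relative_canonical_smooth_base}. The paper's proof is exactly this, stated even more tersely, so the only difference is that you spell out the Abhyankar-type justification for reducedness that the paper leaves implicit.
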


\begin{proof}
Let $\tau$ be any finite cover such that at the closed points $t \in T$ over which the fiber $X_t$ is non reduced, the ramification order of $\tau$ is divisible by all the multiplicities of all the components of $\tau$. Then, $g$ will have reduced fibers, and \autoref{sec:base_change_relative_canonical_smooth_base}
 implies the existence of $\Gamma$, denoted by $\Delta_Z$ there. Finally, \autoref{prop:relative_canonical_base_change_normal}.\autoref{itm:relative_canonical_base_change:base_change} yields point \autoref{itm:reducing_reduced_fibers:canonical_bundle_formula}, and \autoref{prop:CM_base_change}.\autoref{itm:CM_base_change:smooth_base} yields point \autoref{itm:reducing_reduced_fibers:CM}.

\end{proof}

\begin{lemma}
\label{lem:normal_stable_pullback_fiber_product}
If $f : X \to T$ is a surjective morphism from a normal variety to a smooth projective curves with reduced fibers, $m>0$ is an integer and $ \tau : S \to T$ is a finite morphism from another smooth curve, then
\begin{enumerate}
\item \label{itm:normal_stable_pullback_fiber_product:pullback} $X \times_T S$ is normal, and
\item \label{itm:normal_stable_pullback_fiber_product:fiber_product} $X^{(m)}$ is normal, see \autoref{sec:product_notation} for the product notation.
\end{enumerate}

\end{lemma}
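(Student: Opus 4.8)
\textbf{Proof strategy for Lemma~\ref{lem:normal_stable_pullback_fiber_product}.} The plan is to deduce both statements from Serre's criterion: a variety is normal if and only if it satisfies $R_1$ and $S_2$. Both the base change $X\times_T S$ and the fiber product $X^{(m)}$ are built from $X$ by flat base change along morphisms over the smooth curve $T$, so the strategy is to track how the conditions $R_1$ and $S_2$ behave under these operations, using crucially that $f$ has reduced fibers and that $T$ (hence $S$) is a smooth curve.

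First I would handle $S_2$. Since $T$ is a smooth curve, $f\colon X\to T$ is automatically flat (a morphism from a variety to a smooth curve is flat iff no component dominates a point, which holds as $X$ is integral and $f$ is surjective). The morphism $\tau\colon S\to T$ is finite and $S$ is smooth, so $\tau$ is flat as well; hence $X\times_T S\to S$ is flat with the same fibers as $f$ (pulled back along the residue field extensions, which preserves $S_2$ of fibers since the fibers of $f$ are reduced, in particular $S_1$, and lci-type arguments or \cite[Tag 0C0D]{} type statements give that $S_2$ of the total space follows from $S_2$ of base and fibers). More concretely: $X$ is normal, hence $S_2$, and $X\to T$ has fibers which are reduced, in particular $S_1$; since $T$ is regular of dimension $1$, a local computation (the local rings of $X\times_T S$ are flat over the $1$-dimensional regular local rings of $S$ with $S_1$ closed fibers) shows $X\times_T S$ is $S_2$. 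The same argument applied iteratively — writing $X^{(m)} = X\times_T X^{(m-1)}$ and using that $X^{(m-1)}\to T$ is flat with reduced (hence $S_1$) fibers by induction — gives that $X^{(m)}$ is $S_2$; here one also needs that a product of reduced schemes over a field (in characteristic zero, or more generally when one factor is geometrically reduced) is reduced, which holds because we are over a field of characteristic zero so the generic fibers are geometrically reduced, and reducedness of fibers of $X^{(m)}\to T$ follows fiberwise from reducedness of fibers of $X\to T$.

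Next I would handle $R_1$, i.e.\ regularity in codimension one. For $X\times_T S$: away from the finitely many ramification points of $\tau$ and the finitely many non-smooth fibers of $f$, the projection $X\times_T S\to X$ is étale, so $X\times_T S$ is regular there wherever $X$ is; since $X$ is normal it is regular in codimension one, and the bad locus in $X\times_T S$ maps into finitely many fibers, which have codimension one — so one must check $R_1$ precisely along these special fibers. Over a ramification point $t$ with reduced fiber $X_t$, the local structure of $X\times_T S$ over the node-type point of $S\times_T(\text{pt})$ is controlled by the fact that $\tau$ locally looks like $u\mapsto u^e$ and $f$ locally looks like a map with reduced fiber; the key input is that the fiber $X_t$ is reduced, so $X\times_T S$ has an $R_1$ point there by an explicit toric/local computation (this is where reducedness of fibers is essential — it fails if $X_t$ is non-reduced). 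For $X^{(m)}$, over the smooth locus of $f$ the fiber product of smooth morphisms is smooth, so $X^{(m)}$ is regular over the open dense set where all projections land in the smooth locus of $f$; the complement lies over the finitely many singular fibers and over $\Sing(X)$, and again one checks $R_1$ there using that $X_t$ is reduced (so $X_t\times_{k(t)} X_t^{(m-1)}$ is reduced, and generically smooth in characteristic zero) together with $X^{(m-1)}$ being normal by induction.

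The main obstacle, and the step deserving the most care, is the $R_1$ verification along the non-smooth and non-reduced-looking fibers — i.e.\ ruling out codimension-one singularities of $X\times_T S$ (resp.\ $X^{(m)}$) that live inside the fibers over ramification points of $\tau$ (resp.\ over $\Sing f$). Everything else is a flatness-plus-fiberwise bookkeeping argument. For this step I would reduce to the complete local picture, use that in characteristic zero a reduced scheme over a field is generically smooth, and invoke that fiber products and finite flat base changes of such generically-smooth reduced schemes over a regular one-dimensional base remain generically smooth along each fiber; combined with the $S_2$ already established, Serre's criterion then closes both cases. An alternative, slicker route for \ref{itm:normal_stable_pullback_fiber_product:fiber_product} is: $X^{(m)}\to T$ is flat with geometrically reduced generic fiber (characteristic zero) and reduced special fibers, and its total space is $S_2$, so it suffices to see it is $R_0$ (generic reducedness), which is clear, plus $R_1$ — and $R_1$ along a fiber $X^{(m)}_t$ reduces to checking that the reduced scheme $X^{(m)}_t$ is generically smooth and that $X^{(m)}$ has a regular point at the generic point of each component of $X^{(m)}_t$, the latter following because $X^{(m)}$ is flat over the DVR $\mathcal{O}_{T,t}$ with reduced fiber, so its local ring there is regular by the fibral criterion for regularity.
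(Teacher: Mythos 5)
Your proposal is correct and follows essentially the same route as the paper: Serre's criterion combined with the fibral criteria for $S_2$ and $R_1$ of a scheme flat over a regular one-dimensional base (general fibers $S_2$/$R_1$, all fibers $S_1$/$R_0$), and the observation that these fiber conditions are preserved under base change and fiber products (using characteristic zero for reducedness of products). The "alternative, slicker route" in your last paragraph is in fact exactly the paper's proof, so the extra local analysis near ramification points in your middle paragraphs is unnecessary.
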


\begin{proof}
First we note that $f$ is flat and hence so is $f^{(m)} : X^{(m)} \to T$ by induction on $m$ and the stability of flatness under base-change.

We know that a variety $Z$ is normal if and only if  it is $S_2$ and $R_1$. In the particular case, when $Z$ maps to a smooth curve $U$ via  a flat morphism $g$, then $Z$ is $S_2$ if and only if the general fibers of $g$ are $S_2$ and the special ones are $S_1$ (so without embedded points) \cite[6.3.1]{Grothendieck_Elements_de_geometrie_algebrique_IV_II} \cite[12.2.4.i]{Grothendieck_Elements_de_geometrie_algebrique_IV_III}, and it is $R_1$ if the general fibers are $R_1$ and the special ones are $R_0$ (so reduced) \cite[12.2.4.ii]{Grothendieck_Elements_de_geometrie_algebrique_IV_III}. It is immediate then that this characterization of $S_2$ and $R_1$ propagates both to fiber powers and to base-changes. 
\end{proof}

\subsection{Semi-positivity engine}

\begin{proposition}
\label{prop:semi_positivity_engine_downstairs}
Let $f : (X, \Delta) \to T$  be a surjective morphism from a normal, projective pair to a smooth curve such that $(X_t,\Delta_t)$ is klt for general $t \in T$ (recall that $X_t$ is normal for $t \in T$ general, $\Delta$ is $\bQ$-Cartier at the codimension $1$ points of $X_t$, and hence $\Delta_t$ makes sense), and let $L$ be a Cartier divisor on $X$ such that $L- K_{X/T} - \Delta$ is an $f$-ample and nef $\bQ$-Cartier divisor. Then, $f_* \sO_X(L)$ is a nef vector bundle. 
\end{proposition}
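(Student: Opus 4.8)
The plan is to reduce the statement to a known semi-positivity result for pushforwards of relative pluricanonical-type sheaves, via the standard covering and perturbation tricks. First I would observe that nefness of a vector bundle on a smooth curve $T$ can be checked after a finite base change $\tau : S \to T$ (pullbacks of nef bundles are nef, and conversely a bundle is nef iff its pullback under a finite surjective map is nef, since $\tau^* \sO_{\bP(\sE)}(1)$ corresponds to $\sO_{\bP(\tau^*\sE)}(1)$ and finite pullback detects nefness of line bundles). Moreover $\tau^* f_* \sO_X(L) \cong g_* \sO_{X_S}(L_S)$ by flat base change, where $g : X_S \to S$ is the pulled-back family; normality of $X_S$ is not needed here since we only need the base-change isomorphism of pushforwards. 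So I may freely pass to a cover of $T$.

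Next I would handle the gap between ``$L - K_{X/T} - \Delta$ is $f$-ample and nef'' and an honest klt pair structure in the relative anticanonical class. Write $N := L - K_{X/T} - \Delta$, an $f$-ample nef $\bQ$-Cartier divisor. For $q \gg 0$ divisible enough, $f_* \sO_X(qN)$ is globally generated over a dense open of $T$ and $qN$ is relatively very ample; choosing general members and averaging, after a finite base change $S \to T$ (to split off the generic-fiber sections, and to make $qN$ divisible) one produces an effective $\bQ$-divisor $\Gamma \sim_{\bQ} N$ such that $(X_t, \Delta_t + \Gamma_t)$ is klt for general $t \in S$ — this is exactly the kind of Bertini-plus-averaging argument used repeatedly in the paper (cf.\ the use of basis-type divisors). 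Then $L \sim_{\bQ} K_{X/T} + \Delta + \Gamma = K_{X/T} + \Delta'$ with $\Delta' := \Delta + \Gamma$ effective, $(X_t, \Delta'_t)$ klt for general $t$, and $L - (K_{X/T} + \Delta') \sim_{\bQ} 0$, in particular nef.

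At this point the statement becomes: for $g : (X, \Delta') \to T$ over a smooth curve with klt general fiber and $L$ Cartier with $L \equiv_{\bQ} K_{X/T} + \Delta'$, the sheaf $g_* \sO_X(L)$ is nef. This is precisely a (weak) positivity / semi-positivity theorem for pushforwards of relative log-canonical sheaves over a curve, of the type proved by Fujino, Kollár and Kovács–Patakfalvi; I would invoke \cite{Fujino_Semi_positivity_theorems_for_moduli_problems} or \cite[Thm.\ 1.x]{Kovacs_Patakfalvi_Projectivity_of_the_moduli_space_of_stable_log_varieties_and_subadditvity_of_log_Kodaira_dimension} (the versions allowing a boundary and only requiring the general fiber to be klt/slc). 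Concretely one takes a log resolution compatible with $\Delta'$, applies the Fujita–Kawamata–Viehweg-type semi-positivity of Hodge-theoretic / $m$-canonical pushforwards, and descends back using that the resolution is an isomorphism over the klt locus of the general fiber. The reduced-fiber and normality lemmas (\ref{lem:reducing_reduced_fibers}, \ref{lem:normal_stable_pullback_fiber_product}) are available if the cited semi-positivity input requires a mild hypothesis on the fibers after base change.

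The main obstacle I anticipate is bookkeeping at the boundary and over the bad fibers: ensuring that after the finite base change the pair $(X_S, \Delta'_S)$ still has klt \emph{general} fiber (fine), that $L$ descends to the class $K + \Delta'$ \emph{integrally} up to the $\bQ$-twist that the semi-positivity theorem tolerates, and that the semi-positivity theorem I cite is genuinely stated in enough generality (curve base, $\bQ$-boundary, only general-fiber klt, no assumption that total space is klt). If a clean citation is not available in exactly this form, the fallback is to prove it by hand: pass to a log resolution, use that over a curve nefness of a bundle $\sE$ is equivalent to $\deg \sF \ge 0$ for every quotient $\sF$, and combine the Fujita-type lower bound on the Harder–Narasimhan slopes of pushed-forward relative canonical sheaves (as in Section \ref{sec:growth}) with the numerical triviality $L - (K_{X/T}+\Delta') \equiv_{\bQ} 0$. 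I expect the honest write-up to mostly consist of citing \cite{Fujino_Semi_positivity_theorems_for_moduli_problems} and \cite{Kovacs_Patakfalvi_Projectivity_of_the_moduli_space_of_stable_log_varieties_and_subadditvity_of_log_Kodaira_dimension} and checking that their hypotheses are met after the two reductions above.
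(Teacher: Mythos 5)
Your reduction breaks at the perturbation step. You claim that for $q \gg 0$ the sheaf $f_* \sO_X(qN)$ (with $N := L - K_{X/T} - \Delta$) is generically globally generated, so that one can produce an effective $\Gamma \sim_{\bQ} N$ with $(X_t, \Delta_t + \Gamma_t)$ klt. But $N$ being nef and $f$-ample does not make it effective or big: take $X = \bP^1 \times E \to T = E$ with $E$ elliptic and $N = p_1^* \sO_{\bP^1}(1) + p_2^* P$ for $P$ a degree-zero non-torsion divisor on $E$. Then $N$ is nef and $f$-ample, yet $f_* \sO_X(qN) \cong H^0(\bP^1, \sO(q)) \otimes \sO_E(qP)$ has no global sections for any $q > 0$, so $|qN| = \emptyset$ and no finite base change repairs this. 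One can instead perturb by $\varepsilon f^* H$ with $H$ ample on $T$ and pass to the limit $\varepsilon \to 0$ using that nefness of ($\bQ$-twisted) bundles is a closed condition, but that is an extra layer you did not set up, and it still leaves your second, correctly flagged worry: the semi-positivity theorems you want to cite (Fujino, Koll\'ar, Kov\'acs--Patakfalvi) require log canonicity of the total pair or of all fibers, whereas here only the \emph{general} fiber of $(X,\Delta)$ is klt and the special fibers may be arbitrarily bad. Your fallback via Harder--Narasimhan slopes is not substantiated; \autoref{sec:growth} contains no such slope bound.

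The paper avoids both issues by proving the statement directly rather than reducing to a citation. After \autoref{lem:reducing_reduced_fibers}, it uses the criterion that $f_* \sO_X(L)$ is nef on the curve $T$ iff $\omega_T(2t) \otimes \bigotimes_{i=1}^m f_* \sO_X(L)$ is generated at a general $t$ by global sections for all $m$; this tensor power is rewritten as $f^{(m)}_* \sO_{X^{(m)}}(L^{(m)} + (f^{(m)})^* K_T + 2 X_t^{(m)})$ on the fiber product, and the required surjectivity of restriction to $X_t^{(m)}$ is obtained from Nadel vanishing for the multiplier ideal $\sJ_{(X^{(m)}, \Delta^{(m)})}$, which is trivial near $X_t^{(m)}$ by \autoref{lem:product} and inversion of adjunction. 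This localizes the klt hypothesis to the general fiber, exactly where your cited theorems would demand global assumptions.
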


\begin{proof}
According to \autoref{lem:reducing_reduced_fibers} we may assume that the fibers of $f$ are reduced. According to \cite[Lem 3.4]{Patakfalvi_Semi_positivity_in_positive_characteristics}, it is enough to prove that for all integers $m>0$, the following vector bundle is generated at a general $t \in T$ by global sections: 
\begin{equation*}
\omega_T (2t) \otimes \bigotimes_{i=1}^m f_* \sO_X(L) 
\explparshift{170pt}{-70pt}{\cong}{$m-1$-times use of \cite[Lem 3.6]{Kovacs_Patakfalvi_Projectivity_of_the_moduli_space_of_stable_log_varieties_and_subadditvity_of_log_Kodaira_dimension}, and see \autoref{sec:product_notation} for the fiber product notation}
\omega_T (2t) \otimes f^{(m)}_*  \sO_{X^{(m)}} \left( L^{(m)} \right)
\expl{\cong}{projection formula}
f^{(m)}_* \sO_{X^{(m)}} \left( L^{(m)} + \left( f^{(m)} \right)^* K_T + 2 X^{(m)}_t \right)
.
\end{equation*}
For that it is enough to prove that the natural restriction homomorphism $
H^0\left( X^{(m)}, N \right) \to H^0\left( X_t^{(m)}, N_t \right)$
is surjective, where 
\begin{equation*}
 N:=L^{(m)} + \left( f^{(m)} \right)^* K_T + 2 X^{(m)}_t  = K_{X^{(m)}} +  \Delta^{(m)} +  (L - K_{X/T} - \Delta)^{(m)} + 2 X_t^{(m)}.
\end{equation*}
We note here that  according to \autoref{lem:normal_stable_pullback_fiber_product}, $X^{(m)}$ is normal. Furthermore, $K_{X^{(m)}} +  \Delta^{(m)} = (K_{X/T} + \Delta)^{(m)} +  \left( f^{(m)} \right)^* K_T$ is $\bQ$-Cartier. We also note that the only generality property of $t$ that we use below is that $X_t$ is normal, $X_t \subsetneq \Supp \Delta_t$ and  $(X_t, \Delta_t)$ is klt. Hence, at this point, we fix a $t$ with such properties.

Set $\sI:=\sJ_{\left( X^{(m)}, \Delta^{(m)}\right)}$, where $\sJ$ denotes the multiplier ideal of the corresponding pair. Then for the above surjectivity the next diagram, the top row of which is exact, shows that it is enough to prove the vanishing of $H^1\left( X^{(m)}, \sI \otimes \sO_{X^{(m)}} \left(  N - X_t^{(m)}\right)\right)$.
\begin{equation*}
\xymatrix@C=10pt@R=10pt{
 H^0\left( X^{(m)}, \sI \otimes   \sO_{X^{(m)}}\left( N \right)\right) \ar[r] \ar@{^(->}[d]
&
H^0\left( X^{(m)}_t,  N|_{X^{(m)}_t}\right) \ar[r] 
& H^1\left( X^{(m)}, \sI \otimes \sO_{X^{(m)}}\left(  N - X^{(m)}_t\right)\right)  \\
H^0\left( X^{(m)},   N \right) \ar[ur] 
}
\end{equation*}
We note that here we used that $\left(X_t^{(m)}, \Delta_t^{(m)} \right)$ is klt by \autoref{lem:product}, and hence by inversion of adjunction \cite[Thm 5.50]{Kollar_Mori_Birational_geometry_of_algebraic_varieties} so does $\left(X^{(m)}, \Delta^{(m)} \right)$ in a neighborhood of $X_t^{(m)}$. This then implies that $\sI$ is trivial in a neighborhood of $X_t^{(m)}$.

We conclude by noting that the above cohomology vanishing is given by Nadel-vanishing as
\begin{equation*}
 N -  X_t^{(m)} = K_{X^{(m)}} +  \Delta^{(m)} +  \underbrace{(\underbrace{L - K_{X/T} - \Delta}_{\textrm{nef and $f^{(m)}$-ample}})^{(m)} +  \underbrace{X_t^{(m)}}_{\left(f^{(m)}\right)^*\textrm{ ample}}}_{\textrm{  ample}}.
\end{equation*}

\end{proof}

\begin{corollary}
\label{cor:semi_positivity_engine_upstairs}
Let $f : (X, \Delta) \to T$  be a surjective morphism from a normal, projective pair to a smooth curve such that $(X_t,\Delta_t)$ is klt for some (or equivalently general) $t \in T$, and let $L$ be an $f$-nef $\bQ$-Cartier $\bQ$-divisor on $X$  such that 
\begin{enumerate}
 \item there is  a $\bQ$-Cartier $\bQ$-divisor $N$ on $T$ such that $L + f^* N $ is Cartier,
\item \label{itm:semi_positivity_engine_upstairs:sections_non_zero}  $L_t = (L+ f^*N)_t$ is globally generated for $t \in T$ general, and
 \item $L  - K_{X/T} - \Delta$ is an $f$-ample and nef $\bQ$-Cartier $\bQ$-divisor.

\end{enumerate}
Then $L $ is nef.
\end{corollary}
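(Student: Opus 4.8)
The plan is to derive \autoref{cor:semi_positivity_engine_upstairs} from \autoref{prop:semi_positivity_engine_downstairs} by a standard reduction: replace $L$ by a suitable twist so that it becomes Cartier without losing any of the positivity hypotheses, apply the proposition to get nefness of the pushforward, and then pass back up to $X$ using the globally-generated-on-the-general-fiber hypothesis.

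First I would reduce to the case that $L$ itself is Cartier. By hypothesis (3) there is a $\bQ$-Cartier divisor $N$ on $T$ with $L + f^*N$ Cartier; choosing a large integer $b>0$ we may assume $bN$ is Cartier and $bL$ is Cartier, and then further replace $N$ by $N + f^*A$ for a sufficiently ample $A$ on $T$ so that $N$ is ample. Set $M := L + f^*N$, which is now a genuine Cartier divisor on $X$. Since $f^*N$ is nef and pulled back from the base, $M - K_{X/T} - \Delta = (L - K_{X/T} - \Delta) + f^*N$ is still $f$-ample (adding a pullback does not change $f$-ampleness) and nef (sum of two nef divisors). Hence \autoref{prop:semi_positivity_engine_downstairs} applies to $M$ and shows $f_*\sO_X(M) = f_*\sO_X(L) \otimes \sO_T(N)$ is a nef vector bundle on $T$.

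Next I would pass from nefness of $f_*\sO_X(L)\otimes\sO_T(N)$ to nefness of $L$ on $X$. Here the key point is hypothesis (1): $L_t$ is globally generated for general $t\in T$, which, combined with $f$-ampleness of $L - K_{X/T}-\Delta$ and cohomology and base change (using that $(X_t,\Delta_t)$ is klt, so $R^if_*\sO_X(L)$ behaves well after a twist), means that the natural evaluation map $f^*f_*\sO_X(L) \to \sO_X(L)$ is surjective over the preimage of a dense open subset $U\subseteq T$ — and in fact, after twisting $L$ by a further pullback of an ample divisor from $T$ (which only improves things and is harmless for checking nefness up to that ample twist), one gets global generation of $f^*f_*\sO_X(L)\otimes f^*(\text{ample}) \twoheadrightarrow \sO_X(L)\otimes f^*(\text{ample})$ on all of $X$. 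A quotient of a nef bundle pulled back from $T$ is nef, so $\sO_X(L)\otimes f^*(\text{ample})$ is nef; since the ample twist can be taken arbitrarily small (working $\bQ$-divisorially) and the nef cone is closed, $L$ is nef. Alternatively, and more cleanly, one checks nefness of $L$ by intersecting with an arbitrary curve $C\subseteq X$: if $C$ is contained in a fiber, then $L\cdot C \geq (L - K_{X/T}-\Delta)\cdot C \cdot 0 + \ldots$ follows from $f$-ampleness of $L - K_{X/T} - \Delta$ together with $L_t$ being a nef (indeed globally generated, hence nef) divisor on the general fiber and a limiting/semicontinuity argument for special fibers; if $C$ dominates $T$, then $L\cdot C = \deg(L|_C)$ and one uses that $L|_C$ is a quotient of $(f_*\sO_X(L))|_C$ up to the twist by $N$, whose degree is controlled by nefness of $f_*\sO_X(L)\otimes\sO_T(N)$ and ampleness of $N$, again letting the auxiliary ample part tend to zero.

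I expect the main obstacle to be the descent step — getting nefness of $L$ on $X$ from nefness of $f_*\sO_X(L)$ twisted — handled carefully enough to cover curves contained in \emph{special} (possibly non-reduced or non-klt) fibers, where $L_t$ need not be globally generated and the multiplier ideal/base change arguments are most delicate. The standard device, as in Viehweg's and Kollár's treatments, is to first base-change to make fibers reduced via \autoref{lem:reducing_reduced_fibers} (nefness and all hypotheses descend along finite covers), then reduce to a general-fiber statement and invoke semicontinuity; the klt hypothesis on the general fiber, propagated by inversion of adjunction exactly as in the proof of \autoref{prop:semi_positivity_engine_downstairs}, is what makes the cohomology and base change clean. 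Everything else is routine manipulation of $f$-ampleness and nefness under pullbacks from the base and the closedness of the nef cone.
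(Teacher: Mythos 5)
Your overall architecture matches the paper's (make $L$ Cartier, apply \autoref{prop:semi_positivity_engine_downstairs}, descend via the evaluation map $f^*f_*\sO_X(L)\to\sO_X(L)$, which is generically surjective by hypothesis (1) and cohomology-and-base-change over an open set), but the step where you handle hypothesis (3) has a genuine gap. You announce a reduction to the case that $L$ is Cartier, but you never perform it: you instead pass to $M:=L+f^*N$ (with $N$ even made ample) and apply the proposition to $M$. The twist by $f^*N$ cannot be removed afterwards. On a horizontal curve $C$, nefness of $f_*\sO_X(M)$ only yields $L\cdot C\geq -(\deg f|_C)\deg N$, and $N$ is a \emph{fixed} divisor forced on you by the Cartier condition in (3) — it is not an auxiliary ample perturbation that you can let tend to $0$, because $L+\epsilon f^*A$ is not Cartier for small $\epsilon$ unless $L$ itself already is. So "letting the auxiliary ample part tend to zero" removes the extra $f^*A$ you added, but not $f^*N$, and the argument does not close. (A smaller issue: the projection formula $f_*\sO_X(L+f^*N)\cong f_*\sO_X(L)\otimes\sO_T(N)$ also requires $N$ integral, which your $N$ need not be; and replacing $L$ by $bL$ would destroy hypothesis (2), though you don't actually use that step.)

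The paper's fix is the one you are missing: first reduce to reduced fibers via \autoref{lem:reducing_reduced_fibers} (so that base changes of $X$ along finite covers of $T$ stay normal, by \autoref{lem:normal_stable_pullback_fiber_product}), then pull back along a finite cover $S\to T$ ramified enough that $\tau^*N$ becomes an integral, hence Cartier, divisor on the smooth curve $S$. Then $L_S=(L+f^*N)_S-f_S^*\tau^*N$ is honestly Cartier, all hypotheses are preserved, and nefness of $L$ can be checked after this finite surjective pullback. With that in place, the downstairs proposition applies to $L_S$ itself and the descent step runs as you describe (with the caveat, shared with the paper's own terse write-up, that one must say a word about curves in the finitely many fibers where the evaluation map is not yet known to be surjective).
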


\begin{proof}
 According to \autoref{lem:reducing_reduced_fibers} we may assume that the fibers of $f$ are reduced, and by further pullback using \autoref{lem:normal_stable_pullback_fiber_product}.\autoref{itm:normal_stable_pullback_fiber_product:pullback}, we may also assume that $N$ is Cartier, whence $L$ is also Cartier. Then, we may apply \autoref{prop:semi_positivity_engine_downstairs} yielding that $f_* \sO_X(L)$ is nef. 
 
Note now  that cohomology and base change always holds over a dense open set. So, for general $t \in T$ we have a commutative diagram as follows:
\begin{equation}
\label{eq:semi_positivity_engine_upstairs}
\xymatrix{
\ar@{->}[d]^{\cong} f^* f_* \sO_X(L) |_{X_t} \ar[r] & \sO_X(L)  |_{X_t} \ar@{->}[d]^{\cong} \\
\sO_{X_t} \otimes H^0(X_t, L|_{X_t}) \ar[r] & \sO_{X_t}( L|_{X_t})
}
\end{equation}
Assumption \autoref{itm:semi_positivity_engine_upstairs:sections_non_zero} tells us that the bottom arrow of diagram \autoref{eq:semi_positivity_engine_upstairs} is surjective. Hence, so is the top arrow, and then  the natural homomorphism $f^* f_* \sO_X(L) \to \sO_X(L)$ is surjective over a dense open set of $T$. 

As $L$ is $f$-nef, we only have to show that if $C$ is a horizontal curve, then $C \cdot L \geq 0$. However, by the previous paragraph, $f^* f_* \sO_X(L)|_C \to \sO_X(L)|_C = \sO_C( L |_C)$ is generically surjective. Hence, $\sO_C(L|_C)$ is a generically surjective image of a nef vector bundle. So, we obtain that $0 \leq \deg \sO_C(L|_C) = C \cdot L$. 
 \end{proof}

\section{Semi-positivity  }
\label{sec:semi_positivity}

In this section we prove our semi-positivity results. Here, and also in \autoref{sec:positivity} we use extensively the fiber product notation explained in \autoref{sec:product_notation}.

\subsection{Framework and results}

The main result of the section is the following, from which the statements of the introduction will follow in a quite straightforward manner.

\begin{theorem}
\label{thm:semi_positivity_curve}
In the situation of \autoref{notation:CM_semi_positivity}, if $\delta\left(X_{\ot},\Delta_{\ot}\right) \geq 1$ for a very general geometric point $\ot \in T$, then $\deg \lambda_{f,\Delta} \geq 0$. 
\end{theorem}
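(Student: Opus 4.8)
The plan is to argue by contradiction, following the outline in \autoref{sec:outline_semi_positivity}. Suppose $\deg\lambda_{f,\Delta} < 0$, i.e. $\bigl(-(K_{X/T}+\Delta)\bigr)^{n+1} > 0$. Choose a rational number $\varepsilon > 0$ and an ample $\bQ$-divisor $H$ on the curve $T$ with $\deg H$ small enough that, by continuity of the intersection product, $\bigl(-(K_{X/T}+\Delta) - f^*H\bigr)^{n+1} > 0$ still holds; write $L := -(K_{X/T}+\Delta) - f^*H$, which is $f$-ample. Since $X$ is normal and $L$ is $f$-ample over a curve and has positive top self-intersection, an asymptotic Riemann--Roch computation (together with relative vanishing) shows $|L|_{\bQ}\neq\varnothing$ and in fact that the restricted volume of $L$ along a general fiber is positive; more precisely, the key quantitative input is that the ratio of $\rk$ of the globally generated part of $f_*\sO_X(qmL)$ to $\rk f_*\sO_X(qmL)$ must be made close to $1$. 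To achieve this we invoke the product trick: pass to the $m$-fold fiber power $f^{(m)}: X^{(m)}\to T$ (normal by \autoref{lem:normal_stable_pullback_fiber_product}, after a preliminary base change via \autoref{lem:reducing_reduced_fibers} to arrange reduced fibers — using that CM line bundles and the relative canonical pull back correctly), with $L^{(m)} = -(K_{X^{(m)}/T}+\Delta^{(m)}) - (f^{(m)})^* mH$.

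The heart of the argument is to produce, for $m \gg 0$, a divisor $\Gamma \in \bigl|-(K_{X^{(m)}/T}+\Delta^{(m)}) - (f^{(m)})^* mH\bigr|_{\bQ}$ such that $\bigl(X^{(m)}_t, \Delta^{(m)}_t + \Gamma_t\bigr)$ is klt for general $t\in T$. First I would fix $q\gg 0$ and set $\sE := f_*\sO_X(qL)$, a vector bundle on $T$ of positive degree (positivity of $\deg\sE$ comes from $\mu(\sE_t)$ growing like the restricted volume, which by the Riemann--Roch estimate exceeds $qm\deg H$ contributions — this is where the choice of small $H$ and the bigness of $L$ enter). Then $f^{(m)}_*\sO_{X^{(m)}}(qL^{(m)}) \cong \sE^{\otimes m}$. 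Fix a general $t\in T$, take the Harder--Narasimhan filtration of $\sE$ and a basis $\{e_i\}$ of $\sE_t$ adapted to it; by \autoref{prop:HN_Tensor_product} the ``slope $\geq 2g$'' part $\sH^d$ of the induced HN filtration of $\sE^{\otimes m}$ is globally generated with $\sH^d_t$ spanned by simple tensors in the $e_i$, and by \autoref{prop:vectP2} (the Central Limit Theorem step) $\rk\sH^d / \rk\sE^{\otimes m}\to 1$ as $m\to\infty$. Choosing $\Gamma_t$ to be (a small perturbation of) the basis-type divisor on $X_t^{(m)}$ attached to a basis of $\sE^{\otimes m}_t$ of simple tensors $e_{i_1}\boxtimes\cdots\boxtimes e_{i_m}$ in which a proportion close to $1$ of the sections lift globally, the $K$-semistability $\delta(X_{\ot},\Delta_{\ot})\geq 1$ of the fiber, combined with \autoref{prop:prod_basis} and \autoref{lem:product}, forces $\bigl(X_t^{(m)}, \Delta_t^{(m)} + \Gamma_t\bigr)$ to be log canonical, hence klt after shrinking the coefficient slightly. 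Lifting this $\Gamma_t$ to a global $\Gamma\in|L^{(m)}|_{\bQ}$ is possible precisely because the chosen sections lie in the globally generated subsheaf $\sH^d$.

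Finally, the existence of such a global $\Gamma$ contradicts the semi-positivity obstruction: applying \autoref{prop:semi_positivity_engine_downstairs} (or its ampleness variant in \autoref{prop:cont}, the statement alluded to in the outline) to the pair $\bigl(X^{(m)},\Delta^{(m)}+\Gamma\bigr)$ over $T$, one deduces that $K_{X^{(m)}/T}+\Delta^{(m)}+\Gamma$ is nef; but by construction this divisor is $\bQ$-linearly equivalent to $-(f^{(m)})^* mH$, which has negative degree on $T$, a contradiction. Hence $\deg\lambda_{f,\Delta}\geq 0$.

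\textbf{Main obstacle.} The delicate point is the klt-on-the-general-fiber property of $\Gamma$: one cannot simply invoke that products of $K$-semistable Fanos are $K$-semistable (an open conjecture), so one is forced to restrict attention to basis-type divisors on $X_t^{(m)}$ coming from \emph{simple tensors} of a fiberwise basis, and then quantify — via the Harder--Narasimhan analysis of $\sE^{\otimes m}$ and the Central Limit Theorem — that enough such simple-tensor sections lift globally as $m\to\infty$. Interweaving the perturbation arguments (small $H$, slightly shrinking the coefficient of $\Gamma$, controlling the lower globally-generated part of the HN filtration up to slope $2g$) with these rank estimates, while keeping track of normality of $X^{(m)}$ and the base-change behavior of $K_{X/T}+\Delta$, is the technically demanding part; the contradiction itself via the semi-positivity engine is then formal.
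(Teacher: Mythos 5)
Your proposal is correct and follows essentially the same route as the paper: contradiction from $(-K_{X/T}-\Delta)^{n+1}>0$, perturbation by a small ample $f^*H$, the fiber-product trick combined with the Harder--Narasimhan/Central Limit Theorem rank estimate (\autoref{prop:vectP2}) to lift enough simple-tensor basis sections, \autoref{prop:prod_basis} for the klt condition on fibers, and the semi-positivity engine (packaged in the paper as \autoref{prop:cont}, whose auxiliary nef divisor $\widetilde{\Gamma}$ plays exactly the role of your "small perturbation / shrinking the coefficient") for the final contradiction. No changes needed.
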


\subsection{Proofs}
\label{sec:semi_pos_proof}

The proof of \autoref{thm:semi_positivity_curve} will be by contradiction with the next proposition.  

\begin{proposition}\label{prop:cont}
In the situation of \autoref{notation:CM_semi_positivity}, let $H$ be an ample $\bQ$-divisor  on $T$. Then, there do not exist $\bQ$-Cartier divisors $\Gamma$ and $\widetilde{\Gamma}$ on $X$ such that:
\begin{enumerate}
 \item\label{class} $\Gamma + \widetilde{\Gamma} \sim_{\bQ} -K_{X/T}-\Delta-f^* H$,
  \item \label{ample} $\widetilde{\Gamma}$ is nef, and 
 \item \label{klt} $(X_t,\Delta_t +\Gamma_t)$ is klt for  $t \in T$ general. 
\end{enumerate}
\end{proposition}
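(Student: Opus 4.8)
<br>

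The plan is to argue by contradiction. Suppose such $\Gamma$ and $\widetilde\Gamma$ exist. The intuition is that over $\bP^1$ (or any curve) a relatively ample anticanonical class with a ``klt-on-fibers'' representative cannot be too negative, and subtracting off a nef part and an ample pullback $f^*H$ overshoots. First I would reduce to the reduced-fiber case via \autoref{lem:reducing_reduced_fibers}, replacing $f$ by its normalized base change $g\colon Y\to S$; note that $\lambda_g=\tau^*\lambda_{f,\Delta}$ and the canonical bundle formula $\pi^*(K_{X/T}+\Delta)=K_{Y/S}+\Gamma_Y$ let me transport $\Gamma$, $\widetilde\Gamma$ to $Y$ while preserving properties \autoref{class}--\autoref{klt} (the pullback of an ample $H$ stays ample, the pullback of a nef divisor stays nef, and klt on general fibers is preserved since the general fiber of $g$ maps isomorphically or finitely to a general fiber of $f$). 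So I may assume the fibers of $f$ are reduced and $X$ is normal.

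Next, the key computation. Since $(X_t,\Delta_t+\Gamma_t)$ is klt for general $t$, in particular $K_{X_t}+\Delta_t+\Gamma_t$ makes sense and, pushing the pair structure around, $\bigl(X,\Delta+\Gamma\bigr)$ is klt over a neighbourhood of the generic fiber by inversion of adjunction \cite[Thm 5.50]{Kollar_Mori_Birational_geometry_of_algebraic_varieties}. Now consider the $\bQ$-Cartier divisor
\[
L := K_{X/T}+\Delta+\Gamma+f^*H \;\sim_{\bQ}\; -\widetilde\Gamma.
\]
By \autoref{class}, $-\widetilde\Gamma\sim_{\bQ}K_{X/T}+\Delta+\Gamma+f^*H$, so $\widetilde\Gamma$ being nef means $-L$ is nef, i.e.\ $L\cdot C\le 0$ for every curve $C$. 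On the other hand I want to apply the semi-positivity engine: write
\[
K_{X/T}+\Delta+\Gamma = L - f^*H,
\]
and observe that $-\widetilde\Gamma - (K_{X/T}+\Delta+\Gamma) = f^*H - \widetilde\Gamma - (K_{X/T}+\Delta+\Gamma) \cdots$ — more cleanly, I would feed into \autoref{prop:semi_positivity_engine_downstairs} (or \autoref{cor:semi_positivity_engine_upstairs}) the divisor $M:=K_{X/T}+\Delta+\Gamma$ together with a suitable twist. The point is that $M - (K_{X/T}+\Delta+\Gamma)=0$ is nef but not $f$-ample, so one twists by a small multiple of $-K_{X/T}-\Delta$ (which is $f$-ample) to make the difference $f$-ample and nef, then concludes $f_*\sO_X(\text{multiple of }M+\text{twist})$ is nef, hence $M+\text{twist}+f^*(\text{something})$ is nef on $X$. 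Combining this nefness with the nefness of $\widetilde\Gamma$ and the relation \autoref{class}, one gets that $f^*H$ is a sum of nef classes plus $-$(nef), and intersecting with a general fiber-curve or with a multisection will produce $H\cdot(\text{pt})\le 0$, contradicting ampleness of $H$ on $T$.

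The main obstacle I anticipate is getting the twisting bookkeeping exactly right so that the semi-positivity engine applies: one must choose a small $\varepsilon\in\bQ_{>0}$ so that $K_{X/T}+\Delta+\Gamma + \varepsilon(-K_{X/T}-\Delta-\widetilde\Gamma\cdots)$ has the difference-with-$K_{X/T}-\Delta-\Gamma$ equal to something $f$-ample \emph{and} nef, while keeping the ``klt on general fibers'' hypothesis (which is stable under adding a small $f$-ample klt perturbation, so this should be fine for $\varepsilon$ small), and one must ensure the auxiliary divisor $N$ on $T$ making things Cartier exists after a further finite base change (via \autoref{lem:normal_stable_pullback_fiber_product}). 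The cleanest route is probably: apply \autoref{cor:semi_positivity_engine_upstairs} to $L':=(1-\varepsilon)(K_{X/T}+\Delta+\Gamma)$ — noting $L'-K_{X/T}-\Delta\sim_{\bQ}-\varepsilon(K_{X/T}+\Delta)-(K_{X/T}+\Delta)\cdots$ needs $-K_{X/T}-\Delta$ to dominate, which it does $f$-relatively — to conclude $L'$ is nef; then $(1-\varepsilon)L' = (1-\varepsilon)^2(\cdots)$ nef together with $\widetilde\Gamma=-K_{X/T}-\Delta-\Gamma-f^*H$ nef forces $-(1-\varepsilon)f^*H$-ish to be a difference of nef classes, and intersecting against a moving curve downstairs yields $\deg H\le 0$, the desired contradiction. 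Once the contradiction is in place, \autoref{thm:semi_positivity_curve} follows by taking $\Gamma$ a scaled basis-type divisor on the generic fiber (klt by $\delta\ge 1$ and \autoref{prop:vectP2}/\autoref{prop:prod_basis} after the product trick) lifted via the section-growth estimate, and $\widetilde\Gamma$ the complementary nef part coming from $\lambda_{f,\Delta}$ being the obstruction.
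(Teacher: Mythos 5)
Your overall strategy is the paper's: feed $K_{X/T}+\Delta+(1-\varepsilon)\Gamma$, the nef part $(1-\varepsilon)\widetilde\Gamma$, and an ample correction into the semi-positivity engine, conclude that the resulting class --- which by \autoref{class} is a multiple of $f^*H$ --- is nef, and contradict this by intersecting with a curve dominating $T$. But two of your intermediate steps fail as written, and they are exactly where the content of the proof lies. First, you propose to make the difference divisor ``$f$-ample and nef'' by twisting with a small multiple of $-K_{X/T}-\Delta$; this divisor is $f$-ample but in general \emph{not} nef (cf.\ \autoref{thm:nef_threshold} and \autoref{ex:not_nef}), so the hypotheses of \autoref{cor:semi_positivity_engine_upstairs} are not met. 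The fix is to first fix a rational $a>0$ with $-K_{X/T}-\Delta+af^*H$ ample and twist by $\varepsilon\left(-K_{X/T}-\Delta+af^*H\right)$. Second, once that twist is added, the coefficient of $f^*H$ in the final identity becomes $\varepsilon a-(1-\varepsilon)$, and the contradiction exists only if this number is negative; you never verify this, and it is the decisive point --- it requires choosing $\varepsilon$ \emph{after} $a$, small enough that $\varepsilon a<1-\varepsilon$. With these two fixes the proof collapses to the single identity
\[
(\varepsilon a-(1-\varepsilon))f^*H\;\sim_{\bQ}\;\bigl(K_{X/T}+\Delta+(1-\varepsilon)\Gamma\bigr)+(1-\varepsilon)\widetilde\Gamma+\varepsilon\bigl(-K_{X/T}-\Delta+af^*H\bigr),
\]
whose right-hand side is nef by \autoref{cor:semi_positivity_engine_upstairs} (the pair $(X_t,\Delta_t+(1-\varepsilon)\Gamma_t)$ is klt for general $t$ by convexity, and the remaining summands are nef plus ample), while the left-hand side is a negative multiple of $f^*H$ and hence pairs strictly negatively with any multisection of $f$. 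Two minor further points: intersecting with a ``general fiber-curve'' gives $0$, not a contradiction, so only the multisection works; and the preliminary reduction to reduced fibers via \autoref{lem:reducing_reduced_fibers} is unnecessary here, since it is already performed inside the proof of the semi-positivity engine itself.
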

\begin{proof}
Assume that there exist $\Gamma$ and $\widetilde{\Gamma}$ as above. 
Let $a>0$ be a rational number such that $-K_{X/T} - \Delta + af^* H$ is ample. Fix a rational number $\varepsilon > 0$ such that $\varepsilon a - (1-\varepsilon)< 0$. Apply then \autoref{cor:semi_positivity_engine_upstairs} by  setting the $L$, $N$ and $\Delta$ of \autoref{cor:semi_positivity_engine_upstairs} to be  respectively $(\varepsilon a - (1-\varepsilon)) f^*H$, $-(\varepsilon a - (1-\varepsilon)) H$, and $\Delta + (1-\varepsilon) \Gamma$. These choices satisfy the assumptions of \autoref{cor:semi_positivity_engine_upstairs} by the right hand side of equation \autoref{eq:cont}, and it  yields a contradiction as indicated under the left hand side term of \autoref{eq:cont}.
\begin{equation}
\label{eq:cont}
\underbrace{  (\varepsilon a - (1-\varepsilon)) f^*H}_{\varepsilon a - (1-\varepsilon)<0 \Rightarrow \textrm{ this is not nef}}  
 \sim_{\bQ}
 \underbrace{K_{X/T} + \Delta + (1-\varepsilon) \Gamma }_{(X_t,\Delta_t +(1-\varepsilon) \Gamma_t ) \textrm{ is klt }} 
+ \underbrace{(1-\varepsilon)\widetilde{\Gamma} + \varepsilon\left(- K_{X/T} -\Delta + af^* H \right)}_{\textrm{ample}}  
\end{equation}
\end{proof}

\begin{proof}[Proof of \autoref{thm:semi_positivity_curve}]
As both the consequences and the conditions of the theorem are invariant under base-extension to another algebraically closed field, we may assume that $k$ is uncountable. In particular, whenever a property is true for  very general geometric fibers, it is also true for some closed fibers. That is, by removing countably many proper closed sets from a variety over $k$, there are some closed points left. The reason is the following:  by cutting down with hyperplanes, this statement can be reduced to curves, where it is true because removing countably many closed points from a curve over an uncountable field leaves uncountably many points of the curve intact. 

First, according to \autoref{lem:reducing_reduced_fibers} we may assume that all fibers of $f$ are reduced. This is to guarantee that the $m$-times iterated fiber product $X^{(m)}$ is normal for any integer $m>0$, according to \autoref{lem:normal_stable_pullback_fiber_product}. 

We argue by contradiction, so assume  that $\deg \lambda_{f,\Delta}<0$.  For $m$ big enough, we are going to produce divisors $\Gamma$ and $\widetilde{\Gamma}$ on $X^{(m)} $ whose existence contradicts \autoref{prop:cont}. 

Fix a closed point $t$ in $T$ such that $X_t$ is normal, $X_t \not\subseteq \Supp \Delta_t$, $(X_t,\Delta_t)$ is klt and $\delta(X_t, \Delta_t) \geq 1$, using \autoref{prop:delta_general_fiber}.  Let $H$ be an ample line bundle on $T$. Fix rational numbers $a,\varepsilon>0$ and $0<c<1$ and an integer $q>0$, such that:
\begin{enumerate}[itemsep=4pt]
 \item the intersection product inequality $(-K_{X/T} -\Delta- \varepsilon f^* H)^{n+1}>0$ holds. This is possible because \autoref{def:CM_log}
and the assumption $\deg \lambda_{f,\Delta}<0$ imply that $ (-K_{X/T} - \Delta)^{n+1}>0$. Set $M:= -K_{X/T} - \Delta - \varepsilon f^* H$.
 \item  $D:=-K_{X/T}-\Delta+af^*H$ is ample.
 \item \label{negative}
$c<\frac{\varepsilon}{a+\varepsilon}$.
\item \label{Cartier} $qM$ is Cartier, which is possible, as $M$ is $\bQ$-Cartier.
\item \label{higher_cohomology} $R^if_*\sO_X(qM)=0$ for all $i>0$, which is possible, as $M$ is $f$-ample.
\item \label{degree} $\deg\left(f_*\sO_X(qM)\right)>0$, using \autoref{lem:Mumford_line_bundles_over_curve}.
\item \label{delta} $\delta_q(X_t,\Delta_t)>1-c$, using \autoref{thm:BJ}.
\end{enumerate}
From now on, let $\sE:=f_*\sO_X(qM)$. Remark that according to \cite[Lemma 3.6]{Kovacs_Patakfalvi_Projectivity_of_the_moduli_space_of_stable_log_varieties_and_subadditvity_of_log_Kodaira_dimension} for every integer $m>0$,
$$
\sE^{\otimes m}=f_*^{(m)} \sO_{X^{(m)}} \left( q M^{(m)} \right) \cong f_*^{(m)} \sO_{X^{(m)}}\left( q\left( -  K_{X^{(m)}/T} -\Delta^{(m)}-  m\varepsilon \left(f^{(m)}\right)^* H\right)\right),
$$
and by item \autoref{higher_cohomology}, the following base change holds
$$
\sE^{\otimes m}_t=H^0\left(X_t^{(m)}, q\left( -  K_{X_t^{(m)}} -\Delta^{(m)}_t \right)  \right).
$$
In general, it is not possible to lift a basis of $\sE_t$ to sections of $\sE$. However, thanks to \autoref{prop:vectP2}, we can choose a basis $e_i$ of $\sE_t$, an integer $m>0$, and $\ell$ global sections $s_i$ of $\sE^{\otimes m}$ so that the sections $s_i$, when restricted over $t$, are linearly independent pure tensor in the $e_i$, and furthermore
\begin{equation}\label{bound}
\frac{\ell}{h^0\left(X_t^{(m)},-q\left(K_{X_t^{(m)}}+\Delta_t^{(m)}\right)\right)}
>
\underbrace{\frac{1-c}{\delta_q(X_t,\Delta_t)}}_{\parbox{57pt}{\tiny $<1$ according to assumption \autoref{delta}}} \,.
\end{equation}
We are now ready to construct $\Gamma$ and $\widetilde{\Gamma}$ on $X^{(m)}$ as in \autoref{prop:cont}. We let
$$ \Gamma:=(1-c)\frac{1}{q\ell}\sum_{i=1}^{\ell}\{s_i=0\}\,,$$
and
$$ \widetilde{\Gamma}:=cD^{(m)}  \,.$$ 
To complete the proof of \autoref{thm:semi_positivity_curve}, we have to prove that $\Gamma$ and $\widetilde{\Gamma}$ are as in \autoref{prop:cont}, with $f$ replaced by $f^{(m)}$. To check item \autoref{class}, remark that
$$\Gamma+\widetilde{\Gamma} \sim_{\bQ} -K_{X^{(m)}/T}-\Delta^{(m)}+ m\left( c a - (1 - c) \varepsilon \right) \left(f^{(m)}\right)^*H.$$
Furthermore,  because of assumption \autoref{negative}, $c a - (1 - c) \varepsilon <0$ holds; so, we may apply \autoref{prop:cont} replacing $H$ by $-m\left( c a - (1 - c) \varepsilon \right) H$. Item \autoref{ample} of \autoref{prop:cont} follows from the ampleness of $D$.

To prove of item \autoref{klt} of \autoref{prop:cont}, we compute the log canonical threshold. We first remark that, since the sections $s_i$ restricted to $X_t^{(m)}$ are linearly independent pure tensors in the $e_i$, we have that
$$
 \frac{\ell}{h^0\left(X_t^{(m)},-q\left(K_{X_t^{(m)}}+\Delta^{(m)}_t\right)\right)} \Gamma_t \leq (1-c)P
$$
for the $q$-product basis type divisor  $P$  on $X_t^{(m)}$ associated to $\{e_i\}$, as in \autoref{def:prod_basis} and \autoref{rem:prod_basis}. Using \autoref{prop:prod_basis}, we obtain that $\lct\left(X^{(m)}_t,\Delta^{(m)}_t;P_t\right)\geq \delta_q(X_t,\Delta_t)$. This yields 
$$
\lct\left(X_t^{(m)},\Delta_t^{(m)};\Gamma_t\right) 
\geq 
\frac{\delta_q(X_t,\Delta_t)\ell}{(1-c)h^0\left(X_t^{(m)},-q\left(K_{X_t^{(m)}}+\Delta^{(m)}_t\right)\right)} 
\expl{>}{rearranging inequlaity \autoref{bound}} 
1.
$$
Hence, all assumptions of \autoref{prop:cont} are verified, implying that $\Gamma$ and $\widetilde{\Gamma}$ cannot exist. Therefore, we obtained a contradiction with our initial assumption that $\deg \lambda_{f,\Delta} <0$. 
\end{proof}

\begin{proof}[Proof of \autoref{thm:semi_positive_boundary}]
\emph{The proof of point \autoref{itm:semi_positive_boundary:pseff}:}
As at the beginning of the proof of \autoref{thm:semi_positivity_curve}, we may assume that $k$ is uncountable. 
 According to \cite[Thm 0.2]{Boucksom_Demailly_Paun_Peternell_The_Pseudo_effective_cone_of_a_compact_Kahler_manifold_and_varieties_of_negative_Kodaira_dimension}, it is enough to show that $\lambda_{f,\Delta} \cdot C \geq 0$ for every morphism $\iota: C\to X$ from a smooth projective curve  such that $C \to \iota(C)$ is the normalization and  $\iota(C)$ is a very general curve in a  family covering $T$.   In particular,  for a very general closed point $t \in \iota(C)$, $X_t$ is normal, $(X_t, \Delta_t)$ is klt and $\delta\left(X_t, \Delta_t\right) \geq 1$. Let $Z \to X_C$ be the normalization, $g : Z \to C$ the induced morphism and $\Delta_Z$ the boundary induced by $\Delta$ on $Z$ as explained in \autoref{sec:base_change_relative_canonical_smooth_base}. According to \autoref{prop:relative_canonical_base_change_normal}.\autoref{itm:relative_canonical_base_change:base_change}, $g : (Z, \Delta_Z) \to C$ satisfies the assumptions of \autoref{thm:semi_positivity_curve}. Hence the following computation concludes the proof of point \autoref{itm:semi_positive_boundary:pseff}:
\begin{equation*}
0 \leq  
\explshift{-40pt}{\deg}{\autoref{thm:semi_positivity_curve}}
 \lambda_{g, \Delta_Z}
\explshift{60pt}{=}{\autoref{prop:CM_base_change}.\autoref{itm:CM_base_change:smooth_base}}
C \cdot \lambda_{f, \Delta}.
\end{equation*}

\emph{The proof of point \autoref{itm:semi_positive_boundary:nef}:}
In this case for each finite morphism $C \to T$ from a smooth projective curve, according to \autoref{sec:base_change_relative_canonical_normal_fibers}, $f_C : (X_C, \Delta_C) \to C$ satisfy the assumptions of \autoref{thm:semi_positivity_curve}. So:
\begin{equation*}
0 
\expl{\leq}{\textrm{\autoref{thm:semi_positivity_curve}}}  
\deg \lambda_{f_C, \Delta_C}
\explshift{60pt}{=}{\autoref{prop:CM_base_change}.\autoref{itm:CM_base_change:normal_fiber}}
C \cdot \lambda_{f, \Delta}.
\end{equation*}
\end{proof}

\begin{proof}[Proof of points \autoref{itm:semi_positive_boundary:pseff} and \autoref{itm:semi_positive_boundary:nef} of \autoref{thm:semi_positive_boundary}]
These are special cases of \autoref{thm:semi_positive_boundary}. 
\end{proof}

\section{Bounding the nef threshold}
\label{sec:nefness_threshold}

\begin{lemma}
\label{lem:product_nef}
If $f : X \to T$ is a morphism between projective varieties, $m>0$ is an integer and $M$ is a $\bQ$-Cartier divisor on $X$, then $M$ is nef if and only if $M^{(m)}$ is nef.
\end{lemma}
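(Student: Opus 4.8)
## Proof proposal for Lemma (product and nefness)

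\begin{proof}[Proof plan]
The statement to prove is that for a morphism $f : X \to T$ of projective varieties, an integer $m > 0$, and a $\bQ$-Cartier divisor $M$ on $X$, the divisor $M$ is nef if and only if $M^{(m)} = \sum_{i=1}^m p_i^* M$ is nef on $X^{(m)}$. The plan is to treat the two implications separately, with the forward direction being essentially formal and the reverse direction requiring a curve-selection argument.

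For the \emph{forward} direction, suppose $M$ is nef. Each projection $p_i : X^{(m)} \to X$ is a morphism of projective varieties, so $p_i^* M$ is nef (pullback of a nef $\bQ$-Cartier divisor under a morphism is nef). A finite sum of nef $\bQ$-Cartier divisors is nef, hence $M^{(m)} = \sum_{i=1}^m p_i^* M$ is nef. This direction needs nothing beyond the standard functoriality of nefness.

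For the \emph{reverse} direction, suppose $M^{(m)}$ is nef; I want to show $M$ is nef, i.e. $M \cdot C \geq 0$ for every irreducible curve $C \subseteq X$. The idea is to produce a curve in $X^{(m)}$ on which the intersection with $M^{(m)}$ computes $m \cdot (M \cdot C)$, or at least a positive multiple of $M \cdot C$. Concretely, fix $C \subseteq X$ and consider the ``small diagonal'' type curve: let $\iota_C : C \hookrightarrow X^{(m)}$ be the map sending $c \mapsto (c, c, \dots, c)$ (the restriction of the diagonal embedding $X \hookrightarrow X^{(m)}$ to $C$, which makes sense since $X^{(m)}$ is the fiber product over $T$ and the diagonal lands inside it). Then $p_i \circ \iota_C = \mathrm{id}_C$ for each $i$, so $\iota_C^* M^{(m)} = \iota_C^* \sum_i p_i^* M = \sum_{i=1}^m M|_C$, and therefore
\[
0 \leq M^{(m)} \cdot \iota_C(C) = m \, (M \cdot C),
\]
using that $\iota_C$ is a closed immersion (or at worst finite onto its image, in which case one divides by the degree). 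Since $C$ was arbitrary, $M$ is nef.

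The main point to be careful about — the only real obstacle — is verifying that the diagonal $\Delta_{X/T} : X \to X^{(m)}$ is a well-defined closed immersion in this setting, so that $\iota_C$ makes sense and is proper with positive-dimensional image whenever $\dim C = 1$. This is automatic when $X^{(m)}$ is the fiber product over a base $T$ (the diagonal of a separated scheme is a closed immersion), and $X$ being a variety (separated) guarantees it; the composition $p_i \circ \Delta_{X/T} = \mathrm{id}_X$ is then immediate from the universal property of the fiber product. After that, the projection formula for the intersection number along $\iota_C$ is routine. One should also note that the statement is insensitive to replacing $M$ by a positive multiple, so the $\bQ$-Cartier hypothesis causes no difficulty.
\end{proof}
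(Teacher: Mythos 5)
Your proof is correct and follows essentially the same route as the paper: the forward direction by functoriality of nefness under pullback, and the converse by restricting $M^{(m)}$ to the image of a curve under the relative diagonal, yielding $0 \leq M^{(m)} \cdot \Delta(C) = m\,(M\cdot C)$. The paper phrases the curve as a morphism $\iota : C \to X$ from a smooth projective curve and defines the diagonal by $p_i \circ \Delta = \iota$, which sidesteps the (minor) verification you flag about the diagonal being a closed immersion, but the argument is the same.
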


\begin{proof}
If $M$ is nef, $M^{(m)}$ is nef by definition. For the other direction, assume that $M^{(m)}$ is nef.  Let $\iota : C \to X$ be a morphism from a smooth, projective curve. Take then the diagonal morphism $\Delta :C \to X^{(m)}$, which is defined by the equality $p_i \circ \Delta = \iota$ for each $i$. Then:
\begin{equation*}
0 \leq M^{(m)} \cdot \Delta(C) = \left( \sum_{i=1}^m p_i^* M \right) \cdot \Delta(C) = \sum_{i=1}^m ((p_i^* M) \cdot \Delta(C)) = \sum_{i=1}^m (M \cdot C) = m (M \cdot C).
\end{equation*}
Hence, $M \cdot C \geq 0$. As this works for any curve $C$ in $X$ we see that $M$ is nef.
\end{proof}

\begin{proof}[Proof of \autoref{thm:bounding_nef_threshold}]
As both the consequences and the conditions of the theorem are invariant under base-extension to another algebraically closed field, we may assume that $k$ is uncountable. In particular, as at the beginning of the proof of \autoref{thm:semi_positivity_curve}, whenever a property is true for  very general geometric fibers, it is also true for some closed fibers.

According to \autoref{lem:reducing_reduced_fibers} we may assume that all fibers of $f$ are reduced. In particular then for all integers $m>0$, $X^{(m)}$ is normal according to \autoref{lem:normal_stable_pullback_fiber_product}. Set $\lambda:=\lambda_{f, \Delta}$.

Fix the following:
\begin{enumerate}
 \item let $H$ be an ample divisor on $T$ of degree $1$, 
 \item let $t \in T$ be a closed point such that $X_t$ is normal, $X_t \subsetneq \Supp \Delta$, $(X_t, \Delta_t)$ is klt and $\delta(X_t, \Delta_t)=\delta$, 
 \item \label{itm:bounding_nef_threshold:epsilon} let $0< \varepsilon< \delta-1$ be an arbitrary rational number, and 
 \item let $0<\varepsilon' \ll \varepsilon$ be another rational number.
\end{enumerate}
 It is enough to prove that 
\begin{equation}
\label{eq:bounding_nef_threshold:goal}
 N:=\varepsilon (-K_{X/T} - \Delta) + \left(\frac{(1 + \varepsilon)\deg \lambda}{v(n+1)} + \varepsilon'\right) f^* H
\end{equation}
is nef, as we may converge with $\varepsilon$ and $\varepsilon'$ to $\delta -1 $ and to $0$, respectively. 
Set 
\begin{equation}
\label{eq:bounding_nef_threshold:M}
M:=(1 + \varepsilon) (-K_{X/T} - \Delta) + \left(\frac{(1 + \varepsilon)\deg \lambda}{ v (n+1)} + \varepsilon' \right) f^* H  = N - K_{X/T} - \Delta. 
\end{equation}
Note that
\begin{multline}
\label{eq:bounding_nef_threshold:self_intersection}
 M^{n+1} = \left((1 + \varepsilon )(-K_{X/T} - \Delta) + \left( \frac{(1 + \varepsilon)\deg \lambda}{v(n+1)} + \varepsilon'\right) f^* H \right)^{n+1} 
\\ = (1 + \varepsilon )^n  (-K_{X/T} - \Delta )^{n} \left( (1 + \varepsilon )(-K_{X/T} - \Delta ) + (n+1)\left(\frac{(1 + \varepsilon) \deg \lambda}{v(n+1)} + \varepsilon' \right) f^* H \right)
\\ = (1 + \varepsilon )^n ( \deg \lambda ( -(1 + \varepsilon ) + (1 + \varepsilon))  + (n+1)v \varepsilon' ) = 
(1 + \varepsilon )^n (n+1)\varepsilon' v   
>0.
\end{multline}
We now fix a positive integer $q$ so that the following hold:
\begin{enumerate}[resume]
\item \label{itm:Cartier} $qM$ is Cartier,
\item $q \varepsilon \in \bN$,
\item \label{itm:higher_cohomology} $R^if_*\sO_X(qM)=0$ for all $i>0$,  which is doable as $M$ is $f$-ample,
\item \label{itm:degree} $\deg\left(f_*\sO_X(qM)\right)>0$, which is doable according to \autoref{lem:Mumford_line_bundles_over_curve} and \autoref{eq:bounding_nef_threshold:self_intersection}, and 
\item \label{itm:delta} $\delta_{q'}(X_t,\Delta_t)>1+\varepsilon$, where $q':= q(1+ \varepsilon )$. This is doable according to \autoref{lem:scaling_delta_invariant} and assumption \autoref{itm:bounding_nef_threshold:epsilon}.
\end{enumerate}
From now on, let $\sE:=f_*\sO_X(qM)$. Remark that according to \cite[Lemma 3.6]{Kovacs_Patakfalvi_Projectivity_of_the_moduli_space_of_stable_log_varieties_and_subadditvity_of_log_Kodaira_dimension} for every integer $m>0$,
\begin{multline*}
\sE^{\otimes m} \cong f_*^{(m)} \sO_{X^{(m)}}\left( qM^{(m)}\right) 
\\ \cong f_*^{(m)} \sO_{X^{(m)}}\left(q' \left(-K_{X^{(m)}/T} - \Delta^{(m)} \right) +  qm\left( \frac{(1 + \varepsilon)\deg \lambda}{ v(n+1)} + \varepsilon' \right)\left(f^{(m)}\right)^* H \right)
\end{multline*}
and, by item \autoref{itm:higher_cohomology}, the following base change holds
$$
\sE^{\otimes m}_t=H^0\left(X_t^{(m)}, q'\left( -  K_{X_t^{(m)}} -\Delta^{(m)}_t \right)  \right).
$$
According to \autoref{prop:vectP2}, we may find a basis $\{e_i\}$ of $\sE_t$, an integer $m>0$, and $\ell$ global sections $s_1,\dots,s_{\ell}$ of $\sE^{\otimes m}$ so that the sections $s_j$, when restricted over $t$, are linearly independent pure tensor in the $e_i$, and furthermore
\begin{equation}\label{eq:bound}
\frac{\ell}{h^0\left(X_t^{(m)},-q'\left(K_{X_t^{(m)}}+\Delta_t^{(m)}\right)\right)}>
\underbrace{\frac{1 + \varepsilon  }{\delta_{q'}(X_t,\Delta_t)}}_{\textrm{$<1$ by assumption \autoref{itm:delta}}} \,.
\end{equation}
Define $\Gamma$ as 
$$ \Gamma:=\frac{1}{q\ell}\sum_{i=1}^{\ell}\{s_i=0\} \sim_{\bQ} M^{(m)}.$$
Note that according to \autoref{eq:bounding_nef_threshold:M},
$$K_{X^{(m)}/T} + \Delta^{(m)} + \Gamma \sim_{\bQ} N^{(m)}.$$
So, to show \autoref{eq:bounding_nef_threshold:goal}, according to \autoref{lem:product_nef} 
 it is enough to prove that $K_{X^{(m)}/T} + \Delta^{(m)} + \Gamma$ is nef, and for that according to \cite[Thm 1.13]{Fujino_Semi_positivity_theorems_for_moduli_problems} it is enough to show that $\left( X^{(m)}_t, \Delta^{(m)}_t + \Gamma_t \right)$ is klt for some (equivalently, a general) $t \in T$. For this we compute the log canonical threshold. We first remark that, since the sections $s_i$ restricted to $X_t^{(m)}$ are linearly independent pure tensors in the $e_i$, we have that
$$
 \frac{q\ell}{q' h^0\left(X_t^{(m)},-q'\left(K_{X_t^{(m)}}+\Delta^{(m)}_t\right)\right)} \Gamma_t \leq P
$$
for the $q'$-product basis type divisor  $P$  on $X_t^{(m)}$ associated to $\{e_i\}$, as in \autoref{def:prod_basis} and \autoref{rem:prod_basis}. Using \autoref{prop:prod_basis}, we obtain that $\lct \left(X^{(m)}_t,\Delta^{(m)}_t;P\right)\geq \delta_{q'}(X_t,\Delta_t)$; this yields 
\begin{multline*}
\lct\left(X_t^{(m)},\Delta_t^{(m)};\Gamma_t\right) \geq
\frac{\delta_{q'}(X_t,\Delta_t)\ell q}{h^0\left(X_t^{(m)},-q'\left(K_{X_t^{(m)}}+\Delta^{(m)}_t\right)\right)q'}
\\ \expl{=}{by the definition of $q'$ in \autoref{itm:delta}}
\frac{\ell \delta_{q'}(X_t,\Delta_t)}{h^0\left(X_t^{(m)},-q'\left(K_{X_t^{(m)}}+\Delta^{(m)}_t\right)\right)(1+ \varepsilon )}
\expl{>}{by \autoref{eq:bound}}
1
\end{multline*}

\end{proof}

\section{Positivity}
\label{sec:positivity}

\subsection{Variation}
\label{sec:variation}

\begin{definition}
\label{def:var}
Let $f : X \to T$ be a flat morphism between normal projective varieties, with $-K_{X/T}$ $\bQ$-Cartier and $f$-ample. Let $q_0$ be an integer such that $q_0K_{X/T}$ is Cartier, and for all positive integers $q_0|q$, set $\sL_q:= \sO_X(-qK_{X/T})$.   As $\sL_q$ provides a relatively ample polarization, the $\Isom$ scheme $I:=\Isom_{T \times T} ( p_1^* f , p_2^* f)$ exists together with the two natural projections $q_i : I  \to T$ \cite[1.10.2]{Kollar_Rational_curves_on_algebraic_varieties}. Let $I'$ be the image of $(q_1, q_2) : I \to T \times T$. Then, there is a non-empty open set $U \subseteq T$ where the fibers of $p_1|_{I'}: I' \to T$ have the same dimension, say $d$. This dimension is the dimension of a general isomorphism equivalence class of the fibers of $f$. As these isomorphism equivalence classes (at least general ones) would be exactly the fibers of any reasonable moduli map, one defines the \emph{variation} of $f$ as 
\begin{equation}
\label{eq:var}
 \var (f):= \dim T - d.
\end{equation}
$f$ has \emph{maximal variation}, if $\var (f)=\dim T$.  
\end{definition}

\subsection{Curve base}

\begin{notation}
\label{notation:CM_positivity}
In the situation of \autoref{notation:CM_semi_positivity}, assume that
\begin{enumerate}
 \item $\delta>1$, where $\delta= \delta\left(X_{\ot}, \Delta_{\ot}\right)$ for very general geometric points $\ot \in T$, and
 \item $\deg \lambda_{f, \Delta} =0$.
\end{enumerate}

\end{notation}

\begin{theorem}
\label{thm:no_divisor}
In the situation of \autoref{notation:CM_positivity}, for each ample $\bQ$-divisor $L$ on $T$, $|-K_{X/T} -\Delta -f^*L|_{\bQ}=\emptyset$. 
\end{theorem}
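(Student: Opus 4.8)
The plan is to argue by contradiction and reduce to the nef-threshold bound of \autoref{thm:bounding_nef_threshold}, exactly as was sketched in \autoref{sec:outline_positivity} and \autoref{sec:outline_nefness_threshold}. Suppose that there is an ample $\bQ$-divisor $L$ on $T$ and a divisor $\Gamma \in |-K_{X/T} - \Delta - f^*L|_{\bQ}$. The first step is to pass to a fiber power: for an integer $m>0$ to be chosen, pulling $\Gamma$ back along the projections $p_i : X^{(m)} \to X$ produces $\Gamma^{(m)} \sim_{\bQ} -K_{X^{(m)}/T} - \Delta^{(m)} - m\left(f^{(m)}\right)^* L$. Here I first need to reduce to reduced fibers via \autoref{lem:reducing_reduced_fibers} so that $X^{(m)}$ is normal by \autoref{lem:normal_stable_pullback_fiber_product}; this also preserves the CM line bundle, so the hypothesis $\deg\lambda_{f,\Delta}=0$ and the $\delta$-invariant both survive. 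As usual I may assume $k$ uncountable so that the very general geometric fiber with $\delta(X_t,\Delta_t)=\delta$ occurs as a closed fiber.

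The core of the argument is to combine $\Gamma^{(m)}$ with a basis-type divisor obtained from \autoref{prop:vectP2}, in the style of the proof of \autoref{thm:bounding_nef_threshold}. Fix a rational number $\varepsilon$ with $0<\varepsilon<\delta-1$, and set $M := \varepsilon(-K_{X/T}-\Delta) + \varepsilon f^*L'$ for a suitable ample $\bQ$-divisor $L'$ on $T$; the point is to arrange $M$ to be $f$-ample of positive self-intersection so that $\deg f_*\sO_X(qM)>0$ (using \autoref{lem:Mumford_line_bundles_over_curve}) while keeping a little room. For $q$ divisible enough with $\delta_{q(1+\varepsilon)}(X_t,\Delta_t)>1+\varepsilon$ (using \autoref{lem:scaling_delta_invariant} and \autoref{thm:BJ}), \autoref{prop:vectP2} applied to $\sE := f_*\sO_X(qM)$ gives, for $m\gg 0$, a basis $\{e_i\}$ of $\sE_t$ and global sections $s_1,\dots,s_\ell$ of $\sE^{\otimes m}$ restricting over $t$ to linearly independent pure tensors in the $e_i$, with $\frac{\ell}{h^0(X_t^{(m)},\text{-}q(1+\varepsilon)(K_{X_t^{(m)}}+\Delta_t^{(m)}))}>\frac{1+\varepsilon}{\delta_{q(1+\varepsilon)}(X_t,\Delta_t)}$. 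Setting $\Gamma' := \frac{1}{q\ell}\sum_i \{s_i=0\} \sim_{\bQ} M^{(m)}$, the divisor $\varepsilon\Gamma^{(m)} + \Gamma'$ is $\bQ$-linearly equivalent to $-(1+\varepsilon)(K_{X^{(m)}/T}+\Delta^{(m)})$ minus an $f^{(m)}$-pullback of a positive-degree divisor on $T$, and by \autoref{prop:prod_basis} together with the contribution of $\varepsilon\Gamma^{(m)}$ the pair $\left(X_t^{(m)}, \Delta_t^{(m)} + \frac{1}{1+\varepsilon}(\varepsilon\Gamma^{(m)}_t + \Gamma'_t)\right)$ is klt at the chosen fiber. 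Then \autoref{prop:cont} (with $f$ replaced by $f^{(m)}$, $\Gamma := \frac{1}{1+\varepsilon}(\varepsilon\Gamma^{(m)} + \Gamma')$, $\widetilde\Gamma := 0$ up to rescaling, and the auxiliary ample $H$ absorbing the negative pullback term) produces the required contradiction; alternatively, one argues directly that $\deg\lambda_{f,\Delta}=0$ forces $(-K_{X/T}-\Delta)^{n+1}=0$, so $(-K_{X^{(m)}/T}-\Delta^{(m)})\cdot\left(f^{(m)}\right)^*H$-type intersections vanish and the existence of $\Gamma'$ with the klt property contradicts \autoref{prop:cont} just as in \autoref{thm:semi_positivity_curve}.

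The main obstacle I anticipate is bookkeeping the coefficients so that the linear-equivalence class in \autoref{prop:cont}.\autoref{class} really has a \emph{negative} pullback part while simultaneously the klt condition \autoref{prop:cont}.\autoref{klt} holds on the general fiber — that is, I must check that the contribution of $\varepsilon\Gamma^{(m)}$ does not destroy the klt bound coming from \autoref{prop:prod_basis}. Since $\Gamma^{(m)}_t$ need not be klt on the fiber, I have to keep its coefficient $\frac{\varepsilon}{1+\varepsilon}$ small relative to the slack $\delta_{q(1+\varepsilon)}(X_t,\Delta_t)-(1+\varepsilon)$; concretely, I would first fix $\varepsilon$ small, then choose $q$ large so the $\delta_{q(1+\varepsilon)}$ estimate holds with enough margin, then invoke \autoref{prop:vectP2} to push $m\to\infty$. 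Once the numerics close up, the contradiction with \autoref{prop:cont} is immediate, so this threading of constants is the only delicate point. I note that $\widetilde\Gamma$ can be taken to be an arbitrarily small ample perturbation as in \autoref{prop:cont}, which gives the extra room needed to make all the strict inequalities work.
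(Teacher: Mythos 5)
Your strategy is workable but it is a genuinely different, and much heavier, route than the one the paper takes. The paper's proof is three lines: assuming $\Gamma \in |-K_{X/T}-\Delta-f^*L|_{\bQ}$ exists, it perturbs the \emph{boundary} to $\Delta + \varepsilon\Gamma$ and invokes Fujita's openness of uniform $K$-stability (\cite[Thm 1.2]{Fujita_Openness_results_for_uniform_K-stability}, together with \autoref{def:K_stable} and \autoref{prop:delta_general_fiber}) to conclude that $\delta\left(X_{\ot},\Delta_{\ot}+\varepsilon\Gamma_{\ot}\right)>1$ for very general $\ot$ and small $\varepsilon$. Then \autoref{thm:semi_positivity_curve} applied to the perturbed family gives $(-K_{X/T}-\Delta-\varepsilon\Gamma)^{n+1}\leq 0$, while a direct expansion using $\Gamma\sim_{\bQ}-K_{X/T}-\Delta-f^*L$ and $(-K_{X/T}-\Delta)^{n+1}=0$ (from $\deg\lambda_{f,\Delta}=0$) evaluates this number to $(n+1)\varepsilon(1-\varepsilon)^n v\deg L>0$ — a contradiction. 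So all the product/basis-type/CLT work is absorbed into the already-proven \autoref{thm:semi_positivity_curve}, at the cost of one external input (openness of $\delta>1$ under adding a small multiple of $\Gamma$ to the boundary). Your proof avoids that external input by re-running the machinery of \autoref{prop:vectP2} and \autoref{prop:cont} with the extra term $\varepsilon\Gamma^{(m)}$, which is a legitimate trade-off.

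Two points in your sketch need repair before it closes up. First, the coefficients as written do not balance: with $M=\varepsilon(-K_{X/T}-\Delta)+\varepsilon f^*L'$ you get $\varepsilon\Gamma^{(m)}+\Gamma'\sim_{\bQ}2\varepsilon\left(-K_{X^{(m)}/T}-\Delta^{(m)}\right)+\varepsilon m\left(f^{(m)}\right)^*(L'-L)$, not $(1+\varepsilon)$ times the anticanonical, and the normalization $\frac{1}{1+\varepsilon}(\varepsilon\Gamma^{(m)}+\Gamma')$ therefore does not land in the class required by \autoref{prop:cont}.\autoref{class}. The correct set-up is $D=a\Gamma^{(m)}+b\Gamma'$ with $a+b\varepsilon=1$ and $aL-b\varepsilon L'$ ample, which forces you to keep $a$ bounded away from $0$ while taking $\deg L'$ small. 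Second, the klt condition \autoref{prop:cont}.\autoref{klt} for $D_t$ then requires two inputs you do not state: a uniform positive lower bound $c_0$ on $\lct\left(X_t,\Delta_t;\Gamma_t\right)$ for very general $t$ (available by the constructibility argument in the proof of \autoref{prop:delta_general_fiber}, and passing to $\Gamma^{(m)}_t$ costs nothing by \autoref{lem:product}), and the convexity of the klt condition to combine $\left(X_t^{(m)},\Delta_t^{(m)}+\tfrac{a}{s}\Gamma_t^{(m)}\right)$ klt with $\left(X_t^{(m)},\Delta_t^{(m)}+\tfrac{b}{1-s}\Gamma'_t\right)$ klt for a suitable $s\in(0,1)$. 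One checks that $a\leq sc_0$ and $b\varepsilon\leq(1-s)(1+\varepsilon')$ can be satisfied simultaneously with $a>0$, so the argument does go through; but as written the "only delicate point" you flag is exactly where the proof currently fails, and your parenthetical alternative (arguing "just as in \autoref{thm:semi_positivity_curve}" from $(-K_{X/T}-\Delta)^{n+1}=0$ alone) does not work, since without the effective $\Gamma$ the sheaf $f_*\sO_X(q(-K_{X/T}-\Delta-\varepsilon f^*H))$ has negative degree and \autoref{prop:vectP2} produces nothing.
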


\begin{proof}
 Assume that $\Gamma \in |-K_{X/T} -\Delta -f^*L|$. Using \cite[Thm 1.2]{Fujita_Openness_results_for_uniform_K-stability}, \autoref{def:K_stable} and \autoref{prop:delta_general_fiber}, choose a small rational number $\varepsilon>0$ such that for very general geometric points $\ot \in T$ we have $\delta\left(X_{\ot}, \Delta_{\ot} + \varepsilon \Gamma_{\ot}\right)>1$. Then, 
 \begin{multline*}
 0 
\explshift{40pt}{ \geq }{by \autoref{thm:semi_positivity_curve}}
 (-K_{X/T} - \Delta - \varepsilon \Gamma)^{n+1} 
 = (-K_{X/T} - \Delta + \varepsilon (K_{X/T} + \Delta + f^*L))^{n+1} 
 = 
 (-(1-\varepsilon) (K_{X/T} + \Delta) + \varepsilon f^* L )^{n+1} 
 \\ = 
 (1-\varepsilon)^n \left( (1-\varepsilon)(- K_{X/T} - \Delta)^{n+1} + (n+1)\varepsilon(-K_{X/T} - \Delta)^n f^* L \right)
\\ 
\expl{ = }{$(- K_{X/T} - \Delta)^{n+1} =0$}
 (n+1)\varepsilon(1-\varepsilon)^n   (-K_{X_t} - \Delta_t)^n \deg L   >0.
 \end{multline*}
This is a contradiction. 
\end{proof}

\begin{notation}
\label{notation:CM_positivity_HN}
 In the situation of \autoref{notation:CM_positivity}, 
 \begin{enumerate}
  \item let $q_0>0$ be an integer such that $q_0 (-K_{X/T} - \Delta)$ is Cartier,
  \item for each integer $q_0 |q$, define $\sE_q:= f_* \sO_X(q(-K_{X/T} - \Delta))$, and set $0=\sF^0_q \subseteq \sF^1_q \subseteq \dots \subseteq \sF^{s_q-1}_q \subseteq \sF^{s_q}_q$ be the Harder-Narasimhan filtration of $\sE_q$. Set $\sG^i_q:= \sF^i_q/ \sF^{i-1}_q$,
  \item  let $g$ be the genus of $T$.
 \end{enumerate}

\end{notation}

\begin{lemma}
\label{lem:2g_slope_bound}
In the situation of \autoref{notation:CM_positivity_HN}, for every positive integer $q_0 | q$,  $\mu(\sF_q^1) \leq 2g$. 
\end{lemma}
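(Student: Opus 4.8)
The plan is to argue by contradiction using the pseudo-effectivity result already in hand. Suppose $\mu(\sF_q^1) > 2g$. The key observation is that the maximal destabilizing subsheaf $\sF_q^1 \subseteq \sE_q = f_*\sO_X(q(-K_{X/T}-\Delta))$ is then semistable of slope $>2g$, hence globally generated by \autoref{prop:semi_stable_globally_generated}, and in particular has positive degree. I would then promote this to a statement about a fiber product: since $\sF_q^1$ is semistable of slope $>2g$, so is its $m$-th tensor power $(\sF_q^1)^{\otimes m}$ for every $m>0$ (by \autoref{prop:tensor_product_semi_stable}), with slope $m\mu(\sF_q^1) > 2g$ for all $m\geq 1$. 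This sits inside $\sE_q^{\otimes m} \cong f^{(m)}_* \sO_{X^{(m)}}(q(-K_{X^{(m)}/T}-\Delta^{(m)}))$, is globally generated, and its sections restricted to a general fiber $t$ span a subspace of $H^0(X_t^{(m)},-q(K_{X_t^{(m)}}+\Delta_t^{(m)}))$ of dimension $(\rk\sF_q^1)^m$.

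The next step is to feed this into the basis-type machinery exactly as in the proof of \autoref{thm:semi_positivity_curve}: choose a basis $\{e_i\}$ of $(\sE_q)_t$ adapted so that a basis of $(\sF_q^1)_t$ is among the $e_i$, and for $m$ large the global sections coming from $(\sF_q^1)^{\otimes m}$ restrict over $t$ to linearly independent pure tensors in these $e_i$. The point is that now the fraction $\dfrac{(\rk \sF_q^1)^m}{h^0(X_t^{(m)}, -q(K_{X_t^{(m)}}+\Delta_t^{(m)}))} = \left(\dfrac{\rk\sF_q^1}{h^0(X_t,-q(K_{X_t}+\Delta_t))}\right)^m$ does not go to $0$ in the relevant comparison — wait, this is backwards from what we want. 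The correct route: since $\mu(\sF_q^1)>2g$ means the positive-degree part of the Harder-Narasimhan filtration is nonempty and the argument of \autoref{thm:semi_positivity_curve} (via \autoref{prop:vectP2}) produces, for $m\gg 0$, a divisor $\Gamma \in |{-K_{X^{(m)}/T}} - \Delta^{(m)} - (f^{(m)})^* mH|_{\bQ}$ with $(X_t^{(m)},\Delta_t^{(m)}+\Gamma_t)$ klt for general $t$, contradicting \autoref{prop:cont}. Actually the cleanest formulation: $\mu(\sF_q^1)>2g$ for the HN filtration of $\sE_q = f_*\sO_X(q(-K_{X/T}-\Delta))$ is exactly the input needed to run the proof of \autoref{thm:semi_positivity_curve} with $M = -K_{X/T}-\Delta$ itself (no $\varepsilon H$ twist, since $\deg\sE_q>0$ is automatic once $\mu(\sF_q^1)>2g\geq 0$ and $\rk > \rk\sF_q^1$ could still be an issue — but one works with the top HN piece). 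This yields a divisor $\Gamma \in |{-K_{X^{(m)}/T}}-\Delta^{(m)}|_\bQ$, and combined with $\deg\lambda_{f,\Delta}=0$ and \autoref{prop:cont} (applied with an arbitrarily small ample $H$, using that $\delta>1$ lets us perturb $\Gamma$) one gets a contradiction.

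More precisely, here is the step I would actually execute. By \autoref{thm:no_divisor}, $|-K_{X/T}-\Delta-f^*L|_\bQ = \emptyset$ for every ample $\bQ$-divisor $L$ on $T$. On the other hand, if $\mu(\sF_q^1)>2g$, then by taking $m\to\infty$, tensor powers of $\sF_q^1$ give a positive-degree globally generated subsheaf of $\sE_q^{\otimes m}$, and by \autoref{prop:vectP2} applied to $\sE := \sF_q^1$ (whose degree is positive), together with the product-basis-type argument from the proof of \autoref{thm:semi_positivity_curve}, for $m$ large enough one constructs an effective $\bQ$-divisor $\Gamma \in |-K_{X^{(m)}/T} - \Delta^{(m)} - (f^{(m)})^* mL|_\bQ$ (here $L$ small ample, absorbing the slight slope loss) with $(X_t^{(m)},\Delta_t^{(m)}+\Gamma_t)$ klt for general $t$. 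Descending via \autoref{lem:product_nef}-type reasoning, or directly noting $\Gamma$ already shows $|-K_{X^{(m)}/T}-\Delta^{(m)} - (f^{(m)})^* mL|_\bQ \neq \emptyset$, contradicts the $m$-fold fiber product version of \autoref{thm:no_divisor} (which holds since $\delta(X_t^{(m)},\Delta_t^{(m)}) \geq 1$ by \autoref{prop:prod_basis} and $\deg\lambda_{f^{(m)},\Delta^{(m)}} = m\,\deg\lambda_{f,\Delta}=0$). The main obstacle is bookkeeping: ensuring the tensor-power sections of $\sF_q^1$ restrict over $t$ to pure tensors in a fixed adapted basis, and that the resulting $\Gamma$ restricts to a genuine product-basis-type divisor so that \autoref{prop:prod_basis} applies — but these are precisely the manipulations already carried out in \autoref{thm:semi_positivity_curve}, so the lemma should follow by invoking that proof almost verbatim.
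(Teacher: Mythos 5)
Your proposal aims at the right contradiction (\autoref{thm:no_divisor}), but the route you take through fiber products has two genuine gaps, and it misses the one‑line argument that actually closes the proof.

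First, the ``$m$-fold fiber product version of \autoref{thm:no_divisor}'' you invoke at the end is not available. \autoref{thm:no_divisor} is proved under \autoref{notation:CM_positivity}, which requires $\delta\left(X^{(m)}_{\ot},\Delta^{(m)}_{\ot}\right)>1$ for the family $f^{(m)}$; that is exactly the product conjecture (\autoref{conj:delta}), which the paper explicitly flags as open. \autoref{prop:prod_basis} does \emph{not} give $\delta\left(X_t^{(m)},\Delta_t^{(m)}\right)\geq 1$: it only bounds the lct of \emph{product} basis type divisors, not of arbitrary basis type divisors on the product. Second, the alternative route through \autoref{prop:cont} also breaks: applying \autoref{prop:vectP2} to $\sE:=\sF_q^1$ alone produces $\ell\approx(\rk\sF_q^1)^m$ pure-tensor sections, whereas the klt bound coming from \autoref{prop:prod_basis} needs $\ell/h^0\left(X_t^{(m)},\cdot\right)$ close to $1$; since $h^0 = (\rk\sE_q)^m$, this ratio tends to $0$ whenever $\sF_q^1\subsetneq\sE_q$, so the resulting $\Gamma$ has no reason to restrict to a klt divisor on the fibers. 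In short, neither branch of your contradiction can be completed as written.

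The point you are missing is that no klt condition is needed at all: \autoref{thm:no_divisor} is contradicted by the existence of a \emph{single} effective divisor in $|-K_{X/T}-\Delta-f^*L|_{\bQ}$ with $L$ ample. This is what the paper does. If $\mu(\sF_q^1)>2g$, twist down by a closed point $t\in T$: $\sF_q^1(-t)$ is still semistable of slope $>2g-1$, hence (by \autoref{prop:semi_stable_globally_generated}, or just by \autoref{prop:semi-stable_coho_vanishing} plus Riemann--Roch) has a nonzero global section. This gives $\Gamma'\in|q(-K_{X/T}-\Delta)-f^*L'|$ with $L'$ the ample divisor $t$, so $\frac{1}{q}\Gamma'\in|-K_{X/T}-\Delta-f^*\frac{L'}{q}|_{\bQ}$, contradicting \autoref{thm:no_divisor} immediately. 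No fiber products, no tensor powers, no central limit theorem. You had the global-generation observation in your first paragraph; the only missing move was the twist by $\sO_T(-t)$, which converts ``slope $>2g$'' into a section of the anticanonical bundle \emph{minus an ample pullback}, rather than of the anticanonical bundle itself.
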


\begin{proof}
Assume the contrary, that is, $\mu(\sF_q^1)>2g$, and let $t \in T$ be an arbitrary closed point. According to \autoref{prop:semi_stable_globally_generated}, $\sF_q^1(-t)$ is globally generated. So, there is a $\Gamma'\in |q (-K_{X/T} - \Delta) - f^* L'|$, where $L'$ is the divisor determined by $t$ on $T$. Hence, for $\Gamma := \frac{\Gamma}{q}$ and $L:= \frac{L'}{q}$ we have $\Gamma \in |-K_{X/T} - \Delta - f^* L |_{\bQ}$. This contradicts \autoref{thm:no_divisor}. 
 
\end{proof}

\begin{proposition}
\label{prop:no_positive_slopes}
In the situation of \autoref{notation:CM_positivity_HN}, for every positive integer $q_0 | q$, $\mu(\sF_q^1) \leq 0$.
\end{proposition}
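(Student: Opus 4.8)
We already know from \autoref{lem:2g_slope_bound} that $\mu(\sF_q^1)\le 2g$; the task is to push this bound down to $0$. The idea is the same as in the proof of \autoref{lem:2g_slope_bound}, but applied on a self-fiber-product $X^{(m)}$ of $f$ for $m\gg 0$, so that the available ``room'' $2g$ becomes negligible compared with the degree being bootstrapped. First I would reduce to the case of reduced fibers via \autoref{lem:reducing_reduced_fibers} (so that each $X^{(m)}$ is normal by \autoref{lem:normal_stable_pullback_fiber_product}), noting that this does not change $\lambda_{f,\Delta}$ and is harmless for the statement since $\mu(\sF_q^1)$ only increases under such finite base change — or, more cleanly, just observe that it suffices to prove the inequality after an arbitrary finite base change, since $\mu$ of the top piece of the HN filtration scales by the degree of the cover, hence $\le 0$ downstairs iff $\le 0$ upstairs.

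\textbf{Main steps.} Suppose for contradiction that $\mu(\sF_q^1)=:a>0$ for some $q_0\mid q$. By \autoref{prop:tensor_product_semi_stable}, $\left(\sF_q^1\right)^{\otimes m}$ is semi-stable of slope $ma$, and it is a subsheaf of
$\sE_q^{\otimes m}\cong f^{(m)}_*\sO_{X^{(m)}}\!\left(q\left(-K_{X^{(m)}/T}-\Delta^{(m)}\right)\right)$
(using \cite[Lemma 3.6]{Kovacs_Patakfalvi_Projectivity_of_the_moduli_space_of_stable_log_varieties_and_subadditvity_of_log_Kodaira_dimension}), and moreover it is saturated there in the sense that it is a term of the HN filtration of $\sE_q^{\otimes m}$ up to the point where slopes drop below $ma$ — in any case it is a semi-stable subbundle of slope $ma$ sitting at the top of the HN filtration. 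Choose $m$ with $ma>2g$, and let $L'$ be an ample divisor on $T$ of degree $\deg L' < ma-2g$ with $q_0\mid q$ still making everything Cartier; then $\left(\sF_q^1\right)^{\otimes m}\otimes\sO_T(-L')$ is semi-stable of slope $>2g$, hence globally generated by \autoref{prop:semi_stable_globally_generated}. A nonzero global section of this sheaf, viewed inside $H^0\!\left(X^{(m)},\,q\left(-K_{X^{(m)}/T}-\Delta^{(m)}\right)-q\left(f^{(m)}\right)^{*}L'\right)$, after dividing by $q$ produces an element
$\Gamma\in\left|-K_{X^{(m)}/T}-\Delta^{(m)}-\left(f^{(m)}\right)^{*}L\right|_{\bQ}$
with $L:=\tfrac{1}{q}L'$ ample on $T$. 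Now $f^{(m)}:\left(X^{(m)},\Delta^{(m)}\right)\to T$ is again in the situation of \autoref{notation:CM_semi_positivity}: the generic fiber $\left(X^{(m)}_{\ot},\Delta^{(m)}_{\ot}\right)$ is klt by \autoref{lem:product}, and its $\delta$-invariant is $\ge\min\{\delta,\delta,\dots\}=\delta>1$ — strictly speaking one only needs $\delta\left(X^{(m)}_{\ot},\Delta^{(m)}_{\ot}\right)\ge 1$, which follows from \autoref{prop:prod_basis} applied to product basis type divisors together with the limit description, but in fact the full strength needed is just that $\deg\lambda_{f^{(m)},\Delta^{(m)}}=0$, which holds because $\lambda_{f^{(m)},\Delta^{(m)}}=\left((-K_{X^{(m)}/T}-\Delta^{(m)})^{n+1}\right)$-type intersection and, expanding $-K_{X^{(m)}/T}-\Delta^{(m)}=\sum_i p_i^{*}(-K_{X/T}-\Delta)$, one gets a sum of terms each a nonnegative multiple of powers of $(-K_{X/T}-\Delta)^{n+1}=-\deg\lambda_{f,\Delta}=0$, giving $\deg\lambda_{f^{(m)},\Delta^{(m)}}=0$. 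Thus \autoref{notation:CM_positivity} holds for $f^{(m)}$, and \autoref{thm:no_divisor} applied to $f^{(m)}$ says $\left|-K_{X^{(m)}/T}-\Delta^{(m)}-\left(f^{(m)}\right)^{*}L\right|_{\bQ}=\emptyset$ — contradicting the existence of $\Gamma$. Hence $\mu(\sF_q^1)\le 0$.

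\textbf{Expected obstacle.} The delicate point is verifying that \autoref{thm:no_divisor} applies to $f^{(m)}$, i.e. checking that the full hypothesis package of \autoref{notation:CM_positivity} is inherited by the fiber product: connectedness of fibers and normality of $X^{(m)}$ are handled by the reduced-fiber reduction and \autoref{lem:normal_stable_pullback_fiber_product}; $f$-ampleness of $-K_{X^{(m)}/T}-\Delta^{(m)}$ is clear; the klt condition on general fibers is \autoref{lem:product}; and the two numbered hypotheses of \autoref{notation:CM_positivity} — namely $\delta>1$ and $\deg\lambda=0$ for $f^{(m)}$ — are the ones requiring the small computations indicated above (the $\delta$-inheritance can be bypassed if \autoref{thm:no_divisor} is re-read to see that its proof only uses $\deg\lambda_{f,\Delta}=0$ plus openness of the $\delta>1$ condition applied fiberwise, but to be safe I would cite \autoref{prop:prod_basis} for $\delta\ge 1$, which together with $\deg\lambda_{f^{(m)},\Delta^{(m)}}=0$ and \autoref{thm:semi_positivity_curve} is all that \autoref{thm:no_divisor}'s proof actually consumes). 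A secondary bookkeeping issue is choosing $q$ divisible enough that $q_0\mid q$, $qM$ Cartier, and $q\deg L<ma-2g$ can be met simultaneously; this is routine since $a>0$ is fixed and we are free to enlarge $m$.
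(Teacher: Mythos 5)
There is a genuine gap. Your argument routes the contradiction through \autoref{thm:no_divisor} applied to the fiber-product family $f^{(m)}:\left(X^{(m)},\Delta^{(m)}\right)\to T$, and for that you must put $f^{(m)}$ in the situation of \autoref{notation:CM_positivity}, i.e.\ you need $\delta\left(X^{(m)}_{\ot},\Delta^{(m)}_{\ot}\right)>1$ (or at least $\ge 1$ with enough margin to absorb the perturbation $\varepsilon\Gamma$ in the proof of \autoref{thm:no_divisor}). This is exactly \autoref{conj:delta}, which the paper explicitly flags as an open problem. Your claimed derivation of $\delta\left(X^{(m)}_{\ot},\Delta^{(m)}_{\ot}\right)\ge 1$ from \autoref{prop:prod_basis} does not work: that proposition only bounds the log canonical threshold of \emph{product} basis type divisors, whereas $\delta$ of the product is an infimum over \emph{all} basis type divisors of $X^{(m)}_{\ot}$, most of which are not of product type — this is precisely why the product conjecture is hard. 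Nor can \autoref{thm:no_divisor} be ``re-read'' to avoid the hypothesis: its proof perturbs the boundary by $\varepsilon\Gamma$ and then invokes \autoref{thm:semi_positivity_curve}, which genuinely consumes $\delta\ge 1$ on very general fibers of the family it is applied to; $\deg\lambda=0$ alone is not enough. (Your computation that $\deg\lambda_{f^{(m)},\Delta^{(m)}}=0$ is fine; it is the $\delta$ hypothesis that fails.)

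The paper's proof avoids fiber products of $X$ entirely at this step. Instead of mapping $\left(\sF_q^1\right)^{\otimes m}$ into $\sE_q^{\otimes m}\cong f^{(m)}_*\sO_{X^{(m)}}(\cdots)$, it uses the multiplication map $\xi:\left(\sF_q^1\right)^{\otimes m}\to\sE_{qm}=f_*\sO_X\!\left(qm(-K_{X/T}-\Delta)\right)$ on $X$ itself. This map is nonzero because at the generic point it sends $x^{\otimes m}$ to the $m$-th power $\tx^m$ of the corresponding section. Its image lands in some graded piece $\sG^j_{qm}$ of the Harder--Narasimhan filtration of $\sE_{qm}$, and since $\left(\sF_q^1\right)^{\otimes m}$ is semi-stable of slope $m\mu\left(\sF_q^1\right)$ by \autoref{prop:tensor_product_semi_stable}, one gets $\mu\left(\sF_{qm}^1\right)\ge m\mu\left(\sF_q^1\right)>2g$ for $m\gg 0$, contradicting \autoref{lem:2g_slope_bound} applied to $qm$. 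If you want to keep your write-up, replace the fiber-product step by this multiplication-map argument; as it stands, the proposal rests on an unproven conjecture.
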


\begin{proof}
Assume that $\mu\left( \sF_q^1\right) >0$, and let $\sH$ be the image of 
\begin{equation*}
\xi: \left( \sF_q^1 \right)^{\otimes m }  \to \sE_{qm} 
\end{equation*}
for some $m \gg 0$. \emph{We claim that  $\sH$ is not zero} because of the following: Let $\eta$ be the generic point of $T$. Then any $x \in \left( \sF_q^1\right)_\eta$ can be identified with some $\tx \in H^0\left( X_\eta,q \left( - K_{X_\eta} - \Delta_\eta \right) \right)$, in which case $\xi \left( x^{\otimes m}\right)$ gets identified with $\tx^m \in H^0\left( X_\eta, mq \left(- K_{X_\eta} - \Delta_\eta \right) \right)$. In particular, the following implications conclude our claim:
$
x \neq 0 \Rightarrow \tx \neq 0 \Rightarrow \tx^m \neq 0 \Rightarrow \xi\left( x^{\otimes m} \right) \neq 0$.

Let then $j$ be the smallest integer such that $\sF_{qm}^j$ contains $\sH$, and let $\sH'$ be the image of $\sH$ in $\sG_{qm}^j$. By the choice of $j$, $\sH' \neq 0$, and as $\sH'$ is a surjective image of  $ \left( \sF_q^1 \right)^{\otimes m }$:
\begin{equation*}
\mu\left(\sF_{mq}^1\right) 
\explparshift{90pt}{-85pt}{\geq }{ by the definition of the Harder-Narasimhan filtration}
\mu \left(\sG_{mq}^j\right) 
\explparshift{25pt}{-30pt}{\geq }{ $\sG_{mq}^j$ is semi- stable}
\mu(\sH') 
\explparshift{74pt}{12pt}{>}{ $ \left( \sF_q^1 \right)^{\otimes m }$ is semi-stable according to \autoref{prop:tensor_product_semi_stable}}
\mu \left( \left( \sF_q^1 \right)^{\otimes m } \right)
\explshift{25pt}{= }{\autoref{prop:tensor_product_semi_stable}}
m \mu \left(\sF_q^1\right)
\explparshift{100pt}{100pt}{>}{ $m \gg 0$, and we assumed that $\mu\left( \sF_q^1 \right)>0$}
2g . 
\end{equation*}
This contradicts \autoref{lem:2g_slope_bound}.
\end{proof}

\begin{theorem}
\label{thm:positivity_curve}
In the situation of \autoref{notation:CM_positivity}, if $q>0$ is an integer such that $-q(K_{X/T} + \Delta)$ is Cartier, then 
$f_* \sO_X(-q (K_{X/T} + \Delta))$ is a semi-stable vector bundle of slope $0$. 
\end{theorem}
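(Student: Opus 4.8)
The plan is to combine the two slope bounds we have just established into a complete description of the Harder--Narasimhan filtration of $\sE_q := f_* \sO_X(-q(K_{X/T}+\Delta))$. By \autoref{prop:no_positive_slopes} we know $\mu(\sF_q^1) \leq 0$, so \emph{every} Harder--Narasimhan slope $\mu_i = \mu(\sG_q^i)$ satisfies $\mu_i \leq 0$. In particular $\deg \sE_q = \sum_i \deg \sG_q^i = \sum_i r_i \mu_i \leq 0$, so $\mu(\sE_q) \leq 0$. Conversely, I would compute $\deg \sE_q$ via Riemann--Roch / the CM line bundle: the leading-order behaviour of $\deg f_* \sO_X(-q(K_{X/T}+\Delta))$ in $q$ is governed by $(-(K_{X/T}+\Delta))^{n+1}$, which equals $-\deg \lambda_{f,\Delta} = 0$ by the assumption in \autoref{notation:CM_positivity} (see \autoref{def:CM_log} and \autoref{lem:Mumford_line_bundles_over_curve}). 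So the degree of $\sE_q$ is, at worst, lower-order in $q$; but one needs $\mu(\sE_q)=0$ exactly, not just asymptotically.

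To upgrade ``asymptotically zero slope'' to ``exactly zero slope and semistable'' I would run the standard product/tensor-power trick once more. Suppose $\mu(\sE_q) < 0$ for some admissible $q$, equivalently $\deg \sE_q < 0$. Consider $\sE_q^{\otimes m} \cong f_*^{(m)} \sO_{X^{(m)}}(-q(K_{X^{(m)}/T}+\Delta^{(m)}))$ (by \cite[Lemma 3.6]{Kovacs_Patakfalvi_Projectivity_of_the_moduli_space_of_stable_log_varieties_and_subadditvity_of_log_Kodaira_dimension}), whose degree is $m \, r^{m-1} \deg \sE_q \to -\infty$. On the other hand, the family $f^{(m)}: (X^{(m)}, \Delta^{(m)}) \to T$ again has $\deg \lambda_{f^{(m)},\Delta^{(m)}} = 0$ — this follows because $(-(K_{X^{(m)}/T}+\Delta^{(m)}))^{n m +1}$ can be expanded by the projection formula into a sum of products of $(-(K_{X/T}+\Delta))^{n+1}=0$ against pullbacks, all of which vanish; hence $\mu(\sF_{q,m}^1) \leq 0$ for $X^{(m)}$ too by \autoref{prop:no_positive_slopes} applied to $f^{(m)}$, so $\deg \sE_q^{\otimes m} \leq 0$, consistent but not yet a contradiction. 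The contradiction I actually want comes from the \emph{lower} bound: by \autoref{lem:scaling_delta_invariant} we still have $\delta\bigl(X_{\ot}^{(m)}?\bigr)$-type control only on $X$ itself, so instead I would argue as in \autoref{thm:semi_positivity_curve} that a strictly negative $\deg \sE_q$ would let us produce, after passing to $X^{(m)}$ for $m \gg 0$ using \autoref{prop:vectP2}, a $\bQ$-divisor $\Gamma \in |-K_{X^{(m)}/T}-\Delta^{(m)} - (f^{(m)})^*(\text{ample})|_{\bQ}$ with klt general fibre, contradicting \autoref{thm:no_divisor} (applied to $f^{(m)}$, whose $\delta$-invariant is still $>1$ — for this last point one uses \autoref{prop:prod_basis} exactly as in the semipositivity proof, replacing the bound $\delta \geq 1$ there with $\delta > 1$). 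This forces $\deg \sE_q = 0$, i.e. $\mu(\sE_q)=0$.

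Once $\mu(\sE_q)=0$ and every HN slope is $\leq 0$, semistability is immediate: the HN slopes are decreasing and sum (weighted by ranks) to $0$, and each is $\leq 0$, so each must equal $0$; hence the HN filtration is trivial and $\sE_q$ is semistable of slope $0$. I would present this final deduction as a short lemma-free paragraph. The main obstacle is the middle step — converting the vanishing of the top self-intersection into the \emph{exact} vanishing of $\deg \sE_q$ for every admissible $q$, rather than merely an asymptotic statement. The cleanest route is the one sketched: assume $\deg\sE_q<0$, pass to a high fibre power where the negativity is amplified linearly in $m$ while the CM degree stays zero, build many sections via the HN/tensor-power machinery of \autoref{prop:vectP2}, and contradict the non-existence statement \autoref{thm:no_divisor}. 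Writing out that the hypotheses of \autoref{thm:no_divisor} (in particular $\delta > 1$) survive passage to $X^{(m)}$ via \autoref{prop:prod_basis} is the one place that needs genuine care, but it is entirely parallel to the argument already carried out in \autoref{sec:semi_pos_proof}.

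\begin{proof}[Proof of \autoref{thm:positivity_curve}]
Write $\sE := \sE_q = f_* \sO_X(-q(K_{X/T}+\Delta))$, and let $0 = \sF^0 \subseteq \sF^1 \subseteq \dots \subseteq \sF^{s} = \sE$ be its Harder--Narasimhan filtration, with semistable quotients $\sG^i = \sF^i/\sF^{i-1}$ of slopes $\mu_1 > \mu_2 > \dots > \mu_s$. By \autoref{prop:no_positive_slopes} applied to $f$, $\mu_1 = \mu(\sF^1) \leq 0$; since the $\mu_i$ are decreasing, $\mu_i \leq 0$ for all $i$, and therefore
\begin{equation*}
\deg \sE = \sum_{i=1}^{s} \deg \sG^i = \sum_{i=1}^{s} r_i \mu_i \leq 0,
\end{equation*}
where $r_i = \rk \sG^i > 0$. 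Thus $\mu(\sE) \leq 0$.

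We claim $\deg \sE = 0$. Suppose not, so $\deg \sE < 0$. For an integer $m > 0$ we have, by \cite[Lemma 3.6]{Kovacs_Patakfalvi_Projectivity_of_the_moduli_space_of_stable_log_varieties_and_subadditvity_of_log_Kodaira_dimension},
\begin{equation*}
\sE^{\otimes m} \cong f_*^{(m)} \sO_{X^{(m)}}\bigl(-q(K_{X^{(m)}/T}+\Delta^{(m)})\bigr),
\end{equation*}
and $\deg \sE^{\otimes m} = m\, r^{m-1} \deg \sE$ with $r = \rk \sE$, so $\deg \sE^{\otimes m} \to -\infty$ as $m \to \infty$. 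Since all fibres of $f$ may be assumed reduced (as in the proof of \autoref{thm:semi_positivity_curve}, using \autoref{lem:reducing_reduced_fibers}), $X^{(m)}$ is normal by \autoref{lem:normal_stable_pullback_fiber_product}, and expanding $(-(K_{X/T}+\Delta))^{n+1} = -\deg\lambda_{f,\Delta} = 0$ via the projection formula one gets $(-(K_{X^{(m)}/T}+\Delta^{(m)}))^{nm+1} = 0$, hence $\deg \lambda_{f^{(m)},\Delta^{(m)}} = 0$; moreover, by \autoref{prop:prod_basis} and \autoref{lem:product}, for $t \in T$ general with $\delta(X_t,\Delta_t) = \delta > 1$ one has $\delta\bigl(X_t^{(m)},\Delta_t^{(m)}\bigr) \geq \delta > 1$ as well. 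So $f^{(m)}$ satisfies the hypotheses of \autoref{notation:CM_positivity}. Fix a closed point $t \in T$ with $X_t$ normal, $X_t \not\subseteq \Supp\Delta_t$, $(X_t,\Delta_t)$ klt and $\delta_{q}(X_t,\Delta_t)$ close enough to $\delta$; choose a small rational $c > 0$ with $\delta_q(X_t,\Delta_t) > 1 - c$. Running the argument of the proof of \autoref{thm:semi_positivity_curve} verbatim, but starting from $\deg \sE < 0$ (which gives $\deg \sE^{\otimes m} < 0$, hence via \autoref{lem:Mumford_line_bundles_over_curve} a positive top self-intersection after a small twist by $(f^{(m)})^*$ of an ample divisor on $T$), and invoking \autoref{prop:vectP2} to produce $\ell$ global sections of $\sE^{\otimes m}$ restricting to linearly independent pure tensors at $t$ with $\ell / h^0$ close to $1$, we obtain for $m \gg 0$ a $\bQ$-divisor
\begin{equation*}
\Gamma \in \bigl| -K_{X^{(m)}/T} - \Delta^{(m)} - (f^{(m)})^* L \bigr|_{\bQ}
\end{equation*}
for some ample $\bQ$-divisor $L$ on $T$, with $(X_t^{(m)}, \Delta_t^{(m)} + \Gamma_t)$ klt. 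Since $\delta(X_{\ot}^{(m)}, \Delta_{\ot}^{(m)}) > 1$, \autoref{thm:no_divisor} applied to $f^{(m)}$ forbids the existence of such a $\Gamma$, a contradiction. Hence $\deg \sE = 0$ and $\mu(\sE) = 0$.

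Finally, $\sum_{i=1}^{s} r_i \mu_i = \deg \sE = 0$ with each $\mu_i \leq 0$ and $r_i > 0$ forces $\mu_i = 0$ for all $i$, so $s = 1$ (the HN slopes are strictly decreasing) and the Harder--Narasimhan filtration of $\sE$ is trivial. Therefore $\sE = f_*\sO_X(-q(K_{X/T}+\Delta))$ is semistable of slope $0$.
\end{proof}
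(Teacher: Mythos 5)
Your first and last steps are fine: \autoref{prop:no_positive_slopes} does give that the maximal Harder--Narasimhan slope of $\sE_q$ is $\leq 0$, and once you also know $\deg\sE_q\geq 0$ the conclusion is immediate. The gap is the middle step, and it is not fixable along the lines you sketch. Starting from the contradiction hypothesis $\deg\sE_q<0$ you propose to ``run the argument of \autoref{thm:semi_positivity_curve} verbatim'' and invoke \autoref{prop:vectP2} to manufacture sections of $\sE_q^{\otimes m}$. But \autoref{prop:vectP2} has the hypothesis $\deg\sE>0$: it produces sections precisely when the degree is \emph{positive}, and the engine behind the semi-positivity proof is the strict inequality $(-K_{X/T}-\Delta)^{n+1}>0$, which after a small negative twist still yields a pushforward of positive degree by \autoref{lem:Mumford_line_bundles_over_curve}. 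In your situation $(-K_{X/T}-\Delta)^{n+1}=-\deg\lambda_{f,\Delta}=0$, and $\deg\sE_q<0$ is perfectly consistent with this (the leading term vanishes and a lower-order term is negative); a bundle of negative degree, tensored with itself, only becomes more negative, yields no sections, and produces no contradiction. Your parenthetical ``$\deg\sE^{\otimes m}<0$, hence via \autoref{lem:Mumford_line_bundles_over_curve} a positive top self-intersection after a small twist'' is backwards. A second, independent problem is the claim that $\delta\bigl(X_t^{(m)},\Delta_t^{(m)}\bigr)\geq\delta(X_t,\Delta_t)$ follows from \autoref{prop:prod_basis}: that proposition only bounds the log canonical threshold of \emph{product} basis type divisors, not of arbitrary basis type divisors on $X_t^{(m)}$, and the statement you want is exactly the open \autoref{conj:delta}; so \autoref{thm:no_divisor} cannot be applied to $f^{(m)}$ as you do.

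The missing lower bound on the slopes is supplied in the paper by positivity rather than by contradiction. Since $\deg\lambda_{f,\Delta}=0$, \autoref{thm:bounding_nef_threshold} shows that $-K_{X/T}-\Delta$ itself is nef, and then \autoref{prop:semi_positivity_engine_downstairs} applied to $L=q(-K_{X/T}-\Delta)\sim_{\bQ}K_{X/T}+\Delta+(q+1)(-K_{X/T}-\Delta)$ shows that $\sE_q$ is a nef vector bundle; hence every quotient of $\sE_q$ has non-negative degree, so the minimal Harder--Narasimhan slope is $\geq 0$. Together with the bound $\leq 0$ on the maximal slope from \autoref{prop:no_positive_slopes}, all slopes vanish and $\sE_q$ is semistable of slope $0$.
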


\begin{proof}
First, 
\autoref{thm:bounding_nef_threshold}  yields that $-K_{X/T} - \Delta$ is nef. Then, $f_* \sO_X(  q(-K_{X/T}- \Delta))$ is also nef, by \autoref{prop:semi_positivity_engine_downstairs} taking into account  the $\bQ$-linear equivalence
\begin{equation*}
q(-K_{X/T}- \Delta) \sim_{\bQ} K_{X/T} + \Delta + (q+1)(-K_{X/T}- \Delta).
\end{equation*}
Finally,  \autoref{prop:no_positive_slopes}, concludes our proof.

\end{proof}

\subsection{Ampleness lemma}

\autoref{thm:ampleness} is an extract of the argument of the ampleness lemma of \cite{Kollar_Projectivity_of_complete_moduli} (one assumption removed in \cite{Kovacs_Patakfalvi_Projectivity_of_the_moduli_space_of_stable_log_varieties_and_subadditvity_of_log_Kodaira_dimension}). It will be one of the main technical ingredients for the proof of items \autoref{itm:semi_positive_no_boundary:big} and \autoref{itm:semi_positive_no_boundary:ample} of \autoref{thm:semi_positive_no_boundary} given in \autoref{sec:arbitrary_base_pos}. We denote by  $\Gr(w,w-q,k)$ the Grassmanian parameterizing linear subspaces of dimension $w -q$ in the $w$ dimensional $k$-vectorspace $k^{\oplus w}$. 

\begin{theorem}
\label{thm:ampleness}
Let $V$ be a vector bundle of rank $v$ on a normal  projective variety $T$ over $k$, and let $\phi : W:=\Sym^d (V) \twoheadrightarrow Q$ be a surjective homomorphism onto another vector bundle, where the ranks are $w$ and $q$, respectively. Assume that there is an open set, where the map of sets $T(k) \to \Gr(w,w-q,k)/\GL(v, k)$ induced by $\phi$ is finite to one. Then, for each ample Cartier divisor $B$ on $T$ there is an integer $m>0$ and a non-zero homomorphism
\begin{equation*}
\Sym^{qm}\left( \bigoplus_{i=1}^w W  \right) \to \sO_T(-B) \otimes ( \det Q)^m.
\end{equation*}

\end{theorem}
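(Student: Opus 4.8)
The plan is to follow Kollár's ampleness lemma argument closely, as the statement is really a distilled version of it. The key point is that the classifying map $T(k) \to \Gr(w,k)/\GL(v,k)$ being generically finite forces a suitable symmetric power of the determinant bundle to be big, and hence — after twisting by $\mathcal{O}_T(-B)$ — still effective. First I would reduce to studying the natural map to a Grassmannian. The surjection $\phi : \Sym^d(V) \twoheadrightarrow Q$ can be thought of as a family of $w$-dimensional spaces (the fibers of $\Sym^d V$) each equipped with a distinguished quotient of dimension $q$; equivalently, $\phi$ determines a morphism from (a frame bundle of) $T$ into the Grassmannian $\Gr(q, w)$, compatibly with the $\GL(v)$-action on $\Sym^d V$ induced from the standard action on $V$. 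The finiteness hypothesis says precisely that this morphism, modulo the $\GL(v)$-action, is finite onto its image over an open set.

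Next I would invoke the standard fact that the Plücker line bundle on $\Gr(q,w)$, pulled back under this classifying map, is big (because the map is generically finite onto its image) and globally generated (being a pullback of a very ample bundle). Unwinding the identifications, the pullback of the Plücker bundle is $\det Q$, twisted appropriately, but more precisely one works on the frame bundle: over the frame bundle $\mathrm{Fr}(V) \to T$ one gets a genuine morphism to $\Gr(q,w)$, and $\det Q$ pulled back there is a power of the Plücker bundle up to the determinant of the tautological bundle of frames. The $\GL(v)$-equivariance lets one descend: sections of high powers of the Plücker bundle that are $\SL(v)$-invariant descend to sections of powers of $\det Q$ twisted by powers of $\det V$, and bigness survives this descent on the open locus where the quotient map is finite. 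Concretely, one obtains that for $m \gg 0$, $(\det Q)^m \otimes (\det V)^{-e m}$ (for the appropriate weight $e$) has enough sections to dominate any fixed ample $B$, i.e. $(\det Q)^m \otimes \mathcal{O}_T(-B)$ is effective after absorbing the $\det V$ twist into the symmetric-power source. Rewriting ``effective'' as ``admits a nonzero map from a trivial-ish bundle'', and identifying $\bigoplus_{i=1}^w W$ with the bundle whose symmetric powers produce the relevant Plücker-type sections (the $w$ copies of $W = \Sym^d V$ encode a choice of basis of each fiber, which is exactly the frame data), gives the displayed nonzero homomorphism $\Sym^{qm}(\bigoplus_{i=1}^w W) \to \mathcal{O}_T(-B) \otimes (\det Q)^m$.

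I expect the main obstacle to be the bookkeeping of the equivariant descent: keeping track of exactly which twist by $\det V$ appears, why the $\SL(v)$-invariant sections are the right ones to descend, and verifying that bigness of the Plücker pullback on the frame bundle (or on the relevant quotient stack) genuinely descends to the asserted effectivity statement on $T$ rather than just on an open set — one needs the line bundle in question to be big on all of $T$, which follows because bigness only requires top self-intersection positive, detectable on a dense open set, but the effectivity/section statement must be checked globally and this is where Kollár's original argument does the real work (via a careful choice of $m$ and a Noetherian/semicontinuity argument). Since the statement explicitly says ``an extract of the argument of the ampleness lemma'', I would simply cite \cite[3.9]{Kollar_Projectivity_of_complete_moduli} together with the refinement in \cite[Thm 5.1]{Kovacs_Patakfalvi_Projectivity_of_the_moduli_space_of_stable_log_varieties_and_subadditvity_of_log_Kodaira_dimension} for the precise form of the descent, and indicate the translation of notation: $V$, $\phi$, $Q$, $B$ here correspond to the data there, and the conclusion is exactly the nonzero homomorphism extracted in the middle of that proof, before one concludes ampleness of $\det Q \otimes (\text{something})$.
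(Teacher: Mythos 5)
Your proposal takes essentially the same route as the paper: the proof given there is nothing more than a citation of the ampleness lemma in the form of \cite[Thm 5.5]{Kovacs_Patakfalvi_Projectivity_of_the_moduli_space_of_stable_log_varieties_and_subadditvity_of_log_Kodaira_dimension} (you cite Thm 5.1; the relevant statement is Thm 5.5, specialized to a projective base with $W=\Sym^d(V)$ and $G=\GL(v,k)$, which is permitted by Rem 5.3 of that paper), and the conclusion is exactly the intermediate non-zero homomorphism of the displayed equation (5.5.5) there, as you say. The one point you should make explicit is that the cited theorem carries a weak positivity hypothesis on $W$ which is \emph{not} assumed in the present statement; the entire content of the paper's proof is the observation that this hypothesis is only used in the last three lines of that argument, after (5.5.5) has already been produced, so the extraction is legitimate. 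Your opening sketch of the frame-bundle/Pl\"ucker geometry is a reasonable heuristic for why the lemma is true, but it is loose in places (in particular, $\Gr(w,k)/\GL(v,k)$ is not a variety, and the real work in Koll\'ar's argument is the GIT/multiplication-map construction rather than bigness of a Pl\"ucker pullback); since you ultimately argue by citation, none of that looseness affects the correctness of the proof.
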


\begin{proof}
We explain how to turn the proof of \cite[Thm 5.5]{Kovacs_Patakfalvi_Projectivity_of_the_moduli_space_of_stable_log_varieties_and_subadditvity_of_log_Kodaira_dimension} into a proof of the above statement. 

First, specialize \cite[Thm 5.5]{Kovacs_Patakfalvi_Projectivity_of_the_moduli_space_of_stable_log_varieties_and_subadditvity_of_log_Kodaira_dimension} to the case of projective base and, thanks to \cite[Rem 5.3]{Kovacs_Patakfalvi_Projectivity_of_the_moduli_space_of_stable_log_varieties_and_subadditvity_of_log_Kodaira_dimension},  to the special choices of $W = \Sym^d (V)$ and  $G=\GL(w,w-q,k)$.  At this point the assumptions of \cite[Thm 5.5]{Kovacs_Patakfalvi_Projectivity_of_the_moduli_space_of_stable_log_varieties_and_subadditvity_of_log_Kodaira_dimension} become identical to ours, except that in \cite[Thm 5.5]{Kovacs_Patakfalvi_Projectivity_of_the_moduli_space_of_stable_log_varieties_and_subadditvity_of_log_Kodaira_dimension} was assumed to be weakly positive. 

The first, and actually main, step of the proof of \cite[Thm 5.5]{Kovacs_Patakfalvi_Projectivity_of_the_moduli_space_of_stable_log_varieties_and_subadditvity_of_log_Kodaira_dimension}  is to  construct a non-zero homomorphism as the one whose existence we have claimed. This homomorphism is displayed in \cite[Equation (5.5.5)]{Kovacs_Patakfalvi_Projectivity_of_the_moduli_space_of_stable_log_varieties_and_subadditvity_of_log_Kodaira_dimension}. In the remaining part of the proof, which is actually just the the few lines after Equation (5.5.5),  the authors use the weakly positivity assumption to deduce weakly positivity of the domain and bigness of the codomain of the morphism, and this is the only place where weakly positivity is used.

We conclude that the argument given in the proof of \cite[Thm 5.5]{Kovacs_Patakfalvi_Projectivity_of_the_moduli_space_of_stable_log_varieties_and_subadditvity_of_log_Kodaira_dimension}  to show the existence of the homomorphism displayed in \cite[Equation (5.5.5)]{Kovacs_Patakfalvi_Projectivity_of_the_moduli_space_of_stable_log_varieties_and_subadditvity_of_log_Kodaira_dimension} also proves our claim.

\end{proof}

\subsection{Arbitrary base}
\label{sec:arbitrary_base_pos}

\begin{proof}[Proof of point \autoref{itm:semi_positive_no_boundary:big} of \autoref{thm:semi_positive_no_boundary}]
As in the proofs of \autoref{thm:semi_positivity_curve} and \autoref{thm:nef_threshold}, we may assume that $k$ is uncountable. 
Let $\eta$ be the generic point of $T$. 
\begin{enumerate}
\item Set  $n:= \dim X - \dim T$, $v:=K_{X_\eta}^n$, $\delta:= \delta\left(X_{\overline{\eta}}\right)$.
 \item Fix a rational number $\alpha$ such that  $\alpha > \max\left\{ 1, \frac{\delta}{(\delta -1)v(n+1)} \right\}$. 
\end{enumerate}
Throughout the proof $\iota : C \to T$ denotes the normalization of a very general member of an arbitrary covering curve family of $T$. Very general here  means that it is not contained in  countably many divisors $S_i$, which we will specify during the proof explicitly. Set:
\begin{itemize}
 \item $\eta_C$ to be the generic point of $C$, 
 \item $Z:=X_C$ (note that as the fibers of $f$ are reduced, and the general ones are normal, $Z$ is normal),
 \item $\sigma : Z \to X$ and $g : Z \to C$ be the induced morphisms, 
\item $\lambda:=\lambda_g$.
\end{itemize}
Then the following holds:
\begin{itemize}
\item $\sigma^* K_{X/T} \cong K_{Z/C}$ by \autoref{rem:reduced_fibers_no_boundary}.\autoref{itm:relative_canonical_base_change:base_change}, and 
  $\lambda=\lambda_f|_C$ by \autoref{prop:CM_base_change}.\autoref{itm:CM_base_change:smooth_base}.
 \item a $\bQ$-Cartier divisor $L$ is pseudo-effective if and only if $L \cdot C \geq 0$ (for any such $C$), 
 \item according to \autoref{prop:delta_general_fiber}, $\delta\left(X_{\overline{\eta_C}} \right) = \delta$ (assuming we add the countably many divisors to $S_i$, over which $\delta(X_t)<\delta$, which are given by \autoref{prop:delta_general_fiber}). In particular, as $\delta>1$ the very general fibers of $g$ are uniformly $K$-stable, and hence klt, see \cite[Theorem 1.3]{Odaka_The_GIT_stability_of_polarized_varieties_via_discrepancy}.
 \item in particular, by \autoref{thm:semi_positivity_curve}, $\deg \lambda \geq 0$,
 \item by \autoref{thm:bounding_nef_threshold}, $-K_{Z/C}  + \alpha g^* \lambda$ is nef and $g$-ample.
\end{itemize}
It is important that throughout the proof all constants, so all rational numbers, will be fixed independently of the particular choice of  $C$ (for which there are two choices, first one choses a covering family, and then a very general member of that). For this reason, whenever such a constant is fixed, we do it in a numbered list item, see points above and below. 

Choose integers $r \geq 2$ and $d>0$ such that 
\begin{enumerate}[resume]
 \item $rK_{X/T}$ and $r\alpha \lambda_f$ are Cartier,
 \item $h^i(X_t, -jrK_{X_t})=0$ for all integers $i,j>0$ and all $t \in T$,
 \item $-rK_{X_t}$ is very ample for all $t \in T$, 
 \item the multiplication maps  $W:=\Sym^d f_* \sO_X (-r K_{X/T}) \to f_* \sO_X (-dr K_{X/T})=:Q$ are surjective, and
 \item \label{itm:positivity:classifying_map} for all $t \in T$, $K_{t}:=\Ker \left( \Sym^d H^0\left(X_t, -r K_{X_t}\right) \to H^0\left(X_t,-d r K_{X_t}\right) \right)$ generates $\sI(d)$, where $\sI$ is the ideal of $X_t$ via the embedding $\varphi_{\left|-rK_{X_t} \right|} : X_t \to \bP^{v-1}$, where $v:= \rk f_* \sO_X (-r K_{X/T})$ and $\varphi_{\left|-rK_{X_t} \right|}$ is defined only up to the action of $\GL(v,k)$ on the target. Note that this is achievable because $\sI$ form a flat family as $t$ varies.  
 
 In particular, if we set  $w:= \rk W$ and $q:= \rk Q$, then for every $t \in T(k)$, $K_{t} \subseteq W_t$ determines $X_t \hookrightarrow \bP^{v-1}$ up to the action of $\GL(v, k)$, which then means that the orbit of $K_{t}$ in $\Gr(w,q)/\GL(v,k)$ determines $X_t$ up to isomorphism. Therefore if we apply \autoref{thm:ampleness} for $W \to Q$, then the fibers of the classifying map $T(k) \to \Gr(w,q)/\GL(v,k)$ are contained in the isomorphism classes of the fibers of $f$ and hence, by the maximal variation assumption, there is an open set where these fibers are finite. 
\end{enumerate}
As,
\begin{equation*}
r (- K_{Z/C}  +  2\alpha g^* \lambda)  = K_{Z/C}  +  \underbrace{ (r+1)( -K_{Z/C} +  \alpha g^* \lambda) +  \underbrace{(r-1) \alpha}_{>1} g^* \lambda}_{\textrm{nef and $g$-ample}},
\end{equation*}
by \autoref{prop:semi_positivity_engine_downstairs}, $g_* \sO_Z(r (- K_{Z/C}  +  2\alpha g^* \lambda)) $ is a nef vector bundle. Set 
\begin{equation*}
M:= r( -K_{X/T} + 2 \alpha f^* \lambda_f). 
\end{equation*}
Note that the conclusions of point \autoref{itm:positivity:classifying_map} about the finiteness of the classifying map hold also for $-rK_X$ replaced by  $M$, as $f_* \sO_X(M)$ and $f_* \sO_X(dM)$ differs from $f_* \sO_X(-rK_{X/T})$ and $f_* \sO_X(-rdK_{X/T})$ only by a twist with $r2 \alpha \lambda_f$ and $dr2 \alpha \lambda_f$, respectively. So,
\autoref{thm:ampleness} yields an ample divisor $B$ on $T$,  an integer $m>0$ and  a non-zero homomorphism as follows (see point \autoref{itm:positivity:classifying_map} above for the definition of $w$ and $q$):
\begin{equation*}
\xi : \Sym^{qm}\left( \bigoplus_{i=1}^w \Sym^d (f_* \sO_X(M)) \right) \to \sO_X(-B) \otimes \left(\det f_* \sO_X(dM) \right)^m.
\end{equation*}
As the target of $\xi$ is a line bundle, there exists a divisor, on the complement of which $\xi$ is surjective. Let us add this divisor to $S_i$. Then $\xi|_C$ is a non-zero homomorphism as follows:
\begin{equation*}
\xi_C : \Sym^{qm}\left( \bigoplus_{i=1}^w \Sym^d( g_* \sO_Z(M_C)) \right) \to \sO_C(-B_C) \otimes \left( \det g_* \sO_Z(dM_C) \right)^m. 
\end{equation*}
Define 
\begin{equation*}
\sA:=\det f_* \sO_X(dr ( -K_{X/T}  + 2 \alpha f^* \lambda_f))= \det f_* \sO_X(dM), 
\end{equation*}
and let $A$ be a divisor corresponding to $\sA$.
As $g_* \sO_Z(M)$ is nef and hence so is every bundle that admits a generically surjective map from the left side of $\xi_C$, we obtain that 
\begin{equation}
\label{eq:positivity:B_A}
\deg \sA|_C = \deg  \det g_* \sO_Z(dM_C)) \geq \frac{B \cdot C}{m} . 
\end{equation}
Consider now, the natural  embedding:
\begin{equation*}
\alpha : \det f_* \sO_X(dM) \hookrightarrow \bigotimes_{i=1}^q f_* \sO_X(dM)  \cong f^{(q)}_* \sO_{X^{(q)}}\left(dM^{(q)}\right),
\end{equation*}
given by the embedding of representations $\det \to \bigotimes_{i=1}^q  $ of $\GL(q,k)$. 
Hence, by adjunction of $f_*^{(q)}(\_)$ and $\left(f^{(q)}\right)^*(\_)$ one can write $\left(f^{(q)} \right)^* A + D = dM^{(q)}$, where $D$ is an effective divisior on $X^{(q)}$. We claim that, since $\mathcal{O}_X(dM)$ is compatible with base change, $D$ does not contain any fiber. Indeed, assuming by contradiction that $D$ contains a fiber $X_t$, we obtain a basis $s_i$ of $H^0(X_t,\mathcal{O}_X(dM)|_{X_t})$ whose determinant vanishes in $\bigotimes^qH^0(X_t,\mathcal{O}_X(dM)|_{X_t})$, and this is a contradiction as the $s_i$ are linearly independent.

By the continuity of log canonical threshold, there is a $0<\varepsilon < \frac{1}{rd}$ such that $\left(X_t^{(q)}, \varepsilon D_t\right)$ is klt for general closed points $t \in T$. In particular by the genericity of $C$ the same holds also for general $t \in C$.	 Then, if we define $N:= d r( -K_{X/T} + 3 \alpha f^* \lambda_f)$, 
 according to \autoref{cor:semi_positivity_engine_upstairs}, the following divisor is nef ($Z^{(q)}$ is normal by \autoref{lem:normal_stable_pullback_fiber_product}.\autoref{itm:normal_stable_pullback_fiber_product:fiber_product}). 
\begin{multline*}
 \underbrace{K_{Z^{(q)}/C}  + \varepsilon D_C }_{\parbox{80pt}{\tiny $\left(Z^{(q)}_t,  \varepsilon \left(D_C\right)_t \right)$ is klt for $t \in C$ general}}
+
 \underbrace{\left(dr + 1 - \varepsilon rd \right)\left(   -K_{Z^{(q)}/C}  + 2 \alpha q \left(g^{(q)}\right)^* \lambda \right) +  (dr-2) \alpha q \left(g^{(q)}\right)^* \lambda}_{\textrm{nef and $f$-ample  $(r \geq 2, d>0)$}} 
 \\ \sim \left(  N^{(q)}_C - \varepsilon \left(g^{(q)} \right)^* A_C \right) = \left(N_C - g^* \frac{\varepsilon }{q} A_C \right)^{(q)}
\end{multline*}
Set $\varepsilon':= \frac{\varepsilon}{qr d}$. Then we have that $\frac{N_C}{dr} - \varepsilon' g^*A_C$ is nef according to \autoref{lem:product_nef}.
So,
\begin{multline*}
0 \leq  (-K_{Z/C}  + 3\alpha g^* \lambda - \varepsilon'  g^* A_C)^{n+1} 
 =
 \left(\underbrace{-K_{Z/C}  +  \frac{g^*\lambda}{v(n+1)}}_{\parbox{90pt}{\tiny top self-intersection is $0$ by the definition of $\lambda$}}  + \left(3\alpha - \frac{1}{v(n+1)} \right) g^* \lambda -  \varepsilon' g^*A_C \right)^{n+1} 
\\ =
(n+1) v \deg \left( \left(3\alpha - \frac{1}{v(n+1)} \right) \lambda -  \varepsilon' A_C \right)
\expl{\leq}{equation \autoref{eq:positivity:B_A}}
(n+1) v \deg \left( \left(3\alpha - \frac{1}{v(n+1)} \right) \lambda -  \varepsilon' \frac{B_C}{m} \right)
\end{multline*}
Hence, $ \left(3\alpha - \frac{1}{v(n+1)} \right) \lambda_f -  \varepsilon' \frac{ B}{m}$ is pseudo-effective (as it dots to at least zero with each movable class). Therefore, $\lambda_f$ is the sum of an ample and a pseudo-effective $\bQ$-divisor, so  $\lambda_f$ is big.
 
\end{proof}

\begin{proof}[Proof of point \autoref{itm:semi_positive_no_boundary:ample} of \autoref{thm:semi_positive_no_boundary}]
By Nakai-Moishezon it is enough to prove that for all normal varieties $V$ mapping finitely to $X$, $\left( \lambda_f|_V \right)^{\dim V} >0$. However, using \autoref{prop:CM_base_change}, this we may obtain by replacing $f : X \to T$ with $f_V : X \times_T V \to V$, and applying point  \autoref{itm:semi_positive_no_boundary:big} to $f_V$.
\end{proof}

\begin{proof}[Proof of point \autoref{itm:semi_positive_no_boundary:q_proj} of \autoref{thm:semi_positive_no_boundary}]

Let $\lambda$ be the CM line bundle on $T$. According to \cite[Thm 6.1]{Li_Wang_Xu_Qasi_projectivity_of_the_moduli_space_of_smooth_Kahler_Einstein_Fano_manifolds}, it is enough to prove that for every irreducible closed subspace $Z$ of $T$ we have $(\lambda|_Z)^{\dim Z}\geq 0$, with strict inequality if $Z$ intersects $U$.

The algebraic space $Z$ has a finite cover $\pi\colon V\to Z$ by a scheme \cite[Tag 04V1]{stacks-project}, and by Nagata's theorem and resolution of singularities we may also assume that $V$ is projective and smooth. To prove $(\lambda|_Z)^{\dim Z}\geq 0$, it is enough to show $(\pi^* \lambda|_Z)^{\dim V}\geq 0$. As $V$ is smooth and projective, this is equivalent to show that $\pi^* \lambda|_Z$ is nef. Since $\lambda$ is compatible with base-change (\autoref{prop:CM_base_change} \autoref{itm:CM_base_change:normal_fiber}), this follows from our semipositivity result \autoref{thm:semi_positive_boundary} \autoref{itm:semi_positive_boundary:nef}.

To prove the strict inequality, as $\pi^*\lambda|_Z$ is nef, we have to show that $\pi^*\lambda|_Z$ is big. This follows from our positivity result \autoref{thm:semi_positive_no_boundary} \autoref{itm:semi_positive_no_boundary:big}, remarking that, as the isomorphism class of the family $f$ are finite, the family $f_Z$ is of maximal variation, and a finite cover does not affect this maximality.

\end{proof}

\begin{proof}[Proof of \autoref{thm:lambda_non_big}]
Choose $q$ big enough such that $-q(K_{X/T} + \Delta)$ is Cartier and without higher cohomology on the fibers. 
Let $H_i \in |H|$ be general for $i=1,\dots, \dim T -1$, and set $C:= \bigcap_{i=1}^{\dim T -1} H_i$. By the above generic choices, $Z:=X_C$ is normal. Furthermore, $C$ lies in the smooth locus of $T$, hence for base-change properties along $C \to T$ we may assume that $T$ is smooth. In particular,  there is an induced boundary $\Delta_Z$ on $Z$ (\autoref{sec:base_change_relative_canonical_smooth_base}), for which $K_{X/T} + \Delta|_Z = K_{Z/C} + \Delta_Z$  (\autoref{prop:relative_canonical_base_change_normal}), and consequently
\begin{equation}
\label{eq:lambda_non_big:base_change}
f_* \sO_X(-q(K_{X/T} + \Delta))|_C \cong \left(f_C\right)_* \sO_{Z}\left(-q\left(K_{Z/C} + \Delta_Z \right) \right).
\end{equation}
Furthermore,
\begin{equation*}
0 
\expl{= }{assumption}
\lambda_{f,\Delta} \cdot H^{\dim T -1}
= \deg \lambda_{f,\Delta}|_C 
\expl{= }{\autoref{prop:CM_base_change}.\autoref{itm:CM_base_change:smooth_base}}
\deg \lambda_{f_C, \Delta_Z} .
\end{equation*}
Therefore, according to \autoref{thm:positivity_curve},  $\left(f_C\right)_* \sO_{Z}\left(-q\left(K_{Z/C} + \Delta_Z \right) \right)$ is a semi-stable vector bundle of slope $0$. However, then the isomorphism \autoref{eq:lambda_non_big:base_change} implies that $f_* \sO_X (- q(K_{X/T} + \Delta))$ is $H$-semi-stable of slope $0$: if it had a subsheaf $\sF$ of $H$-slope bigger than $0$, then for the saturation $\sF'$ of $\sF$, $\sF'|_C$ would be a subbundle of positive degree of $\left(f_C\right)_* \sO_{Z}\left(-q\left(K_{Z/C} + \Delta_Z \right) \right)$, which is a contradiction.  
 \end{proof}
 
\begin{proof}[Proof of \autoref{cor:lambda_trivial_fiber_bundle}]
First, assume that $f$ is analytically locally a fiber bundle. Then, all fibers are isomorphic, and hence uniformly $K$-stable. In particular, there is an induced moduli map $T \to \sM_{n,v}^{\uKs}$. As $\sM_{n,v}^{\uKs}$ is a D-M stack, it has a finite cover $S \to \sM_{n,v}^{\uKs}$ bya  scheme. Let $T'$ be the normalization of a component of $T \times_{\sM_{n,v}^{\uKs}} S$ that dominates $T$. As $T' \to \sM_{n,v}^{\uKs}$ factors through $S$, the family $f \times_{T} T'$ corresponding to $T' \to \sM_{n,v}^{\uKs}$ is a trivial family. In particular, $\deg \lambda_{f \times_{T} T'}=0$. However, by \autoref{prop:CM_base_change}.\autoref{itm:CM_base_change:smooth_base}, $\deg \lambda_{f \times_{T} T'}= (\deg f)\cdot (\deg \lambda_f)$. In particular, we conclude that $\deg \lambda_f=0$.

Second, assume that $\deg \lambda_f=0$.  By \autoref{thm:lambda_non_big}, the vector bundles $\mathcal{E}_q:=f_*\mathcal{O}_X(-qK_{X/T})$ are semistable of slope zero for all $q$ divisible enough. Now, we have to use the language of Higgs bundle to use a result from \cite{Simpson_Higgs_bundles_and_local_systems}. Consider the functor from the category of semistable bundles of slopes zero on $T$ to the category of semistable  Higgs bundle on $T$ with $c_1=0$ associating to $\mathcal{E}$ the pair $(\mathcal{E},0)$, where  $0$ be the zero homomorphism $\mathcal{E} \to \mathcal{E} \otimes \Omega^1_X$. This functor is fully faithfull. By \cite[Corollary 3.10]{Simpson_Higgs_bundles_and_local_systems}, its codomain is equivalent to the category of local systems. Moreover, by the remark at the end of subsection ``Examples' 'of  \cite[Section 3]{Simpson_Higgs_bundles_and_local_systems}, the local system associated to a semistable Higgs bundle $\mathcal{E}$ is isomorphic to $\mathcal{E}$ as holomorphic vector bundle.

We conclude that the multiplication map
$$
m\colon \Sym^r\mathcal{E}_q\to \mathcal{E}_{qr}
$$
is a actually a morphism of local systems, and its kernel $\mathcal{K}_{q,r}$ is a local system too. Taking $r=2$, and $q$ big enough such $-qK_{X/T}$ is very ample and the ideals defining the fibers  in $\mathbb{P}\mathcal{E}_q$ are defined by quadrics, we conclude that the fibration is locally trivial in the analytic topology.
\end{proof}

\begin{proof}[Proof of \autoref{cor:proper_base}]
The proof is very similar to that of  point \autoref{itm:semi_positive_no_boundary:q_proj} of \autoref{thm:semi_positive_no_boundary} above.  
As in the above proof, $T$ has a generically finite cover by a smooth, projective scheme. By base-changing over this cover one may assume that the base is smooth and projective. By \autoref{prop:2_defs_CM_same}, we may replace $N$ by the CM-line bundle notion used in the present article, see \autoref{def:CM}, and then point \autoref{itm:semi_positive_boundary:nef} of \autoref{thm:semi_positive_boundary} and point  \autoref{itm:semi_positive_no_boundary:big} of \autoref{thm:semi_positive_no_boundary} concludes the proof. 

\end{proof}

\section{Proof of the main theorem}
\label{sec:main_theorem}

For the precise definition of the functor of $\sM_{n,v}^{\Kss}$ we refer to  \cite{Alper_Blum_Halpern-Leistner_Xu_Reductivity_of_the_automorphism_group_of_K-polystable_Fano_varieties,Blum_Xu_Uniqueness_of_K-polystable_degenerations_of_Fano_varieties}. For the present article, the  important facts are the following:
\begin{itemize}
\item According to \cite[Thm 1.3]{Blum_Liu_Xu_Openness_of_K-semistability_for_Fano_varieties} and \cite[Thm 1.5]{Xu_A_minimizing_valuation_is_quasi-monomial}, $\sM_{n,v}^{\Kss}$ is a separated Artin stack of finite type over $k$. 
\item $\sM_{n,v}^{\Kss}$ admits a separated good moduli space $M_{n,v}^{\Kps}$, the $k$-points of which parametrize $K$-polystable Fano varieties of dimension $n$ and volume $v$ over $k$. We note that $M_{n,v}^{\Kps}$ is only known to be an algebraic space at this point, as opposed to a scheme. 
\item Given a flat morphism $f : X \to T$ between normal, projective varieties with normal klt fibers and $-K_{X/T}$ being $\bQ$-Cartier and ample, there is an induced moduli map $\nu : T \to \sM_{n,v}^{\Kss}$. That is, the Koll\'ar condition in the definition of $\sM_{n,v}^{\Kss}$ is automatically satisfied for such families \cite[thm 3.68]{Kollar_Families_of_varieties_of_general_type}.
\end{itemize}
We start with  \autoref{lem:lifting_to_Artin_stack}. In the proof of \autoref{thm:main}, where we apply \autoref{lem:lifting_to_Artin_stack}, we want to show the descent of the CM line bundle has positive self-intersection over a proper subspace $V$ of $M_{n,v}^{\Kps}$. Hence, we want to construct a generically finite cover of $V$ that supports a universal family. Because $\sM_{n,v}^{\Kss}$ is an Artin stack, this is not possible. However, we can cook up one cover ($T \to V$ in the lemma) that supports a universal family over a big open set  of $V$, and this universal family extends on  the whole cover to a family $f : X \to T$ to  which our theorems apply. That is, $f$ is flat, $X$ is normal and klt and $-K_{X/T}$ is $\bQ$-Cartier. It is a delicate task to find such an extension, so the second part of the proof of \autoref{lem:lifting_to_Artin_stack} is dedicated to this. The rough idea is to find a flat $f : X \to T$, such that
\begin{enumerate}
\item \label{itm:Artin_expansion_explanation:trivial}  $K_X + \Delta \sim_{\bQ} 0$, and 
\item \label{itm:Artin_expansion_explanation:KSBA} $(X , (1+ \varepsilon') \Delta)$ is a  KSBA stable family.
\end{enumerate}
Indeed, in this situation $-K_X$ is up to a scaling $\bQ$-linearly equivalent to $K_X + (1+ \varepsilon') \Delta$, which is ample over $T$. Although, invoking the KSBA moduli space, guaranteeing condition \autoref{itm:Artin_expansion_explanation:KSBA} is quite straightforward,  guaranteeing condition \autoref{itm:Artin_expansion_explanation:trivial} is much harder. Hence, in the finishing part of \autoref{lem:lifting_to_Artin_stack}, we need to invoke passing to  a $\bQ$-factorialization, and running an adequate MMP. Additionally, in each step of this process we will need to show that flatness is preserved. 

\begin{lemma}
\label{lem:lifting_to_Artin_stack}
Let $V  \subseteq M_{n,v}^{\Kps}$ be proper closed subspace. Then there is a diagram as follows
\begin{equation}
\label{eq:lifting_to_Artin_stack:statement}
\xymatrix@C=120pt@R=0pt{
\explabove{T}{\footnotesize smooth, projective variety} \ar[d]^{\phi}_{\textrm{birational}} \\
\explleft{S}{proper, normal variety} \ar[dd]^{\xi}_{\textrm{finite}}  &
 \explaboveshiftslantleft{100pt}{T^0}{big open set in $S$} \ar@{_(->}[l]_{\iota}^{\textrm{open}} \ar@{_(->}[ul]^j_{\textrm{open}} \ar[d]_{\xi_0}^{\textrm{finite}} \ar[r] & \sM_{n,v}^{\Kss} \ar[dd] \\
 & \tV \ar@{_(->}[dl]_{\textrm{big open set}} \ar@{^(->}[dr] &  \\
\explleft{V}{proper algebraic space} \ar@{_(->}[rr]_{\textrm{closed}} & & M_{n,v}^{\Kps}
}
\end{equation}
where the family induced by $T^0 \to \sM_{n,v}^{\Kss}$ extends to $f : X \to T$ such that 
\begin{enumerate}
\item $X$ is normal and klt,
\item $f$ is flat,
\item the fibers of $f$ are reduced, and 
\item $-K_{X/T}$ is $\bQ$-Cartier and $f$-ample.
\end{enumerate}

\end{lemma}

\begin{proof}
Set $\sX:= V \times_{M_{n,v}^{\Kps}}\sM_{n,v}^{\Kss}$. According to \cite[p 2351, Main Properties (3)]{Alper_Good_moduli_spaces_for_Artin_stacks}, $\sX \to V$ is a good moduli space as well. Additionally, as $V$ is proper, by \cite[Thm A]{Alper_Halpern-Leistner_Heinloth_Existence_of_moduli_spaces_for_algebraic_stacks}, $\sX$ satisfies the existence part of the valuative criterion of properness.

As $\sM_{n,v}^{\Kss}$ is an Artin stack, there is a smooth, surjective morphsim $Z_{\pre} \to \sM_{n,v}^{\Kss}$ from a scheme.
Set $Z'$ to be the normalization of a component of a general complete intersection of an affine chart of $ Z_{\pre} \times_{M_{n,v}^{\Kps}} V$ of that dimension such that $Z' \to V$ is dominant and generically finite. 

Let $Y \to M_{n,v}^{\Kps}$ be a finite cover by a scheme, which exists by \cite[Tag 04V1]{stacks-project}. Replacing $Z'$ by a component of the normalization of $Y \times_{M_{n,v}^{\Kps}} Z'$ dominating $V$ we may assume that $Z'$ factors also though $Y$. That is, we have a commutative diagram as follows, where $Z' \to V$ is dominant and generically finite:
\begin{equation*}
\xymatrix{
\explleft{Z'}{normal variety} \ar@/^7pc/[drr]^{\textrm{generically finite and dominant}} \ar[dd] \ar[r] &  \explabove{Y \times_{M_{n,v}^{\Kps}} V}{scheme} \ar[dr]^{\textrm{finite}} \ar@{^(->}[d]_{\textrm{closed}} \\
 & Y \ar[dr] & V \ar@{^(->}[d]^{\textrm{closed}} \\
\sM_{n,v}^{\Kss} \ar[rr] & & M_{n,v}^{\Kps}
}
\end{equation*}
As $Y \times_{M_{n,v}^{\Kps}} V$ is a scheme, finite over $V$, it is proper. Define $Z$ first to be the normalization in the functionfield of $Z'$ of a reduced structure of a component of $Y \times_{M_{n,v}^{\Kps}} V$ dominating $V$. Then, $Z$ is  a normal proper variety with a rational map  $Z \dashrightarrow \sM_{n,v}^{\Kss}$, such that the composition $Z \dashrightarrow M_{n,v}^{\Kps}$ is a finite morphism with image being $V$. With other words, we have a commutative diagram as follows:
\begin{equation*}
\xymatrix{
\sM_{n,v}^{\Kss} \ar[d] & \explright{Z}{normal, proper variety} \ar@{-->}[l]  \ar[d]^{\textrm{finite, surjective}} \\
M_{n,v}^{\Kss} & V \ar@{_(->}[l] 
}
\end{equation*}
 As $\sX$ satisfies the existence part of the valuative criterion for properness, after replacing $Z$ by a finite cover we may assume that $Z \dashrightarrow  \sM_{n,v}^{\Kss}$ is a morphism in codimension $1$. Hence, there is a big regular open set $\tZ \subseteq Z$ and a family $\tf : \tX   \to \tZ$ of $K$-semi-stable Fano varieties. In particular, $\tf$ is flat, $\tX$ is normal, $-K_{\tX}$ is $\bQ$-Cartier and ample over $\tZ$, and $\tX$ is klt. We may also assume that $\tZ$ maps finitely to an open set $\tV$ of $V$.

\emph{We claim that by possibly shrinking $\tZ$, but still keeping it big in $Z$, we may find a $\bQ$-divisor  $\tilde{\Delta}$ on $\tX$ such that $K_{\tX} + \tilde{\Delta} \sim_{\bQ,\tZ} 0$ and $\left(\tX_z, \tilde{\Delta}_z\right)$ is klt for every $z \in \tZ$}. For this choose an integer $m>0$ and an ample Cartier divisor $H$ on $\tZ$ such that
$-mK_{\tX/\tZ} + \tf^* H$ is very ample, and that
\begin{equation}
\label{eq:lifting_to_Artin_stack:choosing_m}
\frac{2}{m} < \min\left\{1, \min\left\{ \left. \alpha\left(\tX_{\oz} \right) \right|  \oz \in \tZ\textrm{ is a geometric point }\right\} \right\},
\end{equation}
 which minima exist by \cite[Thm 1.1]{Blum_Liu_Xu_Openness_of_K-semistability_for_Fano_varieties}.
Choose now  general elements $\Gamma^i$ of 
$\left|-mK_{\tX/\tZ} + \tf^* H\right|$ for $i=1,\dots,m$. We show our claim by choosing  $\tilde{\Delta}:= \sum_{i=1}^m \frac{\Gamma^i}{m^2} \sim_{\bQ, \tZ} -K_{\tX}$. To show that this is a good choice, it is enough to show that for every codimension $1$ point $\xi$ of $\tZ$, $\left( \tX_\xi, \tilde{\Delta}_\xi \right)$ is klt. By the genericity assumption on $\Gamma^i$, using  Bertini on the general fiber, there are only finitely many codimension one points of $\tZ$ over which $\left(\tX, \Gamma^i\right)$ is not lc. Using the genericity assumption again, we can also assume that these codimension one points are different for different values of $i$. Hence, for  a fixed codimension $1$ point $\xi$, there is at most one  index, say $j$, such that $\left( \tX_{\xi}, \Gamma^i_{\xi} \right)$ is not lc. If there is no such index, set $j$ to be a random one. Then, we may write
\begin{equation}
\label{eq:lifting_to_Artin_stack:subconvex_combination}
 \tilde{\Delta}_{\xi}= \sum_{i=1}^m \frac{\Gamma^i_{\xi}}{m^2} =   
\frac{1}{2} \frac{2\Gamma^j_\xi}{m^2} 
 + 
\sum_{i \neq j} \frac{1}{m^2} \Gamma^i_\xi.
\end{equation}
Note now that 
$\left(\tX_\xi,\frac{2\Gamma^j_{\xi}}{m^2} \right)$ is klt by \autoref{eq:lifting_to_Artin_stack:choosing_m}, and that $\left( \tX_\xi, \Gamma^i_{\xi} \right)$ is lc for all $j \neq i$ by the choice of $j$.  Then, the fact implied by \autoref{eq:lifting_to_Artin_stack:choosing_m} that $\frac{1}{2} + \frac{m-1}{m^2} < 1$, together with equation \autoref{eq:lifting_to_Artin_stack:subconvex_combination} yields  that $\left(\tX_\xi, \tilde{\Delta}_\xi \right)$ is klt. Hence we have showed our claim.

Note that our claim above also implies that for every $0< \varepsilon \ll 1$, $\left(\tX, (1+ \varepsilon)\tilde{\Delta} \right)$ is a log canonical model over $\tZ$.  Additionally, by the claim,  we may consider the moduli map $\phi : \tZ \to \sM^{\KSBA}$ induced by $\left(\tX, (1+\varepsilon)\tilde{\Delta} \right)$ to the moduli space of stable log-varieties, for some $0 < \varepsilon \ll 1$. As  $\sM^{\KSBA}$ is a proper DM stack with projective coarse moduli space, there is a finite surjection $W \to \sM^{\KSBA}$ from a projective scheme. Then, by taking the main component of the normalization of $W \times_{\sM^{\KSBA}} \tZ$ we obtain a normal variety $T^0$ fitting in a commutative diagram
\begin{equation*}
\xymatrix{
T^0 \ar[r] \ar[d]_{\textrm{finite, surjection}}  & \explright{W}{projective scheme} \ar[d] \\
\tZ \ar[r] & \sM^{\KSBA}
}
\end{equation*} 
By replacing both $\tZ$ and $T^0$ by one of their big open sets, we may assume that $T^0$ is also regular. Then, by compactifying $T^0$ resolving the indetermnancies of the map from this compaticifcation to both $Z$ and $W$ and finally also resolving the compatification itself we obtain a smooth compatification $T \supseteq T^0$:
\begin{equation*}
\xymatrix@C=55pt{
\explleft{T}{smooth, projective variety} \ar@/^2pc/[rr] \ar[d]_{\textrm{proper \& generically finite}} &   \ar@{_(->}[l]^{\iota}_{\textrm{open}} T^0 \ar[r] \ar[d]^{\textrm{finite}}  & W \ar[d] \\
Z \ar[d]_{\textrm{finite}} & \ar@{_(->}[l]_{\textrm{big open set}} \tZ \ar[r] \ar[d]^{\textrm{finite}} & \sM^{\KSBA} \\
V & \ar@{_(->}[l]_{\textrm{big open set}} \tV
}
\end{equation*} 
In particular, $T \to \sM^{\KSBA}$ induces $f_{\pre} : \left(X_{\pre}, (1+ \varepsilon) \Delta_{\pre}\right) \to T)$ such that $\left(X_{\pre}, \Delta_{\pre} \right)|_{T^0} = \left( \tX, \tilde{\Delta} \right) \times_{\tZ} T^0$. Additionally $\left(X_{\pre}, (1+ \varepsilon) \Delta_{\pre}\right)$ is klt as it is a family of stable log-varieties with klt general fiber. 

Note that $f_{\pre}: X_{\pre} \to T$ is flat. We are going to take $\bQ$-factorialization of $X_{\pre}$ and then we will run a particular MMP and finally we will take a particular canonical model.  The point is that all these operations preserve flatness. The  reasons is that at each step the statement corresponding to the following one holds, where $d:= \dim T$:
\begin{multline}
\label{eq:lifting_to_Artin_stack:log_canonical}
\textrm{For each closed }t \in T \textrm{ and general hyperplanes }H_1,\dots,H_d \textrm{ through t}:\\ \left(X_{\pre}, (1+ \varepsilon) \Delta_{\pre} + \sum_{i=1}^d f_{\pre}^* H_i \right) \textrm{ is log canonical.}
\end{multline}
Then, equidimensionality of $f_{\pre}$ follows from \cite[Prop 34]{de_Fernex_Kollar_Xu_The_dual_complex_of_singularities}, as $X_t= \bigcap_{i=1}^d H_i$ is a union of lc-centers of codimension $d$, and \cite[Prop 34]{de_Fernex_Kollar_Xu_The_dual_complex_of_singularities} states that an lc-center is contained in the intersection of $d$ $\bQ$-Cartier divisors of coefficient $1$ from the boundary (here the $H_i$), then the codimension of the lc-center has to be at least $d$. As $\left(X_{\pre}, \Delta_{\pre}\right)$ is klt, $X_{\pre}$ is Cohen-Macayulay, an hence by the above shown equidimensionality, $f_{\pre}$ is indeed flat. Additionally, \cite[Prop 34]{de_Fernex_Kollar_Xu_The_dual_complex_of_singularities} tells us that locally there is a finite cover where the pullbacks of the $H_i$ become simple normal crossing. This shows that $\bigcap_{i=1}^d H_i$ is reduced, otherwise the intersection of the above pullbacks would be non-reduced. Hence, we obtain that the fibers of $f_{\pre}$ are reduced. 

By doing a $\bQ$-factorialization we obtain a $\bQ$-factorial model $f' : (X', (1+ \varepsilon) \Delta') \to T$ with a proper, small birational morphism $X' \to X$. Hence  the strict transform from $X$ to $X'$ of every $\bQ$-Cartier divisor is crepant. In particular,
\begin{enumerate}
\item $X'$ is $\bQ$-factorial
\item \label{itm:lifting_to_Artin_stack:X_prime_klt} $(X',  (1+ \varepsilon)\Delta')$ is klt, 
\item  \label{itm:lifting_to_Artin_stack:flat} the condition correpsonding to \autoref{eq:lifting_to_Artin_stack:log_canonical} is satisfied for $f_{\pre}$, $X_{\pre}$ and $\Delta_{\pre}$ replaced by $f'$, $X'$ and $\Delta'$, respectively; hence $f'$ is flat and has reduced fibers,
\item \label{itm:lifting_to_Artin_stack:X_prime_big_and_nef} $K_{X'} + (1+\varepsilon) \Delta'$ is only big and nef over $T$, and 
\item \label{itm:lifting_to_Artin_stack:X_prime_num_triv_generically} over $T^0$, we have $K_{X'} + \Delta' \sim_{\bQ,T} 0$. 
\end{enumerate}
Hence, by points \autoref{itm:lifting_to_Artin_stack:X_prime_big_and_nef} and \autoref{itm:lifting_to_Artin_stack:X_prime_num_triv_generically}, $\Delta'$ is  big over $T$, and therefore we may run an $(X', \Delta')$ MMP \cite[Thm 1.2]{Birkar_Cascini_Hacon_McKernan_Existence_of_minimal_models}. As we have already a minimal model over $T^0$, this MMP is an isomorphism over $T^0$. Let $f_{\min} : \left(X_{\min}, \Delta_{\min} \right) \to T$ be the outcome of this MMP. Hence:
\begin{enumerate}[resume]
\item \label{itm:lifting_to_Artin_stack:klt} $\left( X_{\min}, \Delta_{\min}\right)$ is klt as we are running an MMP on $(X', \Delta')$, which is klt by point \autoref{itm:lifting_to_Artin_stack:X_prime_klt},
\item \label{itm:lifting_to_Artin_stack:X_min_flat} with notation as in \autoref{eq:lifting_to_Artin_stack:log_canonical}: using point \autoref{itm:lifting_to_Artin_stack:flat} and the fact that our MMP is also an an MMP for $\left(X',\Delta' + \sum_{i=1}^d \left(f'\right)^* H_i \right)$, we obtain that the condition correpsonding to \autoref{eq:lifting_to_Artin_stack:log_canonical} is satisfied for $f_{\pre}$, $X_{\pre}$, $\Delta_{\pre}$ and $(1+\varepsilon)$ replaced by $f_{\min}$, $X_{\min}$, $\Delta_{\min}$ and $1$, respectively; in particular, $f_{\min}$ is flat and has reduced fibers.
\item \label{itm:lifting_to_Artin_stack:X_min_num_triv_globally} As over $T^0$ we have  $K_{X_{\min}} + \Delta_{\min} \sim_{\bQ,T} 0$, and as  $K_{X_{\min}} + \Delta_{\min}$ is semi-ample over $T$ by  \cite[Thm 1.1]{Hacon_Xu_Existence_of_log_canonical_closures}, we obtain using the Rigidity lemma \cite[Lem 1.6]{Kollar_Mori_Birational_geometry_of_algebraic_varieties} that $K_{X_{\min}} + \Delta_{\min} \sim_{\bQ,T} 0$ holds over the entire $T$.
\end{enumerate} 
Now, we pass to the log canonical model $f : (X, (1+ \varepsilon')\Delta) \to T$ of $\left(X_{\min}, (1+ \varepsilon')\Delta_{\min}\right)$ over $T$ for some $0<\varepsilon' \ll \varepsilon$. Note that the latter pair is klt by point \autoref{itm:lifting_to_Artin_stack:klt}. We have $\left(X_{\min}, (1+ \varepsilon')\Delta_{\min}\right)|_{T^0} = \left(X', (1+ \varepsilon')\Delta'\right)|_{T^0}$. Hence, over $T^0$, $(X, (1+ \varepsilon')\Delta)$ is the log canonical model of $\left(X', (1+ \varepsilon')\Delta'\right)|_{T^0}$, that is, it agrees over $T^0$ with $\left(X_{\pre}, (1+ \varepsilon') \Delta_{\pre}\right)$. Hence, $(X, (1+ \varepsilon')\Delta)$ is a compactification of $\left(\tX, (1+\varepsilon') \tilde{\Delta} \right) \times_{\tZ} T^0$ with the additional feature that 
\begin{equation*}
K_{X} + \Delta \expl{\sim_{\bQ,T}}{by \autoref{itm:lifting_to_Artin_stack:X_min_num_triv_globally}}
0 
\qquad \Rightarrow \qquad
- K_X \sim_{\bQ,T}  \Delta \sim_{\bQ,T} \underbrace{\frac{1}{\varepsilon'} (K_X + (1 + \varepsilon') \Delta)}_{\bQ\textrm{-Cartier, ample over }T}
\end{equation*}
Also,  $(X, \Delta)$ and $\left(X_{\min}, \Delta_{\min}\right)$ are crepant, by point \autoref{itm:lifting_to_Artin_stack:X_min_num_triv_globally}. Hence, by point \autoref{itm:lifting_to_Artin_stack:X_min_flat}, \autoref{eq:lifting_to_Artin_stack:log_canonical} is satisfied for $f_{\pre}$, $X_{\pre}$, $\Delta_{\pre}$ and $(1+\varepsilon)$ replaced by $f$, $X$, $\Delta$ and $1$, respectively. In particular, $f$ is also flat and has reduced fibers. 

We also note that as $(X, (1+\varepsilon')\Delta)$ is klt and $K_X$ is $\bQ$-Cartier, $X$ is also klt.

Take now the Stein factorization of $T \to V$. As $T^0$ is finite over an open set of $V$, we obtain diagram \autoref{eq:lifting_to_Artin_stack:statement} from our statement.

\end{proof}

\begin{lemma}
\label{lem:descent_of_CM_line_bundle}
The CM line bundles $\lambda$ on $\sM_{n,v}^{\Kss}$ descends to $M_{n,v}^{\Kps}$. That is, there is a $\bQ$-line bundle $L$ on $M_{n,v}^{\Kps}$ such that $\pi^* L = \lambda$, where $\pi : \sM_{n,v}^{\Kss} \to M_{n,v}^{\Kps}$ is the natural morphism. 
\end{lemma}

\begin{proof}
By \cite[Theorem 10.3]{Alper_Good_moduli_spaces_for_Artin_stacks}, it is enough to show that for every closed $k$-point $z$ of  $\sM_{n,v}^{\Kss}$, the stabilizer of $z$ acts trivially on the fiber $\lambda_z^{\otimes N}$, for some integer $N$ which does not depend on $z$. 

The $k$-points of $\sM_{n,v}^{\Kss}$ correspond to K-semistable Fano varieties over $k$, and their stabilizers correspond to the automorphism group of the variety.

Fix a K-semistable Fano variety $F$, its automorphism group $G=\Aut(F)$  is a linear algebraic group, and the fiber $\lambda_{[F]}$ of the CM line bundle over $[F]\in \sM_{n,v}^{\Kss}$ is a one dimensional representation of $G$. Let $G_0$ be the connected component of the identity of $G$, and $G_0=R\ltimes U$ be its Levi decomposition. 

We first show that $G_0$ acts trivially on $\lambda_{[F]}$. The unipotent part $U$ acts trivially because all one dimensional representations of unipotent groups are trivial. To prove that also the reductive part $R$ acts trivially, we have to show that for every one parameter subgroup $\gamma \colon \mathbb{G}_m\to R$, the weight of the action of $\gamma$ on $\lambda_{[F]}$ is zero. This weight equals the Donaldson-Futaki invariant of the product test configuration of $X$ induced by $\gamma$. This invariant vanishes because $F$ is K-semistable.

The quotient $G/G_0$ is a finite group, it does not necessarily act trivially on $\lambda_{[F]}$, however it acts trivially on $\lambda_{[F]}^{\otimes M}$ for every integer $M$ divisible by the cardinality of $G/G_0$. To conclude, we have to show that the cardinality of $G/G_0$ is bounded as we vary $[F]$ in $\sM_{n,v}^{\Kss}$. 

By the boundness of K-semistable Fano varieties of dimension $n$ and volume $v$ proved in \cite{Jiang_Boundedness_of_threefolds_of_Fano_type_with_Mori_fibration_structures}, there exists a  projective family  $f \colon \mathcal{Y}\to T$ of Fano varieties over a smooth base with the following property: for every $[F] \in \sM_{n,v}^{\Kss}(k)$ there exists a a point $t(F)\in T$ with $f^{-1}(t(F))\cong F$.  As $-K_{\mathcal{Y}}$ is ample,  the relative polarzied Isom scheme $I:=\Isom_{T}\left(\mathcal{Y}, -K_{\mathcal{Y}}\right)$ is a finite type group scheme over $T$ \cite[Exc I.1.10.2]{Kollar_Rational_curves_on_algebraic_varieties}.  We can then look at the Stein factorization $I^0$ of $I \to T$ and at the quotient $A:= I/I^0$. By the definition of the Stein factorization and by the functoriality of $I$, for each $[F] \in \sM_{n,v}^{\Kss}(k)$ we have $I_{t(F)} \cong \Aut(F)$, and $I^0_{t(F)} \cong \Aut(F)^0$.  Additionally, as $I$ and $T$ are of finite type over $k$, the group scheme $A$ is finite over the variety $T$, hence the cardinality of the fibers is bounded by an integer $M$. Given $[F] \in \sM_{n,v}^{\Kss}(k)$, the group $G/G_0=\Aut(F)/\Aut(F)^0$ is isomorphic to the fiber of $A$ over $t(F)$, hence its cardinality is bounded by $M$.

\end{proof}

\begin{proof}[Proof of \autoref{thm:main}]
Let $L$ be the descent of $\lambda$ to $M_{n,v}^{\Kps}$. According to  \cite[Thm 6.1]{Li_Wang_Xu_Qasi_projectivity_of_the_moduli_space_of_smooth_Kahler_Einstein_Fano_manifolds},
it is enough to show that $L$ is nef on $M_{n,v}^{\Kps}$ and that on every proper irreducible  closed subset  $V' \subseteq V$ intersecting $M_{n,v}^{\uKs}$, $L|_{V'}$ is big. 

\emph{First, we show that $L$ is nef on $M_{n,v}^{\Kps}$.} Let $C \to M_{n,v}^{\Kps}$ be a finite morphism from a smooth projective curve. Let us apply \autoref{lem:lifting_to_Artin_stack} to $C \to M_{n,v}^{\Kps}$. As $\dim C=1$, in \autoref{lem:lifting_to_Artin_stack} most things collapse. That is, using the notations of \autoref{lem:lifting_to_Artin_stack}, we have $T= T^0 = S$. Hence, by calling $D$ the above three spaces that agree, we obtain a diagram as follows
\begin{equation}
\label{eq:main:lifting_curve}
\xymatrix{
\explleft{D}{smooth, projective curve} \ar[d]_{\textrm{finite}}^{\tau} \ar[r] & \sM_{n,v}^{\Kss} \ar[d]^{\pi}  \\
C \ar[r] & M_{n,v}^{\Kps}  
}
\end{equation}
The morphism $D \to \sM_{n,v}^{\Kss}$ corresponds to a flat family $f_D : X_D \to D$ of $K$-semistable Fanos.  Then, by \autoref{eq:main:lifting_curve}, we have $\lambda_{f_D} = \lambda|_D = (\pi^* L)|_D = \tau^*(L|_C)$, and therefore, it is enough to show that $\deg \lambda_{f_D} \geq 0$. However, this is exactly the statement of \autoref{thm:semi_positivity_curve}. This concludes our first claim.

Second let $V' \subseteq V$ be an irreducible  closed subset of $V$ such that $V' \cap M_{n,v}^{\uKs} \neq \emptyset$. In our second, and final claim, \emph{we show that $L|_{V'}$ is big.} Let us apply \autoref{lem:lifting_to_Artin_stack} to $V' \to M_{n,v}^{\Kps}$, and let us use the notations of \autoref{lem:lifting_to_Artin_stack} for the obtained  spaces and morphisms. 

As $T \to M_{n,v}^{\Kps}$ is generically finite, $f : X \to T$ has maximal variation. Hence, \autoref{thm:semi_positive_no_boundary}.\autoref{itm:semi_positive_no_boundary:big} applies saying that $\lambda_f$ is big. Note that by \autoref{eq:lifting_to_Artin_stack:statement}, if $\lambda$ denotes the CM line bundle on $\sM_{n,v}^{\Kss}$, then we obtain:
\begin{equation}
\label{eq:main:same_on_big_open}
\iota^* \xi^* L  = j^* \phi^* \xi^* L = \xi_0^* L = \lambda|_{T^0} =    j^* \lambda_f  
\explshift{-180pt}{=}{$\phi$ is birational and $\phi|_{j(T^0)}$ is an isomorphism} \iota^*  
\explshift{100pt}{\phi_*}{cycle theoretic pushforward}
 \lambda_f
\end{equation}
As, $T^0$ is a big open set in $S$, \autoref{eq:main:same_on_big_open} implies that $\xi^* L = \phi_* \lambda_f$. As $\lambda_f$ is big, so is $\phi_* \lambda_f$, and then by the finiteness of $\xi$ so is $L|_{V'}$.  This concludes the proof of our second claim.

\end{proof}

\section{Boundedness of the volume}
\label{sec:applications}

\begin{proof}[Proof of \autoref{cor:bounding_volume}]
We have
\begin{multline*}
\vol(-K_X-\Delta) 
\expl{=}{$-K_X - \Delta$ is ample}
(-K_X - \Delta)^{\dim X}
= \left((-K_{X/\bP^1} - \Delta) - f^* K_{\bP^1}\right)^{\dim X}
\\ = (-K_{X/\bP^1} - \Delta)^{\dim X} + (\dim X) 2 \vol (-K_F-\Delta_F)
= - \deg \lambda_{f, \Delta} + (\dim X) 2 \vol (-K_F-\Delta_F)
\\ 
\expl{\leq}{\autoref{thm:semi_positive_boundary}}
(\dim X) 2 \vol (-K_F-\Delta_F)
\end{multline*}
For the second inequality, if $F$ is smooth and $\Delta=0$, we apply the bound on the volume of K-semi-stable Fano varieties obtained in \cite[Thm 1.1]{Fujita_Optimal_bounds_for_the_volumes_of_Kahler-Einstein_Fano_manifolds}  to $F$; if $F$ is singular but still $\Delta=0$, we apply  \cite[Thm 3]{Liu_The_volume_of_singular_Kahler-Einstein_Fano_varieties}. In the log case, we can  obtain the requested inequality applyng \cite[Proposition 4.6]{Li_Liu_Kahler-Einstein_metrics_and_volume_minimization} to case where $v$ is a valuation by vanishing order at a smooth point outside the support of $\Delta_F$, and recalling that Ding semistability is equivalent to K-semistability, as shown for instance in \cite[Section 6]{Fujita_A_valuative_criterion_for_uniform_K-stability_of_Q-Fano_varieties} 

\end{proof}

\section{Examples}
\label{sec:examples}

In this section, we give examples showing the sharpness of our theorems. 
\begin{example}
\label{ex:negative_degree}
 Here, we give an example of a family of Fano varieties which are not K-semistable and such that the degree of the Chow-Mumford line bundle is strictly negative. The members of this family are smooth del Pezzo surfaces of degree $8$, and the family is isotrivial but not trivial. The relevance of this example for the study of the Chow-Mumford line bundle was already pointed out by J. Fine and J. Ross in \cite[Example 5.2]{Fine_Ross_A_note_on_positivity_of_the_CM_line_bundle}. Let us warn the reader that, in contrast with \cite{Fine_Ross_A_note_on_positivity_of_the_CM_line_bundle}, our projective bundles parametrizes rank one quotients rather than sub-bundles. Let 
\begin{enumerate}
\item $T:=\mathbb{P}^1$, 
\item $V:=\sO_{T}(-2)\oplus \sO_T(1)\oplus \sO_T(1)$ (note  that $\deg V=0$),
\item $p\colon Y:=\bP V\to T$ the natural projection,
\item $C$ the curve on $Y$ defined by the quotient $V\to \sO_T(-2)$,
\item $X:=\Bl_C Y$, $\pi\colon X\to Y$ the natural morphism, and $E$ the exceptional divisor of $\pi$, and
\item $f\colon X\to T$ the natural morphism. 
\end{enumerate}
Then, $f$ is a family of smooth degree $8$ del Pezzo surfaces. We want to compute
\begin{equation*}
\deg \lambda_{f}=-\left(-K_{X/T}\right)^3=-\left(\pi^*\sO_{Y}(3)-E\right)^3
\end{equation*}
We compute the four monomials appearing in the above expression separately. 
\begin{itemize}
\item  $(\pi^*\sO_{Y}(3))^3
\explshift{-60pt}{=}{projection formula}
\sO_{Y}(3)^3
\explparshift{72pt}{20pt}{=}{$\dim T=1$, and \cite[Rem 3.2.4]{Fulton_Intersection_theory}} 
-c_1(p^*V)\sO_{Y}(1)^2
\expl{=}{$\deg V=0$}
0$.
\item  $(\pi^*\sO_{Y}(3))^2\cdot E
\expl{=}{projection formula}
\sO_{Y}(3)^2\cdot \pi_*E 
\expl{=}{$\pi_* E = 0$}
0.$
\end{itemize}
Before describing the other two terms we need to have a better understanding of $E$. The ideal $\sJ_C$ of $C$ corresponds to the graded ideal $I$ of $\Sym V$ generated by the degree $1$ ''monomials'' $\sO_T(1) \oplus \sO_T(1)$. Hence, the sheaf $\sJ_C/\sJ^2_C$ corresponds  to the rank $2$ locally free graded module over $\Sym \left( \sO_T(-2) \right)$ generated again by $\sO_T(1) \oplus \sO_T(1)$ in degree $1$, or equivalently to the rank $2$ locally free graded module generated by $\sO_T(3) \oplus \sO_T(3)$ in degree $0$. Hence, $E \cong \bP W$, for $W:=\sO_C(3)\oplus \sO_C(3)$, and $\sO_E(-E) \cong \sO_{\bP W}(1)$. 
In particular, the natural map $\bP W \to C$ can be identified with $\pi|_E \colon \bP^1\times C\to C\cong \bP^1$, and  $\sO_{\bP W}(1)\equiv D+3F$ (see \cite[Lemma II.7.9]{Hartshorne_Algebraic_geometry}), where $D$ and $F$ are the horizontal and the vertical rulings of $E$ over $C$. We have:
\begin{itemize}
\item 
$
\pi^*\sO_{Y}(3)\cdot E^2
\explshift{-50pt}{=}{proj. formula}
\sO_{Y}(3)\cdot \pi_*(E^2)
\expl{=}{$\sO_E(-E) \cong \sO_{\bP W}(1)\equiv D+3F$}
\sO_{Y}(3)\cdot\pi_*(-D-3F)
=
\sO_{Y}(3)\cdot(- C)
\expl{=}{$\sO_{Y}(1) \cdot C=-2$}
6.$
\item $E^3=(-D-3F)^2=6$.
\end{itemize}
Wrapping up, we obtain
$$
\deg \lambda_{f}= - (\pi^*\sO_{Y}(3))^3 + 3 (\pi^*\sO_{Y}(3))^2\cdot E - 3 \pi^*\sO_{Y}(3) \cdot E^2 + E^3 =-0 + 3\cdot 0 - 3 \cdot 6 + 6 =-12<0.
$$
\end{example}

\begin{example}
\label{ex:positive_and_big}
In this example we exhibit a family $f : X \to T$ of smooth degree $8$ del Pezzo surfaces over a curve such that $\deg \lambda_f>0$, or equivalently $(-K_{X/T})^3<0$, but $-K_{X/T}$ is big. So, the statement $(-K_{X/T})^3<0$ is a negativity condition independent of $-K_{X/T}$ being big or not. However, let us also note that there is one missing piece of our example: it  is a family of non-$K$-semi-stable Fano varieties, although we suspect that a $K$-semi-stable one exists also. 

Modify \autoref{ex:negative_degree} replacing $V$ with its dual; so we take $V=\sO_T(2)\oplus \sO_T(-1)\oplus \sO_T(-1)$  and we blow-up the curve defined by the quotient $V\to \sO_{\bP^1}(2)$. In this case, $\deg \lambda_{f}=12>0$. However, $-K_{X/T}$ is still big.  Indeed, write $V=\sL \oplus \sK \oplus \sM$, where $\sL=\sO_T(2)$ and $\sK=\sM=\sO_T(-1)$. Then for every integer $m>0$:
\begin{multline*}
H^0(X,-mK_{X/T}) = H^0(X,\pi^* \sO_{Y}(3) - E)=  H^0(Y,  \sO_{Y}(3) \otimes \sI_{C}^m) \subseteq H^0(Y,  \sO_{Y}(3)) 
\\ = \bigoplus_{\parbox{70pt}{\tiny \begin{center}$i,j,l \geq 0$ \\  $i+j+l = 3m$, \\ $2i - j -l \geq 0$\end{center}}} H^0\left(T, \sL^i \sK^j \sM^l\right)
= \bigoplus_{i = m}^{3m} H^0(T, \sO_T(3i - 3m))^{\bigoplus 3m -i +1}
\end{multline*}
 As $\sI_C$ is generated in $\Sym V$ by $\sK \oplus \sM$, we obtain that
\begin{equation}
\label{eq:positive_and_big}
H^0\left(X,-mK_{X/T}\right) = \bigoplus_{\parbox{70pt}{\tiny \begin{center}$i,j,l \geq 0$ \\  $i+j+l = 3m$, \\ $2i - j -l \geq 0$\\ $j + l \geq m$\end{center}}} H^0\left(T, \sL^i \sK^j \sM^l\right)
= \bigoplus_{i = m}^{2m} H^0(T, \sO_T(3i - 3m))^{\bigoplus 3m -i +1} .  
\end{equation}
To show that $-K_{X/T}$ is big, it is enough to prove that $\displaystyle\lim_{m \to \infty} \frac{h^0( -mK_{X/T})}{m^3}>0$. Equation \autoref{eq:positive_and_big} yields:
\begin{equation*}
\frac{h^0\left(X,-mK_{X/T}\right)}{m^3} 
= \frac{\sum_{i = m}^{2m} (3i - 3m+1)( 3m -i +1)}{m^3}
 = \frac{1}{m} \sum_{i = m}^{2m} \left(3\frac{i}{m} - 3 + \frac{1}{m}\right)\left( 3 - \frac{i}{m} + \frac{1}{m}\right)
\end{equation*}
Hence,
\begin{equation*}
\lim_{m \to \infty} \frac{h^0\left(X,-mK_{X/T}\right)}{m^3} 
= \int_{1}^{2} (3x -3) ( 3 - x)dx = 2
\end{equation*}
So, we showed indeed that \emph{$-K_{X/T}$ is big}, and we even computed that $\vol(-K_{X/T}) = 12$ (a coincidence with the previous number $12$ above).

\end{example}

\begin{example}
\label{ex:not_nef}

Here we give an example of a family $f : X \to T$ of smooth Del-Pezzo surfaces of degree $6$ such that $\delta_{X_t}=1$ for all closed point $t \in T$, $\deg\lambda_{f}=0$ but $-K_{X/T}$ not nef. This shows that the hypothesis $\delta>1$ in \autoref{thm:bounding_nef_threshold} is necessary.

For this, we modify \autoref{ex:negative_degree} in two respects: 
\begin{enumerate}
\item We take $V$ to be the dual vector bundle, that is, $V:=\sO_T(2)\oplus \sO_T(-1)\oplus \sO_T(-1)$.
\item Instead of one curve, we blow up $3$ curves. That is,  we set $X:= \Bl_{C_1, C_2, C_3} Y$, where $C_i$ is the curve defined by the quotient $V \to \sL_i$, where $\sL_i$ is the $i$-th direct summand of $V$.
\end{enumerate}
Let  $E_i$, $F_i$ and $W_i$ (and for $i=1$ also $D_1$) to be defined for each $C_i$ as $E$, $F$ and $W$ (and for $i=1$ also  $D$) was defined for $C$ in \autoref{ex:negative_degree}. We do not define  $D_i$ also as in \autoref{ex:negative_degree} because for $i=2,3$,  $W_i = \sO_{C_i} \oplus \sO_{C_i}(3)$, so $E_2, E_3 \not\cong \bP^1 \times \bP^1$. Instead, for $i=2,3$, set $D_i$ to be the divisor class of $\sO_{\bP W_i}(1)$. 

Note that the $E_i$ are disjoint, and hence any intersection product involving different $E_i$ is automatically $0$. We write out below the computations where the result is different than in \autoref{ex:negative_degree}, where $i=2$ or $3$:
\begin{itemize}
\item 
$
\pi^*\sO_{Y}(3)\cdot E^2_1
\expl{=}{proj. formula}
\sO_{Y}(3)\cdot \pi_*(E^2_1)
\expl{=}{$\sO_{E_1}(-E_1) \equiv D_1-3F_1$}
\sO_{Y}(3)\cdot\pi_*(-D_1+3F_1)
=
\sO_{Y}(3)\cdot(- C_1)
\expl{=}{$\sO_{Y}(1) \cdot C_1=2$}
-6.$
\item $E^3_1=(-D_1+3F_1)^2=-6$.
\item $\pi^*\sO_{Y}(3)\cdot E^2_i=
\sO_{Y}(3)\cdot \pi_*(E^2_i)
=
\sO_{Y}(3)\cdot\pi_*(-D_i)
=\sO_{Y}(3)\cdot(- C_i)=3.$

\item  
$E_i^3= (-D_i)^2= D_i^2
\expl{= }{
\textrm{\cite[Rem 3.2.4]{Fulton_Intersection_theory}}}
c_1(\pi^*W_i)D_i
=\deg(W_i)F_i\cdot D_i=\deg W_i=3$.
\end{itemize}
Set $E:=E_1 + E_2 + E_3$. Then, 
we conclude that 
\begin{multline*}
\deg \lambda_f=- 3(\pi^* \sO_Y(3) - E)^3 
= - \pi^* \sO_Y(3) \cdot E^2  + E^3 
\\ = 
- 3\pi^* \sO_Y(3) \cdot (E_1^2 + E_2^2 + E_3^2)  + (E_1^3 + E_2^3 + E_3^3) 
 = -3\cdot(- 6) -2\cdot 3\cdot 3  + (-6) + 2\cdot 3)= 0
\end{multline*}
The fibres  of $f$ are smooth del Pezzo surfaces of degree $6$, they are well-known to be K-poly-stable (so, in particular, K-semi-stable), but they are not uniformly K-stable because they have a positive dimensional automorphism group. The delta invariant is thus equal to $1$ (e.g., \autoref{def:K_stable}). Furthermore, for $i=2,3$:
\begin{equation*}
\left( -K_{X/T}|_{E_i} \right)^2 = \left((\sO_Y(3) \cdot C_i) F_i - E_i|_{E_i} \right)^2 = \left( D_i - 3 F_i \right)^2  = D_i^2 - 6 F_i \cdot D_i = 3 - 6= -3.
\end{equation*}
Hence, $-K_{X/T}|_{E_i}$ is not nef, and then also  \emph{$-K_{X/T}$ is not nef.}

\end{example}

\begin{example}
\label{ex:anti_canonical_no_section}
In this example, for each choice of an integer $d>0$ we exhibit  families $f: X \to T$ of uniformly K-stable del Pezzo surfaces of degree $4$ over a smooth projective curve. In this situation, \autoref{thm:semi_positive_no_boundary}.\autoref{itm:semi_positive_no_boundary:big} tells us that $\deg \lambda_f>0$, or equivalently $(-K_{X/T})^3<0$. So, one would expect $-K_{X/T}$ to have  only a few sections. Here, we show that both the expected and the unexpected behavior can happen. More precisely, $|-K_{X/T}|_{\bQ}= \emptyset$ for $d>3$, and $\kappa(-K_{X/T})\geq 1$ for $d=1$.

Let $p_1,\dots ,p_4$ be four points in $\bP^2$ in general position, and denote by $L_{ij}$ the line trough $p_i$ and $p_j$. Let $\iota\colon T\to \bP^2$ be a degree $d$ smooth curve in $\bP^2$ which avoids the four points.  Let $\Gamma\cong T$ be the graph of $\iota$ in $\bP^2\times T$, and let $T_i$ be the curve $\{p_i\}\times T$ in $\bP^2\times T$. We want to look at the blow-up $\pi \colon Y\to \bP^2\times T$ of $\Gamma$ and $T_i$, for $i=1,\dots, 4$. Denote by $g\colon Y\to T$ the natural projection.

The family $g\colon Y\to T$ is generically a family of degree $4$ smooth del Pezzo surfaces of maximal variation. The only exception is at the points $t \in T$ where $\iota(T)$ intersects one of the lines $L_{ij}$. In these cases, $Y_t = \Bl_{p_1,p_2,p_3,p_4,p}\bP^2$, where $p$ lies on $L_{ij}$. In particular, $-K_{Y_t}$ is big and semi-ample, and there is a unique curve $C$ for which $C \cdot -K_{Y_t} =0$: the proper transform of $L_{ij}$. The anti-canonical model is the contraction of $L_{ij}$ to an $A_1$ singularity, so in particular it has canonical singularities.

Let $f\colon X\to T$ be the relative anti-canonical model of $g$ (remark that $R^ig_*(-mK_{Y/T})=0$ for $i>0$ and $m$ big enough, by Kawamata-Viehweg vanishing theorem, so we do have base change). The family $f$ satisfies the hypotheses of \autoref{thm:semi_positive_no_boundary}\autoref{itm:semi_positive_no_boundary:big}, so $\deg \lambda_f>0$. 

We show that, if $d>3$, then $|-K_{X/T}|_{\bQ}=\emptyset$ and if $d=1$, then $\kappa(-K_{X/T}) \geq 1$. In either case the crucial remark is that  $H^0(X,-mK_{X/T})$ can be identified with the subspace of$H^0(\bP^2 \times T, \sO_{\bP^2 \times T}(3m)) \cong H^0(\bP^2, \sO_{\bP^2}(3m))$ which vanish along $\iota(T)$ and $p_i$ with multiplicity at least $m$. Hence:
\begin{itemize}
\item If $d>3$, then there are no such sections, as $d$ is exactly the degree of $\iota(T)$.
\item If $d=1$, then $\iota(T)$ is a line $L$. So, $|-K_{X/T}|$ is the set of cubics $C$ on $\bP^2$ such that $C$ goes through $p_i$ and $\Supp C$ contains $L$. Hence, $C = L + C'$, where $C'$ is a conic through $p_i$. There is a one parameter family of such conics. 
\end{itemize}

\end{example}

\appendix

\section{Computations concerning the definition of the CM line bundle}
\label{sec:appendix}

The following work is needed to prove the statements of \autoref{sec:CM_definition}. These  are verifications of technical issues concerning the singular situation.

We need the following lemmas as we work with singular varieties, and hence Riemann-Roch computations do not work directly.  It turns out that if the spaces are normal then singularities do not mess up any of the terms involving any of the definitions of the CM line bundle. However, in the non-normal situation, which we do not deal with in the present article, \autoref{lem:Mumford_line_bundles_over_curve}  seems to suggest that one has to face extra difficulties.

\begin{lemma}
\label{lem:pushforward_degree_coherent_sheaf}
Consider the following situation:
\begin{itemize}
 \item $f : X \to T$ is a projective morphism to a normal quasi-projective variety (allowing $T = \Spec k$), 
 \item $M$ is  an $f$-ample $\bQ$-divisor on $X$, 
 \item $\sE$ is a coherent sheaf on $X$, and
 \item $r \geq 0 $ is an integer such that $\dim \Supp \sE_t \leq r$ for $t \in T$ the generic point and $\dim \Supp \sE_t \leq r+1$ for $t \in T$ a codimension $1$ point. 
\end{itemize}
Then there are $\bQ$-divisors  $D_i$ (resp.  $d_i \in \bQ$), determined uniquely up to $\bQ$-linear equivalence (resp. determined uniquely), such that for all $q$ divisible enough, if $\dim T>0$, then 
\begin{equation*}
c_1( f_* (  \sO_X(qM) \otimes \sE )) = \sum_{i=0}^{1 + r} q^i D_i,
\end{equation*}
(resp. if $ T=\Spec k$, then
\begin{equation*}
h^0 ( X, \sO_X(qM) \otimes \sE ) = \sum_{i=0}^{ \dim \Supp \sE} q^i d_i {\Big)}. 
\end{equation*}
\end{lemma}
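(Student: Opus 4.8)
The statement is essentially a Riemann--Roch / Hilbert-polynomial computation in the relative setting, the only subtlety being that $X$ and $\Supp\sE$ may be singular, so one cannot simply quote Riemann--Roch for smooth varieties. The plan is to reduce to the standard asymptotic statements for coherent sheaves by dévissage. First I would fix an integer $s>0$ such that $sM$ is Cartier and $f$-very ample, and replace $M$ by $sM$ throughout (this only rescales $q$, hence does not affect the existence of the asserted expansions; uniqueness of the $D_i$ resp. $d_i$ follows from the fact that a polynomial is determined by its values at all large $q$ divisible by $s$). After this reduction $M$ is an honest $f$-ample Cartier divisor.

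\textbf{The absolute case $T=\Spec k$.} Here the claim is just that $q\mapsto h^0(X,\sO_X(qM)\otimes\sE)$ agrees, for $q\gg0$, with a polynomial of degree $\dim\Supp\sE$. For $q\gg0$ Serre vanishing gives $h^i(X,\sO_X(qM)\otimes\sE)=0$ for $i>0$, so $h^0$ equals the Euler characteristic $\chi(X,\sO_X(qM)\otimes\sE)$, which by Snapper's lemma (see e.g. \cite[VI.2]{Kollar_Rational_curves} type references, or simply induction on $\dim\Supp\sE$ using hyperplane sections $H\in|M|$ together with the exact sequences $0\to\sO_X(-M)\otimes\sE\to\sE\to\sE|_H\to(\text{torsion})\to0$) is a numerical polynomial in $q$ of degree exactly $\dim\Supp\sE$. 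This gives the $d_i\in\bQ$, uniquely determined as coefficients of that polynomial.

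\textbf{The relative case $\dim T>0$.} Here I would argue that for $q\gg0$ (divisible enough) the higher direct images $R^if_*(\sO_X(qM)\otimes\sE)$ vanish for $i>0$ (relative Serre vanishing), and $f_*(\sO_X(qM)\otimes\sE)$ is a coherent sheaf on the normal variety $T$ whose formation commutes with base change to points of $T$. Its rank at the generic point $\eta$ is $h^0(X_\eta,\sO_{X_\eta}(qM_\eta)\otimes\sE_\eta)$, which by the absolute case applied over $k(\eta)$ is a polynomial in $q$ of degree $\le r$ (since $(\dim\Supp\sE)_\eta\le r$); call it $\rho(q)$. For $c_1$, I would use that on the smooth locus of $T$ (which is big, as $T$ is normal) the sheaf $f_*(\sO_X(qM)\otimes\sE)$ is, after removing a further closed set of codimension $\ge2$, locally free on the complement, and its determinant line bundle can be computed via the Knudsen--Mumford expansion of $\sO_X(sM)$ applied to the family twisted by $\sE$: concretely, restricting to a general complete-intersection curve $C\subseteq T$ through a given codimension-$1$ point, $\deg\bigl(c_1(f_*(\sO_X(qM)\otimes\sE))\cdot[\mathrm{pt}]\bigr)$ becomes $\deg\bigl(f_{C*}(\sO_{X_C}(qM)\otimes\sE_C)\bigr)$, which is $\chi(X_C,\sO_{X_C}(qM)\otimes\sE_C)-\rho(q)\,\chi(\sO_C)$ — a numerical polynomial in $q$ of degree $\le r+1$ by Snapper applied to $f_C:X_C\to C$ and the bound $(\dim\Supp\sE)_y\le r+1$ at the codimension-$1$ point. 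Since intersection numbers against a spanning set of curves determine a divisor class up to numerical (here $\bQ$-linear, as we are free to choose representatives) equivalence, and since this holds coefficient-by-coefficient in $q$, one obtains divisor classes $D_0,\dots,D_{r+1}$ with $c_1(f_*(\sO_X(qM)\otimes\sE))=\sum_{i=0}^{r+1}q^iD_i$ for all $q$ divisible enough; uniqueness up to $\bQ$-linear equivalence is again polynomial interpolation.

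\textbf{Main obstacle.} The genuinely delicate point is not the polynomiality — that is Snapper's lemma — but matching the \emph{order of vanishing} of the coefficients, i.e.\ justifying that the expansion of $c_1(f_*(\dots))$ truncates at degree $r+1$ rather than $\dim X$, using only the hypotheses on $(\dim\Supp\sE)_y$ at the generic and codimension-$1$ points of $T$. The clean way to handle this is to observe that $f_*(\sO_X(qM)\otimes\sE)$ depends only on $\sE$ up to subsheaves and quotients supported over proper closed subsets of $T$ of the relevant dimensions, and that $c_1$ sees only the codimension-$\le1$ behavior over $T$; so one may replace $\sE$ by its restriction over a neighborhood of the codimension-$\le1$ points, where the support-dimension bounds by hypothesis hold fiberwise, and then the degree bound on the polynomial is exactly $\dim(\text{fiber support})+\dim(\text{base})\le r+1$. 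Carrying out this localization carefully, together with the base-change/generic-flatness bookkeeping, is the bulk of the (routine) work.
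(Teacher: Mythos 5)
Your absolute case and your general reductions (rescaling $M$ so it is relatively very ample, noting that $c_1$ only sees a big open subset of the normal base) match the paper. The relative case, however, has a genuine gap: you establish the polynomial expansion of $c_1\bigl( f_* (  \sO_X(qM) \otimes \sE )\bigr)$ only by computing its degree against general complete-intersection curves, and intersection numbers with curves can at best pin down a divisor class up to \emph{numerical} equivalence. The lemma asserts an identity of divisor classes in the Chow group, with coefficients $D_i$ well defined up to $\bQ$-linear equivalence, and this stronger form is what \autoref{prop:2_defs_CM_same} consumes (it identifies the expansion with the actual line bundles $\sM_i$ of the Knudsen--Mumford expansion). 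Since $\Cl(T)_{\bQ}$ modulo numerical equivalence is in general huge (think of $\Pic^0$ of an abelian variety), a function $q \mapsto c_1(\cdots)$ could have polynomial degree against every curve while failing to be polynomial in $\Cl(T)_{\bQ}$; your parenthetical ``here $\bQ$-linear, as we are free to choose representatives'' does not repair this. Moreover, extracting the coefficients $D_i$ by interpolation presupposes that the function is already known to be a polynomial with divisor-class coefficients, which is precisely what needs proof.

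The missing ingredient is the one you gesture at but never carry out: the Knudsen--Mumford theorem on determinants of cohomology, in its version for perfect complexes. The paper's proof replaces $T$ by $T_{\reg}$ (legitimate, since $T_{\reg}$ is big), embeds $X$ into the regular scheme $P:=\Proj_T f_*\sO_X(M)$, resolves $\iota_*\sE$ by a bounded complex $\sP^\bullet$ of locally free sheaves on $P$, and applies Knudsen--Mumford's Theorem 4 to $\det R\pi_*(\sO_P(q)\otimes \sP^\bullet)$, obtaining honest line bundles $\sM_i$ with $\det Rf_*(\sO_X(qM)\otimes\sE)\cong \bigotimes_{i=0}^{1+r}\sM_i^{q^i}$; the truncation at $i=1+r$ is built into their theory via the support bounds at the generic and codimension-one points. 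That theorem is exactly what upgrades ``polynomial degrees against curves'' to ``polynomial in $\Pic$,'' and without it your argument proves only a numerical shadow of the statement. (Your secondary worry about localizing the support bound is, by contrast, unfounded: by Chevalley semicontinuity the locus of $T$ over which the fibre dimension of $\Supp\sE$ exceeds $r+1$ has closure of codimension at least $2$, so it may simply be removed.)
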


\begin{proof}
In the case of $T =\Spec k$,  $h^0 (X,  \sO_X(qM) \otimes \sE  )$ equals the Hilbert polynomial for $q$ divisible enough, and hence the statement follows. So, from now we assume that $\dim T>0$.

Let $s>0$ be an integer such that $sM$ is relatively very ample. As the statement is for all $q$ divisible enough, by replacing $M$ with $sM$ we may assume that $M$ is relatively very ample and $f_* \sO_X(M)$ is locally free, in which case we will exhibit $\bZ$-divisors $D_i$. Furthermore, as the statement is about codimension $1$ behavior over $T$, and $T_{\reg}$ is a big open set of $T$, by replacing $T$ with $T_{\reg}$ we may assume that $T$ is regular.

As $M$ is relatively very ample, it induces an embedding $\iota : X \hookrightarrow P:= \Proj_T f_* \sO_X(M)$. Let $\pi : P \to T$ be the natural morphism.  As $P$ is regular, $\iota_* \sE$ has a locally free resolution $\sP^\bullet$, which in particular is a perfect complex on $P$. Hence, for $q$ divisible enough, the following holds (where, following \cite{Knudsen_Mumford_The_projectivity_of_the_moduli_space_of_stable_curves_I}, $\det$ is the alternating tensor product of the determinants of the elements of a locally free resolution, which exists as $T$ is regular):
\begin{multline*}
c_1 ( f_* (  \sO_X(qM) \otimes \sE ) ) =
c_1(\det f_* (  \sO_X(qM) \otimes \sE ) ) 
\expl{=}{relative Serre vanishing}
c_1(\det Rf_* (  \sO_X(qM) \otimes \sE ))
\\ 
\explparshift{95pt}{10pt}{=}{ $\iota^* \sO_P(1)\cong \sO_X(M)$, and projection formula}
c_1(\det R \pi_*  ( \sO_P(q) \otimes \iota_*  \sE ))
=
c_1(\det R\pi_* \left(  \sO_P(q) \otimes \sP^\bullet \right))
\explparshift{250pt}{-150pt}{=}{ for some line bundles $\sM_i$ according to 
\cite[Thm 4, p 55]{Knudsen_Mumford_The_projectivity_of_the_moduli_space_of_stable_curves_I} (see p 50 for the definition of  $Q_{(r)}$, which is the same as the last itemized condition in the statement of our lemma)}
c_1\left( \bigotimes_{i=0}^{1 + r} \sM_i^{q^i} \right)
\explshift{40pt}{=}{$D_i:=c_1(\sM_i)$}
\sum_{i=0}^{1 + r} q^i D_i.
\end{multline*}
\end{proof}

Note that in the following lemma we do not assume any $\bQ$-Cartier hypothesis on $K_{X/T}$. Still, our intersection in \autoref{eq:Mumford_line_bundles_over_curve:statement} is well defined as $M$ is $\bQ$-Cartier.

\begin{lemma}
\label{lem:Mumford_line_bundles_over_curve}
\label{lem:Hilbert_poly_coeffs}
Let $f : X \to T$ be a surjective morphism from a normal projective variety of dimension $n+d$ to a smooth  variety of dimension $d \geq 0$ with $n \geq 1$, and let $M$ be a $\bQ$-Cartier $f$-ample divisor on $X$.
\begin{enumerate}
 \item If $\dim T>0$, then for all divisible enough integers $q>0$, 
\begin{equation}
\label{eq:Mumford_line_bundles_over_curve:statement}
c_1( f_* \sO_X(qM)) = \frac{q^{n+1}}{(n+1)!} f_*\left( M^{n+1} \right) - \frac{q^n}{2 \cdot n!}f_* \left(K_{X/T} \cdot M^n \right) + p^{n-1}(q),
\end{equation}
where $p^{n-1}(x)$ is polynomial of degree at most $n-1$ with $x$ as a variable and $\bQ$-divisors as coefficients. 
\item 
If $T = \Spec k$, then 
$
\chi(X, qM) = \frac{M^n}{n!} q^{n} - \frac{K_X \cdot M^{n-1}}{2 (n-1)!} q^{n-1} + O(q^{n-2}).
$

\end{enumerate}
In particular, if $T$ is a curve and $M^{n+1}>0$, then $\deg f_* \sO_X(qM)>0$ for all positive integers $q$ divisible enough.
\end{lemma}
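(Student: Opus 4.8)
The statement is essentially a relative Riemann–Roch computation, so the plan is to reduce to the Grothendieck–Riemann–Roch formula applied to $f$ (in the case $\dim T > 0$) or to the ordinary asymptotic Riemann–Roch on $X$ (in the case $T = \Spec k$), being careful that $X$ is only normal, not smooth. The key point making the singularities harmless is that the formula only concerns the top two coefficients in $q$, which are governed by codimension $\le 1$ data on $X$, and on the normal variety $X$ the canonical sheaf $\omega_X$ (hence $K_{X/T}$) is already a well-defined divisor class agreeing with the smooth one on the big open regular locus.

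\textbf{Case $T = \Spec k$.} First I would fix $s > 0$ with $sM$ Cartier and very ample, so that for $q$ divisible by $s$ we may write $q = s q'$ and work with the line bundle $L := \sO_X(sM)$. Then $\chi(X, q'L)$ is a genuine polynomial in $q'$ of degree $n$ by Snapper's lemma / asymptotic Riemann–Roch, and its leading coefficient is $\frac{(sM)^n}{n!}$, i.e.\ $\frac{s^n M^n}{n!}$ in the variable $q'$, which is $\frac{M^n}{n!}q^n$ in $q$. For the subleading coefficient I would use that on a normal (hence $S_2$, $R_1$) projective variety the Hirzebruch–Riemann–Roch expansion $\chi(X, L^{q'}) = \int_X \ch(L^{q'}) \todd(X)$ is still valid after resolving singularities: let $\mu : X' \to X$ be a resolution, then $\chi(X, L^{q'}) = \chi(X', \mu^*L^{q'})$ for $q'$ large since $R^i\mu_* \sO_{X'} $ contributes only in lower degree to the Euler characteristic difference — more precisely, $\mu_*\sO_{X'} = \sO_X$ and $R^i\mu_*\sO_{X'}$ is supported in codimension $\ge 2$, so it affects only the $q^{n-2}$ and lower terms. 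On $X'$ the $q^{n-1}$-coefficient is $\frac{(\mu^* sM)^{n-1} \cdot (-K_{X'})}{2(n-1)!}\cdot \frac{1}{s^{n-1}}$... I would instead argue directly: the $q^{n-1}$ term of $\chi(X', q' \mu^*L)$ is $-\tfrac{1}{2(n-1)!}(\mu^*L)^{n-1}\cdot K_{X'}$, and by the projection formula this equals $-\tfrac{1}{2(n-1)!}L^{n-1}\cdot \mu_* K_{X'} = -\tfrac{1}{2(n-1)!}L^{n-1}\cdot K_X$ since $\mu_* K_{X'} = K_X$ as Weil divisor classes ($X$ normal). Rewriting in $q$ gives $-\frac{K_X \cdot M^{n-1}}{2(n-1)!}q^{n-1}$, as claimed.

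\textbf{Case $\dim T > 0$.} Here I would apply GRR to $f : X \to T$ (again after a resolution $\mu: X' \to X$, with $g := f\circ\mu$): $\ch(Rg_* \sO_{X'}(q'\mu^*sM)) = g_*(\ch(\sO_{X'}(q'\mu^*sM))\todd(T_{X'/T}))$. By relative Serre vanishing $Rg_* = g_*$ for $q'\gg 0$, and $c_1(g_*\sO_{X'}(q'\mu^*sM))$ equals $c_1(f_*\sO_X(q'sM))$ up to lower-degree-in-$q$ corrections coming from $R^i\mu_*$, supported in codimension $\ge 2$ on $X$ hence pushing to codimension $\ge 1$ on $T$ but only in $q$-degree $\le n-1$ — so the top two coefficients agree. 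Extracting the degree $\le 1 + (n+1)$ part of the GRR identity (using $\dim T = d$ and relative dimension $n$, so the relevant Chern character component lives in $A^{d-1}$ of $T$... I would organise this as: the coefficient of $q^{n+1}$ is $\tfrac{1}{(n+1)!}f_*(M^{n+1})$ from the leading term $\ch_{n+1+d}$, and the coefficient of $q^n$ picks up $\tfrac{1}{n!}f_*(M^n)\cdot$ (the degree-one part of $\todd(T_{X'/T})$), which is $-\tfrac{1}{2}K_{X'/T}$, giving $-\tfrac{1}{2\cdot n!}f_*(K_{X/T}\cdot M^n)$ after pushing down via $\mu$ and using $\mu_*K_{X'/T} = K_{X/T}$). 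The remaining terms assemble into a polynomial $p^{n-1}(q)$ of degree $\le n-1$ in $q$ with $\bQ$-divisor coefficients; alternatively this degree bound follows immediately from \autoref{lem:pushforward_degree_coherent_sheaf} applied with $\sE = \sO_X$ and $r = n$, which already tells us $c_1(f_*\sO_X(qM)) = \sum_{i=0}^{n+1}q^i D_i$, so it only remains to identify $D_{n+1}$ and $D_n$.

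\textbf{The final assertion and the main obstacle.} For the last sentence: if $T$ is a curve ($d=1$) and $M^{n+1} > 0$, then taking degrees in \autoref{eq:Mumford_line_bundles_over_curve:statement} gives $\deg f_*\sO_X(qM) = \tfrac{q^{n+1}}{(n+1)!}(M^{n+1}) + O(q^n)$, whose leading term is strictly positive, so it is positive for all $q$ divisible enough. The main obstacle I anticipate is the bookkeeping in the GRR step: correctly tracking which Chern-character component of which pushforward contributes to $q^{n+1}$ versus $q^n$, and verifying rigorously that passing through the resolution $\mu$ only perturbs the coefficients of $q^i$ for $i \le n-1$ — this requires knowing that $\mu_*\sO_{X'} = \sO_X$ (true since $X$ is normal) and a dimension count on the supports of $R^{i>0}\mu_*\sO_{X'}$ and of the kernel/cokernel sheaves comparing $\mu_*\omega_{X'} $ with $\omega_X$. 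I would isolate this comparison as the one genuinely singular-geometry input and dispatch it with the $S_2$ property of $\omega_X$ and the fact that both sheaves are reflexive and agree on the big regular open set.
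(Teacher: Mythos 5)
Your proposal is correct and follows essentially the same route as the paper: resolve singularities, apply (Grothendieck–)Riemann–Roch on the smooth model, control the discrepancy through the higher direct images $R^{i>0}\sigma_*\sO_Z$ (supported on a small locus, so contributing only to the $q$-degree $\le n-1$, resp.\ $\le n-2$, terms via the coherent-sheaf degree lemma), and recover the stated coefficients by the projection formula together with $\sigma_*K_{Z/T}=K_{X/T}$ on the normal $X$. The paper organizes the codimension bookkeeping slightly more explicitly (the support of $R^i\sigma_*\sO_Z$ is $\le n-2$ dimensional over the generic point of $T$ and $\le n-1$ dimensional over codimension-one points, matching the hypotheses of \autoref{lem:pushforward_degree_coherent_sheaf}), but the argument is the same.
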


\begin{proof}
As Grothendieck-Riemann-Roch works directly only for smooth $X$ (or also on locally complete intersection singularities, which does not include klt singularities with Cartier index greater than $1$), we need to compare $X$ with a resolution. Let $\sigma : Z \to X$ be a resolution of singularities and set $g:= f \circ \sigma$. 

First, \emph{we claim that for all  integers $i>0$ and $1 || q$, in the respective cases:
\begin{enumerate}
 \item $\deg R^i g_* \sO_Z(q \sigma^* M)  = p_i^{n-1}(q)$  for some polynomial $p_i^{n-1}(x)$  of degree at most $n-1$  and $\bQ$-divisor coefficients, and 
 \item $ h^i (Z, q \sigma^* M)  = O(q^{n-2})$.
\end{enumerate}}
 Indeed,  fix an integer $i>0$. There is a spectral sequence with $E^2$-terms $R^p f_* (  \sO_X(qM) \otimes R^r \sigma_* \sO_Z )$ abutting to $R^i g_* \sO_Z(q \sigma^* M) $ for $i=p+r$. As $M$ is $f$-ample and $q$ is divisible enough, this spectral sequence degenerates. Therefore,
\begin{equation*}
R^i g_* \sO_Z(q \sigma^* M) \cong f_* (  \sO_X(qM) \otimes R^i \sigma_* \sO_Z ).
\end{equation*}
Then \autoref{lem:pushforward_degree_coherent_sheaf} applied to $\sE:=R^i \sigma_* \sO_Z $ concludes our claim, using that  $ \Supp R^i \sigma_* \sO_Z $ is contained in the non-normal locus, which  is at most $n-2$ dimensional in the generic fiber and at most $n-1$ dimensional over the fibers over codimension $1$ points. 

Having shown our claim, in the $\dim T>0$ case the statement of the proposition is shown by the following computation, which holds for every $q$ divisible enough (so $qM$ is $f$-very ample, Cartier and without higher cohomologies on the fibers):
\begin{multline*}
c_1( f_* \sO_X(qM)) = \ch_1 (f_* \sO_X(qM)) 
\explshift{-80pt}{= }{$X$ is normal, and hence $\sigma_* \sO_Z \cong \sO_X$}
\ch_1 (g_* \sO_Z(q\sigma^* M))
\explparshift{200pt}{100pt}{=}{ the above claim, where $p_i^{n-1}(q)$ are the polynomials the existence of which is stated in the claim}
 \ch_1 (g_! \sO_Z(q \sigma^* M))  - \sum_{i \geq 1} (-1)^i p_i^{n-1}(q)
\\ 
\explshift{400pt}{=}{ Grothendieck-Riemann-Roch, as $Z$ and $T$ are smooth, and setting $p^{n-1}(x):=- \sum_{i \geq 1} (-1)^i p_i^{n-1}(x)$}
 g_* \left( \left(\ch(\sO_Z(q \sigma^* M)) \todd(T_g) \right)_{n+1} \right)  +  p^{n-1}(q)
=
g_* \left( \left(  \left( \sum_{i=0}^{n+1} q^i\frac{( \sigma^* M)^i }{i!} \right) \todd(T_g) \right)_{n+1}\right) + p^{n-1}(q)
\\ 
\explshift{150pt}{=}{$\tilde{p}^{n-1}(x):= p^{n-1}(x) + \sum_{i=0}^{n-1}x^i \frac{(\sigma^* M)^i}{i!} \todd_{n+1-i}(T_g)$}
\frac{q^{n+1}}{(n+1)!} f_* \sigma_* (\sigma^* M)^{n+1} + \frac{q^n}{2n!} f_* \sigma_* \left((\sigma^* M)^n \cdot (-K_{Z/T})\right) + \tilde{p}^{n-1}(q)
\\ 
\explshift{70pt}{=}{ $\sigma_* (\sigma^* M)^{n+1}=M^{n+1}$,  and $\sigma_* \left((\sigma^* M)^n \cdot (-K_{Z/T})\right) = M^n \cdot \sigma_* (-K_{Z/T}) = - M^n \cdot K_{X/T}$ by the projection formula}
\frac{q^{n+1}}{(n+1)!} f_* \left(M^{n+1}\right) - \frac{q^n}{2n!} f_*  \left(M^n \cdot K_{X/T} \right) + \tilde{p}^{n-1}(q).%
\end{multline*}
In the case of $T= \Spec k$, a similar computation concludes the proof:
\begin{multline*}
\chi(X,qM)= h^0(X,qM) 
\expl{=}{$X$ is normal} 
h^0(Z, q \sigma^* M) 
\expl{=}{our claim above}
\chi(Z, q \sigma^* M) + O(q^{n-2})
\\ 
\explshift{100pt}{=}{Grothendieck-Riemann-Roch, as $Z$ is smooth}
\int_Z \ch(\sO_Z(q \sigma^* M)) \todd(T_Z) + O(q^{n-2})
=\int_Z  \left( \sum_{i=1}^{n} q^i\frac{( \sigma^* M)^i }{i!} \right) \todd(T_Z) + O(q^{n-2})
\\ 
\explparshift{280pt}{280pt}{=}{ $(\sigma^* M)^{n}=M^{n}$, using our assumption and that $\sigma$ is birational, and  $(\sigma^* M)^{n-1} \cdot (-K_{Z})  = - M^{n-1} \cdot K_{X}$ by the projection formula}  
\frac{q^{n}}{n!} (\sigma^* M)^{n} + \frac{q^{n-1}}{2(n-1)!} (\sigma^* M)^{n-1} \cdot (-K_{Z}) + O(q^{n-2}) = \frac{q^{n}}{n!} M^{n} - \frac{q^{n-1}}{2(n-1)!}  M^{n-1} \cdot K_{X/T} + O(q^{n-2})%
\end{multline*}
\end{proof}

\begin{remark}
\label{rem:big}
In the situation of \autoref{lem:Mumford_line_bundles_over_curve}, we also have that if  $T$ is a curve and $M^{n+1}>0$, then $M$ is big on $X$. Let us stress that $M$ is not assumed to be nef on $X$, hence this does not follow directly from standard criteria such as \cite[Theorem 2.2.14]{Lazarsfeld_Positivity_in_algebraic_geometry_I}. Indeed: 
\begin{multline*}
h^0(X,qM)=h^0(T,f_*\sO_X(qM)) \geq  \chi(T,f_*\sO_X(qM))  
\\
\expl{=}{Riemann-Roch on $T$}
\deg f_*\sO_X(qM) + \rk  f_*\sO_X(qM) (1-g) 
\expl{ =}{\autoref{lem:Mumford_line_bundles_over_curve}}
\frac{q^{n+1}}{(n+1)!} M^{n+1} + O(q^n).
\end{multline*}
\end{remark}

\begin{proof}[Proof of \autoref{prop:2_defs_CM_same}]
{\scshape Step 1: we may assume that $T$ is smooth.} 
If $T$ is already smooth, there is nothing to prove, so assume that it is not smooth. Hence, by our assumptions, the fibers are normal and $\Supp \Delta$ does not contain any of the fibers.
Take a resolution $\tau : T' \to T$. Then, according to \autoref{sec:base_change_relative_canonical_normal_fibers}, in the respective cases,
\begin{enumerate}
 \item $f_{T'} : X_{T'} \to T'$ and $L_{T'}$, and 
 \item $f_{T'}: \left(X_{T'}, \Delta_{T'} \right) \to T'$ and $L_{T'}$,
\end{enumerate}
 satisfy all our original assumptions, including that $sL \sim -(K_{X/T} + \Delta)$ in the case of point \autoref{itm:2_defs_CM_same:leading_term} by \autoref{prop:relative_canonical_base_change_normal}.\autoref{itm:relative_canonical_base_change:base_change}. 
 
 \emph{We claim that  $\tau_* \lambda_{f_{T'},L_{T'}}= \lambda_{f,L}$ (resp.  $\tau_* \lambda_{f_{T'},\Delta_{T'}}= \lambda_{f,\Delta}$)}. This is verified in the following computations, where $\sigma : X_{T'} \to X$ is the induced morphism:
\begin{enumerate}
 \item $\tau_* \lambda_{f_{T'},L_{T'}}= \tau_* \left( f_{T'}\right)_* \left(\mu L_{T'}^{n+1}+(n+1)L^n_{T'} \cdot K_{X_{T'}/T'}\right) 
 \expl{=}{$\tau \circ f_{T'} = f \circ \sigma$, and $\sigma^* K_{X/T} = K_{X_{T'}/T'}$ by \autoref{sec:base_change_relative_canonical_normal_fibers} }
f_* \sigma_* \left(\mu \sigma^* L^{n+1}+(n+1) \sigma^* L^n \cdot \sigma^* K_{X/T}\right)$ 
\\ $
\explshift{35pt}{= }{$\sigma_* \sigma^* = \id$}
f_*  \left(\mu L^{n+1}_{T'}+(n+1)L^n_{T'} \cdot K_{X/T}\right)= \lambda_{f,L}$, and
 \item $\tau_* \lambda_{f_{T'},\Delta_{T'}} = -\tau_*  \left(f_{T'} \right)_* \left(-\left(K_{X_{T'}/T'} + \Delta_{T'}\right)^{n+1} \right) = - f_* \sigma_* \left(-\left(K_{X_{T'}/T'} + \Delta_{T'}\right)^{n+1} \right)
 \\ =- f_* \left(-\left(K_{X/T} + \Delta\right)^{n+1} \right)= \lambda_{f, \Delta}. $
\end{enumerate}
Having shown our claim, Step 1 follows. Indeed, if we prove, in the case of point \autoref{itm:2_defs_CM_same:Tian_Paul}, that $s^n \lambda_{f_{T'},L_{T'}}= c_1(L_{\CM,f_{T'}, sL_{T'}})$, then
\begin{equation*}
s^n \lambda_{f, L} = s^n \tau_* \lambda_{f_{T'},L_{T'}}= \tau_* c_1( L_{\CM,f_{T'}, sL_{T'}})
\expl{=}{\autoref{lem:Knudsen_mumford_base_change}} 
\tau_*\tau^* c_1( L_{\CM, f, sL})
 = c_1(  L_{\CM, f, sL})
\end{equation*}
The case of \autoref{itm:2_defs_CM_same:leading_term} is verbatim the same with  $s^n$, 
$\lambda_{f, L}$ and  $L_{\CM, f, sL}$ replaced by $-s^{n+1}$, $\lambda_{f,\Delta}$ and $\sM_{n+1}$, respectively.

\noindent { \scshape Step 2: The proof assuming that $T$ is smooth. } 
Set $M_i:=c_1(\sM_i)$. 
Taking into account that
\begin{equation*}
\frac{q^{n+1}}{(n+1)!}-\frac{q (q-1) \cdot \dots \cdot (q-n)}{(n+1)!}= \frac{n}{2} \frac{q^n}{n!} + O(q^{n-1}), 
\end{equation*}
according to \autoref{lem:Mumford_line_bundles_over_curve}, 
\begin{equation}
\label{eq:prop:2_defs_CM_same:Mumford_expansion}
M_{n+1} = f_* (sL)^{n+1} \textrm{, and }  M_n = f_* \left(\frac{-(sL)^n \cdot K_{X/T}}{2} + \frac{n}{2} (sL)^{n+1} \right). 
\end{equation}
(where $L= -(K_{X/T} + \Delta)$ in the case of point \autoref{itm:2_defs_CM_same:leading_term}.) Hence, the next computation concludes the proof in the respective cases:
\begin{enumerate}
 \item

\begin{multline*}
c_1(L_{\CM, f, sL}) 
= \left(n(n+1)+ \mu_{sL} \right)M_{n+1} -2 (n+1) M_n 
\\ = \left(n(n+1)+ \mu_{sL} \right) f_*((sL)^{n+1}) - 2(n+1) f_* \left( \frac{-K_{X/T} \cdot (sL)^n}{2 } + \frac{n (sL)^{n-1}}{2} \right)
\\ = \frac{\mu_L}{s} s^{n+1} f_* L^{n+1} -   s^n (n+1) f_* (K_{X/T}\cdot L^n) = s^n \lambda_{f,L}
\end{multline*}
\item 
\begin{equation*}
M_{n+1} = f_* \left((-s(K_{X/T} + \Delta))^{n+1} \right) = - s^{n+1} \lambda_{f, \lambda}
\end{equation*}
\end{enumerate}

\end{proof}

The next lemma is a technical statement used in \autoref{prop:CM_base_change}. 

\begin{lemma}
\label{lem:highest_Knudsen_Mumford_coefficient_integral}
Let $h : V \to S$ be  a flat $n$-relative dimensional morphism from a reduced projective scheme to a smooth projective curve, and let $\sL$ be an $h$-very ample line bundle on $V$. Let $\pi : Z \to V$ be the normalization of $V$ with $g : Z \to S$ being the induced morphism, and assume also that $\pi^* \sL$ is $g$-very ample. Then the $n+1$-th (so highest) Knudsen-Mumford coefficients of $\sL$ with respect to $g$ (as in \autoref{notation:Paul_Tian}) agrees with that of $\pi^* \sL$.
\end{lemma}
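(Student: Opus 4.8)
The statement compares the top Knudsen--Mumford coefficient of $\sL$ on $V$ and of $\pi^*\sL$ on $Z$, both computed relative to maps to the curve $S$. The plan is to recall that by \cite[Thm 4]{Knudsen_Mumford_The_projectivity_of_the_moduli_space_of_stable_curves_I} the highest coefficient $\sM_{n+1}$ is characterized (up to numerical/linear equivalence) by the leading term of $q \mapsto \det g_*\sO(q\pi^*\sL)$ as a polynomial in $q$ with values in $\Pic(S)$; equivalently, taking degrees, by the leading coefficient of $q\mapsto \deg g_*\sO_Z(q\pi^*\sL)$. So it suffices to prove that
\begin{equation*}
\deg h_* \sO_V(q\sL) - \deg g_* \sO_Z(q\pi^*\sL) = O(q^n)
\end{equation*}
for $q$ divisible enough; then the degree-$(n+1)$ coefficients agree, and since both $\sM_{n+1}$'s live on a curve they are determined by their degrees.

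\textbf{Key steps.} First I would choose $q$ large and divisible enough that $\sL$ and $\pi^*\sL$ are relatively very ample, that $R^ih_*\sO_V(q\sL)=0=R^ig_*\sO_Z(q\pi^*\sL)$ for $i>0$, and that all the relevant pushforwards are locally free on $S$ (possible as $S$ is a smooth curve and by relative Serre vanishing). Second, I would use the conductor short exact sequence for the normalization: there is a coherent sheaf $\sC$ on $V$ supported on the non-normal locus $N\subseteq V$ with $0 \to \sO_V \to \pi_*\sO_Z \to \sC \to 0$, hence, tensoring with the line bundle $\sO_V(q\sL)$ and pushing forward (using the projection formula $\pi_*(\pi^*\sL^{\otimes q}) \cong \pi_*\sO_Z\otimes\sL^{\otimes q}$ and the vanishing above),
\begin{equation*}
0 \to h_*\sO_V(q\sL) \to g_*\sO_Z(q\pi^*\sL) \to h_*(\sO_V(q\sL)\otimes\sC) \to 0
\end{equation*}
is exact on $S$. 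Third, I would invoke \autoref{lem:pushforward_degree_coherent_sheaf} applied to $\sE=\sC$: since $\sC$ is supported on the non-normal locus $N$ of $V$, which meets the generic fiber of $h$ in dimension $\le n-1$ and a fiber over a codimension-$1$ point of $S$ (i.e.\ a closed point) in dimension $\le n$, we may take $r=n-1$ in that lemma, so $\deg h_*(\sO_V(q\sL)\otimes\sC)$ is a polynomial in $q$ of degree at most $n$. Taking degrees in the exact sequence gives $\deg g_*\sO_Z(q\pi^*\sL) - \deg h_*\sO_V(q\sL) = O(q^n)$, which is exactly what is needed.

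\textbf{Main obstacle.} The delicate point is bookkeeping about the dimension of the support of the conductor sheaf $\sC$ over $S$, i.e.\ verifying the hypothesis of \autoref{lem:pushforward_degree_coherent_sheaf} with the right value of $r$. On the generic point of $S$ the fiber $V_\eta$ is reduced (being a fiber of a flat map with reduced total space, generically), and its non-normal locus has codimension $\ge 1$, hence dimension $\le n-1$; over a codimension-$1$ point of $S$ one only gets the crude bound $\le n$, which is why the lemma is stated with the two-tier hypothesis. I would make sure these inequalities are justified cleanly (using that $V$ is reduced, hence generically geometrically reduced over $S$, and that the non-normal locus of $V$ is a proper closed subset whose fibers over $S$ drop dimension by at least one generically). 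Everything else is formal: the conductor sequence, the projection formula, relative vanishing, and the reduction to degrees on the curve $S$. One should also double-check that ``$\sM_{n+1}$ agrees'' is meant up to the same equivalence used throughout \autoref{sec:CM_definition} (numerical equivalence / as divisor classes on $S$), so that matching degrees on the curve genuinely suffices; this is consistent with how $\sM_{n+1}$ enters \autoref{prop:CM_base_change}.
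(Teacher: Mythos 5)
Your proposal is correct and follows essentially the same route as the paper: the conductor exact sequence $0 \to \sO_V \to \pi_*\sO_Z \to \sC \to 0$, the projection formula, and an application of \autoref{lem:pushforward_degree_coherent_sheaf} to the cokernel sheaf supported on the non-normal locus to see that the discrepancy contributes only in degree $\le n$. The paper's proof is exactly this (with less bookkeeping on the support dimensions), so no further comparison is needed.
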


\begin{proof}
Consider the exact sequence on $V$ given by the normalization:
\begin{equation*}
\xymatrix{
0 \ar[r] & \sO_V \ar[r] & \pi_* \sO_Z \ar[r] & \sE \ar[r] & 0
}
\end{equation*}
This yields a natural inclusion $h_* \left( \sL^q \right) \hookrightarrow  g_* \left( \pi^* \sL^q \right)$.
Hence, it is enough to prove that for $q$ divisible enough, $\deg h_* ( \sL^q \otimes \sE) = O(q^{n})$, which is given by \autoref{lem:pushforward_degree_coherent_sheaf} as $\dim \Supp \sE \leq n$.  
\end{proof}

\bibliographystyle{skalpha}
\bibliography{Codogni_Patakfalvi}

\end{document}